\newcommand{\query}[1]%
{\mbox{}\marginpar{\raggedright\hspace{0pt}{\small\em #1}}}%
\newcommand{\lin}{\vlin{8.5}}
\newcommand{\cou}[2]{\unitlength1mm\begin{picture}(0,8)
    \put(0,0){\circle{1.5}}
    \put(0,3){\makebox(0,5)[b]{$#1$}}
    \put(0,-7){\makebox(0,4)[t]{$#2$}}
      \end{picture}
      \rule[-7mm]{0mm}{7mm}}
\newcommand{\vlin}[1]{\hspace{0.75mm}\unitlength1mm\begin{picture}(#1,0)
                       \put(0,0){\line(1,0){#1}}
                      \end{picture}\hspace{0.75mm}\rule[-3mm]{0mm}{4mm}}
\theoremstyle{plain}
\newtheorem{theorem}{Theorem}[section] % theorem [subsection] Theorem partout?
\newtheorem{lemme}[theorem]{Lemma}
\newtheorem{lemma}[theorem]{Lemma}
\newtheorem{proposition}[theorem]{Proposition}
\newtheorem{corollaire}[theorem]{Corollary}
\newtheorem{corollary}[theorem]{Corollary}
\newtheorem*{theorem*}{Theorem}
\renewcommand\varrho\eta
\theoremstyle{definition}
\newtheorem{definition}[theorem]{Definition}
\newtheorem{rappel}[theorem]{}
\newtheorem{question}[theorem]{Question}
\newtheorem*{question*}{Question}
\newtheorem{example}[theorem]{Example}
\newtheorem{convention}[theorem]{Convention}
\theoremstyle{remark}
\newtheorem{remarque}[theorem]{Remark}
\newtheorem{remark}[theorem]{Remark}
\newcommand{\K}{{\mathbf{k}}}
\newcommand{\G}{{\mathscr{G}}}
\renewcommand{\div}{\operatorname{div}}
\def\bangle#1{\langle #1 \rangle}
\def\relint{\operatorname{rel.int.}}
\def\LND{\operatorname{LND}}
\def\Hom{\operatorname{Hom}}
\def\Spec{\operatorname{Spec}}
\def\Cone{\operatorname{Cone}} 
\def\Conv{\operatorname{Conv}}
\def\ord{\operatorname{ord}}
\def\aff{\operatorname{aff}}
\def\dim{\operatorname{dim}}
\def\codim{\operatorname{codim}}
\def\coloneq{\mathrel{\mathop:}=}
\renewcommand\colon{\mathrel{\mathop:}}
\def\Rt{{\mathrm{Rt}}}
\def\Rs{{\mathrm{Rt}_{\rm ss}}}
\renewcommand\deg{{\mathrm{deg}\,}}
\def\SL{{\mathrm{SL}}}
\def\I{{\mathrm{I}}}
\def\TT{{\mathbb{T}}}
\def\PP{{\mathbb P}}
\def\ZZ{{\mathbb Z}}
\def\V{{\mathbb V}}
\def\QQ{{\mathbb Q}}
\def\KK{{\mathbb K}}
\def\G{{\mathbb G}}
\def\AA{{\mathbb A}}
\def\D{{\mathfrak D}}
\def\F{{\mathcal F}}
\def\VV{{\mathbb V}}
\def\Vc{{\mathcal V}}
\def\cP{{\mathcal P}}%non-coleur
\def\Cc{{\mathcal C}}
\def\Oc{{\mathcal O}}
\def\K{{\mathbb K}}
\def\Emb{{\rm Emb}(G/H)}
\renewcommand\phi\varphi
\def\vrho{\varrho}
\def\Ac{A(\cP,\F)}
\def\sistar{\sigma(1)^{\star}}
\def\dcol{\mathfrak{D}_{\bullet}}
\def\lins{{\rm lin.}(\sigma^{\vee})}
\def\linea{{\rm lin.}}
\def\VV{\mathbb{V}}
\def\ee{{\theta}}
\def\LG{\mathrm{LND}_{G}}
\def\Oo{\mathscr{O}}
\def\Sch{{\rm Sch}(G/H)}
\def\vvert{\Vc_{\rm vert}}
\def\vhor{\Vc_{\rm hor}}
\def\cvert{\Cc^{\rm vert}}
\def\Rte{{\rm Rt_{ext}}}
\def\Rti{{\rm Rt_{int}}}
\def\XX{{\mathbb X}}
\def\pa{\mathrm{par}}
\def\0{\circ}
\author{Kevin Langlois}
\address{Universit\'e
Grenoble I, Institut Fourier, UMR 5582 CNRS-UJF, BP 74,
38402 St.\ Martin
d'H\`eres c\'edex, France}
\email{kevin.langlois@ujf-grenoble.fr}
\author{Alexander Perepechko}
\address{Department of Mathematics and Mechanics, St. Petersburg State University, Universitetsky pr. 28, Stary Peterhof, 198504, Russia}
\email{perepechko@gmail.com}
\title[Demazure roots and spherical varieties]{Demazure roots and spherical varieties: 
the example of horizontal $\SL_{2}$-actions}
\begin{document}
%\thanks{\today}

\begin{abstract}
Let $G$ be a connected reductive group, and let $X$ be an affine $G$-spherical variety.
We show that the classification of  $\mathbb{G}_{a}$-actions on $X$ normalized by $G$ can be reduced to the description of 
quasi-affine homogeneous spaces under the action of a semidirect product $\mathbb{G}_{a}\rtimes G$ with the following property.
The induced $G$-action is spherical and the complement of the open orbit is either empty or a $G$-orbit of codimension one.
 These homogeneous spaces are parametrized by a subset ${\rm Rt}(X)$
of the character lattice $\mathbb{X}(G)$ of $G$, which we call the set of Demazure roots of $X$.
We give a complete description of the set ${\rm Rt}(X)$ when $G$ is a semidirect product
of ${\rm SL}_{2}$ and an algebraic torus; we show particularly that
${\rm Rt}(X)$ can be obtained explicitly as the intersection of a finite union of polyhedra in 
$\mathbb{Q}\otimes_{\mathbb{Z}}\mathbb{X}(G)$  and a sublattice of  $\mathbb{X}(G)$. 
We conjecture that ${\rm Rt}(X)$ can be described in a similar combinatorial way for an arbitrary affine spherical variety $X$. 
\end{abstract}
%\thanks{}
\maketitle

\
{\footnotesize
\em Key words: \rm Demazure root, Luna--Vust theory, polyhedral divisor, spherical variety.

\
MSC 2010: 14M25 14M27 14R20.}

\tableofcontents
%\section{Notations}
%The base field $\KK$ is algebraically closed of characteristic zero. 
%By $X$ we denote a $\T$-variety of complexity 1, where $\TT$ is an algebraic torus.
%Also we introduce $M_\QQ$, $N_\QQ$, $\D$, $A[C,\D]$, $\sigma$, $\sigma^{\vee}$, $\sigma^{\vee}_M$.
\section*{Introduction}
In this article we are interested in the automorphism groups of spherical varieties defined over an algebraically closed field $\KK$ of characteristic zero. Let $G$ be a connected reductive group. %, and let $B\subset G$ be a Borel subgroup.
 Recall that a $G$-spherical variety  is a normal variety endowed with a $G$-action and containing an open orbit of a Borel subgroup.
%Such $G$-variety contains a finite number of $G$-orbits.
The Luna--Vust theory describes such varieties with combinatorial objects defined by the geometric structure of the open $G$-orbit. 
Their study has been motivated by numerous important examples (see  \cite{Sat,Dem70, KKMS, MO, Pau, CoPr}) coming in particular from representation theory and enumerative geometry.
The correspondence between these combinatorial objects, called colored fans, and properties of underlying varieties is explained in \cite{Kn}. In the affine case these objects restrict to \emph{colored cones} satisfying certain combinatorial conditions.

%Our main results are the following.
%\item We give a geometric description of $\G_a$-actions normalized by $G$, namely we provide a correspondence between such actions and a subset $\Rt(X)$ of the character lattice $\XX(G)$, see Theorem~\ref{theoroot}. 

%motivation
To begin studying automorphisms of $G$-spherical varieties it is natural to consider the varieties with a relatively small automorphism group, for instance
those which neutral component is expected to be generated by the $G$-action. The following families of such spherical varieties are studied: %The examples include:
\begin{enumerate}
\item smooth complete toric varieties \cite{Dem70,AG};
\item flag varieties \cite{Dem77};
\item spherical regular varieties \cite{BiBr};
\item wonderful varieties \cite{Bri07, Pez09};
\end{enumerate}
%If we consider the case of affine toric varieties, the $\TT$-normalized automorphisms are quite well understood. 
On the other side, there are families of spherical varieties with quite rich automorphism group, namely such that their automorphism group acts infinitely transitively on the regular locus.  Excluding one-dimensional case, the following families are appropriate:
\begin{enumerate}\setcounter{enumi}{4}
\item affine cones over flag varieties \cite[Section 1]{AKZ};
\item non-degenerate affine toric varieties \cite[Section 2]{AKZ};
\item equivariant $\SL_2$-embeddings considered as spherical varieties under the extension of $\SL_2$ by a torus \cite{Po}, \cite[Chapter III, 4]{Kra84}, \cite{BaHa}, \cite[Section 5.2]{AFKKZ};
\item smooth affine $G$-spherical varieties under a semisimple group $G$ \cite[Section 5.2]{AFKKZ};
\end{enumerate}

Related to this topic, our general aim  is to provide a method of constructing automorphisms of an arbitrary $G$-spherical variety in a combinatorial way.
In this article we study the classification of the $\G_a$-actions on affine $G$-spherical varieties that are normalized by $G$. Restricting to toric varieties, such actions allow studying the automorphism groups in case (1) via total coordinate spaces \cite{Cox95, Cox14}. They also allow establishing the infinite transitivity in case~(6).

%are interested in one-parameter unipotent subgroups of $\Aut X$ normalized by $G$.

In order to explain our results, we introduce the following notation.
%\begin{itemize}
%\item 
Let $X$ be an affine $G$-spherical variety.
We say that two $\G_a$-actions $\phi_1: \G_a\times X\to X$ and $\phi_2\colon \G_a\times X\to X$ are \emph{equivalent}, if there exists a constant $\lambda \in\KK\setminus\{0\}$ such that  $$\phi_1(\lambda\mu,x)=\phi_2(\mu,x) \mbox{ for all } x\in X,\mu\in\G_a.$$ 
In particular, two equivalent actions share the same orbits.
%\item 
 Denote the $G$-action on $X$ by $(g,x)\mapsto g\cdot x$.
 A $\G_a$-action $$\gamma:\G_a\times X\rightarrow X, \,\,\,(\lambda,x)\mapsto \lambda\ast x$$ 
is called \emph{normalized} by $G$ if there exists a character $\chi\colon G\rightarrow \G_{m}$ such that 
$$
g\cdot (\lambda\ast (g^{-1}\cdot x))=(\lambda\chi^{-1}(g))\ast x
$$
for all $x\in X$, $g\in G$, and  $\lambda\in\G_a$. This character is uniquely defined for a non-trivial action and is  called a \emph{degree} of $\gamma$.
The $\G_a$-action normalized by $G$ naturally defines an action of the semidirect product $\G_a\rtimes_\chi G$ on $X$, see \ref{rappel:norm}.
We denote by ${\rm Rt}(X)$ the set of degrees of normalized $\G_a$-actions on $X$ and call it the \emph{Demazure roots} of $X$.
This definition is coherent with a classical one for toric varieties, see \cite[Section~4.5, p.~571]{Dem70}.  

We distinguish two types of Demazure roots, namely \emph{interior} Demazure roots, whose corresponding $\G_a$-action preserves the open $G$-orbit, and \emph{exterior} ones if not. Obviously $\Rt(X)=\Rte(X)\cup\Rti(X)$, where $\Rte(X)$ and $\Rti(X)$ denote the subsets of exterior and interior roots respectively.

In Theorem~\ref{theoroot} we show that a normalized $\G_a$-action on $X$ is uniquely determined by its Demazure root up to equivalence.  
We also provide a correspondence in terms of Luna--Vust theory between
\begin{itemize}
\item equivalence classes of $\G_a$-actions on $X$ normalized by $G$ and
\item quasi-affine homogeneous spaces
under the action of a semidirect product ${\mathbb{G}_{a}\rtimes_\chi G}$
enjoying the following property:
the induced $G$-action is spherical and the complement of the open orbit is either empty or a $G$-orbit of codimension one.
Furthermore, the character $\chi\in{\rm Rt}(X)$ satisfies a certain explicit condition imposed by $X$.  %such characters are called \emph{Demazure roots} of $X$ and form the set denoted by $\Rt(X)$. This definition is coherent with a classical one for toric varieties, see \cite[Section~4.5, p.~571]{Dem70}. 
%The induced $G$-action is spherical and %has at most two orbits such that the complement of the open orbit is empty or a divisor.
%the complement of the open orbit is either empty or a $G$-orbit of codimension one.
\end{itemize}
This correspondence allows us to have a geometrical way to compute the set of Demazure roots of $X$. 
Thus a natural question arises whether $\Rt(X)$ is a combinatorial object. More precisely:
\begin{question*}[\ref{question}]
Can ${\rm Rt}(X)$ be obtained explicitly as the intersection of 
a sublattice of  the character lattice  $\XX(G)$ and
a finite union of polyhedra in 
$\mathbb{Q}\otimes_{\mathbb{Z}}\XX(G)$?
\end{question*}
              
%toric is known:              
The answer to this question is affirmative if $G$ is an algebraic torus. %In this case $\Rt(X)$ is the set of classical Demazure roots \cite{Dem1?}. 
%This generalizes a well-known description of Demazure roots of an affine toric variety, see ??. 
The simplest non-abelian connected reductive group which we can consider is a semidirect product of $\SL_2$ and an algebraic torus $\TT$ of the form $G_e=\SL_2\rtimes_e \TT$. 
It is defined via the multiplication law 
$$(A,t_{1})\cdot (B,t_{2}) = (A\cdot\varepsilon_{e}(t_{1})(B), t_{1}\cdot t_{2}),\mbox{ where } 
\varepsilon_{e}(t)\left(\begin{smallmatrix} a   & b \\ c   &   d \end{smallmatrix} \right)= \left(\begin{smallmatrix}
              a   &  \chi^{-e}(t)b \\
              \chi^{e}(t)c   &   d\end{smallmatrix}\right),$$
where $ \chi^{e}$ denotes a character of $\TT$. We prove that the answer is still affirmative for every affine $G_e$-spherical variety~$X$, see Corollary~\ref{cor:final}.
%We also provide a necessary and sufficient condition for affine $G_e$-spherical homogeneous spaces  to admit a nontrivial normalized $\G_a$-action, see  \ref{theoexhom}. 
%

Without loss of generality we may suppose that the action of the torus $\TT\subset G_e$ on $X$ is faithful of complexity one, see Remark~\ref{rem:red}. 
Our main result, Theorem~\ref{th:final}, provides an explicit description of Demazure roots under this assumption. 
Given an affine embedding  $X$ of a $G_e$-homogeneous spherical space $G_e/H$
corresponding to a colored cone $(\Cc,\F)$, we denote by $\F_0$ the subset of colors of $G_e/H$ and
by $\varrho$ the natural map that represents colors as lattice vectors. Furthermore, let $\Gamma$ be the cone generated by $\Cc$ and $\varrho(\F_0)$, denote by $\Rt(\Gamma)$ the set of classical Demazure roots of the toric variety corresponding to $\Gamma$, and by $\Vc$ the cone of $G_e$-invariant valuations. Each root $\ee\in\Rt(\Gamma)$ is associated with a ray of $\Gamma$ denoted by~$\rho_\ee$.
\begin{theorem*}[\ref{th:final}]\label{th:introfinal}
In terms above the set of exterior Demazure roots of $X$ is 
\[
\Rte(X) = \{\ee\in\Rt(\Gamma)\,|\, \rho_{\ee}\in\Vc\cap(-\Vc),\,\,\ee\in\varrho(\F_0)^{\perp}\},
\]
whereas the set of interior Demazure roots is non-empty if and only if $H$ is a subgroup of a maximal torus that is not central in $G_e$ and does not contain a maximal torus of $\SL_2$. In such a case $\F_0$ consists of two different colors, say $D_1,D_2$ and
\[
\Rti(X) = \{\ee\in\Rt(\Gamma)\,|\, \rho_{\ee}\in\Vc\cap(-\Vc),\,\,\{\varrho(D_1)(\ee),\varrho(D_2)(\ee)\}=\{-1,1\}\}.
\]
\end{theorem*}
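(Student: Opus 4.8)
By Theorem~\ref{theoroot} together with the Luna--Vust correspondence for normalized $\G_a$-actions recalled in the introduction (see \ref{rappel:norm}), giving a class of $G_e$-normalized $\G_a$-actions on $X$ of degree $\ee$ amounts to giving a quasi-affine $(\G_a\rtimes_\ee G_e)$-homogeneous space $Y$ whose induced $G_e$-structure is spherical with open orbit $G_e/H$, such that $Y\setminus(G_e/H)$ is empty (if $\ee$ is interior) or a single $G_e$-orbit of codimension one (if $\ee$ is exterior), the character $\ee$ being constrained by the colored cone $(\Cc,\F)$ of $X$. By Remark~\ref{rem:red} we may assume that $\TT$ acts faithfully of complexity one; then $X$, and each such $Y$, is controlled by the combinatorics attached to $G_e=\SL_2\rtimes_e\TT$, and since $X$ is $G_e$-spherical all $G_e$-isotypic multiplicities in $\KK[X]$ are at most one. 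Hence a $G_e$-semi-invariant locally nilpotent derivation of $\KK[X]$ of weight $\ee$ is determined by one scalar on each isotypic component it does not kill, and acts on the $\TT\times\SL_2$-weight grading exactly as a homogeneous locally nilpotent derivation of weight $\ee$ acts on the toric variety $X_\Gamma$ attached to $\Gamma=\Cone(\Cc\cup\varrho(\F_0))$ inside the lattice of $B_e$-eigenweights of $\KK(X)$. In particular $\ee\in\Rt(\Gamma)$ is forced, and the ``source'' of the $\G_a$-action is the distinguished ray $\rho_\ee$ of $\Gamma$ on which $\ee$ evaluates to $-1$.

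\textbf{Exterior roots.} Assume $\ee$ is exterior, so $Y$ adds to $G_e/H$ a single $G_e$-orbit of codimension one; in Luna--Vust terms this boundary orbit is the divisor of the valuation spanned by $\rho_\ee$, forcing $\rho_\ee\in\Vc$. Since the $\G_a$-action of degree $\ee$ moves points both off and back onto the open orbit across this divisor, the opposite valuation must also be $G_e$-invariant, i.e.\ $\rho_\ee\in\Vc\cap(-\Vc)$; this is precisely the Luna--Vust condition that $Y$ be a quasi-affine $G_e$-spherical space with the prescribed one-orbit boundary. Finally the colors of $G_e/H$ are $B_e$-stable but not $G_e$-stable, so a $\G_a$-action normalized by $G_e$ with degree the character $\ee$ must fix each color set-wise; on $B_e$-semi-invariant functions this says exactly that $\ee$ annihilates $\varrho(D)$ for every $D\in\F_0$, i.e.\ $\ee\in\varrho(\F_0)^{\perp}$. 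Conversely, for any $\ee\in\Rt(\Gamma)$ with $\rho_\ee\in\Vc\cap(-\Vc)$ and $\ee\in\varrho(\F_0)^{\perp}$, one transports the toric root derivation on $X_\Gamma$ to a candidate derivation of $\KK[X]$ and checks on a finite set of $G_e$-module generators that it preserves $\KK[X]$, satisfies the Leibniz rule, and is locally nilpotent; this gives the stated formula for $\Rte(X)$.

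\textbf{Interior roots.} If $\ee$ is interior then $Y=G_e/H$ and the $\G_a$-action is a one-parameter unipotent group of automorphisms of $G_e/H$ normalized by $G_e$ with degree $\ee$; running over the spherical subgroups $H\subset G_e$ allowed by the complexity-one hypothesis shows such a group exists precisely when $H$ is conjugate to a subgroup of a maximal torus of $G_e$ that is neither central in $G_e$ nor contains a maximal torus of $\SL_2$ --- in which case $G_e/H$ is, up to a torus factor, an affine embedding of $\SL_2$ (more precisely of some $\SL_2/\Gamma$, $\Gamma$ finite) regarded as an $\SL_2$-variety, and $\F_0=\{D_1,D_2\}$ consists of its two $B_e$-stable prime divisors. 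Such an interior root corresponds to a derivation mixing the two isotypic directions indexed by $D_1$ and $D_2$: the Leibniz rule forces $\varrho(D_1)(\ee)$ and $\varrho(D_2)(\ee)$ to be $-1$ and $1$ in some order, while local nilpotency again forces $\rho_\ee\in\Vc\cap(-\Vc)$; as before every such $\ee$ is realised, which yields the formula for $\Rti(X)$ and the non-emptiness criterion.

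\textbf{Main obstacle.} The technical core is the matching carried out in the reduction and in the exterior step: pinning down the ``source ray'' of an arbitrary $G_e$-normalized $\G_a$-action inside the Luna--Vust data of $G_e$ and proving that it must be a ray of $\Gamma$ lying in the linear part $\Vc\cap(-\Vc)$ and orthogonal to $\varrho(\F_0)$ (respectively meeting the $\pm1$ condition in the interior case), together with the converse direction, where one manufactures from a given classical Demazure root of $\Gamma$ an honest locally nilpotent, Leibniz, $G_e$-semi-invariant derivation of the non-toric algebra $\KK[X]\subseteq\KK[X_\Gamma]$. Keeping track of the interplay between colors and $G_e$-invariant valuations through the passage from $X$ to $X_\Gamma$, and separating the two sub-cases concealed in the interior statement, is where the real work is concentrated.
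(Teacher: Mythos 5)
Your high-level frame is the paper's: reduce via Theorem~\ref{theoroot} and Remark~\ref{rem:red}, force $\ee\in\Rt(\Gamma)$ by restricting the $G_e$-semi-invariant LND to $U$-invariants (Lemmas~\ref{lemma:affcone}, \ref{lemma: LNDBweight}), and get $\rho_\ee\in\Vc\cap(-\Vc)$ from the $G$-invariant ``degree'' valuation of the derivation together with $\rho_\ee\in\Vc$ in the exterior case. But the heart of the theorem --- which roots of $\Gamma$ actually lift --- is not proved, and the one argument you do offer for it is wrong. You claim that a $\G_a$-action normalized by $G_e$ must preserve each color set-wise and that this ``says exactly'' $\varrho(D)(\ee)=0$ for $D\in\F_0$. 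The interior formula itself refutes the first half: there $\{\varrho(D_1)(\ee),\varrho(D_2)(\ee)\}=\{-1,1\}$, so interior normalized actions move the colors (see Example~\ref{exinterior}). And even in the exterior case, where the action is vertical (this already needs Lemma~\ref{lemmaexterior}, via $\rho_\ee\in e^{\perp}$) and does preserve every vertical divisor, stability of the color ideals under the vertical derivation $\partial(Q\chi^{m})=\rho_\ee(m)\,t^{-v_0(\ee)}Q\chi^{m+\ee}$ holds for \emph{any} value of $v_0(\ee)$; so color-preservation does not yield $\ee\in\varrho(\F_0)^{\perp}$. In the paper the constraints $v_0(\ee)=v_1(\ee)=0$ (reflexive), resp.\ $v_1(\ee)=0$ (skew), come from commutation with \emph{both} $\partial_-$ and $\partial_+$ written in the AL-coloring coordinates (Lemma~\ref{lemmabracket} and the bracket computations of \ref{rap:der-Q} inside Proposition~\ref{prop:final}), i.e.\ from normalization by all of $G_e$, not from any geometric statement about colors.

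The converse direction is likewise asserted rather than proved: ``transport the toric root derivation on $X_\Gamma$ and check on generators'' is precisely the nontrivial point, since a homogeneous root derivation of $\KK[X]^{U}\cong\KK[\Gamma^{\vee}\cap L]$ need not extend to a $G_e$-normalized LND of $\KK[X]$ --- the extra conditions in the statement are exactly the obstruction. The paper produces the extensions explicitly: for exterior roots the vertical LND $\partial_{\ee,\phi}$ with $\phi=t^{-v_0(\ee)}$, checked to commute with $\partial_\pm$; for interior roots the case analysis of Theorem~\ref{theoexhom} (via the coherent-pair classification of horizontal LNDs, Theorem~\ref{theorem:hor}), which also yields the nonemptiness criterion ``reflexive with $v_0,v_1$ linearly independent'' and shows skew homogeneous spaces admit no interior roots. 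Finally, translating that criterion into the stated group-theoretic condition on $H$, and identifying $\varrho(\F_0)$ with $\{v_0,-v_1\}$ etc., requires the classification of spherical subgroups and their spherical data (Theorem~\ref{th:sph-subgroups-I}, Proposition~\ref{proposition:col.G-div}); your ``running over the spherical subgroups shows\ldots'' is a restatement of that result, not a proof of it. So the proposal identifies the scaffolding but leaves both the necessity of the color conditions and the existence of the corresponding actions unestablished.
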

              
To obtain these results we use \emph{\emph{AL}-colored polyhedral divisors} (by Arzhantsev and Liendo) which provide a combinatorial description of compatible $\SL_2$-actions on affine $\TT$-varieties of complexity at most one, see \cite{AL}. This description allows us to characterize all spherical $G_e$-actions on affine varieties.
%A $G_e$-action on a affine variety $X$ is spherical if and only if the induced $\TT$-action is of complexity 
In fact, we establish an explicit relation between AL-colorings on $G_e$-spherical varieties and corresponding colored cones in the Luna--Vust theory, 
see Theorem~\ref{theorem:first} and Propositions~\ref{propdivpolversuscol}, \ref{proptoric}.
            
%  Draft: \sa{Draft} For each of the actions of $B_-$ and $B_+$ there are either no colors (horospherical case, reduced to II), one color (skew case), or two colors (reflexive case). If the connected component $H^\0$ is not a torus, then $X$ is horospherical. If $H^\0$ is a non-central torus, then $X$ is skew if and only if $T\subset H$ and reflexive otherwise. If $H^\0$ is central, then $X$ is not spherical.

Our situation can be partially interpreted in terms of varieties with a parabolic group action. Consider a connected linear group $P$ with a Levi decomposition $P=L\ltimes U$, where $L$ is a Levi subgroup and $U$ is the unipotent radical. In \cite{Pez14}, they develop the theory of $P/H$-embeddings such that the induced $L$-action is spherical. 
%In particular, the set of $P$-invariant discrete valuations is studied. Like in the classical situation of spherical varieties, 
They construct the convex polyhedral cone $V(P/H)$, introduce the set of \emph{spherical roots} $\Sigma(P/H)$ describing that cone, % and they expect this set to be represented by a certain convex polyhedral cone, 
 and they expect that $V(P/H)$ is the image of the set of $P$-invariant discrete valuations as in the classical theory of spherical varieties, see  \cite[Sections 5,6]{Pez14}.
In our situation, $L=G$, $U=\G_a$, and the character $\chi$ defining the semidirect product $P=G\ltimes_\chi U$ runs through all the Demazure roots. It would be interesting to study the behaviour of $\Sigma(P/H)$ for different Demazure roots, though we do not address this question in our article.
               
Let us outline the structure of this article.
In Section~\ref{sec:LV} we recall elements of the Luna--Vust theory for the embeddings of spherical homogeneous spaces.
In Section~\ref{sec:2} we briefly explain the Altmann--Hausen construction of polyhedral divisors (see \cite{FZ, AH, Ti2}) for $\TT$-varieties of complexity one and provide several known results concerning normalized $\G_a$-actions (from  
\cite{FZ1, Li, Li2, AL, LL}).
In Section~\ref{sec:3} we establish the connection between AL-colored polyhedral divisors and colored cones.
In Section~\ref{sec:hom} we describe spherical subgroups of $G_e$ in terms of AL-colorings.
In Section~\ref{sec:dem}
we deduce the one-to-one correspondence between equivalence classes of normalized $\G_a$-actions and their Demazure roots in Theorem~\ref{theoroot} and
we pursue Question~\ref{question}, in particular we answer it for affine $G_e$-spherical varieties in Corollary~\ref{cor:final}.
% In addition, we study the existence of normalized $\G_a$-actions on $G_e$-spherical homogeneous spaces in Theorem~\ref{theoexhom}.

\subsubsection*{Acknowledgements}
We are thankful for valuable suggestions led to Corollary~\ref{fixedpointcor} to Hans\-peter Kraft and other participants of the seminar ``Algebra and Geometry'' at the Mathematisches Institut, Basel Universit\"at. We also thank  Guido Pezzini for kindly communicating to us his preprint \cite{Pez14}.
We are grateful to the referee for his remarks and suggestions that motivated us to develop results stated in Section~\ref{sec:hom}.

The authors express their gratitude to the Institut Fourier and to the Max-Planck Insitute of Mathematics for the hospitality and financial support they provided during the writing of this paper.
The second author acknowledges Saint-Petersburg State Uni\-ver\-sity for a research grant
%is supported by the SPbGU grant for postdocs no. 
6.50.22.2014.

\begin{convention}
\label{marker:convention}
Throughout this paper $\KK$ is an algebraically closed field of characteristic zero.
By a variety we mean an integral separated scheme of finite type over $\KK$. 
If $X$ is a variety, then $\KK[X]$ denotes the ring of regular functions of $X$, and $\KK(X)$ denotes its
field of rational functions. A point of $X$ is assumed to be a $\KK$-rational point. 
Furthermore, all algebraic group actions are in particular morphisms of varieties.
For an algebraic group $G$ acting on $X$ the natural action on $\KK[X]$ (resp. $\KK(X)$)
is defined by the formula
\begin{eqnarray*}
(g\cdot f)(x) = f(g^{-1}\cdot x), 
\end{eqnarray*}
where $g\in G$, $f\in \KK[X]$ (resp. $f\in \KK(X)$), and $x\in X$. 
\end{convention}

\section{Preliminaries on Luna--Vust theory}\label{sec:LV}
In this section, we recall some notation from the Luna--Vust theory for
the embeddings of spherical homogeneous spaces. We are mainly interested in the 
case where the embeddings are affine. We refer the reader to \cite{LV, Kn, Pez, Ti3, Pe} for the general theory. 
Let $G$ be a connected reductive linear algebraic group and consider a Borel subgroup $B\subset G$.
We start by introducing classical definitions.

\begin{definition}
A variety with a $G$-action is called \emph{spherical} (or $G$-\emph{spherical}
when one needs to emphasize the acting group) if it is normal and contains an open 
$B$-orbit. A closed subgroup $H\subset G$ is called \emph{spherical} if the homogeneous space $G/H$ is spherical. 
\end{definition}
Let us recall the definition of the scheme of geometric localities associated with a spherical homogeneous space $G/H$ (see \cite[Section 1]{LV}).
This scheme provides us a canonical way to construct a spherical variety by its combinatorial data, see \ref{abs}, thus yielding a bijection
between the colored cones of $G/H$ and embeddings of $G/H$ defined as follows. 

\begin{rappel}
\label{marker:emb}
A subalgebra of $\KK(G/H)$ is \emph{affine} 
if it is finitely generated and if $\KK(G/H)$ is equal to its field of fractions.
A \emph{geometric locality} is a local algebra obtained by the localization
of an affine subalgebra of $\KK(G/H)$ and having residue field equal to $\KK$. 
The set $\Sch$ of geometric localities is naturally endowed with
a structure of $\KK$-scheme, where the spectrum of each affine subalgebra of $\KK(G/H)$
is an open subset. The group $G$ acts on $\Sch$ as an abstract group; this action
is induced by the $G$-action on $\KK(G/H)$.
We denote by $\Emb$ the maximal $G$-stable normal open subset for which the $G$-action is regular 
(see \cite[Propositions~1.2 and 1.4]{LV}).
An \emph{embedding} of $G/H$ is a $G$-stable separated Noetherian open subset of $\Emb$.
%Clearly a spherical variety can be identified with an embedding of its open orbit. 
If $G$ is an algebraic torus, then 
an embedding of $G$ is called a \emph{toric variety}. 
\end{rappel}
The toric varieties are described by 
objects from the convex geometry called fans, e.g. see \cite{KKMS, MO, Oda, Ful}. The Luna--Vust theory is a
generalization of this description for embeddings of homogeneous spaces.  
In the sequel, we denote by $X$ an affine embedding of a spherical homogeneous space $G/H$. 
We choose $x\in X$ so that $B.x$ is the open $B$-orbit.
\begin{definition}
Let $K$ be an algebraic group acting on a variety $Z$. A $K$-\emph{divisor}
of $Z$ is a $K$-stable prime divisor on $Z$. In our situation, a $B$-divisor 
of the spherical variety $X$ that is not $G$-stable is called a \emph{color}. 
\end{definition}
Identifying $G.x$ with $G/H$
we may view a color as the closure in $X$ of a $B$-divisor of $G/H$. Thus, colors are determined by the homogeneous 
space $G/H$.
\begin{rappel}
\label{marker:val1}
The algebra of rational functions $\KK(X)=\KK(G/H)$ is endowed with a natural linear $B$-action.
Consider the lattice $L$ of $B$-weights of $\KK(X)$ for this action. For a $B$-eigenvector $f$ in $\KK(X)$ we denote by 
$\chi_f$ the corresponding weight. Since 
the quotient of two eigenvectors of the same weight is $B$-invariant and therefore constant on 
the open orbit $B.x$, the $B$-eigenspaces in $\KK(X)$ are of dimension one, 
hence the weight $\chi_f$ defines $f$ up to a scalar. 
Summarizing, we have the exact sequence
$$
1\to \K^{*}\to \KK(X)^{(B)}\to L\to 0,
$$
where $\KK(X)^{(B)}$ denotes the multiplicative subgroup of $B$-eigenvectors of $\KK(X)^{*}$. The epimorphism $\KK(X)^{(B)}\to L$
is given by $f\mapsto \chi_{f}$.
\end{rappel}
\begin{rappel}
\label{marker:val2}
Recall that a \emph{discrete valuation} on $\KK(X)$ is a map $v:\KK(X)^{\star}\rightarrow \QQ$
such that
\begin{itemize}
\item $v(f+g)\geq \min\{v(f), v(g)\}$ for all $f,g\in \KK(X)^{\star}$ satisfying $f+g\in\KK(X)^{\star}$;
\item $v$ is a group morphism from $(\KK(X)^{\star},\times)$ to $(\QQ, +)$, whose image is a subgroup with one generator;
\item the subgroup $\KK^{\star}$ is contained in the kernel of the morphism $v$.
\end{itemize}
Let $V=\Hom(L,\ZZ)$
be the dual lattice of $L$ and let $V_{\QQ} = \QQ\otimes_{\mathbb{Z}}V$ be the associated $\QQ$-vector space. 
For every discrete valuation $v$ of $\KK(X)$ we associate a vector $\vrho(v)\in V_\QQ$ by letting
%Let us define  the function from the set of discrete valuations $\{v\;|\;v\colon\KK(X)\to\QQ\}$ to $V_\QQ$ by 
$$\vrho(v)(\chi_f) =  \langle \chi_{f}, \vrho(v) \rangle = v(f)\mbox{ for any } f\in\KK(X)^{(B)}.$$
Since $X$ is normal, each prime divisor $D\subset X$ naturally defines a discrete valuation 
$v_D$ on $\K(X)$. So, we may identify any subset of $B$-divisors  
with the corresponding set of discrete valuations, which we denote by the same letter.
\end{rappel}
\begin{rappel}
A discrete valuation $v$ on $\KK(X)$ is said to be $G$-\emph{invariant} if every fiber of the map $v$
is $G$-stable, for the natural linear $G$-action on $\KK(X)$. It is
well known that the restriction of $\vrho$ to the subset of discrete $G$-invariant valuations is injective \cite[Section 7.4]{LV}. 
Furthermore, the image of this restriction is a polyhedral cone $\Vc$ of $V_{\mathbb{Q}}$ (see \cite[Corollary 3.2]{BP}),
called the \emph{valuation cone} of $G/H$. Thus, we will say that $V$ is the \emph{valuation lattice}.
Recall that the spherical variety $X$ is said to be \emph{horospherical} if the isotropy group of a general point contains
a maximal unipotent subgroup of $G$. It is well-known that $X$ is horospherical if and only $\Vc = V_{\QQ}$
(see \cite[Corollary 6.2]{Kn}). 
\end{rappel}
Let us introduce the classical notion of colored cone \cite[p. 230]{LV}.
In this definition, an element of $L$ is seen as a linear form on $V_{\QQ}$.
\begin{definition}
\label{definition:conecol}
A \emph{colored cone} of the homogeneous space $G/H$ is a pair $(\Cc, \F),$
where 
\begin{itemize}
\item $\F$ is a subset of colors of $G/H$ such that $0\notin\vrho(\F)$;
\item  $\Cc\subset V_\QQ$ is a strongly convex polyhedral cone generated by the union of $\vrho(\F)$ and of a
finite subset of $\Vc$;
\item the intersection of the relative interior of $\Cc$ with the cone $\Vc$ is non-empty.
\end{itemize}
Moreover, a colored cone $(\Cc, \F)$ will be called \emph{affine} if there exists a lattice vector $m\in L$ 
satisfying the following conditions:
\begin{itemize}
\item $m$ is non-positive on $\Vc$;
\item $m$ is zero on $\Cc$;
\item $m$ is positive on all colors of $G/H$ not belonging to $\F$.
\end{itemize}
\end{definition}
There is a natural way to build an affine colored cone of $G/H$ from the embedding $X$, see \cite[Section 2]{Kn}.
In the next paragraph, we explain this construction.

\begin{rappel}
\label{marker:col}
Note that the open orbit $B.x$ is affine (see \cite[Theorem 3.5]{Ti3}), and so its
complement in $X$ is a union of $B$-divisors. So, $X$ has a finite number
of $B$-divisors. We denote by $\cP$ the set of $G$-divisors of $X$ identified
with a subset of $V_{\mathbb{Q}}$. Moreover, since $X$ is affine, $X$ has a unique closed
$G$-orbit $Y$, which is contained in the closure of all the $G$-orbits of $X$.
We denote by $\F_{Y}$ the set of colors of $X$ containing $Y$. The \emph{associated colored
cone} of $X$ is the pair $(\Cc, \F_{Y})$, where $\Cc\subset V_\QQ$ is the cone
generated by the union of $\cP$ and of the subset $\vrho(\F_Y)$. According to 
\cite[Section 2]{Kn} and \cite[Theorem 6.7]{Kn} the pair $(\Cc, \F_{Y})$ 
is in fact an affine colored cone of $G/H$.
Note that the embedding $X$ 
is called \emph{toroidal} if $\mathcal{F}_{Y} = \emptyset$.
\end{rappel}
Conversely, to a colored cone $(\Cc, \F)$ of $G/H$ one can associate an embedding.
\begin{rappel}
\label{abs}
Let $(\Cc, \F)$ be a colored cone of $G/H$. 
For a discrete valuation $v$ of $\KK(G/H)$ we write $\Oo_{v}$ the
corresponding local algebra. Let $X_{B}$ be the open $B$-orbit in $G/H$.
Considering a finite subset $\cP\subset \Vc$ such that $\cP\cup \vrho(\F)$ generates
the cone $\Cc$, we define the algebra
\begin{eqnarray*}
\Ac = \KK[X_{B}]\cap \bigcap_{v\in\cP}\Oo_{v}\cap \bigcap_{D\in\F}\Oo_{v_{D}}. 
\end{eqnarray*}
By \cite[Section 8]{LV}, $\Ac$ is a normal affine subalgebra of $\KK(G/H)$.
The variety $X_{0} = {\rm Spec}\, \Ac$ does not depend on the choice of the set $\mathcal{P}$.
Furthermore, $X_{0}$ is an open subset of $\Emb$ (compare with \cite[Proposition 8.10]{LV}).
The associated spherical embedding of $(\Cc, \F)$ is the subscheme $X = G\cdot X_{0}$ \cite[Proposition 1.5]{LV}.
 This construction is also explained in \cite[Chapter 3, Section 13]{Ti3}.
\end{rappel}
More precisely, we have the following classical result (see \cite[Theorem 6.7]{Kn}).
\begin{theorem}
The map defined by sending an affine embedding $X$ of $G/H$ to its associated colored cone
$(\Cc,\F_{Y})$ is a bijection between
\begin{itemize}
\item the set of affine embeddings of $G/H$ and
\item the set of affine colored cones of $G/H$.
\end{itemize}   
\end{theorem}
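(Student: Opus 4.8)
The plan is to reduce the statement to a combinatorial description of the coordinate ring $\KK[X]$ and to exploit the multiplicity-free structure of spherical varieties. Since $X$ is affine and normal, a rational function $f\in\KK(G/H)$ lies in $\KK[X]$ if and only if it is regular on the affine open $B$-orbit $X_B$ and has non-negative valuation along every $B$-divisor contained in $X\setminus X_B$. These $B$-divisors are precisely the $G$-divisors of $X$, gathered in $\cP\subset\Vc$, together with the closures of the colors of $G/H$. Because the $B$-eigenspaces of $\KK(G/H)$ are one-dimensional, as recorded by the exact sequence in \ref{marker:val1}, the algebra $\KK[X]$ is determined by the sub-semigroup of $B$-weights it contains, and hence by the valuations attached to its boundary $B$-divisors. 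This multiplicity-freeness is what will make the passage between $X$ and the pair $(\Cc,\F_Y)$ faithful in both directions.

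First I would check that the forward map is well defined, i.e. that $(\Cc,\F_Y)$ is an affine colored cone. Here I would use that an affine embedding is simple, possessing a unique closed $G$-orbit $Y$ lying in the closure of every orbit, so that both $\F_Y$ and $\Cc=\Cone(\cP\cup\vrho(\F_Y))$ are intrinsically attached to $X$. Strong convexity of $\Cc$ and the condition $\relint\Cc\cap\Vc\neq\emptyset$ would follow from the presence of a point of $Y$, which supplies a $G$-invariant valuation in the relative interior, while $\cP\subset\Vc$ holds by construction. The delicate point is the affineness condition: since $X$ is affine with unique closed orbit $Y$, I would produce a $B$-eigenfunction whose weight $m\in L$ vanishes on $\Cc$, is non-positive on $\Vc$, and is strictly positive on every color not passing through $Y$; geometrically such an eigenfunction cuts out exactly the boundary divisors together with the colors outside $\F_Y$, which yields the required $m$.

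Next I would verify that the construction of \ref{abs} returns an affine embedding. Given an affine colored cone $(\Cc,\F)$, the algebra $\Ac$ is a normal affine subalgebra of $\KK(G/H)$ with fraction field $\KK(G/H)$ by \cite[Section~8]{LV}, so $X_{0}=\Spec\Ac$ is a $B$-stable affine open subset of $\Emb$ and $X=G\cdot X_{0}$ is an embedding of $G/H$. The real content is that the affineness condition forces $X$ itself to be affine: the eigenfunction $f_{m}$ of weight $m$ is a global semi-invariant which, together with its $G$-translates, exhibits $\KK[X]=\bigcap_{g\in G}g\cdot\Ac$ as a finitely generated algebra whose spectrum is $X$. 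Conversely, if no such $m$ exists the embedding cannot be affine, so the combinatorial and geometric notions of affineness coincide.

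Finally I would show the two maps are mutually inverse. Starting from an affine $X$ with data $(\Cc,\F_Y)$, I would compute $G\cdot\Spec A(\cP,\F_Y)$ and recover $X$: the $B$-weights of $\KK[X]$ are exactly those lying in $\Cc^{\vee}$ and non-negative on the remaining colors, and the affineness condition guarantees that intersecting over $\F_Y$ and $\cP$ already captures this semigroup after $G$-saturation. In the other direction, extracting the colored cone from $G\cdot\Spec\Ac$ returns $(\Cc,\F)$, since the boundary $G$-divisors reproduce the rays of $\Cc$ inside $\Vc$ and the colors through the closed orbit reproduce $\F$. I expect the main obstacle to be the finite generation of $\Ac$ together with the equivalence $\{\text{colored cone affine}\}\iff\{\text{embedding affine}\}$; both rest on the one-dimensionality of the $B$-eigenspaces and on the structure of the valuation cone $\Vc$, and would be secured through \cite[Section~8]{LV} and \cite[Section~2]{Kn}.
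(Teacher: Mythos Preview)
The paper does not supply its own proof of this theorem: it is stated as a classical result with a reference to \cite[Theorem~6.7]{Kn}, and the surrounding paragraphs~\ref{marker:col} and~\ref{abs} merely recall the two directions of the construction. So there is no ``paper's proof'' to compare against beyond the citation.

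Your sketch follows the standard Luna--Vust/Knop route and is broadly sound: you correctly isolate multiplicity-freeness as the mechanism that lets the semigroup of $B$-weights determine $\KK[X]$, you identify the affineness criterion (existence of the weight $m$) as the crux, and you appeal to \cite[Section~8]{LV} for finite generation of $\Ac$. One small imprecision: strong convexity of $\Cc$ is not quite a consequence of having a $G$-invariant valuation in the relative interior; rather it comes from the fact that $\KK[X]^{U}$ has no non-constant invertibles (equivalently, the weight semigroup generates a strictly convex cone), which is where the affineness of $X$ enters. Also, in the inverse direction your claim that ``the boundary $G$-divisors reproduce the rays of $\Cc$ inside $\Vc$'' needs the lemma that the extremal rays of $\Cc$ lying in $\Vc$ are exactly the images of the $G$-divisors (this is \cite[Lemma~2.4]{Kn}, used elsewhere in the paper). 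With those two points tightened, your outline matches the argument in \cite{Kn}.
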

The next well-known lemma will be useful afterwards (see the proof of \cite[6.7]{Kn}). 
\begin{lemme}
\label{lemma:affcone}
Let $X$ be an affine embedding of $G/H$ with associated colored cone $(\Cc, \F_{Y})$.
Let $\F_{0}$ be the set of colors of $G/H$. Then the cone $\Gamma$ dual to the cone generated by the $B$-weights of $\KK[X]$ is 
the polyhedral cone in $V_{\QQ}$ generated by the subset $\Cc\cup\varrho(\F_{0})$. In addition,
$\Gamma$ is strongly convex. 
\end{lemme}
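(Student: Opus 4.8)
The plan is to compute $\KK[X]$ explicitly in two steps: first as a ring of $B$-semiinvariants, then dualize. Since $X$ is affine, the ring $\KK[X]$ is a $G$-submodule of $\KK(G/H)$, hence by sphericity it decomposes as a direct sum of one-dimensional $B$-eigenspaces, and it is determined by the set $\Lambda$ of $B$-weights occurring. By the very construction in \ref{abs} (together with \ref{marker:emb}), for an affine embedding one has
\[
\KK[X]=\KK[X_{B}]\cap\bigcap_{v\in\cP}\Oo_{v}\cap\bigcap_{D\in\F_{Y}}\Oo_{v_{D}},
\]
where $\cP$ is the set of $G$-divisors and $\F_{Y}$ the colors through the closed orbit. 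Now $\KK[X_{B}]$ is itself the ring cut out inside $\KK(G/H)$ by requiring nonnegativity along \emph{all} $B$-divisors of $G/H$, i.e. along all valuations in $\cP$ together with all colors in $\F_{0}$. Combining these, a $B$-eigenfunction $f$ with weight $\chi_{f}\in L$ lies in $\KK[X]$ if and only if $v(f)\ge 0$ for every $v\in\cP\cup\F_{Y}$ and $v_{D}(f)\ge 0$ for every $D\in\F_{0}$; translating via $\vrho$ from \ref{marker:val2}, this says exactly that $\langle\chi_{f},\vrho(v)\rangle\ge 0$ for all $\vrho(v)$ in the generating set $\cP\cup\vrho(\F_{Y})\cup\vrho(\F_{0})$ of the cone in question. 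Since $\Cc$ is generated by $\cP\cup\vrho(\F_{Y})$, the generating set of $\Gamma$ is $\Cc\cup\vrho(\F_{0})$, and $\Lambda$ is precisely the set of $\chi\in L$ that are nonnegative on $\Gamma$ — i.e. $\Gamma^{\vee}\cap L$ is the weight monoid of $\KK[X]$. Hence $\Gamma$ is the dual of the cone generated by the $B$-weights of $\KK[X]$, as claimed.

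For strong convexity of $\Gamma$: it suffices to show that the cone generated by the $B$-weights of $\KK[X]$ spans $L_{\QQ}$, equivalently that $\Gamma$ contains no line. If $\Gamma$ contained a line $\QQ\cdot v_{0}$, then every global $B$-eigenfunction would vanish identically to infinite order along the corresponding valuations, which is impossible because $X$ is affine with $\KK(X)=\KK(G/H)$: the $B$-eigenfunctions generate $\KK(G/H)$ as a field, so their weights generate $L$ as a group, and a nonzero function in $\KK[X]$ that is a ratio of such eigenfunctions gives a pairing that is not identically zero on any line. More directly, one invokes that $X$ affine implies its weight monoid generates $L_{\QQ}$ as a cone that is pointed (this is the standard fact used in the proof of \cite[Theorem 6.7]{Kn}): an affine spherical variety has a pointed weight cone since $\KK[X]$ is a finitely generated domain with $\KK[X]^{*}=\KK^{*}$ forcing the weight monoid to be pointed. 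Dually, a pointed generating cone of weights has a full-dimensional dual, and a full-dimensional dual that is generated by $\Cc\cup\vrho(\F_{0})$ is automatically strongly convex — wait, rather: full-dimensionality of the weight cone is equivalent to strong convexity of its dual $\Gamma$, which is exactly what we need.

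The main obstacle I expect is the bookkeeping in the first step: one must be careful that $\KK[X_{B}]$, the coordinate ring of the open $B$-orbit, is genuinely cut out by \emph{all} $B$-divisors of $G/H$ (those in $\F_{0}$ plus the $G$-invariant ones) and that no extra valuations intrude — this uses that $B.x$ is affine (\cite[Theorem 3.5]{Ti3}, already cited in \ref{marker:col}) so that its complement in $G/H$ is a union of $B$-divisors and hence $\KK[X_{B}]=\bigcap\Oo_{v}$ over exactly those divisors. Once that identification is in hand, the intersection of local rings translates term-by-term into the half-space conditions defining $\Gamma^{\vee}$, and the rest is the duality between generators of $\Gamma$ and inequalities on $\Gamma^{\vee}$, which is routine convex geometry. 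The strong convexity claim is then a formal consequence of $X$ being affine, and I would simply cite the relevant step of the proof of \cite[Theorem 6.7]{Kn} rather than redo it.
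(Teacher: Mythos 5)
The first half of your argument arrives at the correct criterion, but through an intermediate step that is false as stated. You claim that $\KK[X_{B}]$ is cut out inside $\KK(G/H)$ by nonnegativity along all $B$-divisors of $G/H$, ``i.e. along all valuations in $\cP$ together with all colors in $\F_{0}$''. First, the elements of $\cP$ are $G$-divisors of $X$ and lie in $X\setminus(G/H)$, so they are not divisors of $G/H$ at all; second, and more seriously, the description is backwards: for the normal affine variety $X_{B}$ one has $\KK[X_{B}]=\bigcap\Oo_{v_{D}}$ where $D$ runs over the prime divisors of $G/H$ that \emph{meet} $X_{B}$, i.e.\ precisely over the divisors \emph{not} in $\F_{0}$ --- functions on $X_{B}$ may have arbitrary poles along the colors, since those have been removed. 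The correct (and shorter) route to your criterion is: a $B$-eigenvector $f\in\KK(X)^{(B)}$ is invertible on the open orbit $B.x$, and since $B.x$ is affine its complement in $X$ is a union of $B$-divisors of $X$, namely the $G$-divisors $\cP$ and the closures of the colors $\F_{0}$ (see \ref{marker:col}); hence $f\in\KK[X]$ if and only if $\langle\chi_{f},\vrho(D)\rangle\geq 0$ for all $D\in\cP\cup\F_{0}$, which after dualizing gives $\Gamma=\Cone(\Cc\cup\vrho(\F_{0}))$ as you conclude. So this half is repairable, but the step you rely on would fail as written.

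For strong convexity you correctly reduce the claim to full-dimensionality of the weight cone of $\KK[X]$ in $L_{\QQ}$, but you then argue for the wrong property: pointedness of the weight cone, justified by ``$\KK[X]^{*}=\KK^{*}$'', which is false in general (for $X=G=\TT$ acting on itself every $B$-eigenvector is a unit and the weight cone is all of $L_{\QQ}$, not pointed). Pointedness of the weight cone is dual to $\Gamma$ being full-dimensional, not to $\Gamma$ being strongly convex, so even if it held it would prove nothing here. What must be shown is that the $B$-weights of $\KK[X]$ generate $L$ as a group, equivalently that every $B$-eigenvector of $\KK(X)$ is a quotient of two $B$-eigenvectors of $\KK[X]$; this is exactly where affineness of $X$ enters in the paper's proof, via the fact that the field of fractions of $\KK[X]^{U}$ equals $\KK(X)^{U}$ (\cite[Lemma D.7]{Ti3}). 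Your sketch gestures at the weights of $\KK(G/H)$ generating $L$, but never bridges from rational to regular eigenvectors, so the second assertion of the lemma is not established as written.
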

\begin{proof}
Let $U$ be the unipotent part of $B$. Since $X$ is affine, the field of fractions
of $\KK[X]^{U}$ is equal to $\KK(X)^{U}$ (see \cite[Lemma D.7]{Ti3}). 
Consequently, every $B$-eigenvector of $\KK(X)$
is the quotient of two eigenvectors of $\KK[X]$. This shows that $\Gamma$ is a strongly convex cone.
The rest of the proof is straightforward. % and left to the reader. 
\end{proof}

%In case of affine $X$ the pair $(\Cc,\F_Y)$ satisfies  certain properties described in.
%On the contrary, The pair $(\Cc,\F_Y)$ satisfying all these conditions is called \emph{Knop's colored cone}. Given $G/H$, it defines the affine variety $X$.

\section{Preliminaries on polyhedral divisors and normalized $\G_{a}$-actions}
\label{sec:2}
In this section, we recall some basic facts from the theory
of polyhedral divisors for the case of complexity one \cite{FZ, AH, Ti2}.
We present also a brief survey of known results concerning the classification
of normalized $\G_{a}$-actions on affine $\TT$-varieties of complexity one, see \cite{FZ1, Li, Li2, AL, LL}.

\subsection{Polyhedral divisors and affine $\TT$-varieties of complexity one}
\

We denote by $\TT$ an algebraic torus of 
dimension $n$. We fix a lattice $M$ isomorphic to  the character group of $\TT$ via $m\mapsto \chi^{m}$. 

\begin{definition}
The \emph{complexity} of a $\mathbb{T}$-action on a variety $X$ is the transcendence degree over $\KK$ of the field
extension $\KK(X)^{\TT}$. By a result of Rosenlicht \cite{Ro}, the complexity is also the codimension of 
a general orbit in $X$.
In particular, if the action is faithful, then the complexity is equal to 
${\rm dim}\, X - {\rm dim}\,\TT$. A \emph{$\mathbb{T}$-variety} is a normal variety endowed with 
a faithful $\mathbb{T}$-action.
\end{definition}
Note that having a $\TT$-action on an affine variety $X = {\rm Spec}\,A$ is equivalent to
endowing $A$ with an $M$-grading. If $\TT$ acts on $X$ and $m\in M$, then we define the $m$-th graded component
of $A$ by letting 
$$A_{m} = \{f\in A\,\,|\,\, t\cdot f = \chi^{m}(t)f\,\mbox{ for any }t\in\TT\}.$$
In \cite{AH}, a combinatorial description of $M$-graded algebras
corresponding to affine $\TT$-varieties is given in terms of polyhedral divisors.
We recall this construction in the setting of the complexity one. We also introduce
other notation from convex geometry which will be useful in the sequel.
\begin{rappel}
Let $N = {\rm Hom}(M,\mathbb{Z})$ be the dual lattice of $M$.
Denote by $M_{\mathbb{Q}} = \mathbb{Q}\otimes_{\mathbb{Z}}M$ and
$N_{\mathbb{Q}} = \mathbb{Q}\otimes_{\mathbb{Z}}N$ the associated vector spaces.
For a polyhedral cone $\tau\subset N_\QQ$ the symbols $\tau^{\vee}$, $\linea(\tau)$, and $\relint(\tau)$ stand respectively for 
the dual cone in $M_{\mathbb{Q}}$, the linear part $(-\tau)\cap \tau$, 
and the relative interior of $\tau$.
Considering $v\in N_{\QQ}$ we may write $v^{\perp} = \{m\in M_{\QQ}\,|\, v(m) = 0\}$
for its orthogonal space and denote $\mu(v)=\inf\{r\in\ZZ_{>0}\;|\;rv\in N\}$.

\label{marker:defpol}
For every polyhedron $\Delta$ of $N_\QQ$ we denote by
$\Delta(r)$ the set of its $r$-dimensional faces.
Let $\sigma\subset N_\QQ$ be a strongly convex polyhedral cone.
A subset $\Delta\subset N_{\QQ}$ is called a \emph{$\sigma$-polyhedron} if $\Delta$
is a Minkowski sum $Q+\sigma$ for a polytope\footnote{We recall that 
a polytope in a $\QQ$-vector space is the convex hull of a non-empty
finite subset.} $Q\subset N_{\mathbb{Q}}$.

A \emph{$\sigma$-polyhedral divisor} over a curve $C$ is a formal sum 
$\D=\sum_{z\in C}\Delta_z\cdot  z$, where each $\Delta_z$ is a $\sigma$-polyhedron, with the condition that 
$\Delta_z=\sigma$ for all but finitely many $z\in C$. The finite subset $\{z\in C\;|\;\Delta_z\neq\sigma\}$ is called the \emph{support} of $\D$.
 The \emph{degree} of $\D$, denoted by $\deg\D$,
is the Minkowski sum of all the coefficients of $\D$, which is a $\sigma$-polyhedron (see \cite[2.12]{AH}). 
For a vector $m\in M_\QQ$ we let
$$\D(m)=\sum_{z\in C}\min_{v\in\Delta_z(0)} \{v(m)\}\cdot z,$$
which is a $\QQ$-divisor on $C$. 
If $C$ is a normal curve, then the associated $M$-graded algebra
of $\D$ is 
$$
A[C,\D] =  \bigoplus_{m\in\sigma^{\vee}\cap M} H^{0}(C, \Oc_C(\lfloor\D(m)\rfloor))\chi^{m}.
$$
The multiplication on $A[C,\D]$ is defined on homogeneous elements via the maps
$$H^{0}(C, \Oc_C(\lfloor\D(m)\rfloor))\chi^{m}\times H^{0}(C, \Oc_C(\lfloor\D(m')\rfloor))\chi^{m'}\rightarrow 
H^{0}(C, \Oc_C(\lfloor\D(m + m')\rfloor))\chi^{m+m'}$$
sending $(f_{1},f_{2})$ to $f_{1}\cdot f_{2}$. 
The polyhedral divisor 
$\D$ is called \emph{linear} on a polyhedral cone $\omega\subset M_{\QQ}$
if $$\D(m + m') = \D(m) + \D(m')\mbox{ for all } m,m'\in\omega.$$ We denote by 
$\Lambda(\D)$ the coarsest quasifan of $\sigma^{\vee}$ where $\D$ is linear on each cone (see \cite[Section 1]{AH}
for details).  
\end{rappel}
\begin{definition}
A $\sigma$-polyhedral divisor $\D$ on a normal curve $C$ is said to be \emph{proper} if $\D(m)$ 
is semi-ample for any $m\in \sigma^{\vee}$ and big for any $m\in\relint(\sigma^{\vee})$.
This condition is automatically fulfilled in case of an affine curve $C$. Otherwise, if $C$ is projective, 
then the properness condition can be split in two parts:
\begin{itemize}
\item $\deg\D\subsetneq \sigma$;
\item for any $m\in \sigma^{\vee}$ such that $\min_{v\in\deg\D}\{v(m)\} = 0$,
$m$ belongs to the boundary of $\sigma^{\vee}$ and $\D(rm)$ is a principal divisor for some $r\in \ZZ_{>0}$.
\end{itemize}
% The properness condition is equivalent to $\D$ satisfies at least one of the following assertions.
%\begin{enumerate}
%\item [(i)] $C$ is affine.
%\item [(ii)] $C$ is projective, $\deg\D\subsetneq \sigma$, and $\min_{v\in\deg\D}\{v(m)\} = 0$ implies that
%$m$ belongs to the boundary of $\sigma^{\vee}$ and that $\D(m)$ is a nonzero $\QQ$-principal divisor.
%\end{enumerate}
\end{definition}
In this paper we deal with proper polyhedral divisors over $\AA^{1}$ or $\PP^{1}$. In this case,
we have a simple characterization of the properness: a $\sigma$-polyhedral divisor over $\PP^{1}$ 
is proper if and only if $\deg\D\subsetneq \sigma$. %its degree is strictly contained in the cone $\sigma$.
\begin{rappel}
If $\D$ is a proper polyhedral divisor over a normal curve $C$, then $X=\Spec A[C,\D]$ is an affine $\TT$-variety 
of complexity one. The cone $\sigma^{\vee}$ is the weight cone of the $M$-graded algebra $\KK[X]$. The curve $C$ 
is a rational quotient of $X$, i.e. %; in other words, this means that 
$\KK(X)^{\TT} = \KK(C)$. 
Conversely, every affine $\TT$-variety of complexity one is obtained in this way \cite[Theorem 3.1]{AH}. 
For the uniqueness of such 
representation, see \cite[Section 8]{AH}.
\end{rappel}
Let $\mathfrak{D}$ be a proper $\sigma$-polyhedral divisor over a normal curve $C$.
Consider the associated algebra $A = A[C,\D]$ and let $X = {\rm Spec}\, A$. In the following paragraph
we explain a classification of $\TT$-divisors in terms
of the pair $(C,\D)$ corresponding to $X$. See  \cite[Section 7]{AH}, \cite{Ti2}, and \cite{PeSu} for further details.
\begin{rappel}\label{rappel:div-hor-vert}
Let $\sistar$ be the following subset of $\sigma(1)$. If $C$ is affine,
then we let $\sistar = \sigma(1)$. Otherwise, $C$ is projective and $\sistar$
is the set of $1$-dimensional faces of $\sigma$ that do not meet  $\deg\D$. For every element $\rho\in\sigma(1)$ we denote by the same letter $\rho$
its primitive lattice generator. There are two types of $\TT$-divisors in the variety $X$.
\begin{itemize}
\item \emph{Horizontal} divisors denoted by $D_\rho$ for each $\rho\in \sistar$. The corresponding
$M$-graded ideal of such divisor $D_{\rho}$ is
$$
\I(D_\rho)=\bigoplus_{m\in(\sigma^{\vee}\setminus\rho^{\perp})\cap M} A_m\chi^{m}, \mbox{ where } A_m=H^0(C, \Oc_C(\lfloor\D(m)\rfloor)).
$$
%\end{itemize}
A $\TT$-divisor $D$ is horizontal if and only if
the induced $\TT$-action on $D$ is of complexity one.
%\begin{itemize}
\item \emph{Vertical} divisors denoted by $D_{(z,v)}$, where $z\in C$ and $v\in\Delta_z(0)$ is a vertex of $\Delta_z$. 
The associated $M$-graded ideal is 
$$
\I(D_{(z,v)})=\bigoplus_{m\in\sigma^{\vee}\cap M} (A_m\cap \{f\in\KK(C)\;|\;\ord_z f > -v(m)\})\chi^{m}.
$$
A $\TT$-divisor $D$ is vertical if and only if 
the induced $\TT$-action on $D$ is of complexity zero.  
\end{itemize}

Moreover, if $f\chi^{m}\in A$ is homogeneous of degree $m$, 
then the corresponding principal divisor is given by the formula
$$
\div(f\chi^{m})= \div(f\chi^{m})_{\rm hor} + \div(f\chi^{m})_{\rm ver} 
$$
where 
$$\div(f\chi^{m})_{\rm hor} = \sum_{\rho\in\sistar}\rho(m)\cdot D_\rho, $$
$$\div(f\chi^{m})_{\rm ver}  = \sum_{z\in C}\sum_{v\in\Delta_z(0)}\mu(v) \left(v(m)+\ord_z f\right)\cdot D_{(z,v)},$$
see \cite[Proposition 3.14]{PeSu}. The divisor $\div(f\chi^{m})_{\rm hor}$
(resp. $\div(f\chi^{m})_{\rm ver}$) will be called the \emph{horizontal part} (resp. \emph{vertical part}) of $\div(f\chi^{m})$.
\end{rappel}
\subsection{$\G_{a}$-actions on affine $\TT$-varieties of complexity one}
\

In this subsection, we review results in \cite{Li} concerning the classification 
of normalized additive group actions on affine $\TT$-varieties of complexity one.
\begin{rappel}\label{rappel:norm}
Let $Z$ be a variety. Let $$K\times Z\rightarrow Z,\,\,\,(g,x)\mapsto g\cdot x$$ be an action of an algebraic 
group $K$ on $Z$. %An additive group action $$\gamma:\G_a\times Z\rightarrow Z, \,\,\,(\lambda,x)\mapsto \lambda\ast x$$ 
%is called \emph{normalized} by $K$ if there exists a character $\chi :K\rightarrow \G_{m}$ such that 
%$$
%g\cdot (\lambda\ast (g^{-1}\cdot x))=(\lambda\chi^{-1}(g))\ast x
%$$
%for all $x\in Z$, $g\in K$, and  $\lambda\in\G_a$. The character $\chi$ will be called a \emph{degree} of $\gamma$.
%Note that if $\gamma$ is non-trivial, then $\chi$ is uniquely determined by $\gamma$. 
%
Consider the algebraic group 
$\G_{a}\rtimes_{\chi}K$, where the underlying variety is $\G_a\times K$ and the 
group law is given by
$$(\lambda_{1}, g_{1})\cdot (\lambda_{2}, g_{2}) = (\lambda_{1} + \chi^{-1}(g_{1})\lambda_{2}, g_{1}\cdot g_{2})$$
for $\lambda_{i}\in \G_{a}$ and $g_{i}\in K$; this algebraic group is the semidirect product
of $\G_{a}$ and $K$ associated with the character $\chi$.
Moreover, a $K$-action on $Z$ and an additive group action normalized by $K$
of degree $\chi$ naturally generate the action of $\G_a\rtimes_{\chi} K$ on $Z$
%The relation between these actions is given
 via the formula
$$(\lambda, g)\cdot x = \lambda\ast (g\cdot x),$$ where $(\lambda, g)\in \G_{a}\rtimes_{\chi}K$
and $x\in X$. If $K$ is the torus $\TT$ and $e\in M$ is lattice vector, then we denote $\G_{a}\rtimes_{e}\TT = \G_{a}\rtimes_{\chi^{e}}\TT$.  
\end{rappel}
The following definition is introduced in \cite{FZ1} to describe normal quasi-homogeneous affine surfaces and
extended in \cite[Section 1.2]{Li} to varieties with a $\TT$-action.  
\begin{definition}
Let $Z$ be a variety endowed with a $\mathbb{T}$-action.
An additive group action on $Z$ normalized by $\TT$ is called \emph{vertical}
if a general $\G_a$-orbit of $Z$ is contained in the closure of a $\TT$-orbit. 
Otherwise, the action is called \emph{horizontal}.
\end{definition}
\begin{rappel}
\label{marker:Ga}
For an affine variety $Z$, we recall that a locally nilpotent derivation (called LND for brevity) on $\KK[Z]$ is a $\KK$-derivation $\partial:\KK[Z]\rightarrow \KK[Z]$ %with the property that for every $f\in\KK[Z]$ there exists $i\in\ZZ_{>0}$ such that $\partial^{i}(f) = 0$.
such that for any $f\in\KK[Z]$ there holds $\partial^{i}(f) = 0$ for a sufficiently large $i\in\ZZ_{>0}$.
There is a well-known one-to-one correspondence between the $\G_a$-actions on $Z$ 
and the LNDs on $\K[Z]$. 
Indeed, let $\KK[\G_a] = \KK[\lambda]$ for a variable $\lambda$ over $\KK$. Then, 
given an LND $\partial$ on $\K[Z]$, the morphism
$$\KK[Z]\to\KK[Z]\otimes_\KK\KK[\lambda],\quad f\mapsto\sum_{i\ge0}\frac{\partial^{i}(f)}{i!}\otimes\lambda^{i},$$ 
defines a $\G_a$-action $\gamma_{\partial}$ on $Z$, and vice-versa. See \cite{Fre} for details concerning 
the theory of LNDs. Now assume that $Z$ is endowed with a $\TT$-action.
It is also known that $\G_a$-actions on $Z$ normalized by $\TT$ of degree $\chi^{e}$ correspond 
to homogeneous LNDs of degree $e$ with respect to the $M$-grading on $\K[Z]$ (see the proof
of \cite[Lemma 2.2]{FZ1}). 
\end{rappel}
For the next paragraphs, we keep the same notation as above for the $\sigma$-polyhedral divisor $\D$. We let
$X = {\rm Spec}\, A[C,\D]$ be the associated affine $\TT$-variety of complexity one.
\begin{rappel}
Let $\partial$ be a homogeneous LND on $\KK[X]$. We have the following characterization.
\begin{itemize}
\item $\gamma_{\partial}$ is vertical if and only if $\KK(C)\subset \KK(X)^{\partial}$, where $\KK(X)^{\partial}\subset \KK(X)$ is the 
field of invariants.
\item $\gamma_{\partial}$ is horizontal if and only if $\KK(C)\cap\KK(X)^{\partial}=\K$. In other words,
the algebraic group $\G_a\rtimes_e \TT$ 
acts on $X$ with an open orbit.
\end{itemize} 
We call the corresponding LNDs \emph{vertical} and \emph{horizontal} respectively. 
The set of non-zero vertical (resp. horizontal) LNDs on $\KK[X]$ is denoted by $\LND_{ver}(X)$
(resp. $\LND_{hor}(X)$).
\end{rappel}
The following paragraph presents the classical description of vertical homogeneous LNDs in 
complexity one. See \cite{Li2} for the general case.
\begin{rappel}
\label{marker: Demazure}
Recall from \cite[Section~4.5, p.~571]{Dem70} that a vector $e\in M$ is a \emph{Demazure root} of the cone $\sigma$ with distinguished ray $\rho_{e}\in\sigma(1)$ 
if $\rho_e(e)=-1$ and $\rho(e)\ge0$ for any $\rho\in\sigma(1)\setminus\{\rho_e\}$.
We denote by $\Rt(\sigma)$ the set of all roots of $\sigma$. If $e\in \Rt(\sigma)$, then we let 
$$
\Phi_e^*=H^0(C,\Oc_C(\lfloor\D(e)\rfloor))\setminus\{0\}.
$$
Furthermore, $\Phi_e^*\neq\emptyset$ if and only if $\rho_{e}\in\sistar$, see \cite[Corollary 3.13]{Li}. Assume
that $\Phi_e^*$ has an element $\phi$. We define a homogeneous LND $\partial_{e,\phi}$ of vertical type on the 
algebra $A = A[C, \D]$ by the formula
$$
\partial_{e, \phi}(f\chi^m)=%\rho_e(m)\cdot\phi\chi^{e}\cdot f\chi^{m}=
\rho_e(m)\phi f\chi^{m+e}\raisebox{-1.5pt},
$$
where $f\chi^{m}\in A$ is an homogeneous element of degree $m$, see \cite[Lemma 3.6]{Li}. 
More precisely, let
$$\Rt^{\ast}(\sigma) = \{e\in\Rt(\sigma)\,|\, \rho_{e}\in\sistar\}.$$
Considering the set $$\Phi=\{(e,\phi)\;|\;e\in \Rt^{\ast}(\sigma),\phi\in\Phi_e^*\},  
$$ the map $\Phi\to \LND_{ver}(A),\,(e,\phi)\mapsto\partial_{e,\phi}$ is a bijection \cite[Theorem 3.8]{Li}.
\end{rappel}
In the sequel, we consider the case of horizontal homogeneous LNDs in complexity one. Note that if $X = {\rm Spec}\, A[C,\D]$ admits a normalized 
$\G_a$-action of horizontal type, then either $C=\AA^1$ or $C=\PP^1$ (see \cite[3.16]{Li}). 
The next paragraph introduces combinatorial
objects that describe homogeneous LNDs of horizontal type on $A = A[C,\D]$ 
(see \cite[Section 3.2]{Li}, \cite[Section 1.4]{AL}, and \cite[Section 5]{LL}).
Note that our terminology will be slightly different.  
%\begin{rappel}
%Let $\D$ be a proper $\sigma$-polyhedral divisor over $C=\AA^1$ or $C=\PP^1$.
%An (Arzhantsev--Liendo) AL\emph{-coloring} on the polyhedral divisor $\D$ is 
%\begin{itemize}
%\item a triple $\dcol=(\D,\{v_z\;|\;z\in C\}, z_{0})$ where $z_{0}\in C$ in the case $C=\AA^1$; or
%\item a quadruple $\dcol=(\D,\{v_z\;|\;z\in C'\}, z_{0}, z_\infty)$, where $z_\infty\in C$ and $z_{0}\in C'=C\setminus\{z_\infty\}$,
%in the case $C=\PP^1$,
%\end{itemize}
%satisfying the following conditions:
%\begin{itemize}
%\item each $v_z$ is a vertex of $\Delta_{z}$;
%\item $v_{\deg{}}\coloneq\sum_{z\in C'}v_z$ is a vertex of the polyhedron $\deg\D|_{C'} = \sum_{z\in C'}\Delta_{z}$;
%\item $v_z\in N$ for any point $z$ different from $z_0$.
%\end{itemize}
%The elements of $\{v_{z}\,|\, z\in C\}$ in the case $C = \AA^{1}$ or of $\{v_{z}\,|\, z\in C'\}$ in the case
%$C = \PP^{1}$ are the AL\emph{-colors} of $\dcol$. The points $z_{0}$ and $z_{\infty}$
%are respectively called the \emph{marked point} and the \emph{point at the infinity} of $\dcol$. The polyhedron
%$$\omega_{\bullet} = \Cone(\deg\D|_{C'}-v_{\deg})$$
%is the \emph{associated cone} of $\dcol$. The subset of $C$ which consists of the points 
%having nonzero AL-colors is called the {\em support} of $\dcol$. 
%\end{rappel}
%
\begin{rappel} %\textbf{NOVEAU}
Let $\D$ be a proper $\sigma$-polyhedral divisor over $\AA^1$.
An  \emph{\emph(affine\emph) \emph{AL}-coloring}\footnote{named by I. Arzhantsev and A. Liendo. The sign $\bullet$ will be substituted by $-$ and $+$ later on, when we consider different AL-colorings of a polyhedral divisor.} on the polyhedral divisor $\D$ is 
a triple $\dcol=(\D,\{v_z^\bullet\;|\;z\in \AA^{1}\}, z_{0}),$ where $z_{0}\in \AA^{1}$,
satisfying the following conditions:
\begin{itemize}
\item each $v_z^\bullet$ is a vertex of $\Delta_{z}$;
\item $v_{\deg}^\bullet\coloneq\sum_{z\in \AA^{1}}v_z^\bullet$ is a vertex of the polyhedron $\deg\D = \sum_{z\in \AA^{1}}\Delta_{z}$;
\item $v_z^\bullet\in N$ for any $z\in \AA^{1}\setminus\{z_0\}$.
\end{itemize}
The elements $v_{z}^\bullet$ for $z$ in the support of $\D$ are called the AL\emph{-colors} of $\dcol$. 
The point $z_{0}$ is called the \emph{marked point} of $\dcol$. 
The \emph{support} of $\dcol$ is defined as the union of the support of $\D$ and of the marked point $z_0$.  
The polyhedron
$$\omega_{\bullet} = \Cone(\deg\D-v_{\deg}^\bullet)$$
is called the \emph{associated cone} of $\dcol$. 

Now let $\D$ be a proper $\sigma$-polyhedral divisor over $\PP^1$.
A \emph{\emph(projective\emph) \emph{AL}-coloring} on the polyhedral divisor $\D$ is 
a quadruple $\dcol=(\D,\{v_z^\bullet\;|\;z\in \AA^{1}\}, z_{0}, z_\infty)$, where $z_\infty\in \PP^1$, $\AA^1=\PP^1\setminus\{z_\infty\}$, and the triple $\dcol^{\aff}=(\D|_{\AA^{1}},\{v_z^\bullet\;|\;z\in \AA^{1}\}, z_{0})$ is an affine AL-coloring.
We define the AL-colors, the marked point, the associated cone, and the support of $\dcol$ as those of $\dcol^{\aff}$. 
We also call $z_\infty$ the \emph{point at the infinity} of $\dcol$.

\end{rappel}
\begin{remark}\label{omega-sens}
The definition of $\omega_{\bullet}$ is motivated by the following relation:
\begin{equation}
\D(m)|_{\AA^{1}}=\sum_{z\in \AA^{1}} v_z^{\bullet}(m)\cdot z \mbox{ if and only if } m\in\omega^\vee_{\bullet}.
\end{equation}
\end{remark}
\begin{rappel}
Since
we will consider several colorings on a 
same polyhedral divisor, it is convenient to represent them by a weighted graph.
Let $\{z_{0},\ldots , z_{r}\}\subset \AA^{1}$ be the support of $\dcol$, where $z_{0}$
is the marked point. A representation of $\dcol$ will be drawn as 
$$\dots \lin \cou{v_{z_{0}}^\bullet}{z_0} \lin \cou{v_{z_{1}}^\bullet}{z_1} \lin \dots \lin \cou{v_{z_{r}}^\bullet}{z_r} \lin \dots,$$
where $v_{z_{0}}^\bullet,\ldots, v_{z_{r}}^\bullet$ are the corresponding AL-colors of $\dcol$.
\end{rappel}

%A directed weighted graph $\Gamma$ is a \emph{representation} of the coloring $\dcol$ if it verifies the
%following conditions.
%\begin{itemize}
%\item The set $\Gamma(0)$ of the vertices of $\Gamma$ is a finite subset of $C$ containing the support of $\dcol$. 
%The finite set $\Gamma(0)$ is endowed with total order and $z_{\infty}\not\in\Gamma(0)$. The marked point $z_{0}$ 
%is the minimal element of $\Gamma(0)$. 
%\item Given $z,z'\in\Gamma(0)$, a pair $(z,z')$ is an arrow of $\Gamma$ if $z$ is the greatest element of $\Gamma(0)$ such that $z<z'$. 
%\item The weight of a vertex $z\in\Gamma(0)$ is the corresponding color $v_{z}$.
%\end{itemize}
%Usually, a representation $\Gamma$ of $\dcol$ will be draw as 
%$$\dots \lin \cou{v_{z_{0}}}{z_0} \lin \cou{v_{z_{1}}}{z_1} \lin \dots \lin \cou{v_{z_{r}}}{z_r} \lin \dots$$
%where $z_{0},\ldots,z_{r}$ are the vertices and $(z_{0},z_{1}),\ldots, (z_{r-1}, z_{r})$
%are the arrows.  
   
Let us introduce other objects attached to a coloring of $\D$. In the following definition,
we keep the same notation as above for the AL-coloring
$\dcol$.
\begin{definition}
\label{marker:defcohe}
Let $\tilde{\omega}_{\bullet}\subset (N\oplus \mathbb{Z})_{\QQ}$ be the 
cone generated by $(\omega_{\bullet}, 0)$ and $(v_{z_{0}}^\bullet, 1)$ if $C = \AA^{1}$, and by $(\omega_{\bullet}, 0), (v_{z_{0}}^\bullet,1)$,
and $(\Delta_{z_{\infty}}+ v_{\rm{deg}}^\bullet -v_{z_{0}}^\bullet,-1)$ if $C = \PP^{1}$. 
The polyhedral cone $\tilde{\omega}_{\bullet}$ will be called the \emph{domain} of $\dcol$.
Let $e\in M$.
A pair $(\dcol,e)$ is said to be \emph{coherent} 
if the following conditions are satisfied.
\begin{itemize}
\item Let $s= -\frac{1}{d(e)} - v^\bullet_{z_0}(e)$, where $d(e)=\inf\{d\in\ZZ_{>0}\;|\; dv^\bullet_{z_0}(e)\in\ZZ\}$. The vector $(e,s)$ is a Demazure root of $\tilde{\omega}_{\bullet}$;
\item $v(e)\ge1+v_z^\bullet(e)$ for any $z\in \AA^{1}\setminus\{z_0\}$ and for any $v\in\Delta_z(0)\setminus\{v_z^\bullet\}$;
\item $v(e)\ge-s$ for any $v\in\Delta_{z_0}\setminus\{v_{z_0}^\bullet\}$;
\item If $C=\PP^1$, then  $v(e)\ge-\frac{1}{d(e)}-v_{\deg}^\bullet(e)$ for any $v\in\Delta_{z_\infty}(0)$.
\end{itemize}
\end{definition}
The next theorem provides a description of normalized $\G_a$-actions 
of horizontal type on the $\TT$-variety
$X = {\rm Spec}\, A[C,\D]$
in terms of coherent pairs \cite[Theorem 1.10]{AL}. We 
fix a variable $t$ over $\KK$ such that $z_0=\{t=0\}$ and $z_\infty=\{t=\infty\}$ (if exists), in particular $\KK[\AA^{1}] = \KK[t]$. % and $\KK(C) = \KK(t)$.
\begin{theorem}
\label{theorem:hor}
 Let $\D$ be a proper $\sigma$-polyhedral divisor over $C = \AA^{1}$ or $C = \PP^{1}$ and let $A = A[C,\D]$. 
%There is a surjective map between the following sets.
%\begin{itemize}
% \item  The set of triplets $(\lambda,\dcol,e)$, 
%where $\lambda\in\KK^{\ast}$, $\dcol$ is an ${\rm AL}$-coloring of $\D$, and $(\dcol,e)$ is a coherent pair.
%\item The set $\LND_{hor}(A)$.
%\end{itemize}

i) Consider a triplet $(\lambda,\dcol,e)$, 
where $\lambda\in\KK^{\ast}$, $\dcol$ is an ${\rm AL}$-coloring of $\D$, and $(\dcol,e)$ is a coherent pair. 
Assume without loss of generality that $v_z^\bullet=0$ for any $z\in \AA^{1}\setminus\{0\}$.
Each such triplet $(\lambda,\dcol,e)$ defines a horizontal \emph{LND} $\partial$ on $A$ via the formula
$$
\partial(t^r\chi^m)=\lambda\cdot d(e)\cdot (v_0^\bullet(m)+r)\cdot t^{r+s}\chi^{m+e} \,\,\,\,{\rm for\,\,all}\,\,\,(m,r)\in M\oplus\mathbb{Z},
$$
where $s = -\frac{1}{d(e)} - v_{0}^\bullet(e)$.
The kernel of the {\rm LND} $\partial$ corresponding to the triplet $(\lambda,\dcol,e)$ is 
$$
\ker\,\partial=\bigoplus_{m\in\omega_{\bullet}^{\vee}\cap L} \KK\phi_m\chi^{m},
$$
where $L=\{m\in M\;|\; v_{0}^\bullet(m)\in\ZZ\}$, $\phi_m\in\KK(C)^{\ast}$ such that $(\div(\phi_m)+\D(m))|_{\AA^{1}}=0$,
and $\omega_{\bullet}$ is the associated cone of $\dcol$. Moreover, 
$\omega_{\bullet}^\vee$ is a maximal cone of $\Lambda(\D|_{\AA^{1}})$ {\rm (}see \ref{marker:defpol}{\rm )}.

ii) All horizontal \emph{LND}s on $A$ are obtained in this way.
%We can suppose without loss of generality that $z_{0} = 0$, $z_{\infty} = \infty$ with respect to $t$ 
%and that $v_z=0$ for all $z\in C'\setminus\{0\}$.
\end{theorem}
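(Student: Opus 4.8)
The plan is to prove the two assertions separately: I would establish (i) by a direct verification that the stated formula defines a locally nilpotent derivation and by computing its kernel, and (ii) by reconstructing the combinatorial datum $(\lambda,\dcol,e)$ from an abstract horizontal LND. The conceptual anchor throughout is that the kernel of a horizontal homogeneous LND is always a semigroup (i.e. toric) algebra, reflecting that quotienting out a horizontal $\G_a$-action drops the complexity from one to zero.

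For part (i), since $v_0^\bullet$ is a linear form the additivity $v_0^\bullet(m)+v_0^\bullet(m')=v_0^\bullet(m+m')$ gives the Leibniz rule on the bihomogeneous generators $t^r\chi^m$ at once, so $\partial$ extends $\KK$-linearly to a $\KK$-derivation of $A$. The substantive point is that $\partial(A)\subseteq A$: I would read the four coherence conditions of Definition~\ref{marker:defcohe} as exactly the inequalities forcing each $t^{r+s}\chi^{m+e}$ to have non-negative order along every $\TT$-divisor, hence to lie in $H^0(C,\Oc_C(\lfloor\D(m+e)\rfloor))\chi^{m+e}$; concretely, the conditions at $z\neq z_0$, at $z_0$, and at $z_\infty$ govern the vertical-divisor orders listed in \ref{rappel:div-hor-vert}, while the root condition on $(e,s)$ controls the horizontal part. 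Local nilpotency then follows because $\partial^k(t^r\chi^m)=\lambda^k\bigl(\prod_{j=0}^{k-1}(d(e)\,c-j)\bigr)t^{r+ks}\chi^{m+ke}$ with $c=v_0^\bullet(m)+r$, and the Demazure-root condition on $(e,s)$ in the domain $\tilde{\omega}_{\bullet}$ guarantees that $d(e)\,c$ is a non-negative integer on every bihomogeneous element of $A$; the product therefore vanishes once $k>d(e)\,c$. Homogeneity of degree $e$ is immediate from the formula.

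For the kernel, note that $\partial$ shifts the bidegree injectively by $(e,s)$ and multiplies by $\lambda\,d(e)\,c$, so a bihomogeneous generator lies in $\ker\partial$ exactly when $c=0$, i.e. $r=-v_0^\bullet(m)$; this forces $v_0^\bullet(m)\in\ZZ$, that is $m\in L$, and the resulting element is $\phi_m\chi^m$ with $(\div(\phi_m)+\D(m))|_{\AA^{1}}=0$, which is regular precisely when $m\in\omega_{\bullet}^{\vee}$ by the defining property of $\omega_{\bullet}$ in Remark~\ref{omega-sens}. Hence $\ker\partial=\bigoplus_{m\in\omega_{\bullet}^{\vee}\cap L}\KK\phi_m\chi^m$, a semigroup algebra. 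Two consequences are then cheap: its $\TT$-invariant subfield is $\KK$, so $\KK(C)\cap\KK(X)^{\partial}=\KK$ and $\partial$ is horizontal; and $\omega_{\bullet}^{\vee}$ is the maximal cone of $\Lambda(\D|_{\AA^{1}})$ on which $\D$ is linear, matching the final assertion of (i).

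For the converse (ii) I would start from an arbitrary nonzero horizontal homogeneous LND $\partial$ of degree $e$ and recover $(\lambda,\dcol,e)$. Here $C\cong\AA^{1}$ or $\PP^{1}$ by \cite[3.16]{Li}, and $\G_a\rtimes_e\TT$ acts with a dense orbit. The anchor is again that $\ker\partial=A^{\G_a}$ is the coordinate ring of the complexity-zero quotient $X/\!\!/\G_a$, hence a semigroup algebra whose weight monoid spans a full-dimensional cone $\omega^{\vee}$; its primitive generators single out a vertex $v_z^\bullet\in\Delta_z(0)$ for each $z$, and I would prove the relations $v_{\rm deg}^\bullet=\sum_z v_z^\bullet$ and integrality of $v_z^\bullet$ away from one distinguished point $z_0$ by locating where the quotient map degenerates, thereby producing the AL-coloring $\dcol$. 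Evaluating $\partial$ on bihomogeneous generators and reading off orders of vanishing at the points of $C$ then pins down $e$, the shift $s=-\tfrac{1}{d(e)}-v_{z_0}^\bullet(e)$, the denominator $d(e)$, and a single scalar $\lambda$, while the fact that $\partial$ genuinely maps $A$ into $A$ forces each coherence inequality, so $(\dcol,e)$ is coherent and $\partial=\partial_{(\lambda,\dcol,e)}$. I expect the reconstruction step to be the main obstacle: extracting the marked point $z_0$ together with the exact fractional shift $s$ and the denominator $d(e)$ from an abstract derivation requires passing to a localization on which $\partial$ straightens to a standard partial derivative and carefully tracking how the single non-integral vertex at $z_0$ produces the $\tfrac{1}{d(e)}$ correction.
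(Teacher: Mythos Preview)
The paper does not supply its own proof of this theorem: it is quoted verbatim from \cite[Theorem~1.10]{AL} (with roots in \cite[Section~3.2]{Li}), and no argument appears in the text. Hence there is nothing in the paper to compare your proposal against.

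That said, your outline follows the strategy of the source papers. One point deserves more care: your local nilpotency argument claims that $d(e)\,c=d(e)(v_0^\bullet(m)+r)$ is a non-negative integer for every bihomogeneous element of $A$, so that the falling product $\prod_{j=0}^{k-1}(d(e)c-j)$ eventually vanishes. The integer $d(e)$ is defined only by the condition $d(e)v_0^\bullet(e)\in\ZZ$, which does not a priori force $d(e)v_0^\bullet(m)\in\ZZ$ for arbitrary $m\in M$. In \cite{Li} this is handled by first passing to a cyclic cover that makes $v_0^\bullet$ integral (equivalently, working in the normalization $A[\sqrt[d]{t}]$), where the formula becomes the standard toric one and local nilpotency is immediate; you should either carry out that reduction or explain directly why the bihomogeneous elements actually occurring in $A$ satisfy the integrality you need. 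A second, smaller point: the monomials $t^r\chi^m$ need not span $A$ when $\D$ has support away from $0$, so the Leibniz and stability checks must be phrased on $\KK(t)[M]$ and then restricted to $A$, as you implicitly do; just make that explicit. The reconstruction in (ii) is indeed the delicate part, and your description of the obstacle---isolating the marked point and the fractional shift via a suitable localization---matches what the source papers do.
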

\begin{remark}\label{rem:general}
More generally, if $z_0$ is arbitrary, $z_\infty=\infty$, and $\dcol$ be represented by
$$\dots \lin \cou{v_{z_{0}}^\bullet}{z_0} \lin \cou{v_{z_{1}}^\bullet}{z_1} \lin \dots \lin \cou{v_{z_{k}}^\bullet}{z_k} \lin \dots$$
then the LND $\partial_\bullet$ is given by the formula
\begin{align*}
\partial_\bullet((t-z_0)^{r}\xi_m\chi^{m})=d(e)(v_{z_0}^\bullet(m)+r)(t-z_0)^{r-s}\xi_{m+e}\chi^{m+e},\\
\end{align*}
where
$$ \xi_m=\prod_{i=1}^{k}(t-z_i)^{-v_{z_i}^\bullet(m)}
\mbox{ and } s=-\frac{1}{d(e)}-v_{z_0}^\bullet(e).$$

\end{remark}

\subsection{Horizontal $\SL_2$-actions}
\

In this subsection, we recall generalities on compatible $\SL_{2}$-actions (see \cite[Section 2]{AL})
of horizontal type for affine $\TT$-varieties of complexity one. 
\begin{rappel}
\label{marker:semiprod}
We make the convention that $\SL_{2}$ is the algebraic 
group of $2\times 2$ matrices over $\KK$ with determinant one.
For every $e\in M$ we define the reductive group $G_{e} = \SL_{2}\rtimes_e \TT$ as follows. 
The underlying variety is $\SL_{2}\times\TT$ and the multiplication law is given by the relation
$$(A,t_{1})\cdot (B,t_{2}) = (A\cdot\varepsilon_{e}(t_{1})(B), t_{1}\cdot t_{2}),$$
where $(A,t_{1}), (B,t_{2})\in G_{e}$ and  the function $\varepsilon_{e}$ satisfies
$$\varepsilon_{e}(t)\begin{pmatrix} a   & b \\ c   &   d \end{pmatrix} = \begin{pmatrix}
              a   &  \chi^{-e}(t)b \\
              \chi^{e}(t)c   &   d\end{pmatrix}$$
for all $t\in \TT$ and $\left(
\begin{smallmatrix}
a&b\\ c&d
\end{smallmatrix}
\right)\in \SL_2$.
The group $G_{e}$ is the semidirect product of $\SL_{2}$ and $\TT$ via the map $\varepsilon_{e}$. 
\end{rappel}
\begin{rappel}\label{Upm-Bpm}
In the sequel, we let $T\subset \SL_{2}$ be the maximal torus which consists
of diagonal matrices. We also consider the root subgroups 
$$ U_{-} = \left\{\left.\,\begin{pmatrix} 1   & 0 \\ \lambda   &  1  \end{pmatrix}\,\,\,\right|\,\,\,\lambda\in \KK\,\right\}
\,\,\,\,\,{\rm and} \,\,\,U_{+} = \left\{\left.\,\begin{pmatrix} 1   & \lambda \\ 0   &  1  \end{pmatrix} \,\,\,\right|\,\,\,
\lambda\in \KK\,\right\} $$
with respect to $T$.  We make also the convention that $\SL_2\subset G_e$ via the natural isomorphism $\SL_2\cong \SL_2\times\{1\}$.
We denote by $B_-\subset G_{e}$ (resp. $B_+\subset G_{e}$)  the Borel subgroup
generated by $U_-$ (resp. $U_+$) and $T\times\TT$.
\end{rappel}
\begin{rappel}
A choice of a $G_{e}$-action on an affine variety $Z$ %$Z$ be an affine variety. Recall that defining a is equivalent to having 
%This is
is equivalent to a choice of (see \cite[Theorem 3]{CD})
\begin{itemize}
\item an $M$-grading on $\KK[Z]$
\item  and three derivations 
$\partial_{-},\delta, \partial_{+}$ on $\KK[Z]$
satisfying $$[\delta,\partial_\pm]=\pm2\partial_\pm \mbox{ and }$$ $$[\partial_+,\partial_-] = \partial_{+}\circ \partial_{-}
-\partial_{-}\circ \partial_{+}=\delta,$$
where $\partial_{\pm}$ is a homogeneous LND of degree $\pm e$ and $\delta$ 
is semisimple.   
\end{itemize} 
Note that $\partial_-$, $\partial_+$, and $\delta$ are given by the induced actions of $U_-$, $U_+$, and
 $T$ respectively, whereas the $M$-grading corresponds to the induced $\TT$-action.
Assume that the $\TT$-action on $Z$ is of complexity one. 
Then the variety $Z$ is $G_{e}$-spherical if and only if $\partial_{-}$ and $\partial_{+}$ are both of horizontal type. 
\end{rappel}
\begin{rappel}
By \cite[Corollary 2.3 (ii)]{AL}, in case $e=0$ we may consider the bigger torus and fall into $e\neq0$. So, the classification of affine $G_{e}$-spherical varieties splits into 
the following cases:
\begin{enumerate}
\item $e\neq 0$ and $Z$ is not toric for the action of a larger torus than $\mathbb{T}$. % (\emph{type} I).
\item $e\neq0$ and $Z$ is toric for the $\TT$-action. % (\emph{type} II). 
\end{enumerate}
\end{rappel} 
\begin{definition}
An affine $G_{e}$-spherical variety $Z$ satisfying (1) (resp. (2)) is called of \emph{type} I (resp. of \emph{type} II).
\end{definition}
%The pair $(\delta, \partial_{\pm})$ gives naturally an action of a Borel subgroup $B_{\pm}$
%of $G_{e}$; we will say that $B_{\pm}$ is \emph{associated} to $(\delta, \partial_{\pm})$.  
The following theorem gives a complete description of affine $G_{e}$-spherical varieties of type I
in terms of polyhedral divisors (see \cite[Theorem 2.18]{AL}). Here
$t$ is a local parameter on $\PP^{1}$ and $0,1,\infty\in\PP^{1}$
are chosen accordingly.
\begin{theorem}
\label{theorem:SL}
Let $X$ be an affine variety with a $G_{e}$-action such that the induced $\TT$-action is faithful of complexity one.
Then $X$ is $G_{e}$-spherical of type {\rm I} if and only if $\KK[X]$ is $G_{e}$-isomorphic to a rational 
$G_{e}$-algebra $A = A[C,\D]$ enjoying the following properties.
\begin{enumerate}[label=\emph{(\alph*)}]
\item $\D = \sum_{z\in C}\Delta_{z}\cdot z$ is a proper $\sigma$-polyhedral divisor over $C = \AA^{1}$ or $C = \PP^{1}$
supported in at most three points $0,1,\infty$.
\item The horizontal {\rm LND}s $\partial_{-},\partial_{+}$
are given by the coherent pairs
\footnote{By the abuse of notation we allow a coherent pair $(\dcol,e)$ to denote a triplet $(1,\dcol, e)$, cf. Theorem~\ref{theorem:hor}.} 
$(\D_{-},-e)$, $(\D_{+},e)$, where the marked point of $\D_\pm$ is $0$ and
 the point at the infinity of $\D_\pm$ is $\infty$.
\item $\D$, $\D_-$, and $\D_+$ are defined by the following conditions separated in two cases.
\begin{itemize}
 \item  
Either {\rm (}reflexive case{\rm )} $\Delta_0=\Conv(0,v_0)+\sigma$, $\Delta_1=\Conv(0,v_1)+\sigma$, where
$v_0,v_1\in N\setminus\sigma$, $v_0(e)=1$,  $v_1(e)=-1$, $\Delta_{\infty}(0)\subset e^{\perp}$, and\footnote{The
polyhedral cone $\tilde{\omega}_{\pm}$ is the domain of $\D_{\pm}$ (see \ref{marker:defcohe}).}
$(\pm e,-\frac{1}{2}\mp \frac{1}{2})\in\Rt(\tilde{\omega}_{\pm})$.
Representations of $\D_{-}, \D_{+}$ are given respectively by
\end{itemize}
$$\dots \lin \cou{v_{0}}{0} \lin \cou{0}{1} \lin \dots\,\,\,\,\,\,\,\,\,\dots \lin \cou{0}{0} \lin \cou{v_{1}}{1} \lin \dots$$
\begin{itemize}
\item Or {\rm (}skew case{\rm )} $\Delta_0=v_0+\sigma$, $\Delta_1=\Conv(0,v_1)+\sigma$, where
$2v_0,v_1\in N\setminus\sigma$, $2v_0(e)=1$,  $v_1(e)=-1$, $\Delta_{\infty}(0)\subset e^{\perp}$,
and $(\pm e,-\frac{1}{2}\mp \frac{1}{2})\in\Rt(\tilde{\omega}_{\pm})$.
Representations of $\D_{-}, \D_{+}$ are given respectively by
\end{itemize}
$$\dots \lin \cou{v_{0}}{0} \lin \cou{0}{1} \lin \dots\,\,\,\,\,\,\,\,\,\dots \lin \cou{v_{0}}{0} \lin \cou{v_{1}}{1} \lin \dots$$
\end{enumerate} 
The converse is also true: every choice of the combinatorial data $v_0,v_1,\sigma,\Delta_\infty$ satisfying the conditions above yields a $G_e$-spherical variety $X=\Spec A[C,\D]$ of type I.
\end{theorem}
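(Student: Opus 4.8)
The plan is to translate the $G_{e}$-action into an $\ssl_{2}$-triple of derivations and then read off the combinatorics from the classification of horizontal LNDs in Theorem~\ref{theorem:hor}. By \cite[Theorem~3]{CD}, endowing the affine variety $X$ with a $G_{e}$-action is the same as fixing the $M$-grading of $\KK[X]$ coming from $\TT$ together with derivations $\partial_{-},\delta,\partial_{+}$ satisfying $[\delta,\partial_{\pm}]=\pm2\partial_{\pm}$ and $[\partial_{+},\partial_{-}]=\delta$, where $\partial_{\pm}$ is a homogeneous LND of degree $\pm e$ and $\delta$ is semisimple. Since the $\TT$-action is faithful of complexity one, I would first invoke the Altmann--Hausen presentation $\KK[X]\cong A[C,\D]$ for a proper $\sigma$-polyhedral divisor over a normal curve $C$. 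Sphericity of type~I is equivalent to both $\partial_{-}$ and $\partial_{+}$ being of horizontal type, and the existence of such an LND forces $C=\AA^{1}$ or $C=\PP^{1}$ by \cite[3.16]{Li}; applying Theorem~\ref{theorem:hor} to each of $\partial_{+}$ and $\partial_{-}$ then produces coherent pairs $(\D_{+},e)$ and $(\D_{-},-e)$. After rescaling the LNDs and choosing a coordinate $t$ I would put both marked points at $0$ and, in the projective case, both infinity points at $\infty$. This already yields (b) and the curve part of (a).

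The core of the forward direction is to exploit the two remaining $\ssl_{2}$-relations through the explicit formula of Remark~\ref{rem:general}. The relation $[\delta,\partial_{\pm}]=\pm2\partial_{\pm}$ integrates $\delta$ to a one-parameter group normalizing both $\G_{a}$-actions and refines the $M$-grading to a $\ZZ\oplus M$-grading in which $\partial_{\pm}$ has $T$-weight $\pm2$; since the $t$-exponent produced by $\partial_{\bullet}$ at the marked point is shifted by $-s_{\bullet}$, compatibility with a semisimple $\delta$ of weight $\pm2$ on $\partial_{\pm}$ forces the second coordinates of the Demazure roots of $\tilde\omega_{\pm}$ to be $s_{+}=-1$ and $s_{-}=0$, i.e.\ $(\pm e,-\tfrac12\mp\tfrac12)\in\Rt(\tilde\omega_{\pm})$ as in the first bullet of Definition~\ref{marker:defcohe}, which in turn fixes the pairings of $e$ with the marked-point vertices. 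The decisive relation is $[\partial_{+},\partial_{-}]=\delta$: computing $\partial_{+}\partial_{-}-\partial_{-}\partial_{+}$ on a common $\ZZ\oplus M$-eigenbasis and requiring it to be diagonal forces the rational factors $\xi_{m}^{+}$ and $\xi_{m}^{-}$ arising from the two colorings to cancel. Such cancellation is possible with nontrivial polyhedral coefficients only at the marked point $0$ (where $\D_{-}$ is nontrivial) and at one further affine point, which I label $1$ (where $\D_{+}$ is nontrivial), together with $\infty$. This gives the support bound in (a) and the segment shapes of $\Delta_{0},\Delta_{1}$.

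The reflexive/skew dichotomy is then governed by the index $d(e)=\mu(v_{0})$ of the chosen vertex at the marked point. When that vertex lies in $N$ one has $d(e)=1$ and the forced pairing is $v_{0}(e)=1$, giving $\Delta_{0}=\Conv(0,v_{0})+\sigma$ with $\D_{-}$ selecting $v_{0}$ and $\D_{+}$ selecting $0$; when it is a genuine half-lattice point one has $d(e)=2$ and $2v_{0}(e)=1$, giving $\Delta_{0}=v_{0}+\sigma$ with both colorings selecting $v_{0}$. In both cases $v_{1}(e)=-1$. Finally, the constraint $\Delta_{\infty}(0)\subset e^{\perp}$ I would obtain from the fourth bullet of Definition~\ref{marker:defcohe} applied to both pairs: the inequalities $v(e)\ge-\tfrac{1}{d(e)}-v^{\bullet}_{\deg}(e)$ for $(\D_{+},e)$ and $v(-e)\ge-\tfrac{1}{d(-e)}-v^{\bullet}_{\deg}(-e)$ for $(\D_{-},-e)$ sandwich every vertex $v$ of $\Delta_{\infty}$, and a short computation of $v^{\bullet}_{\deg}(e)$ in each case collapses both into $v(e)=0$.

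For the converse I would run the construction in reverse: starting from data $v_{0},v_{1},\sigma,\Delta_{\infty}$ satisfying the listed conditions, I define $\D$, the two colorings, and the derivations $\partial_{\pm}$ via the formula of Remark~\ref{rem:general}. Theorem~\ref{theorem:hor} guarantees that $\partial_{\pm}$ are horizontal LNDs, and it remains to verify $[\delta,\partial_{\pm}]=\pm2\partial_{\pm}$ and the semisimplicity of $\delta=[\partial_{+},\partial_{-}]$, whereupon \cite[Theorem~3]{CD} yields a $G_{e}$-action; sphericity follows from horizontality, and type~I from the fact that $X$ is not toric for the larger torus $T\times\TT$, which remains of complexity one because of the nontrivial shape of the coefficients. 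I expect the main obstacle to be exactly the bracket computation $[\partial_{+},\partial_{-}]=\delta$: because $\partial_{+}$ and $\partial_{-}$ are expressed in two different colorings with incompatible monomial bases, converting between them and tracking the precise cancellation of the $t$- and $\xi_{m}$-factors is delicate, and it is this cancellation that simultaneously produces the three-point support, the vertex shapes, and the reflexive/skew split.
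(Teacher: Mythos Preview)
The paper does not give its own proof of this theorem: it is quoted verbatim from \cite[Theorem~2.18]{AL}, with only the explicit formulae for $\partial_{\pm}$ and $\delta$ recorded afterwards in \ref{rap:remdpm} and \ref{rap:der-Q}. So there is no in-paper argument to compare against.

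That said, your plan follows the expected route of \cite{AL}: encode the $G_{e}$-action as an $\ssl_{2}$-triple on $A[C,\D]$ via \cite{CD}, realise $\partial_{\pm}$ as horizontal LNDs through Theorem~\ref{theorem:hor}, and read the combinatorics off the relations $[\delta,\partial_{\pm}]=\pm2\partial_{\pm}$ and $[\partial_{+},\partial_{-}]=\delta$. A couple of points deserve more care. First, the normalisation ``put both marked points at $0$ and both infinity points at $\infty$'' is not automatic: a priori the two coherent pairs come with independent marked points, and aligning them uses the $\ssl_{2}$-relations (in \cite{AL} this is part of the bracket analysis, not a preliminary step). Second, your derivation of the support bound (``cancellation is possible only at $0$, one further point, and $\infty$'') is exactly the crux and is only asserted; in \cite{AL} this is obtained by a case analysis on the possible AL-colorings compatible with $[\partial_{+},\partial_{-}]=\delta$, and it is there that the reflexive/skew split actually arises, rather than from $d(e)$ alone. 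Your sandwich argument for $\Delta_{\infty}(0)\subset e^{\perp}$ from the fourth coherence inequality is correct in the reflexive case; in the skew case one has $v_{\deg}^{+}=v_{0}+v_{1}$ and $d(e)=2$, and the same computation still yields $v(e)=0$.
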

\begin{rappel}\label{rap:SL2-omegas}
The resulting cones $\omega_\pm$ in Theorem~\ref{theorem:SL} are described as follows. 
\begin{center}
\begin{tabular}{c|c}
reflexive & skew \\ \hline
$\omega_-=\Cone(-v_0,v_1)+\sigma$ & $\omega_-=\Cone(v_1)+\sigma$ \\
$\omega_+=\Cone(v_0,-v_1)+\sigma$&$\omega_+=\Cone(-v_1)+\sigma$\\
\end{tabular}
\end{center}
The cones $\tilde\omega_\pm$ depend on the curve $C$, we denote them by $\tilde\omega_\pm^{\AA^{1}}$ and $\tilde\omega_\pm^{\PP^{1}}$
for $C=\AA^{1}$ and $C=\PP^{1}$ respectively. Then

\begin{center}
\begin{tabular}{c|c}
reflexive & skew \\ \hline
$\tilde\omega_-^{\AA^{1}}=(\omega_-,0)+\Cone((v_0,1))$& $\tilde\omega_-^{\AA^{1}}=(\omega_-,0)+\Cone((v_0,1))$\\
$\tilde\omega_+^{\AA^{1}}=(\omega_+,0)+\Cone((0,1))$& $\tilde\omega_+^{\AA^{1}}=(\omega_+,0)+\Cone((v_0,1))$\\
$\tilde\omega_-^{\PP^{1}}=\tilde\omega_-^{\AA^{1}}+\Cone((\Delta_\infty,-1))$& $\tilde\omega_-^{\PP^{1}}=\tilde\omega_-^{\AA^{1}}+\Cone((\Delta_\infty,-1))$\\
$\tilde\omega_+^{\PP^{1}}=\tilde\omega_+^{\AA^{1}}+\Cone((\Delta_\infty+v_1,-1))$& $\tilde\omega_+^{\PP^{1}}=\tilde\omega_+^{\AA^{1}}+\Cone((\Delta_\infty,-1))$\\
\end{tabular}
\end{center}
The cones $\tilde\omega_\pm^{\PP^{1}}$ and $\tilde\omega_\pm^{\AA^{1}}$ coincide in the non-negative half-space, i.e. 
\[\tilde\omega_\pm^{\PP^{1}}\cap \{(v,p)\;|\; v\in N_\QQ, p\in\QQ_{\ge0}\}=\tilde\omega_\pm^{\AA^{1}}.\]
\end{rappel}

\begin{rappel}
\label{rap:remdpm}
The {\rm LND} $\partial_\pm$ corresponding to the coherent pair $(\D_{\pm}, \pm e)$ in Theorem \ref{theorem:SL} is 
defined via the formulae (see the comment before \cite[Lemma 2.16]{AL} and the proof of \cite[Lemma 2.17]{AL})

either (reflexive case)
\begin{align*}
\partial_{\pm}(t^{r}\xi^{\pm}_{m}\chi^{m})=\left[\left(\frac{1}{2}\mp\frac{1}{2}\right)v_0(m)+r\right]\cdot t^{r- \frac{1}{2}\mp\frac{1}{2}}\xi^{\pm}_{m\pm e}\chi^{m\pm e}\,\,\,\,{\rm for\,\,all}\,\,\,(m,r)\in M\oplus\mathbb{Z},
 \\
\delta(t^{r}\chi^{m}) = (v_{0} - v_{1})(m)\cdot t^{r}\chi^{m}\,\,\,\,{\rm for\,\,all}\,\,\,(m,r)\in M\oplus\mathbb{Z},
\end{align*}
where $\xi^{\pm}_m=(t-1)^{-(\frac{1}{2}\pm \frac{1}{2})v_1(m)};$

or (skew case)
\begin{align*}
\partial_{\pm}(t^{r}\xi_{m}^{\pm}\chi^{m})=2(v_0(m)+r)\cdot t^{r - \frac{1}{2}\mp\frac{1}{2}}\xi^{\pm}_{m\pm e}\chi^{m\pm e}\,\,\,\,{\rm for\,\,all}\,\,\,(m,r)\in M\oplus\mathbb{Z},
 \\
\delta(t^{r}\chi^{m}) =  - 2v_{1}(m)\cdot t^{r}\chi^{m}\,\,\,\,{\rm for\,\,all}\,\,\,(m,r)\in M\oplus\mathbb{Z},
\end{align*}
where $\xi^{\pm}_m$ is again $(t-1)^{-(\frac{1}{2}\pm \frac{1}{2})v_1(m)}.$ 
\end{rappel}

\begin{rappel}\label{rap:der-Q}
The formulae for $\partial_\pm$ can also be expressed for an arbitrary $Q\in\KK(t)$.
In reflexive case they are:
\begin{align*}
\partial_-(Q\chi^{m})=&(v_0(m)Q+tQ')\chi^{m-e},\\
\partial_+(Q\chi^{m})=&(v_1(m)Q+(t-1)Q')\chi^{m+e},
\end{align*}
where $Q'=\frac{dQ}{dt}$. In skew case they are:
\begin{align*}
\partial_-(Q\chi^{m})=&2(v_0(m)Q+tQ')\chi^{m-e},\\
\partial_+(Q\chi^{m})=&2\left(\left(v_0(m)\left(1-\frac{1}{t}\right)+v_1(m)\right)Q+(t-1)Q'\right)\chi^{m+e}.
\end{align*}
\end{rappel}

%\sa{diagram by kovalenko}
%\begin{equation}
%\dots \ \cou{v_1}{w_1 - 1} \lin \cou{v'}{0} \lin \cou{v_2}{w_2 + 1} \ \dots \ \dasharrow \ \dots \ \cou{v_1}{w_1 - 1} \lin \cou{v'}{-1} \lin \cou{v}{-1} \lin \cou{v_2}{w_2} \ \dots \ \to \ \dots \ \cou{v_1}{w_1} \lin \cou{v}{0} \lin \cou{v_2}{w_2} \quad .
%\end{equation}

In fact, every skew $G_e$-spherical variety of type I is obtained from a reflexive one as the quotient by an involution commuting with the $G_e$-action.
\begin{proposition}
Let $X=\Spec A[C,\D]$ be a skew $G_e$-spherical variety as in Theorem~\ref{theorem:SL}, and $\tilde X=\Spec A[C,\tilde\D]$ be the reflexive $G_e$-spherical variety defined by the pair $\tilde v_0,\tilde v_1$ such that $\tilde v_1=v_1$ and $\tilde v_0=-v_1$, $\tilde\sigma=\sigma$, and if $C=\PP^1$, $\tilde\Delta_\infty=2\Delta_\infty+2v_0+v_1$. 
%Denote $L=\{m\in M\;|\; v_0(m)\in\ZZ\}$ and its indicator
Introduce the parity function $\pa\colon M\to\{0,1\}$ via $\pa(m)=0$ if $v_0(m)\in \ZZ$ and $\pa(m)=1$ otherwise. Consider the following involution on $\K[\tilde X]$:
\[\tau\colon t\mapsto 1-t,\; \chi^m\mapsto (-1)^{\pa(m)}\left(\frac{1-t}{t}\right)^{v_1(m)}\chi^m\; \mbox{ for any }\; m\in M,\]
%where $\varepsilon_m=1$  if $v_0(m)\in\ZZ$ and $\varepsilon_m=-1$ otherwise.
Then $\tau$ commutes with the $G_e$-action, and $X$ is the quotient of $\tilde X$ by this involution.
\end{proposition}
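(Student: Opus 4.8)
The plan is to verify directly that the involution $\tau$ is a well-defined algebra automorphism of $\K[\tilde X]$, that it commutes with the $G_e$-action, that $\tau^2=\mathrm{id}$, and finally that the invariant subalgebra $\K[\tilde X]^{\tau}$ equals $\K[X]$ (after identifying both as graded subalgebras of an appropriate field of rational functions, using the explicit descriptions in Theorem~\ref{theorem:SL} and \ref{rap:der-Q}).

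First I would check that $\tau$ really defines an automorphism of $\K[\tilde X]=A[C,\tilde\D]$. Since $\tilde v_0=-v_1$, we have $v_0(m)\in\ZZ$ iff $v_1(m)\in\ZZ$, so the parity function $\pa$ is compatible with the exponent $v_1(m)$ in $\left(\frac{1-t}{t}\right)^{v_1(m)}$, and the formula for $\tau(\chi^m)$ lands in the graded piece of degree $m$ times a rational function in $t$. One must check that the prescribed image of $\chi^m$ actually lies in $H^0(C,\Oc_C(\lfloor\tilde\D(m)\rfloor))\chi^m$: this is a divisor computation on $C$ using that $\tau$ sends the point $0$ to $1$, the point $1$ to $0$, fixes $\infty$, and that $\tilde\Delta_0=\Conv(0,\tilde v_0)+\sigma=\Conv(0,-v_1)+\sigma$, $\tilde\Delta_1=\Conv(0,v_1)+\sigma$ are interchanged by the substitution $t\mapsto 1-t$ combined with the twist by $\left(\frac{1-t}{t}\right)^{v_1(m)}$; the factor $(-1)^{\pa(m)}$ is there precisely to make $\tau$ multiplicative on the $\ZZ$-grading indexed by $\frac{1}{2}\ZZ$-valued functions (i.e.\ to respect the relations among the $\xi^{\pm}_m$). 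Multiplicativity $\tau(\chi^m\chi^{m'})=\tau(\chi^m)\tau(\chi^{m'})$ reduces to additivity of $v_1$ and $\pa(m)+\pa(m')\equiv\pa(m+m')\pmod 2$.

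Next I would verify $\tau^2=\mathrm{id}$ (immediate: $t\mapsto 1-(1-t)=t$, and the twist composed with its $t\mapsto1-t$-transform gives $(-1)^{\pa(m)}\cdot(-1)^{\pa(m)}\left(\frac{1-t}{t}\right)^{v_1(m)}\left(\frac{t}{1-t}\right)^{v_1(m)}=1$), and then that $\tau$ commutes with $\partial_-,\partial_+,\delta$ defining the $G_e$-action on $\tilde X$. Using the reflexive-case formulae of \ref{rap:der-Q} with $\tilde v_0=-v_1$, one has $\tilde\partial_-(Q\chi^m)=(-v_1(m)Q+tQ')\chi^{m-e}$ and $\tilde\partial_+(Q\chi^m)=(v_1(m)Q+(t-1)Q')\chi^{m+e}$; applying $\tau$, using $\tau(t)=1-t$, $\tau(Q')=-\widetilde{Q'}$ (chain rule), and $\deg$ shifts $m\mapsto m\pm e$, one checks $\tau\tilde\partial_-\tau=\tilde\partial_+$ up to the sign absorbed into $e\mapsto -e$, hence $\tau$ normalizes the $\SL_2$-triple and commutes with the whole $G_e$-action (the $\TT$-grading is manifestly preserved since $\tau$ is homogeneous of degree $0$). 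Finally, to identify $X$ with $\tilde X/\langle\tau\rangle$, I would compute $\K[\tilde X]^{\tau}$ degree by degree: in degree $m$ it is spanned by $\phi+\tau(\phi)$ for $\phi\in H^0(C,\Oc_C(\lfloor\tilde\D(m)\rfloor))\chi^m$, and a direct check shows this invariant space is isomorphic to $H^0(C,\Oc_C(\lfloor\D(m)\rfloor))\chi^m$ for the skew divisor $\D$ with $\Delta_0=v_0+\sigma$, $\Delta_1=\Conv(0,v_1)+\sigma$ (and the stated $\Delta_\infty$ when $C=\PP^1$), because $\tilde\Delta_\infty=2\Delta_\infty+2v_0+v_1$ was chosen exactly so that the averaging operation halves the relevant coefficients; matching the $G_e$-actions then follows from the coincidence of the derivation formulae in \ref{rap:der-Q}.

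I expect the main obstacle to be the bookkeeping in showing $\tau$ is well-defined and multiplicative when $v_0(m)\notin\ZZ$: there the ``character'' $\chi^m$ is not a genuine function on $\tilde X$ but only a formal symbol whose square or products with other half-integral pieces land back in the integral part, so one must track the $(-1)^{\pa(m)}$ signs and the branch of $\left(\frac{1-t}{t}\right)^{v_1(m)}$ carefully; concretely this means working with the $\xi^{\pm}_m$ of \ref{rap:remdpm} as the actual generators and checking $\tau$ respects all the multiplicative relations among them. The commutation with $G_e$ and the quotient identification are then comparatively mechanical verifications, but they do rely on having set up the first step cleanly.
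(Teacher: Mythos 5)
Your strategy --- a direct check that $\tau$ is a well-defined involution of $\K[\tilde X]$ commuting with the $G_e$-action, followed by a degree-by-degree computation of the invariants --- is reasonable, and its first half overlaps with the paper's computation of the invariant graded pieces, but two of your pivotal steps are defective as described. The commutation step is the serious one: you predict that the computation gives $\tau\tilde\partial_-\tau=\tilde\partial_+$ ``up to the sign absorbed into $e\mapsto-e$'' and conclude that $\tau$ ``normalizes the $\SL_2$-triple and commutes with the whole $G_e$-action.'' Normalizing is not centralizing: if $\tau$ really interchanged $\tilde\partial_-$ and $\tilde\partial_+$, then $\K[\tilde X]^{\tau}$ would not be stable under each $\tilde\partial_\pm$ separately and the proposition would fail; moreover such an interchange is impossible, since $\tau$ preserves the $M$-grading while $\ker\tilde\partial_-$ and $\ker\tilde\partial_+$ have different weight cones (cf.~\ref{rap:SL2-omegas}). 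What must be (and can be) verified are the exact identities $\tau\circ\tilde\partial_\pm=\tilde\partial_\pm\circ\tau$, using $\tilde v_0=-v_1$, $v_1(e)=-1$, $\pa(m+e)=1-\pa(m)$ and the formulae of \ref{rap:der-Q}; carrying this out also forces you to pin down the twist convention --- with the normalization of Theorem~\ref{theorem:SL}, where $\tilde\Delta_0=\Conv(0,-v_1)+\sigma$ and $\tilde\Delta_1=\Conv(0,v_1)+\sigma$, the factor must act as $\left(\frac{t}{1-t}\right)^{v_1(m)}$ on the degree-$m$ piece, for otherwise $\tau$ does not even map $A[C,\tilde\D]$ into itself --- which is exactly the sign bookkeeping you flag as the delicate point but do not resolve.

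Second, the last step is where the real content of the paper's proof lies, and your sketch skips it. Knowing that $(\tilde A_m\chi^m)^{\tau}$ has the expected size (``averaging halves the coefficients'') does not identify $X$ with $\tilde X/\langle\tau\rangle$ as a $G_e$-variety, and the derivation formulae do not ``coincide'': on the invariant ring the natural variable is $t(t-1)$ rather than $t$, so the restricted reflexive derivations only match the skew ones after a nontrivial change of variables. The paper supplies this explicitly: after describing $\tilde A_m$ and $(\tilde A_m\chi^m)^{\tau}$ case by case (for $v_1(m)\ge0$ and $v_1(m)<0$), it defines $\phi\colon A[C,\D]\to A[C,\tilde\D]^{\tau}$ by $\phi(t)=(2t-1)^2$ and $\phi(\chi^m)=t^{v_1(m)}\left(t-\frac{1}{2}\right)^{2v_0(m)}\chi^m$, checks that this is an isomorphism onto the invariants, and verifies by direct computation that $\phi$ intertwines the skew derivations with the (restricted) reflexive ones; it also records $\deg\tilde\D=2\deg\D$, so that $\tilde\D$ is proper. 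Some such explicit equivariant isomorphism is indispensable; without it your final paragraph establishes only an equality of graded dimensions, not the $G_e$-equivariant quotient statement.
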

\begin{proof}
%Let us choose the basis $\alpha_1,\ldots,\alpha_{n-1}$ of $v_1^{\perp}$ and let $\tau(\chi^{\alpha_i})=\chi^{\alpha_i}$ if $v_0(\alpha_i)\in\ZZ$ and $\tau(\chi^{\alpha_i})=-\chi^{\alpha_i}$ otherwise.
%First, assume that $C=\AA^1$. 

For $u\in\KK[t]$ and $d\in\QQ$ we denote by $\K[u]_{\le d}$
the subspace of $\KK[u]$ generated by elements of degree $\le d$ in $u$.
We assume that $C=\PP^1$; the case $C=\AA^1$ is similar and left to reader.
The relation $\tilde\Delta_\infty=\Delta_\infty+2v_0+v_1$ ensures that $\deg\tilde\D=2\deg\D$, so $\tilde\D$ is proper.
Then by construction
$A[C,\tilde\D]=\bigoplus_{m\in\sigma^\vee\cap M} \tilde A_m\chi^m$, where 
\[\tilde A_m=H_0(C, \Oc_C(\lfloor\D(m)\rfloor))=\begin{cases} \K[t]_{\le\deg\tilde\D(m)}t^{v_1(m)}&\mbox{ if } v_1(m)\ge0, \\\K[t]_{\le\deg\tilde\D(m)}(t-1)^{-v_1(m)}&\mbox{ if }v_1(m)<0. \end{cases}\] 
It follows that $A[ C,\tilde\D]^\tau=\bigoplus_{m\in\sigma^\vee\cap M} (\tilde A_m\chi^m)^\tau,$ where
\[(\tilde A_m\chi^m)^\tau=\begin{cases} \K[t(t-1)]_{\le\deg\D(m)}(2t-1)^{\pa(m)}t^{v_1(m)}\chi^m&\mbox{ if } v_1(m)\ge0, \\\K[t(t-1)]_{\le\deg\D(m)}(2t-1)^{\pa(m)}(t-1)^{-v_1(m)}\chi^m&\mbox{ if }v_1(m)<0. \end{cases}\]
On the other hand, $A[C,\D]=\bigoplus_{m\in\sigma^\vee\cap M} A_m\chi^m,$ where
\[A_m=\begin{cases} \K[t]_{\le\deg\D(m)}t^{-v_0(m)+\pa(m)/2}&\mbox{ if } v_1(m)\ge0, \\\K[t]_{\le\deg\D(m)}t^{-v_0(m)+\pa(m)/2}(t-1)^{-v_1(m)}&\mbox{ if }v_1(m)<0. \end{cases}\] 
The $G_e$-equivariant isomorphism $\phi\colon A[C,\D]\to A[C,\tilde\D]^\tau$ is given by
\[
\phi(t)=(2t-1)^2,\, \phi(\chi^m)=t^{v_1(m)}\left(t-\frac{1}{2}\right)^{2v_0(m)}\chi^{m}.
\]
By direct computation we verify that $\partial_\pm(\phi(t))=\phi(\partial_\pm(t))$ and $\partial_\pm(\phi(\chi^m))=\phi(\partial_\pm(\chi^m))$ for any $m\in\sigma^\vee\cap M$. Note that one the left hand side we have reflexive derivations and on the right hand side we have skew ones. Thus, $\phi$ is $G_e$-equivariant.
\end{proof}

\section{Colored cones versus colored polyhedral divisors}\label{sec:3}
Let $G_{e}$ be the previously introduced reductive group $\SL_{2}\rtimes_{e}\TT$.
The purpose of this section is to express the approach of Arzhantsev--Liendo
 in terms of the Luna--Vust theory applied to affine $G_{e}$-spherical varieties. 
Sections \ref{subsec:3.1} and \ref{subsec:3.2} are devoted to the type I and Section \ref{subsec:3.3} to the type II.

\subsection{Classification of type I}
\

\label{subsec:3.1}
Let $X$ be an affine $G_{e}$-spherical variety 
of type I. Suppose that $\KK[X] = A[C,\D]$
admits a presentation as in \ref{theorem:SL} (either skew or reflexive case).
We identify the open $G_e$-orbit of $X$ with a homogeneous space $G_e\,/H$ so that $X$ is its embedding.

\begin{remark}\label{GH-aff}
Any non-trivial unipotent subgroup of $G_e$ is one-dimensional, hence maximal. %Since the maximal unipotent subgroup of $G_e$ is one-dimensional, any subgroup of $G_e$, which connected component is not a torus,implies that $H$ is reductive.
 Since $X$ is not horospherical \cite[Proposition 3.8]{AL}, $H$ does not contain a unipotent subgroup, thus it is reductive.
So, by the Matsushima criterion, $G_{e}\,/H$ is affine.
\end{remark}
\begin{rappel}
\label{marker:not}
Denote by   
$$\dots \lin \cou{v_{0}^{-}}{0} \lin \cou{v_{1}^{-}}{1} \lin \dots\,\,\,\,\,\,\,\,\,\dots \lin \cou{v_{0}^{+}}{0} \lin 
\cou{v_{1}^{+}}{1} \lin \dots$$
the representations of $\D_{-}$ and $\D_{+}$ respectively. For instance, in the reflexive case,
$v_{0}^{-} = v_{0}$, $v_{1}^{-} = 0$, $v_{0}^{+} = 0$, and $v_{1}^{+} = v_{1}$. 
For every $m\in M$ we let
\begin{eqnarray*}
\phi_{m}^{\pm} = (t-1)^{-v_{1}^{\pm}(m)}t^{-\lfloor v_{0}^{\pm}(m)\rfloor}. 
\end{eqnarray*}
Then we have
\begin{equation}\label{eq:kerpm}
\ker\,\partial_\pm=\bigoplus_{m\in\omega^{\vee}_{\pm}\cap L}\KK\phi^{\pm}_{m}\chi^{m},
\end{equation}
where $\omega_{\pm}$ is the associated cone of $\D_{\pm}$ and 
$$L=\{m\in M\;|\;v_0^{+}(m)\in\ZZ\}=\{m\in M\;|\;v_0^{-}(m)\in\ZZ\}.$$% is a sublattice of $M$.
%Note that the conditions $v_0(m)\in\ZZ$, $v_0^{+}(m)\in\ZZ$, and $v_0^{-}(m)\in\ZZ$ are equivalent for any $m\in M$.
%Note that  $L$ is exactly the sublattice of $M$ on which $v_{0}^{-}$ or $v_{0}^{+}$ admit the integer values. % does not depend on the choice of $v_{0}^{-}$ or $v_{0}^{+}$.
\end{rappel}
In the following lemma we use the notation from \ref{Upm-Bpm}.
\begin{lemma}
\label{lemma:B-orbit}
Let $A^{\pm}_M=\bigoplus_{m\in M}\KK[t]\cdot\phi^{\pm}_{m}\chi^{m}$.
Then the following assertions hold.
\begin{enumerate}[label=\emph{(\roman*)}]
\item The lattice of $B_{\pm}$-weights of the $B_{\pm}$-algebra $\KK(X)$ is $L$.
\item The variety $X_M^{\pm}=\Spec A_M^{\pm}$ is a principal open subset of $X$ 
which coincides with the open $B_{\pm}$-orbit.
\end{enumerate}
More precisely, the supports of all divisors $\div(\phi^{\pm}_m\chi^{m})$ for $m\in \relint(\omega^{\vee}_\pm)\cap L$ 
are equal and form the complement  $X\setminus X_{M}^{\pm}$.
\end{lemma}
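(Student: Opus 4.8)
The plan is to work out everything from the explicit description of $\ker\partial_\pm$ in \eqref{eq:kerpm} together with the formula for $\phi_m^\pm$ in~\ref{marker:not}, and from the description of the horizontal divisors and principal divisors in~\ref{rappel:div-hor-vert} and~\ref{rap:remdpm}. The starting observation for (i) is that $\ker\partial_\pm$ is exactly $\KK(X)^{U_\pm}\cap\KK[X]$ (since $\partial_\pm$ corresponds to the $U_\pm$-action), so its nonzero homogeneous elements $\phi_m^\pm\chi^m$ are precisely the $U_\pm$-eigenvectors of $\KK[X]$; moreover they are all genuine $U_\pm$-invariants (weight~$0$) and they are $T\times\TT$-eigenvectors of weight $m\in L$. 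Thus every $B_\pm$-eigenvector of $\KK[X]$ has a $\TT$-weight lying in $L$, and conversely for each $m\in\omega_\pm^\vee\cap L$ the element $\phi_m^\pm\chi^m$ is a $B_\pm$-eigenvector. Since $X$ is affine, by Lemma~D.7 of \cite{Ti3} (used already in the proof of Lemma~\ref{lemma:affcone}) every $B_\pm$-eigenvector of $\KK(X)$ is a quotient of two $B_\pm$-eigenvectors of $\KK[X]$, whose $\TT$-weights lie in $L$; so the weight lattice of $\KK(X)$ is contained in $L$, and the reverse inclusion follows because $L$ is generated by $\omega_\pm^\vee\cap L$ as a group (as $\omega_\pm^\vee$ is full-dimensional). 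This gives~(i).

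For (ii) the idea is that the open $B_\pm$-orbit is the complement of the union of the $B_\pm$-stable prime divisors, and a prime divisor is $B_\pm$-stable iff it is cut out by a semiinvariant; concretely $X\setminus X_M^\pm$ should be the common zero/pole locus of the semiinvariants $\phi_m^\pm\chi^m$ for $m$ in the relative interior of $\omega_\pm^\vee$. First I would check that $X_M^\pm=\Spec A_M^\pm$ is a principal open subset: pick $m_0\in\relint(\omega_\pm^\vee)\cap L$; then one shows $A_M^\pm=\KK[X][(\phi_{m_0}^\pm\chi^{m_0})^{-1}]_0$‑type localization, more precisely $A_M^\pm$ is the localization of $\KK[X]$ at the multiplicative set generated by the single semiinvariant $\phi_{m_0}^\pm\chi^{m_0}$ (inverting one interior element inverts all of them because a suitable power of $\phi_{m_0}^\pm\chi^{m_0}$ divided by $\phi_m^\pm\chi^m$ is again in $\KK[X]$, using $\omega_\pm^\vee$ interior). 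Hence $X_M^\pm$ is the principal open set $\{\phi_{m_0}^\pm\chi^{m_0}\ne0\}$, and its complement is $\div(\phi_{m_0}^\pm\chi^{m_0})$; independence of the choice of $m_0\in\relint(\omega_\pm^\vee)\cap L$ forces all these divisors to have the same support, which is the last assertion. The identification of the horizontal and vertical components of $\div(\phi_{m_0}^\pm\chi^{m_0})$ via the formulas in~\ref{rappel:div-hor-vert} then makes this complement explicit.

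Finally, to see that $X_M^\pm$ is the open $B_\pm$-orbit: $X_M^\pm$ is $B_\pm$-stable (it is the nonvanishing locus of a $B_\pm$-semiinvariant), it is affine, and on it every $B_\pm$-semiinvariant is invertible; since $X$ is $G_e$-spherical, $B_\pm$ has a dense orbit in $X$, which must meet $X_M^\pm$, and an affine $B_\pm$-variety on which all semiinvariants are units and which contains a dense $B_\pm$-orbit is a single orbit (the orbit map to a torus quotient is an isomorphism because $\KK[X_M^\pm]^{U_\pm}$ is, by~(i) and the above, the Laurent polynomial ring $\KK[M]$ generated by the $\phi_m^\pm\chi^m$, $m\in L$, hence the $U_\pm$‑action has no invariant functions beyond the $T\times\TT$ part and the $B_\pm$-orbit is open and closed in $X_M^\pm$). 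I expect the main obstacle to be the bookkeeping in showing $A_M^\pm$ is exactly this one‑element localization of $\KK[X]$ — i.e., that inverting a single interior semiinvariant produces precisely $\bigoplus_{m\in M}\KK[t]\,\phi_m^\pm\chi^m$ and nothing more — which requires carefully combining the grading, the shape of $\phi_m^\pm$, and the fact that $A_m=H^0(C,\Oc_C(\lfloor\D(m)\rfloor))$ is the space of $\KK[t]$-multiples of $\phi_m^\pm$ of bounded degree.
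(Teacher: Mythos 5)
Your steps for (i) and for the principal-open-subset part run along the same lines as the paper: (i) is exactly the paper's argument ($B_\pm$-eigenvectors of $\KK(X)$ are quotients of homogeneous elements of $\ker\partial_\pm$, whose weights by \eqref{eq:kerpm} lie in $L$), and your identification of $A_M^\pm$ with the localization of $\KK[X]$ at a single semiinvariant $\phi_{m_0}^\pm\chi^{m_0}$, $m_0\in\relint(\omega_\pm^\vee)\cap L$, is precisely what the paper does, except that the paper first adds a principal polyhedral divisor so that $\phi^\pm_m=t^{-\lfloor v_0^\pm(m)\rfloor}$ and then simply quotes \cite[Proposition~3.3]{AHS} for the equality $\KK[X]_{\alpha_\pm}=A_M^\pm$; the verification you flag as the ``main obstacle'' is thus available off the shelf, and your sketch of it (inverting one interior semiinvariant, using $m_0\in L$ so that the floors are additive, and the shape of $A_m$) is the right computation. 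The support statement then follows for both of you from the independence of the localization on the choice of $m_0$.

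Where you genuinely diverge from the paper is the transitivity step. The paper adjoins a $d$-th root of a semiinvariant, normalizes, and exhibits a finite $B_\pm$-equivariant surjection $Z_M^\pm\cong\Spec\KK[M][\zeta]\cong\TT\times\AA^1\to X_M^\pm$ on which the $B_\pm$-action is visibly transitive; note this does not even use sphericity of $X$. You instead invoke the existence of the open $B_\pm$-orbit and the principle that an affine $B_\pm$-variety on which every semiinvariant is a unit is a single orbit. That principle is in fact true, but your parenthetical justification is not a proof: the assertion that ``the orbit map to a torus quotient is an isomorphism'' and that the orbit is ``open and closed'' does not follow from $\KK[X_M^\pm]^{U_\pm}$ being a Laurent ring (it is $\KK[L]$, by the way, not $\KK[M]$) --- the fibres of $X_M^\pm\to\Spec\KK[L]$ could a priori be reducible or contain several $U_\pm$-orbits, and closedness of the orbit is exactly what needs an argument. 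The clean repair is Lie--Kolchin: a nonempty proper $B_\pm$-stable closed subset of $X_M^\pm$ would have a nonzero $B_\pm$-stable ideal in $A_M^\pm$, hence would contain a nonzero $B_\pm$-eigenvector, which by your computation of $\ker\partial_\pm$ in the localization is a unit up to scalar --- a contradiction; then every orbit is dense, hence open, and irreducibility of $X_M^\pm$ gives a single orbit (sphericity is not even needed). With that substitution your alternative route to (ii) is sound, but as written this last step is the one genuine gap.
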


\begin{proof}
(i) The first assertion follows from the fact that $B_{\pm}$-eigenvectors in $\KK(X)$ are exactly quotients of non-zero homogeneous elements of $\ker\,\partial_\pm$. 
%%There holds $$\KK(X)^{(\TT)}=\bigsqcup_{m\in M}\KK(t)\chi^{m},$$ hence $$\KK(X)^{(\tilde B_{\pm})}=\KK(X)^{U_\pm}\cap \KK(X)^{(\TT)}.$$ It is known that $\KK(X)^{U}=\Quot\KK[X]^{U}$ for a unipotent group $U$ acting on $X$, see \cite{PV}. Therefore,  $$\KK(X)^{(\tilde B_{\pm})}=\Quot (\ker \partial_{\pm})\cap\KK(X)^{(\TT)}=\bigsqcup_{m\in L}\KK\phi^{\pm}_{m}\chi^{m}.$$

(ii)
%Choose  and let .
By Remark~\ref{omega-sens}, we can add a principal polyhedral divisor to $\D$ so that for any $m\in  \relint(\omega^{\vee}_\pm)\cap L$ the divisor $\D(m)|_{\AA^{1}}$ is supported in at most 
the point $0$ and that $\phi_{m}^{\pm} = t^{-\lfloor v_{0}^{\pm}(m)\rfloor}$. 
Let us fix $m\in  \relint(\omega^{\vee}_\pm)\cap L$ and denote $\alpha_\pm = \phi_m^{\pm}\chi^{m}$.
By \cite[Proposition 3.3]{AHS}, the localization 
$\KK[X]_{\alpha_{\pm}}$ is equal to $A_M^{\pm}$. 
Since $\alpha_{\pm}\in {\rm ker}\,\partial_{\pm}$, the derivation $\partial_{\pm}$
extends to an LND on $A^{\pm}_{M}$ written by the same letter $\partial_{\pm}$.
Hence, $X^{\pm}_{M}$ is a $B_{\pm}$-stable principal open subset of $X$.

Let us deduce that $X^{\pm}_M$ is the open $B_\pm$-orbit. 
Denote by  $E_M^{\pm}$ the normalization of the algebra $A_M^{\pm}[\sqrt[d]{\phi_{\pm d e}^{\pm}}\chi^{\pm e}]$,
%where $\pm e=\deg\partial_{\pm}$ and
where $d = d(e)$. 
Then the inclusion $A_M^{\pm}\subset E_M^{\pm}$ yields a finite surjective map 
$$\pi\colon Z_M^{\pm}=\Spec  E_M^{\pm} {\to} X_M^{\pm}.$$ 
By \cite[Corollary 2.6]{LL}, the derivation $\partial_\pm$ admits a unique extension to $E_M^{\pm}$, hence there is a natural $B_{\pm}$-action on $Z^{\pm}_{M}$ and $\pi$ is $B_{\pm}$-equivariant. 
Denote  $\zeta=\sqrt[d]{t}$. The map 
$$\gamma\colon E_M^{\pm}\to\KK[M][\zeta]=\bigoplus_{m\in M}\KK[\zeta]\chi^{m},\quad \zeta^{l}\chi^{m}\mapsto\zeta^{l+dv_0^{\pm}(m)}\chi^{m}$$
is an isomorphism of $M$-graded algebras.
Consider the extension of $\partial_\pm$ to $E_M^{\pm}$ denoted by  $\tilde\partial_\pm$ and introduce
 $\tilde\partial_\pm^{\prime}=\gamma\circ\tilde\partial_\pm\circ\gamma^{-1}$,  which is a homogeneous LND on 
$\KK[M][\zeta]$. Then the induced $B_\pm$-action on  $\Spec\KK[M][\zeta]\cong\TT\times\AA^{1}$ is transitive, 
and so is the $B_\pm$-action on $Z_M^{\pm}$. Since $\pi$ is $B_\pm$-equivariant 
and surjective, the $B_\pm$-action on $X_M^{\pm}$ is also transitive.
\end{proof}
Now we give an explicit description of the $B_{\pm}$-divisors on $X$ and
compute their images in the vector space $V_{\QQ}$ (see \ref{marker:val1}, \ref{marker:val2}),
where $V = {\rm Hom}(L,\mathbb{Z})$ is the valuation lattice.
Note that $V_{\QQ}$ is identified with $N_{\QQ}$.
\begin{proposition}
\label{proposition:col.G-div}
Denote by $\vrho_{\pm}$ the map that sends a discrete valuation $\nu$ of $\KK(X)$
into the linear form $\vrho_{\pm}(\nu)\in V_{\QQ}$ defined as follows. For any $B_{\pm}$-eigenvector $\alpha\in\KK(X)$ of weight $\chi_{\alpha}\in L$ 
we let $$\vrho_{\pm}(\nu)(\chi_{\alpha}) = \nu(\alpha).$$
To simplify the notation, we also let $\vrho_\pm(D) = \vrho_\pm(v_{D})$ for a prime divisor $D\subset X$.
Then the colors of $X$ are given in the following table in accordance with notation \ref{rappel:div-hor-vert}.
\
\newline

%\begin{tabular}{|l|l|l|l|}
%\hline
%&  For the $B_{-}$-action   &  For the $B_{+}$-action  \\
%   \hline
%    Reflexive case & $\vrho_{-}(D_{(0,0)}) = -v_{0}$, $\vrho_{-}(D_{(1, v_{1})}) = v_{1}$ & $\vrho_{+}(D_{(0,v_{0})}) = v_{0}$, $\vrho_{+}(D_{(1, 0)}) = -v_{1}$  \\
%   \hline
%    Skew case  &  $\vrho_{-}(D_{(1,v_{1})}) = v_{1}$ & $\vrho_{+}(D_{(1,0)}) = -v_{1}$ \\
%   \hline
%\end{tabular}
\begin{tabular}{|l|l|l|l|}
\hline
&  Reflexive case   &  Skew case  \\
   \hline
    $B_{-}$-action & $\vrho_{-}(D_{(0,0)}) = -v_{0}$, $\vrho_{-}(D_{(1, v_{1})}) = v_{1}$ &   $\vrho_{-}(D_{(1,v_{1})}) = v_{1}$ \\
   \hline
    $B_{+}$-action  &  $\vrho_{+}(D_{(0,v_{0})}) = v_{0}$, $\vrho_{+}(D_{(1, 0)}) = -v_{1}$& $\vrho_{+}(D_{(1,0)}) = -v_{1}$ \\
   \hline
\end{tabular}
\
\newline

Moreover, the $G_{e}$-divisors on $X$ are exhausted by $D_\rho$ for $\rho\in\sistar$ and
 $D_{(\infty,v)}$ for $v\in\Delta_\infty(0)$. We have $\vrho_{\pm}(D_{\rho}) = \rho$
for any $\rho\in\sistar$. The images of the vertical $G_{e}$-divisors %via the map $\vrho_{\pm}$
are given in the following table.
\
\newline
%
%\begin{tabular}{|l|l|l|l|}
%\hline
%&  For the $B_{-}$-action   &  For the $B_{+}$-action  \\
%   \hline
%    Reflexive case & $\vrho_{-}(D_{(\infty, v)}) = \mu(v)(v_{0} + v)$ & $\vrho_{+}(D_{(\infty, v)}) = \mu(v)(v_{1} + v)$  \\
%   \hline
%    Skew case  &  $\vrho_{-}(D_{(\infty,v)}) = \mu(v) (v_{0} + v)$&  $\vrho_{+}(D_{(\infty,v)}) = \mu(v) (v_{0} + v_{1} + v)$ \\
%   \hline
%\end{tabular}

\begin{tabular}{|l|l|l|l|}
\hline
& Reflexive case  &  Skew case   \\
   \hline
     $B_{-}$-action  & $\vrho_{-}(D_{(\infty, v)}) = \mu(v)(v_{0} + v)$ &  $\vrho_{-}(D_{(\infty,v)}) = \mu(v) (v_{0} + v)$ \\
   \hline
    $B_{+}$-action & $\vrho_{+}(D_{(\infty, v)}) = \mu(v)(v_{1} + v)$ &  $\vrho_{+}(D_{(\infty,v)}) = \mu(v) (v_{0} + v_{1} + v)$ \\
   \hline
\end{tabular}
\
\newline

\end{proposition}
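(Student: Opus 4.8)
The plan is to compute directly the discrete valuations of the explicit $B_{\pm}$-eigenfunctions described in the previous paragraphs, using the divisor formulae recalled in \ref{rappel:div-hor-vert} and \ref{rap:remdpm}, \ref{rap:der-Q}. By Lemma~\ref{lemma:B-orbit}(ii), the complement $X\setminus X_M^{\pm}$ is the union of the supports of the divisors $\div(\phi_m^{\pm}\chi^m)$ for $m\in\relint(\omega_{\pm}^{\vee})\cap L$, so the $B_{\pm}$-divisors of $X$ are precisely the prime components of those divisors, together with the $G_e$-stable divisors not meeting $X_M^{\pm}$ (which account for the $D_\rho$ with $\rho\in\sistar$ and the $D_{(\infty,v)}$). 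Since a $B_{\pm}$-divisor is a color exactly when it is not $G_e$-stable, and the only non-$G_e$-stable $\TT$-divisors available in the classification of Theorem~\ref{theorem:SL} are the vertical divisors $D_{(0,v)}$ and $D_{(1,v)}$ over the points $0$ and $1$, the list of candidate colors is forced by the combinatorics: in the reflexive case $\Delta_0=\Conv(0,v_0)+\sigma$ and $\Delta_1=\Conv(0,v_1)+\sigma$ each contribute one extra vertex besides those lying in $\sigma$, giving $D_{(0,0)},D_{(0,v_0)},D_{(1,0)},D_{(1,v_1)}$; in the skew case $\Delta_0=v_0+\sigma$ is a translated cone, so only $D_{(1,0)}$ and $D_{(1,v_1)}$ appear.

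The first concrete step is to evaluate $\vrho_{\pm}$ on these prime divisors. By definition $\vrho_{\pm}(\nu)(\chi_\alpha)=\nu(\alpha)$ for a $B_{\pm}$-eigenvector $\alpha$, and from \eqref{eq:kerpm} a $B_{\pm}$-eigenvector of weight $m\in\omega_{\pm}^{\vee}\cap L$ is $\phi_m^{\pm}\chi^m=(t-1)^{-v_1^{\pm}(m)}t^{-\lfloor v_0^{\pm}(m)\rfloor}\chi^m$. Applying the horizontal/vertical decomposition of $\div(f\chi^m)$ from \ref{rappel:div-hor-vert}, the coefficient of a vertical divisor $D_{(z,v)}$ in $\div(\phi_m^{\pm}\chi^m)$ is $\mu(v)\bigl(v(m)+\ord_z\phi_m^{\pm}\bigr)$. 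For $z=0$, $v=0$ we get $v_D(\phi_m^{\pm}\chi^m)=-\lfloor v_0^{\pm}(m)\rfloor$; on the sublattice $L$ this equals $-v_0^{\pm}(m)=(-v_0^{\pm})(m)$, which identifies $\vrho_{\pm}(D_{(0,0)})=-v_0^{\pm}$, i.e. $-v_0$ for the $B_{-}$-action (reflexive) and $0$ — hence not a color — for the $B_{+}$-action. Similarly $v_{D_{(0,v_0)}}(\phi_m^{+}\chi^m)=\mu(v_0)(v_0(m)-\lfloor v_0^{+}(m)\rfloor)=v_0(m)$ on $L$ since $v_0^{+}=0$, yielding $\vrho_{+}(D_{(0,v_0)})=v_0$; the point $z=1$ contributes $v(m)-v_1^{\pm}(m)$, giving $\vrho_{-}(D_{(1,v_1)})=v_1-v_1^{-}=v_1$ and $\vrho_{+}(D_{(1,0)})=0-v_1^{+}=-v_1$, while $D_{(1,0)}$ for $B_{-}$ and $D_{(1,v_1)}$ for $B_{+}$ have zero image and are $G_e$-stable. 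For the skew case the same computation with $v_0^{-}=v_0^{+}=v_0$, $v_1^{-}=0$, $v_1^{+}=v_1$ produces the listed values (and $D_{(0,\cdot)}$ disappears because $\Delta_0$ has a single vertex sitting in neither a color nor $\sistar$).

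For the $G_e$-stable (vertical over $\infty$, and horizontal) divisors I would argue that $\vrho_{\pm}$ coincides on them with the Luna--Vust vector $\vrho$ of \ref{marker:val2}, since they do not meet the open $B_{\pm}$-orbit and hence the eigenfunction weights separate them; concretely, $v_{D_\rho}(\phi_m^{\pm}\chi^m)=\rho(m)$ from the horizontal part formula, so $\vrho_{\pm}(D_\rho)=\rho$, and $v_{D_{(\infty,v)}}(\phi_m^{\pm}\chi^m)=\mu(v)\bigl(v(m)+\ord_\infty\phi_m^{\pm}\bigr)$. Here one must be careful with $\ord_\infty\phi_m^{\pm}$: since $\phi_m^{\pm}=(t-1)^{-v_1^{\pm}(m)}t^{-\lfloor v_0^{\pm}(m)\rfloor}$ we have $\ord_\infty\phi_m^{\pm}=v_1^{\pm}(m)+v_0^{\pm}(m)$ up to an integral correction that vanishes on $L$, and this is exactly the shift appearing in the $\infty$-column of the two tables — $v_0(m)$ in every case for $B_{-}$ (since $v_1^{-}=0$ reflexive and in the skew case $\phi_m^{-}$ uses only $t$), and $v_1(m)$ reflexive resp. $v_0(m)+v_1(m)$ skew for $B_{+}$. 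The main obstacle I anticipate is precisely this bookkeeping of the order at $\infty$ of the rational functions $\phi_m^{\pm}$ and the relation between $\lfloor v_0^{\pm}(m)\rfloor$ and $v_0^{\pm}(m)$ — one has to invoke that $m$ ranges over $L=\{m\mid v_0^{\pm}(m)\in\ZZ\}$ to kill the floor functions, and keep track of the multiplicities $\mu(v)$ correctly so that the answer is the clean $\mu(v)(v_0+v)$ etc. rather than something only proportional to it; once that is handled, verifying that these are all the $B_{\pm}$-divisors follows from Lemma~\ref{lemma:B-orbit} together with the fact that $X\setminus X_M^{\pm}$ together with the $\infty$-fibre and the horizontal divisors exhaust the boundary, which is immediate from the construction of $A[C,\D]$.
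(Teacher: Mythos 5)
Your valuation computations are correct and your strategy is exactly the paper's: by Lemma~\ref{lemma:B-orbit} the $B_{\pm}$-divisors are the components of the support of $\div(\phi^{\pm}_{m}\chi^{m})$ for $m\in\relint(\omega^{\vee}_{\pm})\cap L$, and the coefficients read off from \ref{rappel:div-hor-vert} (with the floors killed because $m\in L$, and $\mu(v_0)=\mu(v_1)=1$ in the reflexive case) give precisely the two tables. However, the part of your argument that separates colors from $G_{e}$-divisors is flawed. You claim that $D_{(1,0)}$ (for $B_{-}$) and $D_{(1,v_{1})}$ (for $B_{+}$) ``have zero image and are $G_{e}$-stable'': this is backwards. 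A zero coefficient means the divisor does not lie in the support $X\setminus X_{M}^{\mp}$, i.e.\ it meets the open $B_{\mp}$-orbit and therefore is \emph{not} $B_{\mp}$-stable; since these divisors are $B_{\pm}$-stable but not $B_{\mp}$-stable, they are exactly \emph{not} $G_{e}$-stable, which is what makes them colors — as your own first table requires. As written, this assertion contradicts the statement you are proving.

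Symmetrically, you take for granted what must be proved for the other half of the proposition: the $G_{e}$-stability of the $D_{\rho}$, $\rho\in\sistar$, and of the $D_{(\infty,v)}$ is not ``forced by the combinatorics of Theorem~\ref{theorem:SL}''. The paper derives it by showing that these divisors occur with \emph{nonzero} coefficient in both $\div(\phi^{-}_{m}\chi^{m})$ and $\div(\phi^{+}_{m}\chi^{m})$, hence are stable under both Borel subgroups, which generate $G_{e}$: for $D_{\rho}$ one uses $\rho(m)\neq 0$ because $m$ lies in $\relint(\sigma^{\vee})$, and for $D_{(\infty,v)}$ one uses the properness of $\D$, which yields $\ord_{\infty}\phi^{\pm}_{m}+\min_{v\in\Delta_{\infty}(0)}v(m)>0$ and hence strict positivity of the coefficients $\mu(v)\bigl(v+v_{0}^{\pm}+v_{1}^{\pm}\bigr)(m)$. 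Your proposal contains neither of these nonvanishing arguments (nor a substitute, e.g.\ that $t$ is regular on $X_{M}^{\pm}$ while $D_{(\infty,v)}$ lies in its polar locus), so the claim that the $G_{e}$-divisors are exhausted by $D_{\rho}$ and $D_{(\infty,v)}$ — and, with the error above, the identification of the colors — is not actually established. The needed corrections fit entirely within the computation you already set up; once they are made, your proof coincides with the paper's.
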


\begin{proof}
%We will explain the method of calcu
We will illustrate our calculation method on the $B_{+}$-action in the reflexive case.
It can be easily adapted to the remaining cases that are left to the reader.

A representation of $\D_{+}$ in the reflexive case is  given by
$$\dots \lin \cou{0}{0} \lin \cou{v_{1}}{1} \lin \dots$$
Let $m\in \relint(\omega^{\vee}_{+})\cap L$. We have $\phi^{+}_{m}=(t-1)^{-v_1(m)}$.
By Lemma \ref{lemma:B-orbit}, the $B_{+}$-divisors of $X$ are exactly the irreducible components of 
the support of the principal divisor 
$$\div(\phi^{+}_{m}\chi^{m})=\sum_{\rho\in\sistar} \rho(m)\cdot D_\rho
+ \sum_{z\in\{0,1,\infty\}}\sum_{v\in\Delta_z(0)}\mu(v)(v(m) +{\rm ord}_{z}\,\varphi^{+}_{m})\cdot D_{(z,v)}.$$
Since $m\in \relint(\sigma^{\vee})$, the integer $\rho(m)$ is
non-zero for every $\rho\in\sistar$. Hence all the horizontal divisors on $X$
are $B_{+}$-stable. 

An easy computation shows that the vertical part of $\div(\phi^{+}_{m}\chi^{m})$ is
$$\div(\phi^{+}_{m}\chi^{m})_{{\rm vert}} = \Sigma_{0} +\Sigma_{1} +\Sigma_{\infty},$$ 
where 
\begin{align*}
\Sigma_0= v_0(m)\cdot D_{(0,v_0)},\\
\Sigma_1=-v_1(m)\cdot D_{(1,0)},\\
\Sigma_\infty= \sum_{v\in\Delta_{\infty}(0)}\mu(v) (v+v_1)(m)\cdot D_{(\infty,v)}.
\end{align*}
We may choose $m$ such that $v_{0}(m)$ and $v_{1}(m)$ are nonzero. By Lemma \ref{lemma:B-orbit},
this holds for every $m\in \relint(\omega^{\vee}_{+})\cap L$. Hence 
the divisors $\Sigma_{0}$ and $\Sigma_{1}$
are nonzero, and so $D_{(0,v_{0})}, D_{(1,0)}$ are $B_{+}$-stable. Furthermore,
since $\D$ is proper,
\begin{eqnarray*}
{\rm ord}_{\infty}\,\phi^{+}_{m} + \min_{v\in\Delta_{\infty}(0)}v(m)>0. 
\end{eqnarray*}
Thus, for any $v\in\Delta_{\infty}(0)$, $(v_{1} + v)(m)>0$. This shows that
the vertical divisors of the form $D_{(\infty, v)}$ are $B_{+}$-stable. 

Making the same computation for the divisor ${\rm div}(\phi^{-}_{m}\chi^{m})$ where $m\in \relint(\omega^{\vee}_{-})\cap L$,
we obtain that the divisors of the form $D_{\rho}$ and $D_{(\infty, v)}$ are $B_{-}$-stable.
Hence they form the set of $G_{e}$-divisors of $X$.

The divisors $D_{(0,v_{0})}$ and $D_{(1,0)}$ do not 
appear in the support of ${\rm div}(\phi^{-}_{m}\chi^{m})$. This implies that
$D_{(0,v_{0})}$, $D_{(1, 0)}$ are the colors of $X$ for the $B_{+}$-action.
Finally, one concludes by the formula
$$\div(\phi^{+}_{m}\chi^{m})=\sum_{\rho\in\sistar} \vrho_{+}(D_{\rho})(m)\cdot D_\rho
+ \sum_{z\in\{0,1,\infty\}}\sum_{v\in\Delta_z(0)}\vrho_{+}(D_{(z,v)})(m) \cdot D_{(z,v)}.$$
\end{proof}

\begin{remarque}
\label{marker:remarkG/H}
Let us consider the subalgebra $A^{h}=A[\AA^{1},\D_{G_{e}\,/H}]\subset\KK(t)[M]$ defined by the polyhedral divisor 
$$\D_{G_{e}\,/H} = \Delta(G_{e}\,/H)_{0}\cdot \{0\}	
+ \Delta(G_{e}\,/H)_{1}\cdot \{1\}$$
with the conditions $\Delta(G_{e}\,/H)_{0} = {\rm Conv}(0,v_{0})$, $\Delta(G_{e}\,/H)_{1} ={\rm Conv}(0, v_{1})$ 
in the reflexive case, and  $\Delta(G_{e}\,/H)_{0} = \{v_{0}\}$, $\Delta(G_{e}\,/H)_{1} = {\rm Conv}(0, v_{1})$
otherwise.

Then $\KK[X]\subset A^{h}$, and $A^{h}$ is $G_e$-stable as a subalgebra of $\KK(X)$. 
Therefore, $X^{h}=\Spec A^{h}$ is an embedding of $G_e\,/H$, see \ref{marker:emb}.
%$\Spec R$ is an embedding of $G_e/H$ and in particular $\Spec R\subset\Emb(G/H)$.
Since $X^{h}$ does not include $G_e$-divisors by Proposition~\ref{proposition:col.G-div}, and $G_e\,/H$ is affine by Remark~\ref{GH-aff},  $X^{h}=G_e\,/H$.
So, the polyhedral divisor $\D_{G_e\,/H}$ characterizes the open orbit. Thus, the subgroup $H$ determines whether $X$ is skew, reflexive or of type II.
%Let $x$ be a point in the open $G_e$-orbit in $X$ and $H\subset G_e$ be the corresponding isotropy subgroup.
%Since $G.x$ does not have $G_e$-divisors, Proposition~\ref{proposition:col.G-div} implies that $C=\AA^{1}$ and $\sigma=\{0\}$ for $G.x$. 
%By Theorem~\ref{theorem:SL}, 
%$$\KK[G_{e}\,/H] = A[\AA^{1}, \D_{G_{e}\,/H}],\,\,\,\,{\rm where}\,\,\,\D_{G_{e}\,/H} = \Delta(G_{e}\,/H)_{0}\cdot \{0\}	
%+ \Delta(G_{e}\,/H)_{1}\cdot \{1\}$$
%with the conditions $\Delta(G_{e}\,/H)_{0} = {\rm Conv}(0,v_{0})$, $\Delta(G_{e}\,/H)_{1} ={\rm Conv}(0, v_{1})$, 
%if $G_{e}\,/ H$ is reflexive, and  
%$\Delta(G_{e}\,/H)_{0} = \{v_{0}\}$, $\Delta(G_{e}\,/H)_{1} = {\rm Conv}(0, v_{1})$,otherwise. 
%See \cite[Theorem 4.2]{AL} for the three-dimensional case.
\end{remarque}

 The closed $G_{e}$-orbit $Y\subset X$ is described as follows.
\begin{lemme}
\label{lemma:closorb}
The vanishing ideal of $Y$ is
\begin{eqnarray*}
I=\bigoplus_{m\in(\sigma^{\vee}\setminus\lins)\cap M} A_{m}\chi^{m},\,\,\,
{\it where}\,\,\,A_{m} = H^{0}(C,\mathcal{O}_{C}(\lfloor \D(m)\rfloor)).
\end{eqnarray*} 
\end{lemme}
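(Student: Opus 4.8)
The plan is to identify the closed orbit $Y$ as the unique closed $G_e$-orbit of the affine spherical variety $X$ and to describe its ideal directly from the grading. First I would recall from \ref{marker:col} that $X$ being affine has a unique closed $G_e$-orbit $Y$, and that $Y$ is the intersection of all $G_e$-stable prime divisors passing through it, together with the behaviour of the weight cone. Concretely, since $X=\Spec A$ with $A=A[C,\D]=\bigoplus_{m\in\sigma^\vee\cap M}A_m\chi^m$ and $\sigma^\vee$ is the weight cone, the torus-fixed behaviour forces $Y$ to be cut out by a $G_e$-stable (hence $\TT$-stable, hence $M$-homogeneous) ideal. Thus $I=\bigoplus_{m\in S}A_m\chi^m$ for some subset $S\subset\sigma^\vee\cap M$ that is closed under addition by $\sigma^\vee\cap M$ (ideal condition) and whose complement is a face-saturated subsemigroup (so that $A/I$ is again of the form $A[\cdot]$). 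The claim is precisely that $S=(\sigma^\vee\setminus\lins)\cap M$, i.e. the complement of $S$ consists of the weights lying in the linear part $\lins=\sigma^\vee\cap(-\sigma^\vee)$ of the weight cone.

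The key steps, in order: (1) Observe that $Y$, being the closed orbit of a spherical variety, corresponds under the bijection of \ref{abs} to the colored cone whose cone is the whole $\Gamma$ (the cone dual to the weight cone of $\KK[X]$, see Lemma~\ref{lemma:affcone}); equivalently $A/I$ is the coordinate ring of the affine toroidal-type degeneration whose weight monoid is $\lins\cap M$. (2) Show $I\subset\bigoplus_{m\in(\sigma^\vee\setminus\lins)\cap M}A_m\chi^m$: any $f\chi^m\in A$ with $m\in\lins$ is a $\TT$-semiinvariant which is also invertible up to lower terms along $Y$; more precisely for $m\in\lins$ both $m$ and $-m$ lie in $\sigma^\vee$, so $A_m\chi^m$ and $A_{-m}\chi^{-m}$ are nonzero and their product is a nonzero element of $A_0=\KK[C]\cap(\text{global sections})$, which does not vanish identically on $Y$ because $Y$ dominates $C$ only if... — here one uses instead that $\chi^m$ restricted to $Y$ is a nonzero character, hence a unit, so $f\chi^m|_Y\ne 0$ whenever $f|_Y\ne0$, and one checks $A_m$ has an element nonvanishing on $Y$. (3) Conversely show $\bigoplus_{m\in(\sigma^\vee\setminus\lins)\cap M}A_m\chi^m\subset I$: for $m\in\sigma^\vee\setminus\lins$ the function $\chi^m$ lies in the maximal ideal of the generic point of $Y$ because the value of the character $\chi^m$ on the closed orbit is forced to be $0$ — this is where one invokes that $Y$ is contained in the horizontal divisors $D_\rho$, $\rho\in\sistar$, and $\rho(m)>0$ for some such $\rho$ exactly when $m\notin\lins$ (since $\lins$ is the common zero set of all the $\rho$'s generating $\sigma$, together with the $B$-weight description of $Y$ via $\F_Y$).

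I expect the main obstacle to be step (3), namely pinning down which monomials vanish on $Y$. The clean way is to use that $Y$ is the closed orbit, so $\KK[Y]=\KK[X]/I$ is the coordinate ring of an affine $G_e$-spherical homogeneous-space closure with the same open $B_\pm$-orbit lattice but with weight monoid equal to $\lins\cap M$ (all $G_e$-divisors $D_\rho$ and $D_{(\infty,v)}$ having been contracted), which immediately gives that the kernel is spanned by the $A_m\chi^m$ with $m\notin\lins$. Alternatively, and perhaps more in the spirit of the preceding section, one can argue via the explicit principal divisors: by Proposition~\ref{proposition:col.G-div} the $G_e$-divisors are the $D_\rho$ ($\rho\in\sistar$) and the $D_{(\infty,v)}$, and $Y=\bigcap(\text{those containing }Y)$; computing $\operatorname{ord}$ along these divisors of a homogeneous $f\chi^m$ via the formula in \ref{rappel:div-hor-vert} shows $f\chi^m$ vanishes on $Y$ precisely when $m\in\sigma^\vee\setminus\lins$, and the reverse inclusion follows because for $m\in\lins$ one produces a section $f$ with $\operatorname{ord}$ zero along every $G_e$-divisor through $Y$, hence $f\chi^m|_Y\ne0$. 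Either route reduces the statement to the combinatorics already set up, so once the identification ``$\lins=$ common kernel of the $\rho$'s'' is made explicit the computation is routine.
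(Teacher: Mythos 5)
Your plan computes the vanishing ideal of $Y$ degree by degree via orders along divisors, but two of its key steps do not hold up as stated. First, the inclusion $I(Y)\subset I$ requires that \emph{no nonzero} element of $A_m\chi^m$ with $m\in\lins$ vanishes on $Y$, i.e.\ $A_m\chi^m\cap I(Y)=0$; your step (2) only produces, for each such $m$, \emph{one} section not vanishing on $Y$, which is strictly weaker. Moreover the geometric input you would need is left unproved: for $C=\AA^{1}$ elements of degree $m\in\lins$ are of the form $f(t)\chi^{m}$ and are not units, so the assertion ``$\chi^{m}|_{Y}$ is a unit'' does not settle anything unless you first know that $Y$ dominates $C$ (equivalently that $A_{0}=\KK[t]$ injects into $\KK[Y]$), a point you explicitly leave hanging; for $C=\PP^{1}$ the clean statement is that properness of $\D$ makes every homogeneous element of degree $m\in\lins$ invertible, which you never invoke. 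Second, in step (3) the criterion ``$\rho(m)>0$ for some $\rho\in\sistar$ exactly when $m\notin\lins$'' is wrong over $\PP^{1}$: $\sistar$ omits the rays of $\sigma$ meeting $\deg\D$, so there can be $m\in\sigma^{\vee}\setminus\lins$ with $\rho(m)=0$ for all $\rho\in\sistar$. The fallback via the vertical divisors $D_{(\infty,v)}$ is not combinatorial in $m$ alone, since their coefficients $\mu(v)(v(m)+\ord_{\infty}f)$ depend on $f$; one must then rule out elements whose divisor is supported only on fibres over general points (which contain no divisor known to pass through $Y$), and nothing in your sketch does this. Finally, the ``clean way'' you propose in step (1) — declaring that $\KK[X]/I$ has weight monoid $\lins\cap M$ because all $G_e$-divisors get contracted — is essentially a restatement of the conclusion rather than a proof.

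For comparison, the paper argues in the opposite direction: it sets $Z=\Spec(\KK[X]/I)$, checks that $I$ is $G_{e}$-stable (for $C=\AA^{1}$ because $I$ is generated by the ideals of the horizontal $G_e$-divisors; for $C=\PP^{1}$ because properness makes the degree-$\lins$ part consist of units), and then identifies $\KK[X]/I$ with $A[C,\tilde\D]$ for the projected polyhedral divisor $\tilde\D$ on $\tilde N=N/(\lins^{\perp}\cap N)$, recognizing $Z$ as $G_{e}$-homogeneous (via Remark~\ref{marker:remarkG/H}, resp.\ transitivity of $\TT$ when $C=\PP^{1}$); since $X$ has a unique closed orbit, $Z=Y$ and $I=I(Y)$. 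If you want to salvage your divisorial approach, you would still need an argument of this homogeneity/domination type to close the gaps above.
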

\begin{proof}
Let $R=\KK[X]/I$ and $Z = {\rm Spec}\, R$. We will prove that $Z=Y$.

\emph{Case $C = \AA^{1}$}. Since $I$ is generated by 
$\bigcup_{\rho\in\sigma(1)}I(D_{\rho})$,
the ideal $I$ is $G_{e}$-stable and $R$ is  a rational $G_{e}$-algebra. 
Let us describe $R$ in terms of polyhedral divisors.
Consider the projection $\pi :N\rightarrow \tilde{N}$, where $\tilde{N} = N/(\lins^{\perp}\cap N)$.
The lattice $\tilde{N}$ is the dual of $\lins\cap M$.
Let $\tilde{\D}$ be the polyhedral divisor with coefficients in $\tilde{N}_{\mathbb{Q}} = \mathbb{Q}\otimes_{\mathbb{Z}}\tilde{N}$ 
given by
\begin{eqnarray*}
\tilde{\D} = {\rm Conv}(0, \pi(v_{0}))\cdot \{0\} + {\rm Conv}(0, \pi(v_{1}))\cdot \{1\},
\end{eqnarray*}
in the reflexive case, and by
\begin{eqnarray*}
\tilde{\D} = \{\pi(v_{0})\}\cdot \{0\} + {\rm Conv}(0, \pi(v_{1}))\cdot \{1\}
\end{eqnarray*}
in the skew case. Note that $e\in \lins\setminus \{0\}$ (see \cite[Lemma 3.1]{Li}) and $\pi(v_{0}),\pi(v_{1})$ are nonzero. 
We have a $G_e$-equivariant isomorphism $R \cong A[C,\tilde{\D}]$. The induced homogeneous LNDs on $A[C,\tilde{\D}]$ are horizontal and represented by the graphs
$$\dots \lin \cou{\pi(v_{0}^{-})}{0} \lin \cou{\,\,\,\,\,\,\pi(v_{1}^{-})}{1} \lin \dots\,\,\,\,\,\,\,\,\,\dots \lin \cou{\pi(v_{0}^{+})}{0} \lin \cou{\,\,\,\,\,\,\pi(v_{1}^{+})}{1} 
\lin \dots$$ In particular, $Z$ is an affine spherical
variety of type I which is homogeneous under $G_{e}$,
according to Remark \ref{marker:remarkG/H}. So, $Z$ is the closed $G_e$-orbit.

\emph{Case $C = \PP^{1}$}. By properness of $\D$, every homogeneous element of degree $m\in\lins\cap M$
is invertible and so belongs to ${\rm ker}\,\partial_{-}\cap {\rm ker}\,\partial_{+}$.
Hence $I$ is $G_{e}$-stable. Furthermore, the $\TT$-action on $Z$ is transitive. This allows us to conclude.   
\end{proof}

The following theorem provides the Luna--Vust description of the affine $G_{e}$-spherical variety $X$
in terms of its AL-colorings $\D_{-},\D_{+}$. 
\begin{theorem}
\label{theorem:first}
Let $X$ be an affine $G_{e}$-spherical variety admitting a presentation as in 
Theorem~\ref{theorem:SL}.
Then the following statements hold.
\begin{enumerate}
\item[(i)] If $C = \AA^{1}$ then $X$ is a toroidal spherical variety with colored cone $(\sigma,\emptyset)$. 
\item[(ii)] If $C = \PP^{1}$ then the colored cone of $X$ is given by the following table (cf.~\ref{rap:SL2-omegas}).  
\
\newline

%\begin{tabular}{|l|l|l|l|}
%\hline
%&  For the $B_{-}$-action   &  For the $B_{+}$-action  \\
%   \hline
%    Reflexive case & $\left(\omega_{-}, \{D_{(0,0)}, D_{(1, v_{1})}\}\right)$ & $\left(\omega_{+}, \{D_{(0, v_{0})}, D_{(1,0)}\}\right)$  \\
%   \hline
%    Skew case  & $\left(\omega_{-}, D_{(1, v_{1})}\}\right)$ & $\left(\omega_{+}, D_{(1,0)}\right)$ \\
%   \hline
%\end{tabular}

\begin{tabular}{|l|l|l|l|}
\hline
&  Reflexive case  & Skew case   \\
   \hline
     $B_{-}$-action & $\left(\omega_{-}, \{D_{(0,0)}, D_{(1, v_{1})}\}\right)$ &$\left(\omega_{-}, D_{(1, v_{1})}\right)$  \\
   \hline
    $B_{+}$-action  &  $\left(\omega_{+}, \{D_{(0, v_{0})}, D_{(1,0)}\}\right)$ & $\left(\omega_{+}, D_{(1,0)}\right)$ \\
   \hline
\end{tabular}
\end{enumerate}
 \
\newline
\end{theorem}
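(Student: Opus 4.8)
\textbf{Proof plan for Theorem~\ref{theorem:first}.}
The plan is to combine the explicit divisor computations of Proposition~\ref{proposition:col.G-div} with the description of the closed orbit in Lemma~\ref{lemma:closorb} and the intrinsic characterization of the open orbit in Remark~\ref{marker:remarkG/H}. Throughout I work with the $B_+$-action in the reflexive case as the model computation, the other three cases being obtained by the obvious symmetry $\D_-\leftrightarrow\D_+$, $v_0\leftrightarrow v_1$ and by deleting the appropriate color in the skew case.

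First I would establish (i). When $C=\AA^1$, every proper polyhedral divisor is automatically proper without the degree restriction, and by Proposition~\ref{proposition:col.G-div} the colors $D_{(0,v_0)}$ and $D_{(1,0)}$ (for $B_+$) are vertical $\TT$-divisors whose valuations map to $v_0$ and $-v_1$. By \ref{marker:col} the associated colored cone of $X$ is $(\Cc,\F_Y)$ where $\Cc$ is generated by $\cP\cup\varrho(\F_Y)$; here the $G_e$-divisors are exactly the $D_\rho$, $\rho\in\sigma(1)$, with $\vrho_\pm(D_\rho)=\rho$, so $\cP$ generates $\sigma$. It remains to show $\F_Y=\emptyset$, i.e. that no color contains the closed orbit $Y$. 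By Lemma~\ref{lemma:closorb} the vanishing ideal of $Y$ is $\bigoplus_{m\notin\lins}A_m\chi^m$, hence $Y$ is contained in $D_\rho$ precisely for those $\rho$ with $\rho\notin\lins^\perp$, and in a vertical divisor $D_{(z,v)}$ never (since the $I(D_{(z,v)})$ are not of this form). Thus $\F_Y=\emptyset$, $X$ is toroidal, and its colored cone is $(\sigma,\emptyset)$.

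For (ii), with $C=\PP^1$, the same analysis gives $\cP$ generating $\sigma$ together with the vertical $G_e$-divisors $D_{(\infty,v)}$ whose images are $\mu(v)(v_1+v)$ for the $B_+$-action (reflexive); since $\Delta_\infty(0)\subset e^\perp$ and by properness $\deg\D\subsetneq\sigma$, one checks that the cone generated by $\sigma$, these $\vrho_+(D_{(\infty,v)})$, and $\vrho_+(\F_Y)=\{v_0,-v_1\}$ equals exactly $\omega_+=\Cone(v_0,-v_1)+\sigma$ as recorded in \ref{rap:SL2-omegas}. The remaining point is that both colors $D_{(0,v_0)}$ and $D_{(1,0)}$ actually contain $Y$: by Lemma~\ref{lemma:closorb} a color $D$ contains $Y$ iff $I\subset I(D)$, and since $D_{(0,v_0)}$, $D_{(1,0)}$ are vertical with $\vrho_+\neq0$ this containment is verified directly from the graded descriptions of $I$ and $I(D_{(z,v)})$. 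Hence $\F_Y=\{D_{(0,v_0)},D_{(1,0)}\}$ and the colored cone of $X$ is $(\omega_+,\{D_{(0,v_0)},D_{(1,0)}\})$; the skew case differs only in that $\F_0$ has a single color $D_{(1,0)}$ (resp. $D_{(1,v_1)}$), so the table entry loses one color accordingly.

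The main obstacle I anticipate is bookkeeping rather than conceptual: one must carefully match the cone $\Cc$ produced by the Luna--Vust recipe \ref{marker:col} (generated by $G_e$-divisor images together with $\vrho(\F_Y)$) against the cone $\omega_\pm$ computed from the AL-coloring in \ref{rap:SL2-omegas}, and verify that the ``extra'' generators coming from vertical $G_e$-divisors $D_{(\infty,v)}$ do not enlarge $\omega_\pm$ beyond what the vertices $v_0,-v_1$ (resp. $-v_1$) already contribute --- this is where the properness inequality $\mathrm{ord}_\infty\phi_m^\pm+\min_{v\in\Delta_\infty(0)}v(m)>0$ from the proof of Proposition~\ref{proposition:col.G-div} is used. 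Once this matching is in place, consistency with the abstract correspondence of \ref{abs} forces the colored cone to be the stated pair, and the identification with the classical colored-cone datum follows from the bijection recalled after \ref{abs}.
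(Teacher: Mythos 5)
Your skeleton (determine $\F_Y$ from Proposition~\ref{proposition:col.G-div} and Lemma~\ref{lemma:closorb}, then identify the cone) is the same as the paper's, but the decisive step is both misstated and not actually carried out. You write that a color $D$ contains $Y$ iff $I\subset I(D)$; the correct criterion is the opposite one, $Y\subseteq D$ iff $I(D)\subseteq I$. With your reversed criterion the "direct verification from the graded descriptions" would in fact fail in the case $C=\PP^{1}$ (for $m\notin\lins$ the space $A_m$ contains elements of minimal order at $0$, so $I\not\subseteq I(D_{(0,v_0)})$), and it would wrongly suggest that the colors do \emph{not} contain $Y$ — i.e. it contradicts the table you are trying to prove. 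The missing ingredient, which is exactly what the paper's proof of Lemma~\ref{lemma:closorb} supplies for $C=\PP^{1}$, is properness of $\D$: every homogeneous element of degree $m\in\lins\cap M$ is invertible in $\KK[X]$, hence no proper $M$-graded ideal, in particular no $I(D_{(z,v)})$, has a nonzero component in such a degree; therefore $I(D)\subseteq I$ for \emph{every} $B_{\pm}$-divisor and all colors contain $Y$. Conversely, for $C=\AA^{1}$ one has $I(D_{(z,v)})_m\neq0$ for $m\in\lins$ (multiply by powers of $t-z$), so no color contains $Y$; your parenthetical "the $I(D_{(z,v)})$ are not of this form" gestures at this but does not say it, and your side claim that $Y\subseteq D_\rho$ exactly when $\rho\notin\lins^{\perp}$ is again the reversed criterion (every $\rho\in\sigma(1)$ lies in $\lins^{\perp}$, and in fact $Y$ lies on every $G_e$-divisor).

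On the cone identification you also take a heavier route than necessary, and leave its nontrivial part as "bookkeeping". The paper avoids it: once $\F_Y$ equals the full set of colors (case $C=\PP^{1}$), the cone of the colored cone is generated by $\Cc\cup\varrho(\F_0)$, so by Lemma~\ref{lemma:affcone} it is the dual of the cone of $B_{\pm}$-weights of $\KK[X]$, which by the kernel description \eqref{eq:kerpm} is exactly $\omega_{\pm}$; for $C=\AA^{1}$ it is $\sigma$ since the $G_e$-divisors are the $D_\rho$, $\rho\in\sigma(1)$. If you insist on matching generators directly, note that for $C=\PP^{1}$ the horizontal contribution is only $\sistar$, not all of $\sigma(1)$, and you would still have to prove both that $\mu(v)(v_1+v)$ (resp.\ $\mu(v)(v_0+v)$, etc.) lies in $\omega_{\pm}$ for every $v\in\Delta_{\infty}(0)$ and that the rays of $\sigma$ meeting $\deg\D$, which contribute no $G_e$-divisor, are nevertheless recovered — none of which is done in your proposal. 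Fixing the inclusion direction, inserting the invertibility-by-properness argument, and concluding via Lemma~\ref{lemma:affcone} closes the gap.
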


\begin{proof}
By Proposition \ref{proposition:col.G-div} 
and Lemma \ref{lemma:closorb}, the closed orbit is contained in all colors of $X$ in the case $C = \PP^{1}$ and 
is not contained in any colors in the case
$C = \AA^{1}$. In the former case, one concludes 
by Lemma \ref{lemma:affcone}.  
\end{proof}
\subsection{Horizontal and vertical $G_e$-invariant valuations}
\

\label{subsec:3.2}
As before, $X$ is an affine embedding of type I of a spherical homogeneous space $G_{e}\,/H$
with the associated colored cone $(\Cc,\F_{Y})$. In this subsection, we provide the representation of $\KK[X]$ 
by a polyhedral divisor and AL-colorings in terms
of $(\Cc,\F_{Y})$. For this, we examine the valuation cone of $G_{e}\,/H$. 
\begin{rappel}
Without loss of generality, we may suppose that $\KK[G_{e}\,/H]$ is equal to the $G_{e}$-algebra 
$A[\AA^{1},\D_{G_{e}\,/H}]$ defined in Remark~\ref{marker:remarkG/H}.
Thus, a variety $Z$ is an affine embedding of $G_{e}\,/H$ if and only if 
$\KK[Z]$ is a $G_{e}$-stable normal affine subalgebra of $A[\AA^{1},\D_{G_{e}\,/H}]$. 
\end{rappel}
Inspired by \cite{Ti2}, we introduce the following
terminology.
\begin{definition}
A non-trivial $G_{e}$-invariant valuation $v$ on $\KK(G_{e}\,/H)$ is called \emph{horizontal} (or \emph{central}) if $v$ is trivial on
the subfield $\KK(t)$. Otherwise, $v$ is said to be \emph{vertical}. We denote by
$\vhor$ (resp. $\vvert$) the subset of horizontal (resp. vertical)
valuations in the valuation cone $\Vc$ of $G_{e}\,/H$. 
\end{definition}
 We make the convention that $\vhor\cap \vvert = \{0\}$, i.e. the trivial valuation is both horizontal
and vertical. 
For the next result, we  emphasize that vectors in $N_\QQ$ that belong to the valuation lattice $V = {\rm Hom}(L,\mathbb{Z})$ are regarded as integral. %Moreover, they inherit the property of primitivity.
\begin{lemma}
\label{lemma:valhorvert}
 Consider the $B_\pm$-action on $G_e\,/H$.
 Then $\Vc=(\QQ_{\ge0}(\mp e))^{\vee}$.
%Denote $S=\Cone(\{-\rho_\pm(D)\;|\; D\in\F_0\}),$
Further, denote $$S=\Cone(-\varrho_\pm(\F_0)),$$
where $\F_0$ is the set of colors of $G_e\,/H$.
Then %$v\in \Vc\setminus S$ is horizontal if and only if $v\in e^{\perp}$.
$\vhor\cap(\Vc\setminus S)=e^{\perp}\setminus\{0\}$.
\end{lemma}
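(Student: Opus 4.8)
The plan is to prove the two assertions in turn, working throughout with the fixed Borel $B_\pm\subset G_e$ and the identification $V_\QQ=N_\QQ$ of the valuation lattice with the dual of $M$, and to exploit the explicit data already at hand: the presentation $\KK[G_e/H]=A[\AA^1,\D_{G_e/H}]$ of Remark~\ref{marker:remarkG/H}, the values of $\varrho_\pm$ on $G_e$-divisors and colors in Proposition~\ref{proposition:col.G-div}, the cones $\omega_\pm$ of \ref{rap:SL2-omegas}, and the closed formulas for $\partial_\pm$ in \ref{rap:remdpm}--\ref{rap:der-Q}. Recall that $\Vc$ is a polyhedral cone and that, since $G_e/H$ is not horospherical by \cite[Proposition 3.8]{AL}, $\Vc\neq V_\QQ$; the target is to show that $\Vc$ is exactly the half-space bounded by $e^\perp$, that is, that $e$ (resp.\ $2e$ in the skew case) is the only spherical root.

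For the equality $\Vc=(\QQ_{\ge0}(\mp e))^\vee$ I would first prove the inclusion ``$\subseteq$''. A $G_e$-invariant valuation $v$ is in particular invariant under both root subgroups $U_\pm$ of $\SL_2\subset G_e$, and I would use the standard fact that a valuation is invariant under the $\G_a$-action $\gamma_\partial$ attached to a locally nilpotent derivation $\partial$ (see \ref{marker:Ga}) if and only if $v(\partial f)\ge v(f)$ for all $f$. Applying this to the $B_\pm$-semiinvariants $\phi^{\pm}_m\chi^m$ of \ref{marker:not} and computing from \ref{rap:der-Q} that $\partial_\mp(\phi^{\pm}_m\chi^m)=Q_m\,\phi^{\pm}_{m\mp e}\chi^{m\mp e}$ with $Q_m\in\KK[t]$ of degree $\le1$ (and $Q_m\ne0$ for generic $m$ with $m,m\mp e\in\omega_\pm^\vee\cap L$), one obtains $\varrho_\pm(v)(m\mp e)+v(Q_m)\ge\varrho_\pm(v)(m)$; since $v(Q_m)=0$ whenever $v$ is trivial on $\KK(t)$ and in general $v(Q_m)$ is controlled by the estimate on $\div(f\chi^m)_{\rm ver}$ in \ref{rappel:div-hor-vert}, this yields $\mp\varrho_\pm(v)(e)\ge0$. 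For the reverse inclusion I would exhibit $G_e$-invariant valuations realizing the half-space: using $e\in\lins=\sigma^\perp$ (proof of Lemma~\ref{lemma:closorb}), the images $\varrho_\pm(D_\rho)=\rho$ of the horizontal $G_e$-divisors lie in $e^\perp$, while the vertical $G_e$-divisors $D_{(\infty,v)}$ of Proposition~\ref{proposition:col.G-div} have $e$-value $\mu(v)\bigl(v_0(e)+v(e)\bigr)=\mu(v)\ne0$ of the correct sign (using $v_0(e)=1$, resp.\ $2v_0(e)=1$, and $\Delta_\infty(0)\subset e^\perp$), hence meet the interior of the half-space; together with $\Vc\supseteq\Vc\cap(-\Vc)\supseteq e^\perp$ (established below) and the observation that a polyhedral cone lying in a half-space, containing its bounding hyperplane and meeting its interior, is the whole half-space, this gives $\Vc=(\QQ_{\ge0}(\mp e))^\vee$.

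For the second assertion set $S=\Cone(-\varrho_\pm(\F_0))$. By Proposition~\ref{proposition:col.G-div} and \ref{rap:SL2-omegas}, $\varrho_\pm(\F_0)$ is $\{-v_0,v_1\}$ (reflexive) or a single vector (skew), and since $v_0(e)\ne0$ and $v_1(e)\ne0$ every nonzero element of $S$ pairs nonzero with $e$; hence $S\cap e^\perp=\{0\}$ and $e^\perp\setminus\{0\}\subseteq\Vc\setminus S$. Next I claim $\vhor\subseteq e^\perp$. The key remark is that $\varrho_+$ and $\varrho_-$ agree on any valuation trivial on $\KK(t)$, because the quotient of a $B_+$- and a $B_-$-semiinvariant of the same weight is $\TT$-invariant, hence lies in $\KK(X)^\TT=\KK(t)$. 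Running the previous LND computation once with $\partial_+$ applied to $\phi^-_m\chi^m$ and once with $\partial_-$ applied to $\phi^+_m\chi^m$, a horizontal $G_e$-invariant valuation $v$ satisfies both $\varrho_-(v)(e)\ge0$ and $\varrho_+(v)(e)\le0$; by the key remark $\varrho_-(v)=\varrho_+(v)$, so $\varrho_\pm(v)(e)=0$. Conversely, $e^\perp=\Vc\cap(-\Vc)$ is the linear part of the valuation cone, and it is realized by $G_e$-invariant valuations arising from one-parameter subgroups of the subtorus $\TT_e=\ker\chi^e\subset\TT$, which commutes with $\SL_2$ inside $G_e$; such valuations are trivial on $\KK(t)$, hence horizontal, so $e^\perp\setminus\{0\}\subseteq\vhor$. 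Combining, $\vhor=e^\perp$, and therefore $\vhor\cap(\Vc\setminus S)=\vhor\setminus S=e^\perp\setminus\{0\}$.

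The step I expect to be the main obstacle is making the second half of the ``valuation cone'' part fully rigorous in a case-uniform way: guaranteeing that $\Vc$ actually reaches the interior of the half-space — hence is not the degenerate cone $e^\perp$ — in particular in the $C=\AA^1$ case where $X$ itself is toroidal and carries no $G_e$-divisor outside $e^\perp$, so that one must pass to a suitable $G_e$-embedding of $G_e/H$ or invoke the structure theory to produce the needed invariant valuations; and, relatedly, giving a clean proof that every rational point of the linear part $e^\perp=\Vc\cap(-\Vc)$ is realized by a horizontal $G_e$-invariant valuation. The remaining ingredients — the LND inequalities, the equality $\varrho_+|_{\vhor}=\varrho_-|_{\vhor}$, the triviality $S\cap e^\perp=\{0\}$, and the bookkeeping separating the reflexive and skew cases via the shape of $\varrho_\pm(\F_0)$ — then assemble into the stated formulas without further difficulty.
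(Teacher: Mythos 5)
The genuine gap is in the first assertion, at the step you yourself flag as the ``main obstacle'': producing a $G_e$-invariant valuation whose image pairs strictly with $e$. The lemma is about the homogeneous space $G_{e}\,/H$, whose canonical model (Remark~\ref{marker:remarkG/H}) is $A[\AA^{1},\D_{G_{e}\,/H}]$; it has no $G_e$-divisors at all, and in particular no vertical divisors $D_{(\infty,v)}$. Yet the only invariant valuations you offer off the hyperplane $e^{\perp}$ are exactly such divisors of a $\PP^{1}$-model, so your argument only yields $e^{\perp}\subseteq\Vc\subseteq(\QQ_{\ge0}(\mp e))^{\vee}$ and cannot exclude $\Vc=e^{\perp}$. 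This missing step is where the content of the lemma lies, and it is what the paper's proof supplies: using the converse part of Theorem~\ref{theorem:SL} (every admissible choice of $\sigma,\Delta_{\infty}$ gives an embedding of the \emph{same} $G_{e}\,/H$) together with Proposition~\ref{proposition:col.G-div}, it constructs (a) for each primitive $\rho\in e^{\perp}$ an embedding with $\sigma=\Cone(\rho)$ whose unique horizontal $G_e$-divisor realizes $\rho$, and (b) an embedding over $\PP^{1}$ whose vertical divisor $D_{(\infty,v)}$ maps into the open half-space; then $\Cone\bigl(e^{\perp}\cup\{\varrho_\pm(D_{(\infty,v)})\}\bigr)$ is the closed half-space, and equality with $\Vc$ follows because $G_{e}\,/H$ is not horospherical, hence $\Vc\neq V_{\QQ}$ by \cite[Corollary 6.2]{Kn}. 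You neither carry out such a construction nor invoke a structural substitute (e.g.\ full-dimensionality of valuation cones). Moreover, your intended replacement for the upper bound, the LND inequality applied to arbitrary (vertical) invariant valuations, is not closed as written: the term $v(Q_m)$ is uncontrolled (one would need something like $\min\{v(t),v(t-1)\}\le 0$ plus a choice of $m\in\omega_{\pm}^{\vee}$ annihilating one of $v_0,v_1$), and in the skew case $e\notin L$, so $\phi^{\pm}_{m\mp e}\chi^{m\mp e}$ is not a $B_\pm$-eigenvector of integral weight and the bookkeeping must pass to $2e$.

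The parts of your proposal that do work take a genuinely different route from the paper for the second assertion, and that part is sound in outline: where the paper shows that every primitive $v\in\Vc\setminus S$ with $v\notin e^{\perp}$ is vertical by building the embedding with colored cone $\bigl(\QQ_{\ge0}v+\Cone(\varrho(\F_0)),\F_0\bigr)$ and reading off its unique $G_e$-divisor, you prove directly that $\vhor\subseteq e^{\perp}$ using the equality $\varrho_+=\varrho_-$ on valuations trivial on $\KK(t)$ together with the two LND inequalities (where $v(Q_m)=0$ is legitimate), and you realize $e^{\perp}\subseteq\vhor$ by the grading valuations attached to one-parameter subgroups of the central subtorus $\ker\chi^{e}$ rather than by horizontal divisors of auxiliary embeddings as in the paper. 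These are valid and even pleasant alternatives, but they do not repair the gap above, which concerns precisely the statement $\Vc=(\QQ_{\ge0}(\mp e))^{\vee}$.
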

\begin{figure}[h]
\begin{center} 
\[  \begin{tikzpicture}
  [scale=.57, vertex/.style={circle,draw=black!100,fill=black!100,thick, inner sep=0.5pt,minimum size=0.5mm}, cone/.style={very thick,>=stealth}]
  \filldraw[fill=black!20!white,draw=white!100]
    (0,4.2) -- (4.2,4.2) -- (4.2,-4.2) -- (0,-4.2) -- cycle;
  \draw[cone] (0,-5.4) -- (0,5.4);
  \node at (5.3,0) {$\Vc$};
  \draw[cone] (0,0) -- (-3,1);
  \node at (-2.7,1.2) {$v_0$};
  \draw[cone] (0,0) -- (-3,-2);
  \node at (-3.4,-2.2) {$-v_1$};
  \draw[cone] (0,0) -- (6,-2);
  \draw[cone] (0,0) -- (6,4);
  \filldraw[fill=white,draw=white!100]
    (6,-2) -- (0,0) -- (6,4)  -- cycle;
  \node at (5,1) {$S$};
  \node at (.5,-5) {$e^{\perp}$};
  \foreach \x in {-4,-3,...,4}
  \foreach \y in {-4,-3,...,4}
  {
    \node[vertex] at (\x,\y) {};
  }
  \end{tikzpicture}\]
\end{center} 
\caption{The gray area corresponds to $\Vc\setminus S$ in $N_\QQ$ for $B_+$-action.}
\end{figure}
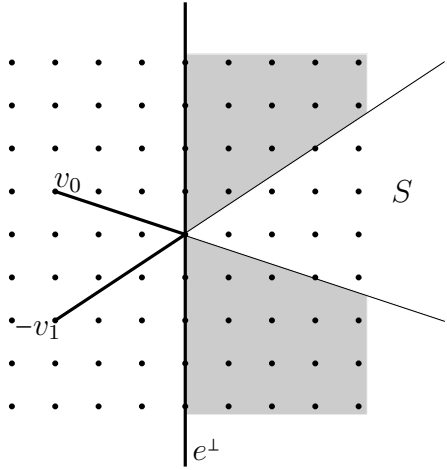

\begin{proof}
Let us show that $\Vc=\Cone(\mp e)^{\vee}$ with respect to the $B_\pm$-action.
For any  primitive $\rho\in e^{\perp}\setminus\{0\}$%primitive in $V=\Hom(L,\ZZ)$
, let us define the affine embedding $Z=\Spec A[\AA^{1},\D_\rho]$ of $G_e\,/H$ via
\begin{align*}
\D_\rho=\Delta_0^{\rho}\cdot\{0\}+\Delta_1^{\rho}\cdot\{1\},
\end{align*}
where $\Delta_0^{\rho}$ and $\Delta_1^{\rho}$ are as in Theorem~\ref{theorem:SL} with $\sigma=\Cone(\rho).$
Then by Proposition~\ref{proposition:col.G-div}, $Z$ has a unique horizontal $G_e$-divisor $D_\rho$, hence $\rho=\varrho_\pm(D_\rho)$ is contained in $\Vc$. This yields $e^{\perp}\subset\vhor$.
Using again Proposition~\ref{proposition:col.G-div}, we may construct an affine embedding of $G_e\,/H$ having a vertical $G_e$-divisor $D_{(v,\infty)}$ such that 
$\varrho_\pm(D_{(v,\infty)})\in\{v\in N_\QQ\;|\; \mp e(v)>0\}.$ Hence
$$\Cone((\mp e)^{\vee})=\Cone\left(e^{\perp}\cup\varrho_\pm(D_{(v,\infty)})\right)\subset \Vc.$$ 
Since $G_e\,/H$ is not horospherical, by \cite[Corollary 6.2]{Kn} we obtain  $\Vc= \Cone(\mp e)^{\vee}.$

Take a primitive vector $v\in \Vc\setminus S$ and consider the colored cone $(\Cc,\F_Y)$ of  $G_e\,/H$ given by the following conditions:
\begin{enumerate}
\item $\F_Y$ is the set of all colors of $G_e\,/H$;
\item $\Cc=\QQ_{\ge0}v+\sum_{D\in\F_Y}\QQ_{\ge0}\varrho(D)$.
\end{enumerate}
We note that $\Cc$ is strongly convex, since $v\notin S$.
Assume further that $v\notin e^{\perp}$. In this case $(\Cc,\F_Y)$ is an affine colored cone. Indeed,  $\varrho(\F_Y)\cap\Vc=\emptyset$ by Proposition~\ref{proposition:col.G-div}, and we set $m=0$ in Definition~\ref{definition:conecol}.
Let $Z'$ be the embedding corresponding to $(\Cc,\F_Y)$. Then $Z'$ is not toroidal and so admits a presentation $\KK[Z']=A[\PP^{1},\D']$
as in Theorem~\ref{theorem:SL}. By \ref{proposition:col.G-div}, the only $G_e$-divisor $D'$ on $Z'$ is vertical and $\vrho(D')=v$.
Thus, $v\in\vvert$. The assertion follows.
\end{proof}
\begin{remark}
The primitive vectors in $\Vc\setminus S$ are exactly the images of the $G_e$-divisors of all affine embeddings of $G_e\,/H$.
\end{remark}
\begin{definition}
Given a vector $v\in N_\QQ\setminus e^{\perp}$, we introduce the following function $\beta_v\colon N_\QQ\to \QQ.$ Consider the linear projection
$\pi_v\colon N_\QQ\to \QQ\cdot v$ along $e^{\perp}$. Then given $x\in N_\QQ$, $\beta_v(x)$ is the unique rational number such that  $\pi_v(x)= \beta_v(x)v$.  We say that $\pi_v$ is the projection defining $\beta_v.$
\end{definition}

This notation is used in the following proposition that introduces the explicit construction of the AL-colorings corresponding to an affine embedding of $G_e\,/H$ of type I. This is the inverse of Theorem~\ref{theorem:first}.
\begin{proposition}
\label{propdivpolversuscol}
Let $X$ be an affine $G_e$-spherical variety of type I, whose open $G_e$-orbit is identified with the homogeneous space $G_e\,/H$.
Let $\beta_-$ be $\beta_{v_0}$ and $\beta_+$ be either $\beta_{v_1}$ in the reflexive case or $\beta_{v_0+v_1}$ in the skew case (cf. latter table in Proposition~\ref{proposition:col.G-div}).
Denote
$$ \cvert_\pm=
\left\{\left.
\frac{v}{\beta_\pm (v)}
\;
\right|
\;
v\in \Cc(1),\, v(e)>0
\right\}.$$
Let $\sigma\subset N_{\QQ}$ be the strongly convex polyhedral cone 
equal to either
$$\sigma = \Cone(\Cc(1)\cap e^{\perp},\cvert_\pm,\cvert_\pm\pm v_0,\cvert_\pm\mp v_1,\cvert_\pm\pm v_0\mp v_1)$$
in the reflexive case or
$$\sigma = \Cone(\Cc(1)\cap e^{\perp},\cvert_\pm,\cvert_\pm\mp v_1)$$
in the skew case.
%$$\sum_{\gamma\in \Cc(1)\cap e^{\perp}}\QQ_{\geq 0}\cdot \gamma +
%\sum_{\gamma\in\Cc(1)\setminus e^{\perp}}\QQ_{\geq 0}\cdot (\eta_{\pm}(\gamma) + \beta_{\pm}(\gamma)v_{0}) + 
%\sum_{\gamma\in\Cc(1) \setminus e^{\perp}}\QQ_{\geq 0}\cdot (\eta_{\pm}(\gamma) + \beta_{\pm}(\gamma)v_{1}).$$
We have $\KK[X] = A[C,\D]$,
where $\D$ is a $\sigma$-polyhedral divisor over
$C = \AA^{1}$, if $\Cc\subset e^{\perp}$, and over $C = \PP^{1}$, otherwise. The non-trivial coefficients of $\D$ 
are defined as follows 
{\rm (}see \ref{marker:remarkG/H}{\rm )}. 
$$ \Delta_{0} = \Delta(G_{e}\,/H)_{0} + \sigma,\,\,\, \Delta_{1}  = \Delta(G_{e}\,/H)_{1} + \sigma,\,\,\, {\rm and}$$
$$\Delta_{\infty} = {\rm Conv}\left(\left\{\left. \frac{v-\pi_\pm(v)}{\beta_{\pm}(v)}\,\right|\,v\in\Cc(1)\setminus e^{\perp}\right\}\right) + \sigma
\,\,\,{\rm if}\,\,C = \PP^{1},$$
where $\pi_\pm$ is the projection defining $\beta_\pm.$
\end{proposition}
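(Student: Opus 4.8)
The plan is to invert the dictionary of Theorem~\ref{theorem:first} by reconstructing the $\sigma$-polyhedral divisor $\D$ and its AL-colorings from the colored cone $(\Cc,\F_Y)$ of $X$. We already know from Remark~\ref{marker:remarkG/H} that the open orbit $G_e/H$ is encoded by the fixed polyhedral divisor $\D_{G_e/H}$ supported at $0,1$ with coefficients $\Delta(G_e/H)_0,\Delta(G_e/H)_1$; hence $\KK[X]=A[C,\D]$ for a unique proper $\sigma$-polyhedral divisor $\D$ refining $\D_{G_e/H}$, and the whole content of the statement is the identification of $\sigma$, $\Delta_\infty$, and the curve $C$. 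First I would settle the curve: by Theorem~\ref{theorem:first}(i)--(ii), $C=\AA^1$ precisely when $X$ is toroidal, i.e. when $\F_Y=\emptyset$; using the description of colors in Proposition~\ref{proposition:col.G-div} together with Lemma~\ref{lemma:valhorvert}, the condition $\F_Y=\emptyset$ is equivalent to $\Cc\subset e^\perp$ (the colors $\varrho_\pm(\F_0)$ all lie outside $e^\perp$, and $\Cc$ being generated by $\cP\cup\varrho(\F_Y)$ with $\cP\subset\Vc=\Cone(\mp e)^\vee$, one gets $\Cc\subset e^\perp$ iff no color is needed). This gives the stated dichotomy on $C$.

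Next I would recover the cone $\sigma$. By \ref{rappel:div-hor-vert} the horizontal $G_e$-divisors of $X$ are the $D_\rho$ for $\rho\in\sigma^\star(1)$, and by Proposition~\ref{proposition:col.G-div} these are exactly the $G_e$-divisors lying in $e^\perp$, namely the rays $\Cc(1)\cap e^\perp$; this accounts for the first batch of generators of $\sigma$. The remaining rays of $\sigma$ come from the vertical $G_e$-divisors $D_{(\infty,v)}$: Proposition~\ref{proposition:col.G-div} records $\varrho_\pm(D_{(\infty,v)})=\mu(v)(v+w_\pm)$ where $w_-=v_0$, and $w_+=v_1$ (reflexive) or $w_+=v_0+v_1$ (skew). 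Inverting this relation, a vertex $v$ of $\Delta_\infty$ and a ray $\varrho_\pm(D_{(\infty,v)})\in\Cc(1)\setminus e^\perp$ determine each other once we normalize by $\beta_\pm$: the point $\tfrac{v_{\mathrm{ray}}-\pi_\pm(v_{\mathrm{ray}})}{\beta_\pm(v_{\mathrm{ray}})}$ is forced to be the $e^\perp$-component, which is exactly the stated formula for the vertices of $\Delta_\infty$. The tail cone $\sigma$ must then contain $\cvert_\pm$ together with the translates by $\pm v_0,\mp v_1,\pm v_0\mp v_1$ dictated by passing between the two AL-colorings $\D_-,\D_+$ and the divisor $\D$ (recall $\D_\pm$ differ from $\D$ by shifting coefficients by the AL-colors $v_0^\pm,v_1^\pm$, which in the reflexive case are $v_0,0,0,v_1$); this produces precisely the listed generators, and strong convexity of $\sigma$ follows from that of $\Gamma$ in Lemma~\ref{lemma:affcone} (equivalently from finite generation of $\KK[X]^U$).

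Finally I would check consistency: the coefficients $\Delta_0,\Delta_1$ are $\Delta(G_e/H)_i+\sigma$ by definition of an embedding refining $\D_{G_e/H}$, the resulting $\D$ is proper (over $\PP^1$ this reduces to $\deg\D\subsetneq\sigma$, which holds because $\Delta_\infty+\deg\D_{G_e/H}$ together with $\sigma$ was arranged so that the relevant vertex lies strictly inside, mirroring the properness already present in Theorem~\ref{theorem:SL}), and the AL-colorings $\D_\pm$ reconstructed from $\D$ by the shifts above have associated cones $\omega_\pm$ matching the colored cone of Theorem~\ref{theorem:first}(ii) via \ref{rap:SL2-omegas}; uniqueness in the Altmann--Hausen correspondence (\cite[Section~8]{AH}) then forces $\KK[X]=A[C,\D]$. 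The main obstacle I anticipate is the bookkeeping in the third step: verifying that the cone $\sigma$ generated by all these translated rays is exactly the tail cone of every coefficient of $\D$ (not larger), equivalently that the polyhedra $\Delta_0,\Delta_1,\Delta_\infty$ built from $(\Cc,\F_Y)$ reproduce the $B_\pm$-divisor images of Proposition~\ref{proposition:col.G-div} with no spurious extremal rays; this is a finite but delicate convex-geometry check, carried out separately in the reflexive and skew cases and, for $C=\PP^1$, using that $\tilde\omega_\pm^{\PP^1}$ and $\tilde\omega_\pm^{\AA^1}$ agree in the non-negative half-space (\ref{rap:SL2-omegas}).
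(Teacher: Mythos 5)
Your overall route is the same as the paper's: start from the presentation $\KK[X]=A[C,\D']$ of Theorem~\ref{theorem:SL} (with tail cone $\sigma'$), and translate between the colored cone and the polyhedral data via Proposition~\ref{proposition:col.G-div} and Theorem~\ref{theorem:first}. The dichotomy on $C$, the identification of $\Cc(1)\cap e^{\perp}$ with the horizontal $G_e$-divisors, and the recovery of the vertices of $\Delta_{\infty}$ by inverting $\varrho_{\pm}(D_{(\infty,v)})=\mu(v)(v+v_0)$, $\mu(v)(v+v_1)$ or $\mu(v)(v+v_0+v_1)$ are all exactly the paper's steps; note that the fact you use tacitly, that the rays of $\Cc$ lying in $\Vc$ are precisely the images of the $G_e$-divisors, is \cite[Lemma 2.4]{Kn}, which the paper cites at the outset.

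The gap is at the crux, namely the identification of the stated cone $\sigma$ with the tail cone $\sigma'$, which you defer to a ``delicate convex-geometry check''; moreover the mechanism you propose for the translated generators is not the right one. The AL-colorings $\D_{-},\D_{+}$ are extra data (choices of vertices and a marked point) on the \emph{same} divisor $\D$, not divisors obtained from $\D$ by shifting coefficients, so ``passing between the colorings'' does not produce the translates. The short argument, which is what the paper's proof gestures at, is this: for $C=\PP^{1}$ one has $\sigma'=\Cone\bigl(\sigma'(1)^{\star}\cup\deg\D\bigr)$, since every extremal ray of $\sigma'$ either does not meet $\deg\D$ (hence lies in $\sigma'(1)^{\star}$) or contains a point of $\deg\D$; and $\deg\D=\Delta_{0}+\Delta_{1}+\Delta_{\infty}$ has vertices $v+\epsilon_{0}v_{0}+\epsilon_{1}v_{1}$ with $\epsilon_{i}\in\{0,1\}$, $v\in\Delta_{\infty}(0)$, in the reflexive case (resp. $v+v_{0}+\epsilon_{1}v_{1}$ in the skew case). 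After dividing the divisor images by $\beta_{\pm}$ (equivalently by $\mu(v)$) these vertices are exactly $\cvert_{\pm}$ together with its stated translates, while $\Cc(1)\cap e^{\perp}=\sigma'(1)^{\star}$; hence the stated $\sigma$ equals $\sigma'$, and for $C=\AA^{1}$ one simply has $\sigma'=\Cone(\Cc(1))$ with $\Cc\subset e^{\perp}$. With this one-line Minkowski-sum observation your remaining steps ($\Delta_{0},\Delta_{1}$ from Remark~\ref{marker:remarkG/H}, $\Delta_{\infty}$ from Proposition~\ref{proposition:col.G-div}) coincide with the paper's, and the appeal to Altmann--Hausen uniqueness is unnecessary: since the presentation of $\KK[G_{e}\,/H]$ is fixed once and for all, one compares the stated data directly with the presentation already provided by Theorem~\ref{theorem:SL}.
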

\begin{proof}
First, we note by \cite[Lemma 2.4]{Kn} that $\Cc(1)\cap \Vc$ consists exactly
 of images in $N_{\QQ}$ of the $G_{e}$-divisors of $X$, and elements of
$\Cc(1)\cap  e^{\perp}$ (resp. $\Cc(1)\setminus e^{\perp}$) correspond to horizontal (resp. vertical) $G_e$-divisors.

The $G_{e}$-algebra $\KK[X] = A[C,\D]$ admits a presentation as in Theorem \ref{theorem:SL},
where $\D$ is a $\sigma'$-polyhedral divisor for some strongly convex polyhedral cone $\sigma'\subset N_\QQ$.
Using Proposition~\ref{proposition:col.G-div} and Theorem~\ref{theorem:first}, 
we remark that $\sigma\subset N_{\QQ}$ is generated by
$\sigma'(1)$ if $C = \AA^{1}$ and by $\sigma'(1)^{\star}$ and ${\rm deg}\, \D$ if
$C = \PP^{1}$. Hence $\sigma = \sigma'$. Clearly, we have $\Delta_{0} = \Delta(G/H)_{0} + \sigma$
and $\Delta_{1} = \Delta(G/H)_{1} + \sigma$. For the case $C = \PP^{1}$, the description of $\Delta_{\infty}$
follows again from \ref{proposition:col.G-div}. 
\end{proof}

\subsection{Classification of type II}
\

\label{subsec:3.3}
In this subsection, we explain the connection between Demazure roots and affine colored cones for the
type II. Let $e$ be a nonzero vector of $M$. 
Note by \cite[Proposition 3.8]{AL}
an affine $G_{e}$-variety $X$ is not of type {\rm I} if and only if
$X$ is horospherical. 
The following result shows that an affine $G_{e}$-spherical $X$ that is not of type I
can be assumed to be of type II for a larger reductive group.
\begin{lemme}
\label{lemmenotoftypeI}
Let $\Omega$ be a quasi-affine $G_{e}$-horospherical homogeneous space. Assume that the induced $\mathbb{T}$-action on
$\Omega$ is of complexity one. Then there exists an open subset $\tilde{\Omega}$
of the affine closure $\bar\Omega=\Spec\K[\Omega]$, which is a homogeneous space under a reductive group $\tilde{G}_{\tilde{e}} = 
\SL_{2}\rtimes_{\tilde{e}}\tilde{\TT}$ containing $G_{e}$ with the following properties.
\begin{itemize}
\item[(a)] The torus $\TT$ is a subtorus of $\tilde{\TT}$.
\item[(b)] The $\tilde{\TT}$-action on $\tilde{\Omega}$ is of complexity $0$.
\item[(c)]  $\tilde{\Omega}$ is an embedding of the $G_e$-spherical homogeneous space $\Omega$. %given by the induced $G_{e}$-action on $\tilde{\Omega}$. 
\end{itemize}
Moreover, 
the inclusion map $\Omega\rightarrow\tilde{\Omega}$ naturally identifies the affine $G_{e}$-embeddings of $\Omega$
with the affine $\tilde{G}_{\tilde{e}}$-embeddings of $\tilde{\Omega}$. 
\end{lemme}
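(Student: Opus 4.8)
The plan is to realize $\tilde\Omega$ as an embedding of $\Omega$ inside $\bar\Omega = \Spec\K[\Omega]$ obtained by enlarging the torus, and then to track how this enlargement acts on the combinatorial data. First I would set up the polyhedral description of $\Omega$: since $\Omega$ is $G_e$-horospherical of complexity one for $\TT$, its coordinate ring $\K[\Omega]$ is an $M$-graded algebra carrying the three derivations $\partial_-,\delta,\partial_+$ of \ref{marker:semiprod}, with $\partial_\pm$ homogeneous LNDs of degree $\pm e$; by Theorem~\ref{theorem:hor} these $\partial_\pm$ are described by coherent pairs over a base curve $C$, which here must be $\PP^1$ or $\A^1$. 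The horospherical hypothesis forces the weight monoid of $\K[\Omega]$ to be ``saturated in the $e$-direction'' in the appropriate sense, and this is precisely what will let us split off the rank-one piece.

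The key step is to locate a one-parameter subgroup action commuting with $G_e$ that enlarges $\TT$ to $\tilde\TT$ of rank $n+1$ so that the $\tilde\TT$-action on the open orbit becomes complexity zero, i.e. toric. Concretely, $\K(\Omega)^{\TT} = \K(C) = \K(t)$ for a rational parameter $t$ (after the complexity-one reduction, $C$ is rational because a horizontal $\G_a$-action is present, cf. \ref{marker: Demazure} and the remark after it), so one extra $\G_m$ acting on $t$ with an open orbit on $C$ produces an $(n+1)$-dimensional torus $\tilde\TT$ with $\K(\Omega)^{\tilde\TT} = \K$. One must check this extra $\G_m$ can be chosen to normalize the whole $G_e$-action: since $C=\PP^1$ and the coherent pairs defining $\partial_\pm$ in Theorem~\ref{theorem:SL} (horospherical/type~II degeneration) have all $v_z^\bullet$ compatible with a scaling of $t$ fixing $0$ and $\infty$, the torus $\G_m$ acting by $t\mapsto st$ rescales $\partial_\pm$ by a character, hence $\varepsilon_{\tilde e}$ for an appropriate extended character $\tilde e\in\tilde M$ with $\tilde e|_M = e$, giving $\tilde G_{\tilde e}=\SL_2\rtimes_{\tilde e}\tilde\TT\supset G_e$ and (a). The open $\tilde G_{\tilde e}$-orbit $\tilde\Omega$ in $\bar\Omega$ is then a homogeneous space on which $\tilde\TT$ acts with complexity zero, giving (b); and since $\tilde\Omega\subset\bar\Omega$ is $G_e$-stable, open, and contains $\Omega$, it is an embedding of $\Omega$, giving (c).

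For the final ``Moreover'' clause, the plan is a soft argument via affine closures: any affine $G_e$-embedding $Z$ of $\Omega$ satisfies $\K[Z]\subset\K[\Omega]$ as a $G_e$-stable normal finitely generated subalgebra, and the extra $\G_m$ lies in $\Aut(\Omega)$ normalizing $G_e$, so $\K[Z]$ is automatically $\G_m$-stable (being an affine algebra it is a sum of weight spaces, and the $\G_m$-action is the one induced on $\K(\Omega)$ via $t\mapsto st$, under which $\K[Z]$ must be invariant because $Z$ is already $\TT$- and $\partial_\pm$-stable and $\tilde\TT=\TT\cdot\G_m$). Hence $\K[Z]$ is a $\tilde G_{\tilde e}$-stable normal affine subalgebra of $\K[\tilde\Omega]$, i.e. $Z$ is an affine $\tilde G_{\tilde e}$-embedding of $\tilde\Omega$; conversely every affine $\tilde G_{\tilde e}$-embedding of $\tilde\Omega$ is a fortiori a $G_e$-embedding of $\Omega$, and the two correspondences are mutually inverse because they are both given by $Z\mapsto\Spec\K[Z]$ with the same ambient field $\K(\Omega)=\K(\tilde\Omega)$.

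The main obstacle I expect is the second step: checking that the extra $\G_m$ can be chosen to \emph{simultaneously} commute appropriately with all of $\partial_-,\delta,\partial_+$ — equivalently, that rescaling the base parameter $t$ really does act on $\K[\Omega]$ by a torus action whose generator extends the character $e$ to a character $\tilde e$ of the enlarged torus while keeping $[\partial_+,\partial_-]=\delta$ and $[\delta,\partial_\pm]=\pm2\partial_\pm$ intact. This is where one must use the explicit horospherical form of the coherent pairs (the degenerate limit of Theorem~\ref{theorem:SL}, or \cite[Proposition 3.8]{AL}) rather than abstract nonsense; the formulas in \ref{rap:remdpm}–\ref{rap:der-Q} show that under $t\mapsto st$ the derivations $\partial_\pm$ scale by $s^{\mp1/2}$ (up to the $\xi^\pm_m$ factors, which are also homogeneous in $t$), which is exactly the behavior of the character $\varepsilon_{\tilde e}$, so the bracket relations are preserved. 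Once this compatibility is in hand, properties (a)–(c) and the equivalence of embeddings follow formally.
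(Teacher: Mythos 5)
Your overall route (pass to the affine closure $\bar\Omega=\Spec\K[\Omega]$, produce one extra one-parameter torus from the horospherical structure, then transport the enlarged action to all affine embeddings) is the same as the paper's, but the two steps you yourself flag as delicate are exactly where your justification breaks down. To build the extra $\G_m$ you argue via Theorem~\ref{theorem:SL} and the formulas of \ref{rap:remdpm}--\ref{rap:der-Q}; these describe precisely the non-horospherical (type I) situation, which is excluded here, so they say nothing about $\Omega$, and the verification you sketch is false even for those formulas: the factors $\xi^{\pm}_m=(t-1)^{\cdots}$ and the term $(t-1)Q'$ are \emph{not} homogeneous under $t\mapsto st$, which reflects the fact that for type I varieties no larger torus exists at all (this is the content of the type I/type II dichotomy). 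The input actually needed, and the one the paper uses, is \cite[Proposition 3.8]{AL} applied to $\bar\Omega$ (after observing that $\K[\Omega]$ is finitely generated, so $\bar\Omega$ is a variety and $\Omega$ sits in it with small complement): that proposition produces the torus $S$, hence $\tilde\TT$, $\tilde G_{\tilde e}$ and the open orbit $\tilde\Omega$. Your parenthetical fallback to that proposition is the right move, but the rescaling construction as you justify it does not stand on its own.

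The ``Moreover'' clause has a second gap. You assert that for an affine $G_e$-embedding $Z$ the subalgebra $\K[Z]\subset\K[\Omega]$ is automatically stable under the new $\G_m$ ``because $Z$ is already $\TT$- and $\partial_\pm$-stable and $\tilde\TT=\TT\cdot\G_m$''; but stability under a subtorus and under $U_\pm$ does not imply stability under a bigger torus, and an automorphism normalizing $G_e$ need not preserve a given $G_e$-stable subalgebra, so this inference is a non sequitur. The missing idea is the canonicity of the extension: by \cite[Corollary 3.4]{AL} the extra torus $S$ is the one attached to the grading of $\K[\Omega]$ by $\SL_2$-isotypic components, which is intrinsic to the $G_e$-module structure. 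Hence every $\SL_2$-stable subalgebra, in particular $\K[Z]$, is a sum of isotypic components, therefore graded and $S$-stable, and the induced $\tilde G_{\tilde e}$-action on $Z$ extends the one on $\tilde\Omega$ independently of the embedding. This is exactly how the paper closes the argument; without some such intrinsic description of the extra torus, your identification of affine $G_e$-embeddings of $\Omega$ with affine $\tilde G_{\tilde e}$-embeddings of $\tilde\Omega$ does not follow.
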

\begin{proof}
First of all, $\K[\Omega]$ is finitely generated, so $\bar\Omega=\Spec\K[\Omega]$ is an affine variety and $\codim_{\bar\Omega}(\bar\Omega\setminus\Omega)\ge2$.
 Since the $\SL_2$-action induced by $G_e$ on $\bar\Omega$ is horospherical and horizontal, \cite[Proposition 3.8]{AL} implies
 the existence of the $\tilde{G}_{\tilde{e}}$-homogeneous
space $\tilde{\Omega}\subset \bar\Omega$ that is a $G_e$-equivariant embedding of $\Omega$.

Moreover, let $X$ be an affine $G_e$-embedding of $\Omega$. The $\tilde{G}^{\prime}_{\tilde{e}'}$-action on $X$ is constructed in \cite[Proposition 3.8]{AL}
by extending the torus $\TT\subset G_e$ by the torus $S$ commuting with $G_e$, see \cite[Corollary 3.4]{AL}.  
The torus $S$ corresponds to the grading defined by the decomposition into isotypic components with respect to the $\SL_2$-action. Hence it does not depend on the embedding $X$. 
Therefore, $\tilde{G}^{\prime}_{\tilde{e}'}=\tilde{G}_{\tilde{e}}$ and $\tilde{G}_{\tilde{e}}$-action on $X$
is the extension of $\tilde{G}_{\tilde{e}}$-action on $\tilde\Omega$.
% Denoting by $\tilde{B}_{\pm}$ the Borel subgroup of $\tilde{G}_{\tilde{e}}$
%generated by $B_{\pm}$ and $\{1\}\rtimes\tilde{\TT}$, we observe easily that the open $\tilde{B}_{\pm}$-orbit
%of $\tilde{\Omega}$ coincides with the open $B_{\pm}$-orbit of $\Omega$, as they are both affine. 
%Taking into account that the irreducible components of $\tilde{\Omega}\setminus\Omega$ are at least of codimension 2, 
%the rest of the proof follows from \cite[Section 4]{Kn}.
\end{proof}
For the next paragraphs, we let $X$ be an affine $G_{e}$-spherical variety of type II and assume
that $\TT$-action on $X$ is faithful.
\begin{rappel}
\label{marker: semisimpleroot}
Since the torus $\TT$ acts on $X$ with an open orbit, we may represent the $M$-graded algebra $\KK[X]$ as 
the semigroup algebra
$$\KK[\sigma^{\vee}\cap M] = \bigoplus_{m\in\sigma^{\vee}\cap M}\KK\chi^{m},$$
where $\sigma\subset N_{\QQ}$ is a strongly convex polyhedral cone. For any $m\in\sigma^{\vee}\cap M$ 
the character $\chi^{m}:\TT\rightarrow \G_{m}$
is identified with a regular function on $X$. 
Recall that the set of \emph{semisimple roots} \cite[Section 3.4]{Oda} of $\sigma$ is
$$\Rs(\sigma) = \Rt(\sigma)\cap (-\Rt(\sigma)).$$ By \cite[Theorem 2.7]{AL}, we have $e\in\Rs(\sigma)$. 
Denoting by $\rho_{\pm}$ the distinguished ray of $\pm e$,
the $\mathfrak{sl}_{2}$-triplet
on $\KK[X]$ corresponding to the $G_{e}$-action on $X$ is $\{\partial_{-},\delta, \partial_{+}\}$,
where $\partial_{\pm}(\chi^{m}) = \langle m,\rho_{\pm}\rangle \chi^{m\pm e}$ for any $m\in\sigma^{\vee}\cap M$,
and $\delta  = [\partial_{+},\partial_{-}]$. 
\end{rappel}
\begin{lemme}
\label{lemmatoricdiv}
The following assertions hold.
\begin{enumerate}
 \item[(i)] The lattice $L_{\pm}$ of $B_{\pm}$-weights of $\KK(X)$ 
is the sublattice of $M$ generated by $\tau_{\pm}\cap M$,
where $\tau_{\pm}\subset \sigma^{\vee}$ is the dual facet of the ray $\rho_{\pm}\subset\sigma$. 
\item[(ii)] The open $B_{\pm}$-orbit on $X$ is $X_{B_{\pm}} = {\rm Spec}\,\KK[\rho_{\pm}^{\vee}\cap M]$.
\item[(iii)] The open $G_{e}$-orbit on $X$ is the toric $\TT$-variety $X_{\Sigma}$
associated with the fan $\Sigma  = \{\rho_{-}, \{0\}, \rho_{+}\}$.  

\end{enumerate} 
\end{lemme}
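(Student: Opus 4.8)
The plan is to establish the three assertions in turn, using throughout the monomial presentation $\KK[X]=\KK[\sigma^{\vee}\cap M]=\bigoplus_{m\in\sigma^{\vee}\cap M}\KK\chi^{m}$ from \ref{marker: semisimpleroot}, so that $\KK(X)$ is the fraction field of $\KK[M]$ and $\partial_{\pm}(\chi^{m})=\langle m,\rho_{\pm}\rangle\chi^{m\pm e}$. First I would record two elementary facts to be used repeatedly. Since $e$ \emph{and} $-e$ both lie in $\Rt(\sigma)$, one has $\langle e,\rho_{+}\rangle=-1$, $\langle e,\rho_{-}\rangle=1$, $\rho_{+}\ne\rho_{-}$, and $\langle e,\rho\rangle=0$ for every $\rho\in\sigma(1)\setminus\{\rho_{+},\rho_{-}\}$. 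And a one–line bracket computation gives $\delta(\chi^{m})=[\partial_{+},\partial_{-}](\chi^{m})=\langle m,\rho_{-}-\rho_{+}\rangle\chi^{m}$, so the diagonal torus $T\subset\SL_{2}$ acts on $X$ through $\TT$ (via the cocharacter $\rho_{-}-\rho_{+}$); hence the reductive part of $B_{\pm}=U_{\pm}\rtimes(T\times\TT)$ acts through $\TT$.

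For (i): a $B_{\pm}$–eigenvector of $\KK(X)^{\ast}$ is the same thing as a $U_{\pm}$–invariant $(T\times\TT)$–eigenvector; by the previous remark the $(T\times\TT)$–eigenvectors of $\KK(X)^{\ast}$ are exactly the monomials $c\chi^{m}$, and $\chi^{m}$ is $U_{\pm}$–invariant iff $\partial_{\pm}(\chi^{m})=0$ iff $\langle m,\rho_{\pm}\rangle=0$. Thus $L_{\pm}=\rho_{\pm}^{\perp}\cap M$. To identify this with $\langle\tau_{\pm}\cap M\rangle_{\ZZ}$, I would use that $\tau_{\pm}=\sigma^{\vee}\cap\rho_{\pm}^{\perp}$, being a facet of the full–dimensional cone $\sigma^{\vee}$, is full–dimensional inside $\rho_{\pm}^{\perp}$, and that a full–dimensional rational cone meets a lattice in a semigroup generating the whole lattice.

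For (ii): choose $m_{0}\in\relint(\tau_{+})\cap M$. The standard description of the affine charts of a toric variety gives $\KK[X]_{\chi^{m_{0}}}=\KK[\rho_{+}^{\vee}\cap M]$, so $X_{B_{+}}:=\Spec\KK[\rho_{+}^{\vee}\cap M]$ is the principal open subset $D(\chi^{m_{0}})\subset X$; since $\chi^{m_{0}}\in\ker\partial_{+}$ is a $(T\times\TT)$–eigenvector, this open subset is $B_{+}$–stable (exactly as in the proof of Lemma~\ref{lemma:B-orbit}, via \cite[Proposition~3.3]{AHS}). Using that $\rho_{+}$ is primitive I would write $X_{B_{+}}\cong\AA^{1}\times\TT'$ with $\TT'=\Spec\KK[\rho_{+}^{\perp}\cap M]$, and then a direct computation shows that $U_{+}$ acts by translating the $\AA^{1}$–coordinate by a unit on $\TT'$; as $\TT$ has only the two orbits $\KK^{\ast}\times\TT'$ and $\{0\}\times\TT'$ on $\AA^{1}\times\TT'$, the group $B_{+}$ is transitive, so $X_{B_{+}}$ is the dense open $B_{+}$–orbit. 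The $B_{-}$ case is symmetric, giving $X_{B_{-}}=\Spec\KK[\rho_{-}^{\vee}\cap M]$.

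For (iii): let $\mathcal O$ be the open $G_{e}$–orbit and $X_{\Sigma}\subset X$ the open toric subvariety of the subfan $\Sigma=\{\rho_{-},\{0\},\rho_{+}\}$ of the fan of $\sigma$, so that $X_{\Sigma}=X_{B_{+}}\cup X_{B_{-}}$ by (ii). Since the dense $B_{\pm}$–orbit $X_{B_{\pm}}$ lies inside the dense $B_{\pm}$–stable set $\mathcal O$, one gets $X_{B_{\pm}}\subseteq\mathcal O$, whence $\mathcal O=G_{e}\cdot X_{B_{+}}\supseteq X_{B_{+}}\cup X_{B_{-}}=X_{\Sigma}$. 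The remaining point — and the one I expect to be the only delicate one — is that $X_{\Sigma}$ is $G_{e}$–stable; granting this, $X_{B_{+}}\subseteq X_{\Sigma}$ forces $\mathcal O=G_{e}\cdot X_{B_{+}}\subseteq X_{\Sigma}$, hence $\mathcal O=X_{\Sigma}$. For the $G_{e}$–stability, the orbit decomposition of $X$ (see \cite{Oda}) gives
\[
X\setminus X_{\Sigma}=\Bigl(\bigcup_{\rho\in\sigma(1)\setminus\{\rho_{+},\rho_{-}\}}D_{\rho}\Bigr)\ \cup\ \bigl(D_{\rho_{+}}\cap D_{\rho_{-}}\bigr),
\]
and I would check each summand is $G_{e}$–stable: for $\rho\ne\rho_{\pm}$ the ideal $\bigoplus_{\langle m,\rho\rangle>0}\KK\chi^{m}$ of $D_{\rho}$ is $\partial_{\pm}$–stable because $\langle e,\rho\rangle=0$; and the ideal $J$ of $D_{\rho_{+}}\cap D_{\rho_{-}}$, spanned by the $\chi^{m}$ with $m\in\sigma^{\vee}\cap M$ and $\langle m,\rho_{+}\rangle>0$ or $\langle m,\rho_{-}\rangle>0$, is $\partial_{\pm}$–stable because, for instance, if $\langle m,\rho_{+}\rangle>0$ then $\langle m,\rho_{-}\rangle\ge0$ forces $\langle m+e,\rho_{-}\rangle\ge1$, so $\chi^{m+e}\in J$, and symmetrically. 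This last verification is where the hypothesis $e\in\Rs(\sigma)$ is genuinely used (it pins down every ray of $\sigma$), and it is the main obstacle; everything else is formal toric geometry together with the $\ssl_{2}$–triple description of the $G_{e}$–action.
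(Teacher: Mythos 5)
Your proof is correct, and for parts (i) and (ii) it follows essentially the paper's line: (i) reduces to identifying $B_{\pm}$-eigenvectors with (quotients of) homogeneous elements of $\ker\partial_{\pm}=\KK[\tau_{\pm}\cap M]$ --- your formulation $L_{\pm}=\rho_{\pm}^{\perp}\cap M$ together with the full-dimensionality of $\tau_{\pm}$ in $\rho_{\pm}^{\perp}$ is the same computation --- and (ii) is the same principal open subset $D(\chi^{m_{0}})$ with $m_{0}\in\relint(\tau_{\pm})\cap M$; the only cosmetic difference is the transitivity step, where the paper notes that a nontrivial $U_{\pm}$-orbit is an affine line and hence cannot lie in either of the two $\TT$-orbits of $\AA^{1}\times\G_m^{n-1}$, while you exhibit the translation explicitly. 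In (iii) you take a genuinely different, though closely related, route. The paper realizes $X_{\Sigma}$ inside its affine hull $\Spec\KK[\Lambda\cap M]$, $\Lambda=\rho_{-}^{\vee}\cap\rho_{+}^{\vee}$, observes that this subalgebra of $\KK(X)$ is $\partial_{\pm}$-stable, and then checks $\partial_{\pm}$-stability of the single monomial ideal cutting out the complement of $X_{\Sigma}$ there --- an argument that only ever involves the two distinguished rays. You instead stay inside $X$, decompose $X\setminus X_{\Sigma}$ into the divisors $D_{\rho}$ with $\rho\in\sigma(1)\setminus\{\rho_{-},\rho_{+}\}$ and the intersection $D_{\rho_{+}}\cap D_{\rho_{-}}$, and verify $\partial_{\pm}$-stability of the corresponding monomial ideals; this forces you to use $\rho(e)=0$ for the non-distinguished rays, which you correctly derive from $e\in\Rs(\sigma)$. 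What your version buys is that you never leave $X$ and need not justify that the $G_{e}$-action extends to the auxiliary affine model of $X_{\Sigma}$; what the paper's version buys is an ideal computation that is independent of the rays of $\sigma$ other than $\rho_{\pm}$. Both arguments are complete.
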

\begin{proof}
(i) This follows from the equality ${\rm ker}\,\partial_{\pm} = \KK[\tau_{\pm}\cap M]$.  

(ii) $X_{B_{\pm}}\cong\AA^{1}\times\G_m^{n-1}$, where $n=\dim\TT$, is an open $B_{\pm}$-stable subset of $X$.
It consists of two $\TT$-orbits, namely  $\{0\}\times \G_m^{n-1}$ and $\G_m\times \G_m^{n-1}\cong\TT$. Since a $U_{\pm}$-orbit, which is an affine line,
 cannot be contained in neither $\TT$-orbit, one concludes.
  
(iii) By the above step, the open $B_\pm$-orbit corresponds to the fan $\{\{0\},\rho_\pm\}$, hence the open $G_e$-orbit contains $X_{\Sigma}$. So it remains
to show that $X_{\Sigma}$ is $\SL_{2}$-stable. The coordinate ring of $X_{\Sigma}$ is $\KK[\Lambda\cap M]$, 
where $\Lambda=\rho_{-}^{\vee}\cap\rho_{+}^{\vee}\subset M_{\QQ}$.
%is the cone generated by $\rho_{-}$ and $\rho_{+}$. 
Since $\K[X_\Sigma]$ is $\SL_2$-stable, it remains to show that the complement $\Spec(\KK[\Lambda\cap M])\setminus X_\Sigma$ is also $\SL_2$-stable.
The ideal of the complement
$$ I = \bigoplus_{m\in (\Lambda\setminus {\rm lin}(\Lambda))\cap M}\KK\chi^{m}\subset \KK[\Lambda^{\vee}\cap M]$$
is $\partial_{\pm}$-stable, which proves the assertion.
\end{proof}

\begin{remark}
Given an affine horospherical embedding $X$, the associated colored cone is $(\Cc,\F)$, where $\F$ is the set of all colors of $X$, see Definition~\ref{definition:conecol}. 
In particular, for any affine embedding of $X_\Sigma$ the associated colored cone contains exactly one color.
\end{remark}

The main result of this subsection is the following proposition. It can also be obtained from \cite[Theorem 1.1]{AKP}.
\begin{proposition}
\label{proptoric}
Let $X$ be an affine $G_{e}$-spherical variety of type {\rm II}. Assume that the induced $\TT$-action on $X$
is faithful.
\begin{itemize}
\item[(i)]Assume that $X$ is the toric $\TT$-variety corresponding to the strongly convex cone 
$\sigma\subset N_{\QQ}$.
Let $\omega_{\pm}$ be the image of the cone
$$
\sum_{\rho\in\sigma(1)\setminus\{\rho_{\pm}\}}\QQ_{\geq 0}\cdot\rho\subset \sigma
$$
under the projection $N_{\QQ}\rightarrow (V_{\pm})_{\QQ} = {\rm Hom}(L_{\pm},\QQ) =  (N/\langle \rho_{\pm}\rangle)_{\QQ}$,
and $D_{\pm}$ be the $\TT$-divisor on $X$ corresponding to the ray $\rho_{\pm}$.
Then  the associated colored cone 
of $X$ in $(V_{\pm})_{\QQ}$ with respect to
the $B_{\pm}$-action
is $(\omega_{\pm}, D_{\mp})$.
%
% the associated colored cone 
%of $X$ in $(V_{\pm})_{\QQ} = {\rm Hom}(L_{\pm},\QQ) =  (N/\langle \rho_{\pm}\rangle)_{\QQ}$ with respect to
%the $B_{\pm}$-action
%is $(\omega_{\pm}, D_{\rho_{\pm}})$,
%where $\omega_{\pm}$ is the image of the subcone
%$$\sum_{\rho\in\sigma(1)\setminus\{\rho_{\pm}\}}\QQ_{\geq 0}\cdot\rho\subset \sigma$$
%under the projection $N_{\QQ}\rightarrow (V_{\pm})_{\QQ}$
%and $D_{\rho_{\pm}}$ is the
%$\TT$-divisor of $X$ corresponding to the ray $\rho_{\pm}$.
\item[(ii)] Conversely, assume that $X$ is an affine embedding of the $G_{e}$-homogeneous space
$X_{\Sigma}$ {\rm (}see \ref{lemmatoricdiv}{\rm )} given by a colored cone $(\Cc, D_{\mp})$, where $D_{\mp}$
is the color of $X$ for the $B_{\pm}$-action. Let $\rho_{\pm}\in N$ be the primitive vector corresponding to the $\TT$-divisor $D_{\pm}$
and consider $\Cc$ as a cone in $N$. Then $X$ is equal to the toric $\TT$-variety ${\rm Spec}\,\KK[\sigma^{\vee}\cap M]$ for
$\sigma=\Cc+\QQ_{\ge0}\rho_\pm$. 
\end{itemize}
\end{proposition}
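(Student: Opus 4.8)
The plan is to prove the two statements essentially by bookkeeping with the combinatorial correspondences already established, specializing the type~I machinery to the horospherical (type~II) situation. For part~(i), I would start from Lemma~\ref{lemmatoricdiv}: the open $B_\pm$-orbit is $\Spec\KK[\rho_\pm^\vee\cap M]$, the lattice of $B_\pm$-weights is $L_\pm$, generated by $\tau_\pm\cap M$ where $\tau_\pm$ is the facet of $\sigma^\vee$ dual to $\rho_\pm$, and the valuation space is $(V_\pm)_\QQ=(N/\langle\rho_\pm\rangle)_\QQ$. Then I would identify the $B_\pm$-divisors of $X$ by intersecting the toric boundary with the open $B_\pm$-orbit: the $\TT$-divisor $D_\rho$ associated with a ray $\rho\in\sigma(1)$ survives in the open $B_\pm$-orbit precisely when $\rho\ne\rho_\pm$ (since removing $D_{\rho_\pm}$ is exactly passing from $\sigma$ to the fan $\{\{0\},\rho_\pm\}$ inside which the $B_\pm$-orbit sits, cf. the argument in Lemma~\ref{lemmatoricdiv}(iii)). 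Among these surviving divisors, $D_{\rho_\mp}$ is not $\partial_\pm$-stable — it is moved by the $U_\mp$-action inside the $\SL_2$ — hence it is a color for the $B_\pm$-action, while all other $D_\rho$ remain $G_e$-stable. Computing the image in $(V_\pm)_\QQ$ of a ray $\rho$ is just the projection $N_\QQ\to(N/\langle\rho_\pm\rangle)_\QQ$, so the associated cone is the image of $\sum_{\rho\in\sigma(1)\setminus\{\rho_\pm\}}\QQ_{\ge0}\rho$, which is $\omega_\pm$ by definition, and the set of colors is $\{D_\mp\}$. Finally I would check the affineness and strong convexity conditions of Definition~\ref{definition:conecol} directly (they follow from $X$ being affine and from $\rho_\mp\not\subset\rho_\pm$), so that $(\omega_\pm,D_\mp)$ is genuinely the associated colored cone.

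For part~(ii) I would argue by reversing this construction. Given the colored cone $(\Cc,D_\mp)$ of an embedding $X$ of $X_\Sigma$, I set $\sigma=\Cc+\QQ_{\ge0}\rho_\pm\subset N$. The point is that adding back the ray $\rho_\pm$ — whose $\TT$-divisor $D_\pm$ is a $G_e$-divisor, i.e.\ is \emph{not} a color — reconstitutes the full $\TT$-toric data from the $B_\pm$-colored data. Concretely, $\KK[X]$ sits inside $\KK(X_\Sigma)$ as the subalgebra cut out by the valuations in $\Cc$ together with the color valuations, and since the color $D_\mp$ corresponds (under $\varrho_\pm$) to the image of $\rho_\mp$, intersecting with its valuation is the same as intersecting with $v_{\rho_\mp}$ on the torus; together with the $\rho_\pm$-valuation coming from the homogeneous space $X_\Sigma$ this gives exactly $\bigcap_{\rho\in\sigma(1)}\Oo_{v_\rho}\cap\KK[\TT]=\KK[\sigma^\vee\cap M]$. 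One then checks that the $\SL_2$-action extends (the ideal of the complement of $\Spec\KK[\sigma^\vee\cap M]$ is $\partial_\pm$-stable, exactly as in Lemma~\ref{lemmatoricdiv}(iii)), so $X=\Spec\KK[\sigma^\vee\cap M]$ as a $G_e$-variety. Part~(i) applied to this $\sigma$ recovers $(\Cc,D_\mp)$, closing the loop.

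The main obstacle, I expect, is the precise matching of the lattices and valuation spaces on the two sides of the correspondence — keeping straight that $L_\pm$ is a \emph{proper} sublattice of $M$ (it is $\tau_\pm\cap M$ saturated), that $(V_\pm)_\QQ$ is a \emph{quotient} $N_\QQ/\langle\rho_\pm\rangle$ rather than a subspace, and that the projection used to compute $\varrho_\pm$ of a $\TT$-divisor is precisely this quotient map. In particular one must verify that $\omega_\pm$, as the image of a strongly convex cone under a projection, is still strongly convex as a cone in $(V_\pm)_\QQ$ — this uses that $\rho_\pm$ is a genuine extremal ray of $\sigma$ and not contained in the span of the other rays, which is guaranteed because $e\in\Rs(\sigma)$ is a semisimple root and hence $\rho_\pm$ is a root-ray with $\langle e,\rho_\pm\rangle=-1$. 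Once these identifications are pinned down, everything else is the same orbit/divisor bookkeeping already carried out for type~I in Proposition~\ref{proposition:col.G-div} and Theorem~\ref{theorem:first}, now in the simpler toric setting; alternatively, as the statement notes, one can simply invoke \cite[Theorem 1.1]{AKP}, but I would prefer to give the self-contained combinatorial argument above.
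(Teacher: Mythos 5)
Your overall route is the same as the paper's: for (i) you use Lemma~\ref{lemmatoricdiv} to read off the open $B_{\pm}$-orbit, hence the $B_{\pm}$-divisors, identify the unique color as $D_{\rho_{\mp}}$ and the $G_{e}$-divisors as the $D_{\rho}$ with $\rho\in\sigma(1)\setminus\{\rho_{-},\rho_{+}\}$, and compute their images under the quotient $N_{\QQ}\to (N/\langle\rho_{\pm}\rangle)_{\QQ}$; for (ii) you reverse the construction and close the loop with the embedding/colored-cone bijection, which is exactly what the paper's ``(ii) follows directly from (i)'' amounts to. That bookkeeping is correct.

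There is, however, one genuine gap in (i). Knowing that $D_{\mp}$ is the only color and that the $G_{e}$-divisors map onto the rays of $\omega_{\pm}$ other than the image of $\rho_{\mp}$ does not yet give the \emph{associated} colored cone: by \ref{marker:col} that cone is $(\Cc,\F_{Y})$ where $\F_{Y}$ consists of the colors \emph{containing the closed $G_{e}$-orbit}, so you must still show $\F_{Y}=\{D_{\mp}\}$ rather than $\F_{Y}=\emptyset$. Your final step --- ``check the affineness and strong convexity conditions of Definition~\ref{definition:conecol} \dots so that $(\omega_{\pm},D_{\mp})$ is genuinely the associated colored cone'' --- does not do this: verifying that a candidate pair satisfies the axioms of an affine colored cone never proves it is the one attached to $X$. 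The step cannot be skipped: if $\sigma$ has only the two rays $\rho_{-},\rho_{+}$, then $X$ has no $G_{e}$-divisors at all, and the toroidal reading $\F_{Y}=\emptyset$ would give the colored cone $(\{0\},\emptyset)$ of the homogeneous space, not $(\omega_{\pm},D_{\mp})$. The fix is the one the paper records in the remark preceding the proposition: $X_{\Sigma}$ is horospherical, so $\Vc=V_{\QQ}$, and in the affineness condition of Definition~\ref{definition:conecol} the vector $m$ must be non-positive on $\Vc$, hence $m=0$, which cannot be positive on a color outside $\F_{Y}$; therefore every color of an affine embedding of $X_{\Sigma}$ lies in $\F_{Y}$, i.e.\ $\F_{Y}=\{D_{\mp}\}$. (A smaller point: your justification of strong convexity of $\omega_{\pm}$ via ``$\rho_{\pm}$ not contained in the span of the other rays'' is not the right criterion and is not needed --- once the pair is identified as the associated colored cone, Knop's result makes it an affine colored cone automatically; the combinatorial reason, if you want one, is the sign pattern $\rho(e)=0$ for $\rho\in\sigma(1)\setminus\{\rho_{-},\rho_{+}\}$ together with $\rho_{\pm}(e)=\mp1$, coming from $e\in\Rs(\sigma)$.)
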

\begin{proof}
(i) By Lemma \ref{lemmatoricdiv}, the $G_{e}$-divisors of $X$ correspond to elements of 
$\sigma(1)\setminus \{\rho_{-},\rho_{+}\}$ and the image of the unique color (with respect to $B_{\pm}$) is $\rho_{\pm}$.
%Since the ideal of the $G_{e}$-closed orbit is 
%$$I(Y) = \bigoplus_{m\in (\sigma^{\vee}\setminus {\rm lin}(\sigma^{\vee})\cap M}\KK \chi^{m},$$
%one has $Y\subset D_{\rho_{\pm}}$. 

Assertion (ii) follows directly from (i).
\end{proof}

\section{$G_e$-spherical homogeneous spaces}\label{sec:hom}
In this section we study the correspondence between classes of spherical subgroups of $G_e$ and types of $G_e$-spherical varieties, namely skew, reflexive and horospherical ones. For this, we explicitly describe the $G_e$-spherical homogeneous spaces. We start with the classification of the subgroups of $G_e$ in the following lemma.
\begin{lemma}
\label{lemma:subgroup}
\begin{enumerate}[label*={\rm\Roman*.}]
\item
The group $G_e$ posesses a non-trivial center 
\begin{equation}
Z(G_e)=\left\{\left.
\left(\left(
\begin{matrix}
a&0\\ 0&a^{-1}
\end{matrix}
\right),t\right)
\;\right|\; \chi^{e}(t)=a^{2},a\in\K^{*},t\in\TT\right\}.
\end{equation}
\item Any closed subgroup %of positive dimension 
of $G_{e}$
up to conjugation %belongs to one of the following families:
is of one of the following types:
\begin{enumerate}[label={(\roman*)}]
\item ``center extension'', i.e. a subgroup $H\subset G_e$ such that the neutral component $H^{\circ}$ is contained in the center $Z(G)$. 
\item quasitorus $Q\subset T\times\TT$ such that $Q^{\circ}\nsubset Z(G_e)$;
\item ``normalizing subgroup'' $N(Q)=Q\cup \tau Q$ such that $Q^{\circ}\nsubset Z(G_e)$.
Such a $\tau$ exists if and only if $Q^\0\cap\SL_2=T$. In latter case we may choose $\tau$  so that $\tau=\left(\left(
\begin{smallmatrix}
0&-1\\ 1&0
\end{smallmatrix}
\right),b\right)$ and  $b^{2d}=1$, where $d$ is the exponent of $Q/Q^\0$;
\item semidirect product $U_+\rtimes Q$, where the quasitorus $Q$ is a subgroup of $T\times\TT$;
\item semidirect product $\SL_2\rtimes Q'$, where the quasitorus $Q'$ is a subgroup of $\TT$.
\end{enumerate}
\item The spherical subgroups of $G_e$ are those with a non-central neutral component, i.e. of types (ii)--(v).
\end{enumerate}
\end{lemma}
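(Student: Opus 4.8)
The plan is to work through the three parts essentially in order, with part II carrying most of the weight. First I would prove I: the center $Z(G_e)$ consists of pairs $(A,t)$ that commute with all of $\SL_2\times\{1\}$ and with $\{1\}\times\TT$. Commuting with $\SL_2$ forces $A$ to be central in $\SL_2$, hence $A=\pm\mathrm{Id}$ is too naive — one must be careful, since conjugation in $G_e$ mixes the factors through $\varepsilon_e$. Writing out $(A,t)(B,1)(A,t)^{-1}$ using the multiplication law and demanding this equal $(B,1)$ for all $B\in\SL_2$ forces $A\cdot\varepsilon_e(t)(B)\cdot A^{-1}=B$, i.e.\ $A$ conjugates $\varepsilon_e(t)$-twisted matrices back; a short computation shows this holds for all $B$ exactly when $A=\mathrm{diag}(a,a^{-1})$ with $\chi^e(t)=a^2$, giving the stated description. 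Commuting with $\{1\}\times\TT$ is then automatic since $\TT$ is abelian and $\varepsilon_e(t_1)$ acts diagonally.

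For part II, the strategy is: given a closed subgroup $H\subset G_e$, look at the image $\bar H$ of $H$ under the projection $G_e\to G_e/\mathscr{R}_u$ where we quotient by the unipotent radical structure — more precisely, use the projection $p\colon G_e\to \mathrm{PGL}_2\rtimes\TT$ or just analyze $H\cap\SL_2$ and the image of $H$ in $\SL_2$. The key dichotomy is whether $H^\circ$ contains a nontrivial unipotent subgroup of $\SL_2$. Since every nontrivial unipotent subgroup of $G_e$ is one-dimensional and (after the natural $B_\pm$-type conjugation inside $\SL_2$) conjugate to $U_+$, and since the normalizer of $U_+$ inside $G_e$ is $U_+\rtimes(T\times\TT)$, one gets: if $H^\circ$ contains a unipotent, then either $H^\circ=U_+$ and $H\subset U_+\rtimes(T\times\TT)$ giving case (iv), or $H^\circ\supset U_+$ together with the opposite root group, forcing $H^\circ\supset\SL_2$ and hence case (v). If $H^\circ$ contains no unipotent, then $H^\circ$ is reductive with $H^\circ\cap\SL_2$ being a torus, so $H^\circ$ is a subtorus of a maximal torus of $G_e$; then $H$ normalizes $H^\circ$, and the component group acts on $H^\circ$. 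If $H^\circ\subset Z(G_e)$ we are in case (i). Otherwise $H^\circ$ is a non-central subtorus; its centralizer in $G_e$ is $T\times\TT$ when $H^\circ\cap\SL_2$ is a maximal torus (hence $H\subset N_{G_e}(H^\circ)=N_{G_e}(T)\cdot(\ldots)$, producing the quasitorus case (ii) or its degree-two extension (iii)), and the only nontrivial outer element comes from the Weyl element $\tau=\left(\left(\begin{smallmatrix}0&-1\\1&0\end{smallmatrix}\right),b\right)$; the constraint $b^{2d}=1$ is forced by requiring $\tau^2\in Q$ and $\tau$ to normalize $Q$, which is a direct computation with the multiplication law and the definition of the exponent $d$ of $Q/Q^\circ$.

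For part III, I would invoke the Matsushima-type / sphericity criteria already available: a subgroup of type (i) has $G_e/H$ of dimension $\ge \dim G_e - 1 = \dim\SL_2$, and one checks a Borel $B_\pm$ cannot have an open orbit (the unipotent part $U_\pm$ acts nontrivially along the $\SL_2$-direction but there is no room), so type (i) is non-spherical; conversely for types (ii)--(v) one exhibits an open $B_\pm$-orbit directly, or cites that $G_e/H$ admits a presentation $A[C,\mathfrak D]$ of type I or is horospherical (type (v) giving horospherical, types (ii)--(iv) giving type I), which by Theorem~\ref{theorem:SL} and \ref{marker:remarkG/H} already encodes sphericity. The main obstacle will be the bookkeeping in part II(iii): precisely pinning down which $\tau$ exist, why $Q^\circ\cap\SL_2=T$ is the exact condition, and deriving $b^{2d}=1$ — this requires carefully unwinding how $\tau$ conjugates a general element of $Q$ via $\varepsilon_e$ and matching the $\TT$-components, and it is easy to be off by the factor relating $d$ to the torsion of the $\TT$-component of $\tau$. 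Everything else is either a short matrix computation or a direct appeal to the structure theory and to results already in the paper.
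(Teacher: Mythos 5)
Your part I and the unipotent branch of part II are sound in outline and close to the paper's argument (the paper uses Jordan decomposition and, for elements outside the Borel, the density of $U_+hU_+h^{-1}U_+$ in $\SL_2$; your route through $N_{G_e}(U_+)=U_+\rtimes(T\times\TT)$ works as well, after the small repair that $H^{\circ}$ need not equal $U_+$ --- it may be $U_+\rtimes S$ for a subtorus $S$ --- and after noting that $\SL_2\subset H$ forces $H=\SL_2\rtimes Q'$ with $Q'\subset\TT$ because $(I,t)=(A,1)^{-1}(A,t)$). Part III is essentially the paper's codimension-one Borel argument. The genuine gap sits in the diagonalizable branch, i.e.\ exactly at cases (ii)--(iii). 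What must be shown is that for a \emph{non-central} subtorus $H^{\circ}\subset T\times\TT$ every element of $H$ is diagonal or anti-diagonal, so that $H\subset(T\times\TT)\cup\tau(T\times\TT)$. Your justification --- ``the centralizer is $T\times\TT$ when $H^{\circ}\cap\SL_2$ is a maximal torus'' --- uses the wrong group ($H$ normalizes $H^{\circ}$ but need not centralize it) and omits precisely the subtori with $H^{\circ}\cap\SL_2$ finite or trivial, which do occur and are not of type (i): the torus $Q_1$ of Theorem~\ref{th:sph-subgroups-I} meets $T$ trivially yet is non-central. The paper treats all cases at once by conjugating a fixed non-central diagonal element $h=\left(\left(\begin{smallmatrix}a&0\\0&a^{-1}\end{smallmatrix}\right),b\right)\in H^{\circ}$ by an arbitrary $g=\left(\left(\begin{smallmatrix}x&y\\z&w\end{smallmatrix}\right),t\right)\in H$: the off-diagonal entries of $ghg^{-1}$ are $(a^{-1}-a\chi^{e}(b^{-1}))xy$ and $(a-a^{-1}\chi^{e}(b))wz$, and non-centrality ($a^{2}\neq\chi^{e}(b)$) forces $xy=wz=0$. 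Some such explicit computation is indispensable here and is absent from your plan.

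Moreover, you explicitly postpone exactly the assertions of (iii) that carry the content of the lemma: that $\tau$ exists if and only if $Q^{\circ}\cap\SL_2=T$, and that $b$ may be normalized so that $b^{2d}=1$. In the paper these follow from a short commutator computation: for $q=\left(\left(\begin{smallmatrix}a&0\\0&a^{-1}\end{smallmatrix}\right),t\right)\in Q^{\circ}$ one has $\tau q\tau^{-1}q^{-1}=\left(\left(\begin{smallmatrix}\chi^{e}(t)a^{-2}&0\\0&\chi^{-e}(t)a^{2}\end{smallmatrix}\right),1\right)$, which lies in $Q^{\circ}\cap T$; the image of the connected group $Q^{\circ}$ under $q\mapsto\tau q\tau^{-1}q^{-1}$ is a connected subgroup of $T$, so it is either trivial --- meaning $a^{2}=\chi^{e}(t)$ on all of $Q^{\circ}$, i.e.\ $Q^{\circ}$ central, which is excluded --- or all of $T$, whence $T\subset Q^{\circ}$ and $Q^{\circ}=T\times Q'$; conversely any $b$ with $b^{2}\in Q'$ yields such a $\tau$, and since $b^{2d}\in Q'^{\circ}$ one can multiply $\tau$ by an element of $T\times\{c\}\subset Q^{\circ}$ with $c\in Q'^{\circ}$ and $(bc)^{2d}=1$, preserving the shape of $\tau$. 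Since your proposal names these steps as the ``main obstacle'' rather than carrying them out, it does not yet establish parts II(ii)--(iii); the remainder of the plan is acceptable.
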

\begin{proof}
Let $\left(A,t\right)$ be an arbitrary element in $G_e$. Conjugating by $(B,t^{-1})\in G_e$, we obtain 
\begin{equation}\label{Ge-conj}
(B\cdot\varepsilon_e(t^{-1})(A\cdot\varepsilon_e(t)(B^{-1})),t)=(B\cdot\varepsilon_e(t^{-1})(A)\cdot B^{-1},t).
\end{equation}
It is easy to see that up to conjugation we may assume $A$ to be upper (or lower) triangular for any $\left(A,t\right)$, and diagonal for a semisimple element $\left(A,t\right)\in G_e$. In particular, if $\left(A,t\right)$ is central, then $A$ is diagonal.

%We will need a general formula of conjugation. Let 
%$g=\left(\left(
%\begin{smallmatrix}
%x&y\\ z&w
%\end{smallmatrix}
%\right),t\right),$
%%
%$h=\left(\left(
%\begin{smallmatrix}
%a&b\\ c&d
%\end{smallmatrix}
%\right),f\right).$ Then
%$$
%g^{-1}=\left(\left(
%\begin{matrix}
%w&-y\chi^{e}(t)\\ -z\chi^{e}(t^{-1})&x
%\end{matrix}
%\right),t^{-1}\right),
%$$
%and
%$g\cdot h\cdot g^{-1} = \left(M,f\right)$, where
%\begin{equation}\label{conjugation}
%M=\left(
%\begin{matrix}
%awx+cyw\chi^{e}(t)-bxz\chi^{e}(ft)-dyz\chi^{e}(f),&
%-axy\chi^{e}(f^{-1})-cy^2\chi^{e}(f^{-1}t)+bx^{2}\chi^{e}(t)+dxy\\
% awz+cw^{2}\chi^{e}(t)-bz^{2}\chi^{e}(ft)-dwz\chi^{e}(f), &
% -ayz\chi^{e}(f^{-1})-cyw\chi^{e}(f^{-1}t)+bxz\chi^{e}(t)+dxw
%\end{matrix}
%\right).
%\end{equation}

Let us study the center $Z(G)$. Consider the conjugation of a diagonal element $z=\left(\left(
\begin{smallmatrix}
a&0\\ 0&a^{-1}
\end{smallmatrix}
\right),b\right)\in G$ by an arbitrary $g=\left(\left(
\begin{smallmatrix}
x&y\\ z&w
\end{smallmatrix}
\right),t\right)\in G$:

\begin{equation}\label{diag-conj}
g\cdot z\cdot g^{-1}=
\left(\left(
\begin{matrix}
axw-a^{-1}yz\chi^{e}(b),&
(a^{-1}-a\chi^{e}(b^{-1}))xy\\
 (a-a^{-1}\chi^{e}(b))wz, &
 a^{-1}xw-ayz\chi^{e}(b^{-1})
\end{matrix}
\right),b\right).
\end{equation}
So, $z\in Z(G)$ if and only if 
$a^{2}=\chi^{e}(b)$. So, the first statement of the lemma is proved.

Let $H\subset G_e$ be a closed subgroup. Assume that all its elements are semisimple. Then the connected component is diagonalizable, so up to conjugation $H^\circ\subset T\times\TT$. 

If $H^{\circ}\subset Z(G)$, then our group is of type (i), i.e. a ``center extension''. 
Otherwise, there exists $h=\left(\left(
\begin{smallmatrix}
a&0\\ 0&a^{-1}
\end{smallmatrix}
\right),b\right)\in H^{\circ}$ such that $h\notin Z(G_e)$. 
Since $H^{\circ}$ is normal in $H$, for any $g=\left(\left(
\begin{smallmatrix}
x&y\\ z&w
\end{smallmatrix}
\right),t\right)\in H$ the conjugation $g\cdot h\cdot g^{-1}$ lies in $H^{\circ}$ and in particular is diagonal. By (\ref{diag-conj}), $xy=wz=0$.
This is possible if $g$ is either of form $\left(\left(
\begin{smallmatrix}
*&0\\ 0&*
\end{smallmatrix}
\right),*\right)$ or of form
$\left(\left(
\begin{smallmatrix}
0&*\\ *&0
\end{smallmatrix}
\right),*\right)$.
Since $Q=H\cap (T\times\TT)$ is a quasitorus, either $H=Q$ or $H=Q\cup \tau Q$ for some $\tau=\left(\left(
\begin{smallmatrix}
0&-a\\ a^{-1}&0
\end{smallmatrix}
\right),b\right)$. In the latter case, $\tau^{2}\in Q$. In addition, if we conjugate $H$ by a diagonal element, then $Q$ is preserved and $\tau$ can be reduced to form $\tau=\left(\left(
\begin{smallmatrix}
0&-1\\ 1&0
\end{smallmatrix}
\right),b\right)$.
Thus, provided an element $q=\left(\left(
\begin{smallmatrix}
a&0\\ 0&a^{-1}
\end{smallmatrix}
\right),t\right)\in Q^\0,$ the commutator
$\tau q\tau^{-1} q^{-1}=q=\left(\left(
\begin{smallmatrix}
\chi^e(t)a^{-2}&0\\ 0&\chi^{-e}(t)a^2
\end{smallmatrix}
\right),1\right)$ is also contained in $Q^\0$. If $Q^\0\cap T$ is trivial, then all elements of $Q^\0$ satisfy $a^2=\chi^e(t)$, so $Q$ is a ``center extension''. Otherwise $Q=T\times Q'$ for a quasitorus $Q'\subset\TT$. Then any $b\in\TT$ such that $b^2\in Q'$ provides a ``normalizing subgroup'' $Q\cup\tau Q$.
Further, if $d$ is the exponent of $Q/Q^\0$, then $b^{2d}\in Q^{\prime\0}$ and there exists $c\in Q^{\prime\0}$ such that $(bc)^{2d}=1$. Thus, multiplying $\tau$ by an element of $T\times\{c\}\subset Q^\0$, we obtain the property $b^{2d}=1$ preserving the form $\tau=\left(\left(
\begin{smallmatrix}
0&-1\\ 1&0
\end{smallmatrix}
\right),b\right)$. We should note that different choices of $b$ may provide different normalizing subgroups.
%\begin{equation}
%g\cdot h\cdot g^{-1} = 
%\left(\left(
%\begin{matrix}
%awx-dyz\chi^{e}(f),&
%-axy\chi^{e}(f^{-1})+dxy\\
% awz-dwz\chi^{e}(f), &
% -ayz\chi^{e}(f^{-1})+dxw
%\end{matrix}
%\right),f\right)\in H^{\circ}.
%\end{equation}
%So, $xy(-a\chi^{e}(f^{-1})+d) =   wz(a-d\chi^{e}(f) )= 0$, i.e. either $\chi^{e}(f)=\frac{a}{d}=a^{2}$ or $h$ is 

Now assume there is $h\in H$ that is not semisimple.  By (\ref{Ge-conj}), we may assume up to conjugation that $h$ is upper-triangular. Consider the Jordan decomposition $h=h_s h_u$ into the semisimple part $h_s$ and the unipotent part $h_u$. Since $H$ is closed, $h_u\in H$ and $H$ contains $U_+$.

Let $H$ be contained in the Borel subgroup $\left\{\left(\left(
\begin{smallmatrix}
*&*\\ 0&*
\end{smallmatrix}
\right),*\right)
\right\}.$ Then any element of $H$ is decomposed into an element of $U$ and an element of $T\times\TT.$ So, $H=U\rtimes Q$, where $Q\subset T\times\TT$ is a quasitorus.

Assume now that there exists $h\in H$ that is not contained in the Borel subgroup. Then by direct computation we obtain that the subset 
$$U_+\cdot h\cdot U_+ \cdot h^{-1}\cdot U_+\subset \SL_2$$
is three-dimensional and dense in $\SL_2$. 
Thus, $H=\SL_2\rtimes Q$, $Q\subset\TT$.

Finally, let us describe spherical subgroups. A subgroup $H\subset G_e$ is spherical if and only if the neutral component $H^{\circ}$ is spherical.
The connected subgroup $H^{\circ}$ is spherical if and only if $B\cdot H^{\circ}$ is dense in $G_e$ for some Borel subgroup $B$. Since $B\subset G_e$ is of codimension one, the latter condition is equivalent to $H^\circ\nsubset B$. So, a subgroup $H\subset G_e$ is spherical if and only if $H^{\circ}\nsubset Z(G_e)$.
\end{proof}

\begin{remark}
In fact, normal closed subgroups of $G_{e}$ are exhausted by:
\begin{enumerate}
\item central quasitori $Q\subset Z(G_e)$;
\item semidirect products $\SL_2\rtimes Q'$, where $Q'\subset\TT$.
\end{enumerate}
\end{remark}

\subsection{Classification of type I} 
Here we compute homogeneous spaces $X=G_e/H$ of type I and provide the corresponding combinatorial data in terms of AL-colorings. By the following remark, we may assume that $\dim\TT\le 2$. Moreover, if $\dim\TT<2$, we may extend $G_e$ and $X$ to a direct product with a torus and restrict to the case $\dim \TT=2$, so $\dim X=3$. In fact, the case $\dim\TT<2$ corresponds to the situation $v_0\parallel v_1$ described in Theorem~\ref{th:sph-subgroups-I}(2,4).
\begin{remark}\label{rem:T-reduction}
%If $\dim X>2$, then there exists decompositions $G_e=G_e^{\prime}\times\TT''$, where $\TT''\subset\TT$, and $X=X'\times \TT''$, where $X'$ is a $G_e^{\prime}$-spherical homogeneous space $X'$ of dimension 3. Indeed, we may choose a decomposition $N=N'\oplus N''$
Let a $G_e$-spherical homogeneous space $X=\Spec A[C,\D]$ be as in Remark~\ref{marker:remarkG/H}.
 Assume that $\dim\TT>2$. Then there exists a decomposition $\TT=\TT'\times\TT''$ with induced lattice decompositions $M=M'\oplus M''$, $N=N'\oplus N''$ enjoying following properties: 
 \begin{itemize}
  \item $M'=N^{\prime\prime\perp},M''=N^{\prime\perp},$
 \item $v_0,v_1\in N'$, $\dim N'=2,$
 \item $N''\subset e^{\perp}$, or equivalently, $e\in M'$.
 \end{itemize}
Then $\Delta_0,\Delta_1\subset N^\prime_\QQ$. So, we may consider $\D$ as the polyhedral divisor in $N^\prime_\QQ$ and denote it by $\D'$.
Therefore,
$A[C,\D]=A[C,\D']\otimes \K[\TT'']$ and $X'=\Spec A[C,\D']$ is a $\TT'$-variety of complexity 1.
In addition, by \ref{rap:der-Q}, 
\[\partial_\pm(Q\chi^{m'+m''})=\partial_\pm(Q\chi^{m'})\chi^{m''}\quad\mbox{for all}\quad m'\in M',\,m''\in M''.\]
Thus, $A[C,\D']$ is stable under the action of the subgroup $G_e^{\prime}=\SL_2\rtimes \TT'\subset G_e$, $X'=\Spec A[C,\D']$ is a $G_e^{\prime}$-spherical homogeneous space, and $X=X'\times\TT''$. Moreover, the condition $N''\subset e^{\perp}$ ensures that the $\TT''$-action  on $X'$ is trivial, so
 $G_e^\prime\times\TT''$ acts on $X'\times\TT''$ componentwise.
%where the actions of $G_e',\TT''$ are trivial on $\K[X^\prime]$ and $\K[\TT'']$ respectively.
This reduction preserves the conditions that the $\TT$-action is faithful of complexity one and that $e\neq0$.
\end{remark}
%We may assume that $\dim\TT=2$. \sa{Argument?}
\begin{rappel}
Let $H\subset G_e$ be a spherical subgroup, whose connected component is a two-dimensional torus and such that the $\TT$-action on $G_e/H$ is faithful.
This corresponds to cases (ii) and (iii) of Lemma~\ref{lemma:subgroup}, i.e. to homogeneous spaces of type I, since cases (iv) and (v) induce a horospherical one.
Then $H^{\0}$ is the image of an embedding $\G_m^{2}\to T\times \TT$ given by a triple of characters $(\chi^f,\chi^{\lambda_1},\chi^{\lambda_2})$ in $\mathbb{X}(\G_m^{2}).$ Thus, 
\[
H^\0=\left\{\left.\left(\left(\begin{matrix}
   \chi^f(t) & 0 \\
   0 & \chi^{-f}(t)
 \end{matrix}\right), (\chi^{\lambda_1}(t),\chi^{\lambda_2}(t))\right)\;\right|\;t\in\G_m^{2}\right\}.
\]
 Since it is an embedding, two cases are possible:
\begin{enumerate}
\item[``$\lambda_1\nparallel\lambda_2$'':] The vectors $\lambda_1$ and $\lambda_2$ are linearly independent. Then there exist $k,\alpha_1,\alpha_2\in\ZZ$ such that $kf=\alpha_1\lambda_1+\alpha_2\lambda_2$. We choose $k\in\ZZ_{>0}$ to be minimal\footnote{The constant $k$ corresponds to $r$ in \cite[Theorem 4.2]{AL}.}.
 By construction, $\alpha=(\alpha_1,\alpha_2)\in M.$
\item[``$\lambda_1\parallel\lambda_2$'':] The vectors $\lambda_1$ and $\lambda_2$ are linearly dependent. 
Then we may assume up to a choice of coordinates in $\G_m^2$ that
$f=(1,0)$, $\lambda_1=(0,-\alpha_2)$, $\lambda_2=(0,\alpha_1)$, where $\mathrm{gcd}(\alpha_1,\alpha_2)=1$.
Moreover, the element $\alpha=(\alpha_1,\alpha_2)\in M$ is such that $\chi^{\alpha}(\chi^{\lambda_1}(t),\chi^{\lambda_2}(t))=1$ for any $t\in\G_m^{2}$.
\end{enumerate}

If $H$ is a quasitorus, we denote it by $H=Q$, otherwise $H=N=Q\cup\tau Q$. By Lemma~\ref{lemma:subgroup}(iii), the latter case is possible if and only if 
$T\subset H$, i.e. $\lambda_1\parallel\lambda_2$.
The representation $G_e=\left\{\left(\left(\begin{smallmatrix}
  a & b \\
   c & d
 \end{smallmatrix}\right), (s_1,s_2)\right)\right\}$ gives us the coordinates on $G_e$ as follows:
 \[\K[G_e]=\K[a,b,c,d,s_1,s_1^{-1},s_2,s_2^{-1}]/(ad-bc-1).\]
\end{rappel}
These preparations allow us to formulate the description of spherical subgroups for type I.

\begin{theorem}\label{th:sph-subgroups-I}
Let $H$ be a spherical subgroup of $G_e$ such that $G_e/H$ is not horospherical and the $\TT$-action on $G_e/H$ is faithful of complexity one.
Then $H$ is either a torus or a ``normalizing subgroup'' of a torus as in Lemma~\ref{lemma:subgroup}(ii,iii), and $M=\bangle{\alpha,e}$.

Summarizing, $H$ is conjugate to one of following cases, the first case corresponding to $\lambda_1\nparallel\lambda_2$, the others to $\lambda_1\parallel\lambda_2$. We use notation $x=ab,y=cd,z=ad+bc, e=(e_1,e_2), s^\alpha=s_1^{\alpha_1}s_2^{\alpha_2}$.
\begin{enumerate}
\item $H=Q_1$ is a two-dimensional torus such that $Q_1\cap T$ is the identity. Then 
\[\K[G_e/Q_1]=\K[a^{k-i}c^is^{-\alpha},b^{k-i}d^is^{\alpha},ab,ad,bc,cd\,|1\le i\le k]/(ad-bc-1).\]
\item $H=Q_2=T\times Q'$, where $Q'\subset\TT$ is a 1-parameter subgroup. Then 
\[\K[G_e/Q_2]=\K[x,y,z,s^\alpha,s^{-\alpha}]/(z^2-4xy-1).\]
\item $H=N_1=T\times Q'\cup\tau_1(T\times Q')$, where $Q'\subset\TT$ is a 1-parameter subgroup, and $\tau=$ the right multiplication by $\tau_1$ sends $(x,y,z,s^\alpha)$ to
$(-x,-y,-z,-s^\alpha).$ Then
\[\K[G_e/N_1]=\K[xs^{\alpha},ys^{\alpha},zs^{\alpha},s^{2\alpha},s^{-2\alpha}]/(z^2-4xy-1).\]
\item $H=N_2=T\times Q'\cup\tau_2(T\times Q')$, where $Q'\subset\TT$ is a 1-parameter subgroup, and the right multiplication by $\tau_2$ sends $(x,y,z,s^\alpha)$ to
$(-x,-y,-z,s^\alpha).$ Then
\[\K[G_e/N_2]=\K[x^2,y^2,z^2,xy,xz,yz,s^\alpha,s^{-\alpha}]/(z^2-4xy-1).\]
\end{enumerate}

Let us take the basis $\{\nu_\alpha,\nu_e\}$ in $N$ dual to the basis $\{\alpha,e\}$ in $M$. Thus, for any $(m_1,m_2)$ in the standard basis of $M$, 
%\begin{align*}
\[\nu_\alpha(m_1,m_2)=\varepsilon(e_2m_1-\alpha_2m_2),\;	 \nu_e(m_1,m_2)=\varepsilon(-e_1m_1+\alpha_1m_2),\]
%\end{align*}
where $\varepsilon=\left|\begin{smallmatrix}\alpha_1&e_1\\ \alpha_2&e_2\end{smallmatrix}\right|\in\{1,-1\}.$ In Table~\ref{tab:hom} we describe spherical data of all cases.
Note that subgroups of types $Q_1,Q_2$ give reflexive homogeneous spaces, whether $N_1,N_2$ give skew ones. In the former case $v_0,v_1$ can be swapped, and in the latter case the coefficient $h$ in $v_0$ is odd for $N_1$ and even for $N_2$.
An isomorphism $G_e/H\cong\Spec A[C,\D]$ is defined by images of $t,\chi^e,\chi^\alpha$ generating the fraction field of $A[C,\D]$; we have two choices for images of $t,\chi^e$ for $Q_2$ and $N_2$, and the image of $\chi^\alpha$ is defined up to a scalar multiple.  
\begin{table}[h!]
\centering 
\begin{tabular}{r|c|c|c|c|}
&$Q_1$ & $Q_2$ & $N_1$ &  $N_2$  \\ \hline
%&&&&\\ 
$K[G/H]^{U_-}$ &$\K[a^{k}s^{-\alpha},b^{k}s^{\alpha},ab]$&$\K[x,s^\alpha,s^{-\alpha}]$ & $\K[xs^\alpha,s^{2\alpha},s^{-2\alpha}]$ &  $\K[x^2,s^\alpha,s^{-\alpha}]$  \\ \hline
$K[G/H]^{U_+}$&$\K[c^{k}s^{-\alpha},d^{k}s^{\alpha},cd]$&$\K[y,s^\alpha,s^{-\alpha}]$&$\K[ys^\alpha,s^{2\alpha},s^{-2\alpha}]$ &  $\K[y^2,s^\alpha,s^{-\alpha}]$\\ \hline
$B_-$-divisors &$\V(a^{k}s^{-\alpha}),\V(b^{k}s^{\alpha})$&$\V(x,z\pm1)$ & $\V(xs^\alpha)$ &  $\V(x^2)$  \\ \hline %,\V(x,z+1)
$B_+$-divisors &$\V(c^{k}s^{-\alpha}),\V(d^{k}s^{\alpha})$&$\V(y,z\pm1)$ & $\V(ys^\alpha)$ &  $\V(y^2)$  \\ \hline
$B_-$-colors&$-k\nu_\alpha-\nu_e,\,-\nu_e$&$-\nu_e,\,-\nu_e$&$-\nu_e$&$-\nu_e$\\ \hline
$B_+$-colors&$\nu_e,\,k\nu_\alpha+\nu_e$&$\nu_e,\,\nu_e$&$\nu_e$&$\nu_e$\\ \hline
$v_0,v_1$&$k\nu_\alpha+\nu_e,\,-\nu_e$&$\nu_e,\,-\nu_e$&$\frac{h}{2}\nu_\alpha+\frac{1}{2}\nu_e,\,-\nu_e$&$\frac{h}{2}\nu_\alpha+\frac{1}{2}\nu_e,\,-\nu_e$\\ \hline
$t$&$ad$&$\frac{1+ z}{2},\frac{1- z}{2}$&$z^2$&$z^2$\\ \hline
$\chi^e$&$\frac{d}{b}$&$\frac{z+1}{2x},\frac{z-1}{2x}$&$\frac{z}{2x}$&$\frac{z}{2x}$\\ \hline
$\chi^\alpha$&$d^ks^\alpha$&$s^\alpha$&$zs^\alpha$&$s^\alpha$\\ \hline
\end{tabular}
\caption{Description of $G_e$-homogeneous spaces}\label{tab:hom}
\end{table}
\end{theorem}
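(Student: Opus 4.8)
The plan is to combine the classification of subgroups from Lemma~\ref{lemma:subgroup} with the parametrization of one-parameter subgroups of $T\times\TT$ and then match the resulting homogeneous spaces to the combinatorial data of Theorem~\ref{theorem:SL} via Remark~\ref{marker:remarkG/H}. First I would argue that since $G_e/H$ is not horospherical and the $\TT$-action is faithful of complexity one, Lemma~\ref{lemma:subgroup}(iii) together with the remark on horospherical cases forces $H$ to be a torus or a ``normalizing subgroup'' whose neutral component $H^\circ$ is a two-dimensional subtorus of $T\times\TT$ that is not contained in $Z(G_e)$. The faithfulness of the $\TT$-action then pins down $H^\circ$ as the image of an embedding $\G_m^2\to T\times\TT$, i.e. a triple of characters $(\chi^f,\chi^{\lambda_1},\chi^{\lambda_2})$; the dichotomy $\lambda_1\nparallel\lambda_2$ versus $\lambda_1\parallel\lambda_2$ is precisely the distinction between cases (ii) and (iii) of Lemma~\ref{lemma:subgroup}, since $T\subset H$ exactly when $\lambda_1\parallel\lambda_2$. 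In each branch the minimality of $k$ (resp.\ the normalization $\gcd(\alpha_1,\alpha_2)=1$) produces the lattice vector $\alpha\in M$, and one checks directly that $\{\alpha,e\}$ is a basis of $M$: this is where one uses that the embedding $\G_m^2\hookrightarrow T\times\TT$ has saturated image, combined with $e\neq0$.

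Next I would compute the invariant ring $\K[G_e/H]=\K[G_e]^H$ in each of the four cases. The key observation is that $\K[G_e]=\K[a,b,c,d,s_1^{\pm1},s_2^{\pm1}]/(ad-bc-1)$ carries the $H$-action by right translation, so $H$-invariants are spanned by monomials $a^{i}b^{j}c^{k}d^{l}s^{\mu}$ whose weight under $H^\circ$ (and under $\tau$ in the normalizing cases) vanishes. Translating the character conditions into linear equations on the exponents gives the generating sets listed: for $Q_1$ one gets the generators $a^{k-i}c^{i}s^{-\alpha}$, $b^{k-i}d^{i}s^{\alpha}$ together with the quadrics $ab,ad,bc,cd$; for $Q_2,N_1,N_2$ the $T$-invariance collapses everything into the variables $x=ab$, $y=cd$, $z=ad+bc$ subject to $z^2-4xy=1$, and then the remaining $Q'$- or $\tau$-invariance selects which powers of $s^\alpha$, $z$, etc.\ survive. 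The relation $z^2-4xy=1$ comes from $(ad+bc)^2-4(ab)(cd)=(ad-bc)^2=1$. The $\tau$-action on coordinates is obtained by writing out right multiplication by $\tau_i$ explicitly.

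To fill Table~\ref{tab:hom}, I would: (a) compute $\K[G_e/H]^{U_\pm}$ as the subalgebra of $U_\pm$-invariants inside $\K[G_e/H]$, reading off the $B_\pm$-divisors as the zero loci of the $U_\pm$-invariant generators that are not $B_\pm$-invariant; (b) compute the $B_\pm$-colors by taking, for each such divisor $\V(\phi)$, the functional $m\mapsto \ord_{\V(\phi)}(\psi_m)$ on the weight lattice $L$, where $\psi_m$ is the $B_\pm$-eigenfunction of weight $m$, and expressing the result in the dual basis $\{\nu_\alpha,\nu_e\}$; (c) read off $v_0,v_1$ from Proposition~\ref{proposition:col.G-div} and Remark~\ref{marker:remarkG/H}, using the images of the colors under $\varrho_\pm$ — this is exactly where the reflexive/skew dichotomy and the parity of $h$ (odd for $N_1$, even for $N_2$) are forced, since in the skew case $\Delta_0=\{v_0\}$ with $2v_0\in N\setminus\sigma$ and $2v_0(e)=1$ dictates the half-integral coefficient $\tfrac12\nu_e$; (d) verify the explicit rational functions giving the isomorphism $G_e/H\cong\Spec A[C,\D]$ by checking that $t,\chi^e,\chi^\alpha$ generate the function field and satisfy the defining relations — e.g.\ for $Q_1$ that $t=ad$, $\chi^e=d/b$, $\chi^\alpha=d^ks^\alpha$ are algebraically independent and that $\div_{\AA^1}(\phi^\pm_m\chi^m)$ matches $\D_{G_e/H}$.

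The main obstacle I anticipate is the bookkeeping in step (c)/(d): correctly identifying, for each normalizing subgroup $N_1,N_2$, the marked point and the precise half-integral vertex $v_0$, and checking that the quotient by $\tau$ genuinely produces the skew polyhedral divisor rather than merely a finite cover of the reflexive one. This requires care with the $\tau$-action on the local parameter $t=z^2$ and on $\chi^e=z/(2x)$, and matching it against the involution described in the Proposition preceding Section~\ref{sec:3}; in particular one must confirm that $t=z^2$ (rather than $z$) is the correct rational quotient, which is what makes the two sheets over $\AA^1$ collapse and yields $2v_0(e)=1$. The $\lambda_1\nparallel\lambda_2$ case $Q_1$ is comparatively routine once the generators are found, since there $H$ is a genuine torus and the homogeneous space is reflexive with $v_0=k\nu_\alpha+\nu_e$, $v_1=-\nu_e$ read off directly.
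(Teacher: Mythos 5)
Your overall strategy is the same as the paper's (Lemma~\ref{lemma:subgroup}, right-translation invariants, $U_\pm$-invariants, divisors and colors, then matching with $\D_{G_e/H}$ of Remark~\ref{marker:remarkG/H} via explicit images of $t,\chi^e,\chi^\alpha$), but there is a genuine gap at the first step. Lemma~\ref{lemma:subgroup}(ii)--(iii) only yields that $H$ is a \emph{quasitorus} $Q\subset T\times\TT$ (possibly disconnected) or $Q\cup\tau Q$; it does not ``force $H$ to be a torus or a normalizing subgroup of a torus''. Eliminating the finite part is exactly the content of the first assertion of the theorem, and it genuinely uses faithfulness of the $\TT$-action: the paper shows $Q_1=Q_1^{\0}$ by checking that an extra generator $\zeta$ of finite order $r$ would differ from an element of $Q_1^{\0}$ by an element $(\zeta_r^{r_1-kr_0},1)\in\TT$ acting trivially on $G_e/Q_1$, and shows $Q_2=T\times Q'$ with $Q'$ a connected one-parameter subgroup by intersecting with $\TT\cap Z(G_e)$. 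Your proposal silently assumes this conclusion, so the case list $(Q_1,Q_2,N_1,N_2)$ is not justified as written. Relatedly, the dichotomy $\lambda_1\nparallel\lambda_2$ versus $\lambda_1\parallel\lambda_2$ is not ``precisely'' the distinction between cases (ii) and (iii) of the lemma: $Q_2$ has $\lambda_1\parallel\lambda_2$ yet is of type (ii); the correct statement is that $\lambda_1\parallel\lambda_2$ is equivalent to $T\subset H^{\0}$, which is necessary (not sufficient) for the normalizing case.

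A second, smaller point: your justification of $M=\bangle{\alpha,e}$ (``the embedding $\G_m^{2}\hookrightarrow T\times\TT$ has saturated image, combined with $e\neq0$'') is not the operative mechanism; every subtorus has that property. What forces $\bangle{\alpha,e}=M$ is again faithfulness of the $\TT$-action on $G_e/H$: the $\TT$-weight lattice of the explicitly computed ring $\K[G_e/H]$ equals $\bangle{\alpha,e}$ in all four cases, and faithfulness says this lattice is all of $M$. Apart from these points, your plan for Table~\ref{tab:hom} (orders of vanishing of $B_\pm$-eigenfunctions, the half-integral $v_0$ with the parity of $h$ separating $N_1$ from $N_2$, and $t=z^{2}$, $\chi^e=z/(2x)$ in the skew cases) follows the paper's route; the paper fixes $\phi^{*}(t)$ slightly differently, using $\phi^{*}(\K[t])=\K[X]^{\TT}$ together with $\partial_+(t)\partial_-(t)=t(t-1)$ or $4t(t-1)$ and an inspection of the graded pieces $A_{-\alpha}\chi^{-\alpha}$, resp.\ $A_{-2e}\chi^{-2e}$, rather than by matching divisors directly, but that difference is cosmetic.
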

\begin{proof}
By a direct computation we have
\[\K[G_e/Q_1^\0]=\K[a^{k-i}c^is^{-\alpha},b^{k-i}d^is^{\alpha},ab,ad,bc,cd\,|0\le i\le k]/(ad-bc-1),\]
\[\K[G_e/Q_2^\0]=\K[G_e/N_{1}^\0]=\K[G_e/N_{2}^\0]=\K[x,y,z,s^\alpha,s^{-\alpha}]/(z^2-4xy-1).\]
Since the $\TT$-action is effective, the lattice of weights, equal to $\bangle{\alpha,e}$ in both cases, coincides with $M$.

 Let us prove that  $Q_{i}=Q_{i}^\0$, $i=1,2$. For the sake of clarity, we allow in the case of $Q_1$ the basis vectors $\alpha$ and $e$ be equal to $(1,0)$ and $(0,1)$ respectively. Thus,  using notation from ``$\lambda_1\nparallel\lambda_2$'', $kf=\lambda_1$, so we may further assume $f=(1,0)$ and $\lambda_2=(0,1)$. If $Q_1$ is not connected, then we have a decomposition $Q_1=Q_1^\0\times\bangle{\zeta}$,
where $\zeta\in Q_1$ has a finite order $r$.
So, 
\[Q_1^\0=\left\{\left.\left(\left(\begin{smallmatrix}
  t_1 & 0 \\
   0 & t_1^{-1}
 \end{smallmatrix}\right)(t_1^k,t_2)\right)\;\right|\;(t_1,t_2)\in\G_m^2\right\},\;
\zeta=\left(\left(\begin{smallmatrix}
  \zeta_r^{r_0} & 0 \\
   0 & \zeta_r^{-r_0}
 \end{smallmatrix}\right),(\zeta_r^{r_1},\zeta_r^{r_2})\right),\]
 where $\zeta_r$ is an $r$th primitive root of unity.
It is easily checked that the element $(\zeta_r^{r_1-kr_0},1)\in\TT$ acts trivially on $G_e/Q_1$, so $r_1\equiv kr_0 \pmod{r}$.
Then $\zeta\in Q_1^\0$ and $Q_1=Q_1^\0$.

Let us now treat the case $Q_2$. The intersection $\TT\cap Z(G_e)=\{t\;|\;\chi^e(t)=1\}$ is a 1-parameter subgroup, because $e$ is primitive. 
Since the $\TT$-action is effective, $Q_2\cap \TT\cap Z(G_e)$ is trivial. Therefore, $Q_2\cap \TT$ is finite, which is impossible, or a 1-parameter subgroup $Q'$.
Thus, $Q_2=T\times Q'$ is also connected.

Computation of $\K[G_e/H]$, $\K[G_e/H]^{U_\pm}$, and $B_\pm$-divisors is straightforward.
We compute the colors, using basis $\{\nu_\alpha,\nu_e\}$ dual to $\{\alpha,e\}\in M$. %If $M\ni (m_1,m_2)=n_1\alpha+n_2e,$ then $(n_1,n_2)=\varepsilon(m_1(e_2,-e_1)+m_2(-\alpha_2,\alpha_1))$.  
For each case we can choose a pair of functions of weights proportional to $\alpha$ and $e$ respectively. For example, in the case $Q=Q_1$ we take $a^{k}s^{-\alpha}$ and $ab$.
They have zeroes of order $k$ and $1$ on $\V(a^ks^{-\alpha})$ respectively, so $\varrho(\V(a^ks^{-\alpha}))=-k\nu_\alpha-\nu_e$.
Similarly we compute all other colors,   using Proposition~\ref{proposition:col.G-div} if necessary.

We cannot extract $v_0$ from colors in cases of $N_1,N_2$, and the only conditions we have is $2v_0(e)=1, 2v_0\in N$.
Thus, $v_0=\frac{h}{2}\nu_\alpha+\frac{1}{2}\nu_e$, where $h\in\ZZ$. Since $v_0$ can be shifted by an integer vector,
we have two possible cases: either $h$ is odd or even.
They correspond respectively to $N_1$ and $N_2$.

To obtain an explicit isomorphism $\phi\colon X\stackrel{\sim}{\longrightarrow}\Spec A[C,\D]$ as in Theorem~\ref{theorem:SL}, we use following observations.
First,  we have $\phi^*(\K[t])=\K[X]^\TT$, thus $\phi^*(t)=pz+r$ for some $p\in\K^\times$, $r\in\K$. Second, $\partial_+(t)\partial_-(t)$ is equal to $t(t-1)$ and $4t(t-1)$ in reflexive and skew cases respectively, thus we obtain two possibilities for $\phi^*(t)$. Computing $\partial_+(t)$, we also find two corresponding possibilities for $\phi^*(\chi^e)$. Further, $\phi^*(\chi^\alpha)$ is obtained from considering the homogeneous component $A_\alpha\chi^\alpha\subset A[C,\D]$ up to a scalar multiple. 
Considering the component $A_{-\alpha}\chi^{-\alpha}$ in cases $H=Q_1$ and $H=N_1$ and the component $A_{-2e}\chi^{-2e}$ in case $H=N_2$, we eliminate one of possibilities for $\phi^*(t)$.
For example, for $H=Q_1$,  we have
\begin{align*}
A_\alpha\chi^\alpha=\K[t]\chi^\alpha&\stackrel{\phi^*}{\longrightarrow}\K[ad]d^ks^\alpha,\\
A_{-\alpha}\chi^{-\alpha}=\K[t]t^k\chi^{-\alpha}&\stackrel{\phi^*}{\longrightarrow}\K[ad]a^ks^{-\alpha},
\end{align*}
thus $\chi^\alpha$ maps to a scalar multiple of $d^ks^\alpha$ and $\K[t]t^k$ to $\K[ad](ad)^k$.
On the other hand, in case $Q_2$ both choices of images of $t$ and $\chi^e$ are valid, and the involutions  $\tau_1,\tau_2$ permute them. 
\end{proof}

\begin{remark}
In conclusion, taking into account Remark~\ref{marker:remarkG/H}, the type of a $G_e$-spherical embedding  of a homogeneous spherical space $G_e/H$ is defined by $H$ as follows.
The cases (ii), (iii), and (iv--v) of Lemma~\ref{lemma:subgroup} induce reflexive, skew, and horospherical varieties respectively.
\end{remark}

\subsection{Classification of type II} 
By Proposition~\ref{proptoric}, a $G_e$-horospherical homogeneous space $X$ of type II is toric corresponding to the pair of non-collinear rays $\rho_-,\rho_+$.
Since $\rho_\pm(e)=\mp1,$ there exists a decomposition $\TT=\TT'\times\TT''$ as in Remark~\ref{rem:T-reduction} such that the group $G_e=G_e^\prime\times\TT''$ acts on $X=X'\times\TT''$ componentwise and $\dim\TT'\le 2$. Since the case $\dim\TT\le1$ is not possible, we assume that $\dim\TT=2$, hence $\dim X=2$, and obtain the following description of spherical subgroups.

\begin{proposition}
Let $X=G_e/H$ be a $G_e$-spherical homogeneous space of type II. Assume that $\dim\TT=2$, $X$ is toric, and the $\TT$-action on $X$ is faithful. Then $H$ is conjugate to
\[
\left(U_+\rtimes\bangle{\zeta}\right)\rtimes\TT\subset\SL_2\rtimes\TT,
\]
where $\zeta\in T\subset\SL_2$ is an element of finite order $r=[N:\bangle{\rho_-,\rho_+}]$.
Conversely, for each $r$ such subgroup $H$ provides a $G_e$-spherical homogeneous space of type II.
\end{proposition}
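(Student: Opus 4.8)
The plan is to combine the classification of closed subgroups of $G_e$ from Lemma~\ref{lemma:subgroup} with the description of the open orbit of a type~II variety from Lemma~\ref{lemmatoricdiv}(iii), and then to pin down $H$ by an explicit quotient computation inside $G_e$. Since $X$ is of type~II it is $G_e$-horospherical, so the isotropy group $H$ of a point of $X$ contains a maximal unipotent subgroup of $G_e$; by Remark~\ref{GH-aff} every such subgroup is one-dimensional, and after conjugating we may assume $U_+\subseteq H$. Subgroups of types (i)--(iii) in Lemma~\ref{lemma:subgroup} are diagonalizable up to finite index and hence contain no unipotent element, so $H$ is, up to conjugation, of type~(iv), $H=U_+\rtimes Q$ with $Q\subseteq T\times\TT$ a quasitorus, or of type~(v), $H=\SL_2\rtimes Q'$ with $Q'\subseteq\TT$ a quasitorus.

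First I would eliminate type~(v). If $H=\SL_2\rtimes Q'$, then the projection $G_e\to\TT$ identifies $G_e/H$ with $\TT/Q'$ in such a way that $\TT$ acts transitively on $X$; but by Lemma~\ref{lemmatoricdiv}(iii) the homogeneous space $X=G_e/H$ equals the toric $\TT$-variety $X_\Sigma$ with $\Sigma=\{\rho_-,\{0\},\rho_+\}$, and since $\langle e,\rho_-\rangle=1\neq-1=\langle e,\rho_+\rangle$ the rays $\rho_\pm$ are distinct, so $\Sigma$ --- a fan in $N_\QQ$ with $\dim N=\dim\TT=2$ --- has three $\TT$-orbits, a contradiction. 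Hence $H=U_+\rtimes Q$, and comparing $\dim G_e=3+\dim\TT=5$ with $\dim X=\dim X_\Sigma=2$ forces $\dim Q=2$.

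Next I would identify $Q$. Writing $G_e/U_+\cong(\A^2\setminus\{0\})\times\TT$ via $(A,t)\mapsto(\mathrm{col}_1(A),t)$, the residual right action of $q=(q_0,q_\TT)\in Q\subseteq T\times\TT$ scales the $\A^2$-coordinates $a_1,a_2$ by $\chi^\omega(q_0)$ ($\omega$ the fundamental character of $T$) and translates the $\TT$-coordinate by $q_\TT$; taking $Q$-invariants of $\K[a_1,a_2]\otimes\K[\TT]$ and comparing with the coordinate ring of $X_\Sigma$ read off from Lemma~\ref{lemmatoricdiv}(i),(ii) and \ref{marker: semisimpleroot} determines $Q$ up to $G_e$-conjugacy. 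Here the faithfulness hypothesis translates into $Q\cap\ker\chi^e=\{1\}$ --- indeed, since $H$ contains $U_+$ but not $\SL_2$, the Remark following Lemma~\ref{lemma:subgroup} shows the normal core of $H$ in $G_e$ is $Z(G_e)\cap H$ --- and together with the requirement that the $\TT$-weight data of $\K[X]$ match the ones prescribed by $\rho_-,\rho_+$ this fixes $Q^\circ$ up to conjugacy; the finite quotient $Q/Q^\circ$ embeds into $T$ through the $\SL_2$-factor, and passing from $G_e/(U_+\rtimes Q^\circ)$ to the quotient by the resulting cyclic group $\langle\zeta\rangle$ enlarges the lattice $\langle\rho_-,\rho_+\rangle$ to $N$ with index $\ord(\zeta)$. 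This yields $H$ conjugate to $(U_+\rtimes\langle\zeta\rangle)\rtimes\TT$ with $\ord(\zeta)=r=[N:\langle\rho_-,\rho_+\rangle]$. For the converse, given $r$ and $\zeta\in T$ of order $r$ one takes $H=(U_+\rtimes\langle\zeta\rangle)\rtimes\TT$: then $G_e/H=\SL_2/(U_+\langle\zeta\rangle)$ is the quotient of $\A^2\setminus\{0\}$ by the scalar action of $\mu_r$, a toric surface, the induced $\SL_2$-action is nontrivial with $U_+$ in a point stabilizer so that $G_e/H$ is horospherical (hence of type~II), and a direct toric computation recovers $\Sigma=\{\rho_-,\{0\},\rho_+\}$ with $[N:\langle\rho_-,\rho_+\rangle]=r$.

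The step I expect to be the main obstacle is the identification of $Q$: one must carefully match the explicit presentation $G_e/(U_+\rtimes Q)$ with the intrinsic combinatorial datum $(\sigma,e,\rho_\pm)$ produced by \ref{marker: semisimpleroot}--Lemma~\ref{lemmatoricdiv}, so as to check simultaneously that $Q^\circ$ has the asserted form up to conjugacy and that $Q/Q^\circ$ is \emph{cyclic} of order exactly $[N:\langle\rho_-,\rho_+\rangle]$, rather than a larger or non-cyclic quasitorus, all while keeping the faithfulness normalization under control. By contrast, the elimination of type~(v) and the verification of the converse are routine computations in the explicit coordinates on $G_e$.
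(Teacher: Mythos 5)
Your opening reductions are sound, and they follow a genuinely different route from the paper: the paper never classifies $H$ abstractly, but instead imports the normalization of {[}AL, Example 2.9{]} ($\rho_-=(1,0)$, $\rho_+=(r,s)$, $e=(1,-1)$), realizes $X_\Sigma$ as the punctured cone over a rational normal curve inside the $\SL_2$-module of binary forms, and reads off the stabilizer of the point $(1,0,\dots,0)$ by a direct computation. Your steps through the dimension count are correct: horosphericity gives $U_+\subseteq H$ up to conjugation, Lemma~\ref{lemma:subgroup} leaves types (iv) and (v), type (v) is excluded since $\SL_2$ is normal so $G_e/H\cong\TT/Q'$ would be a single $\TT$-orbit while $X_\Sigma$ has three, and $\dim Q=2$; the constraint $Q\cap\ker\chi^e=\{1\}$ from faithfulness is also right.

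The gap is exactly where you flag it, and it is not merely an omitted routine verification: carried out as you describe, the identification step would not deliver your intermediate claims. Do the smallest case $\rho_-=(1,0)$, $\rho_+=(1,2)$, $e=(1,-1)$, so $X_\Sigma=\{XZ=Y^2\}\setminus\{0\}=(\AA^2\setminus\{0\})/\mu_2$. The stabilizer of the boundary point $(1,0,0)$ is
\[
H=\left\{\left(\left(\begin{smallmatrix}a&b\\ 0&a^{-1}\end{smallmatrix}\right),(t_1,t_2)\right)\;\middle|\;a^{2}t_2=1\right\},
\]
i.e.\ $U_+\rtimes Q$ with $Q\subset T\times\TT$ cut out by a \emph{primitive} character equation, hence a connected two-dimensional torus; the same happens in general (the defining equation is $a^{k}\chi^{m_0}(t)=1$ with $k=[N:\bangle{\rho_-,\rho_+}]$ and $m_0$ a primitive generator of $\rho_-^{\perp}\cap M$; alternatively, faithfulness makes $\pi_1(\TT)=N$ surject onto $\pi_1(X_\Sigma)=N/\bangle{\rho_-,\rho_+}$, forcing $H$ connected). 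Consequently $Q/Q^{\circ}$ is trivial, $Q^{\circ}$ is not (conjugate to) $\{1\}\times\TT$ --- your own constraint $Q\cap\ker\chi^e=\{1\}$ already forbids $\{1\}\times\TT\subseteq Q$ --- and the cyclic group $\bangle{\zeta}$ of order equal to the index arises not as a component group but as $Q\cap T$, the kernel of the natural surjection $Q\twoheadrightarrow\TT$, so that $Q$ is a twisted extension $1\to\bangle{\zeta}\to Q\to\TT\to 1$ (this is what the expression $(U_+\rtimes\bangle{\zeta})\rtimes\TT$ in the statement has to be read as denoting). Your plan --- ``$Q^{\circ}$ has the asserted form up to conjugacy, $Q/Q^{\circ}$ is cyclic of order $[N:\bangle{\rho_-,\rho_+}]$, and quotienting by $\bangle{\zeta}$ enlarges the lattice'' --- therefore targets assertions that do not hold as stated, and the deferred invariant-ring comparison (which is the entire content of the proposition) is never carried out. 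To close the argument along your lines you would have to perform that comparison explicitly, most naturally on the cover $\AA^2\setminus\{0\}\to X_\Sigma$ attached to the sublattice $\bangle{\rho_-,\rho_+}\subseteq N$, which is in essence the computation the paper does directly.
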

\begin{proof}
Recall \cite[Example 2.9]{AL} identifying $N$ with $\ZZ^2$ so that $\rho_-=(1,0), \rho_+=(r,s)$, where $0\le r\le s$ and $\gcd(r,s)=1.$
As explained in the example, it implies that $e=(1,-1)$ and $s=r+1$. Given the morphism
\[
\phi\colon\TT\to\AA^{s+1},\; (t_1,t_2)\mapsto(t_1^{s+1},t_1^st_2,\ldots,t_2^{s+1}),
\]
we obtain the isomorphism $X\cong X_\Sigma=\overline{\phi(\TT)}\setminus\{0\}$. % may be viewed as the Zariski closure of the image of the morphism minus the origin.
The action of $G_e=\SL_2\rtimes_e\TT$ on $X_\Sigma$ is induced by the canonical $\SL_2$-action on the simple $\SL_2$-module $\bangle{t_1^{s+1},t_1^st_2,\ldots,t_2^{s+1}}$ of homogeneous forms of degree $s+1$ and by the natural $\TT$-action on $\phi(\TT)$.

Let $X=G_e/H$ and $H'$ be the stabilizer of the element $((1,0,\ldots,0))\in\phi(\TT)$. Since $H'$ is conjugate to $H$, it is enough to describe $H'$. It is easily seen that %$\bangle{(1,0,\ldots,0)}$ is $B_+$,
$H'=(U_+\rtimes\bangle\zeta)\rtimes\TT$, where $\zeta=\left(\begin{smallmatrix}
  \zeta_{s+1} & 0 \\
   0 & \zeta_{s+1}^{-1}
 \end{smallmatrix}\right)\in T\subset\SL_2$ is an element of order $s+1$.
\end{proof}

\section{Demazure roots and affine spherical varieties}\label{sec:dem}
The existence of normalized $\G_{a}$-actions on an affine
spherical variety imposes restrictive conditions on its geometric structure. We study some of these conditions in the framework 
of the Luna--Vust theory. In particular, we define the notion of Demazure roots of an affine spherical
embedding $X$ (see \ref{rootsphere}), which describes  the normalized $\G_{a}$-actions on $X$. 
Section \ref{subsec:4.2} is devoted to the example of affine $G_{e}$-spherical varieties, 
where $G_{e}$ is the reductive group $\SL_{2}\rtimes_{e}\TT$.
\subsection{General case} 
\

We denote by $G$ a connected reductive group and fix a Borel subgroup $B\subset G$.
We also let $U$ be the unipotent radical of $B$. We begin with a preliminary lemma.
\begin{lemme}
\label{lemma: LNDBweight}
Let $Z$ be a quasi-affine variety endowed with a $G$-action. Let $\partial$ be
an {\rm LND} on $\KK[Z]$. Then the  $\G_{a}$-action corresponding to $\partial$
is normalized of degree $\chi$ if and only if
$$g\cdot \partial(f) = \chi(g)\cdot \partial(g\cdot f)$$
for all $f\in \KK[Z]$ and $g\in G$. 
If the latter condition holds, then $\partial$ sends $B$-eigenvectors
into $B$-eigenvectors. Furthermore, if $Z$ contains an open $B$-orbit and $\partial$ is nonzero, 
then $\partial$ is uniquely determined
by the character $\chi$ up to the multiplication by a nonzero constant.
\end{lemme}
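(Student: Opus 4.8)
The plan is to analyze the normalization condition directly in terms of the linear $G$-action on $\KK[Z]$, using the standard correspondence between $\G_a$-actions and LNDs described in \ref{marker:Ga}.

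First I would unwind the definition. The $\G_a$-action $\gamma_\partial$ associated with $\partial$ sends $(\lambda,x)$ to the point obtained by evaluating $f\mapsto\sum_i\frac{\partial^i(f)}{i!}\lambda^i$. Normalization of degree $\chi$ means $g\cdot(\lambda\ast(g^{-1}\cdot x))=(\lambda\chi^{-1}(g))\ast x$, which, after translating through the comorphism and using the convention $(g\cdot f)(x)=f(g^{-1}\cdot x)$, becomes the identity of formal power series in $\lambda$:
\[
\sum_{i\ge0}\frac{g\cdot\partial^i(g^{-1}\cdot f)}{i!}\lambda^i=\sum_{i\ge0}\frac{\partial^i(f)}{i!}\chi^{-i}(g)\lambda^i
\]
for all $f$ and $g$. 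Comparing coefficients of $\lambda^i$ gives $g\cdot\partial^i(g^{-1}\cdot f)=\chi^{-i}(g)\partial^i(f)$; the case $i=1$ is exactly $g\cdot\partial(g^{-1}\cdot f)=\chi^{-1}(g)\partial(f)$, equivalently $g\cdot\partial(h)=\chi(g)\partial(g\cdot h)$ after substituting $h=g^{-1}\cdot f$, and conversely this case $i=1$ implies all the others by an easy induction (apply the conjugation relation repeatedly). This proves the first equivalence.

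Next, suppose the relation $g\cdot\partial(f)=\chi(g)\,\partial(g\cdot f)$ holds. If $f$ is a $B$-eigenvector of weight $\psi$, then for $b\in B$ we get $b\cdot\partial(f)=\chi(b)\partial(b\cdot f)=\chi(b)\psi(b)\partial(f)$, so $\partial(f)$ is again a $B$-eigenvector (with weight $\chi\psi$) unless it vanishes; in either case it lies in the span of $B$-eigenvectors, so $\partial$ maps $B$-eigenvectors to $B$-eigenvectors. Finally, assume $Z$ has an open $B$-orbit and $\partial\ne0$. Since $\KK[Z]$ is spanned by $B$-eigenvectors (a quasi-affine variety with an open $B$-orbit is spherical in the relevant sense, so its coordinate ring decomposes into one-dimensional $B$-eigenspaces — each eigenspace is one-dimensional because the quotient of two eigenvectors of the same weight is $B$-invariant hence constant on the dense orbit, as in \ref{marker:val1}), it suffices to see how $\partial$ acts on each eigenvector. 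By the previous paragraph $\partial$ shifts the $B$-weight by the fixed character $\chi$, so for each weight $\psi$ the value $\partial(f_\psi)$ is a scalar multiple of the unique eigenvector $f_{\psi\chi}$ of weight $\psi\chi$ (or zero). Given two nonzero LNDs $\partial,\partial'$ both satisfying the relation with the same $\chi$, pick a $B$-eigenvector $f_0$ with $\partial(f_0)\ne0$; rescaling $\partial'$ by a constant we may assume $\partial(f_0)=\partial'(f_0)$. For an arbitrary $B$-eigenvector $f_\psi$, the product $f_0 f_\psi$ is a $B$-eigenvector, and expanding $\partial(f_0 f_\psi)=\partial(f_0)f_\psi+f_0\partial(f_\psi)$ (Leibniz) and the same for $\partial'$, then comparing in the field of fractions, forces $\partial(f_\psi)=\partial'(f_\psi)$; since eigenvectors span, $\partial=\partial'$.

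The main obstacle I anticipate is the last uniqueness step: one must handle the possibility that $\partial(f_\psi)=0$ while $\partial'(f_\psi)\ne0$ or vice versa. The cleanest way around this is to argue in $\KK(Z)$ directly — both $\partial$ and $\partial'$ extend to derivations of the fraction field, the ratio $\partial(f_0)/\partial'(f_0)$ is a well-defined element fixed once we rescale, and then the Leibniz-rule computation on products $f_0 f_\psi$ shows $\partial$ and $\partial'$ agree on a set of generators of $\KK(Z)$ over $\KK$, hence on all of $\KK[Z]\subset\KK(Z)$. One should also take care that $\chi$ is genuinely unique for a nonzero normalized action — this is immediate, since $\chi$ is read off from the weight shift of $\partial$ on any eigenvector in its image. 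I expect the bookkeeping with formal power series in $\lambda$ and the induction over $i$ to be routine.
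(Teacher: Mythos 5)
Your first equivalence (unwinding the comorphism, comparing coefficients of $\lambda^i$, induction from $i=1$) and the observation that $\partial$ shifts $B$-weights by a fixed character are fine and follow the same route the paper takes (it simply cites \cite[Lemma 2.2]{FZ1} for this part). One small algebraic slip there: from $g\cdot\partial(g^{-1}\cdot f)=\chi^{-1}(g)\partial(f)$, substituting $h=g^{-1}\cdot f$ gives $g\cdot\partial(h)=\chi^{-1}(g)\,\partial(g\cdot h)$, not $\chi(g)\,\partial(g\cdot h)$; whether the final statement carries $\chi$ or $\chi^{-1}$ is a matter of convention, but the rewriting you label ``equivalently'' is not the identity you derived.

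The uniqueness step, however, has a genuine gap, in two places. First, $\KK[Z]$ is \emph{not} spanned by $B$-eigenvectors when $G$ is nonabelian: the $B$-eigenvectors are the highest-weight vectors, and their span is only $\KK[Z]^{U}$ (e.g.\ for $\SL_2$ acting on $\KK^2$ the span of the $B$-eigenvectors in $\KK[x,y]$ is $\KK[x]$). Likewise the $B$-eigenvectors do not generate $\KK(Z)$ as a field in general (they generate a subfield of transcendence degree at most the rank of $L$, which can be smaller than $\dim Z$), so your fraction-field fallback does not repair this. The missing idea is to use the equivariance relation itself to propagate from $U$-invariants to all of $\KK[Z]$: since $\KK[Z]$ is multiplicity-free, every simple $G$-submodule is generated by its highest-weight vector $f\in\KK[Z]^{U}$, and the relation $\partial(g\cdot f)=\chi^{\pm1}(g)\,g\cdot\partial(f)$ determines $\partial$ on $G\cdot f$, hence on the whole submodule; this is exactly how the paper reduces to the restriction $\partial'=\partial|_{\KK[Z]^{U}}$. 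Second, even on $\KK[Z]^{U}$ (a semigroup algebra, by one-dimensionality of the eigenspaces) your comparison is circular: from $\partial(f_0f_\psi)=\partial(f_0)f_\psi+f_0\partial(f_\psi)$ and the analogous identity for $\partial'$ you can conclude nothing unless you already know $\partial(f_0f_\psi)=\partial'(f_0f_\psi)$, which is an instance of what you are proving; and indeed uniqueness up to scalar of a \emph{homogeneous derivation} of a given degree is false without local nilpotency ($x\partial_x$ and $y\partial_y$ on $\KK[x,y]$ are homogeneous of degree $0$ and not proportional). One really needs the rigidity of homogeneous LNDs on semigroup algebras coming from the Demazure-root description (the distinguished ray is determined by the degree), which is precisely what the paper imports via \cite[Corollary 2.8]{Li}; your argument never uses local nilpotency in this step, so it cannot be complete as written.
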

\begin{proof}
See \cite[Section 1.1]{FKZ} for the existence and properties of the LND $\partial$.
The first claims are obtained similarly to the proof of \cite[Lemma 2.2]{FZ1}.
Moreover, for all $b\in B$ and $B$-eigenvector $f\in\KK[Z]$ with $B$-weight $\chi_{f}$, one has 
$$b\cdot \partial(f) = \chi(b)\cdot \partial(b\cdot f) = (\chi\cdot\chi_{f})(b)\cdot\partial(f).$$
Hence, $\partial$ sends $B$-eigenvectors into $B$-eigenvectors. 

Assume that $Z$ contains an open $B$-orbit. Passing to the normalization,
we may suppose that $Z$ is normal. Since the semisimple $G$-module $\KK[Z]$ has no multiplicities,
the subalgebra $\KK[Z]^{U}$ %of $B$-eigenspaces
is a $\partial$-stable normal finitely generated semi-group algebra.
Furthermore, the restriction $\partial'$ of $\partial$ on $\KK[Z]^{U}$ is a homogeneous LND.
Since every simple submodule of $\KK[X]$ is generated by $G.f$ for some $B$-weight $f\in \KK[X]$, $\partial$
is uniquely determined by $\partial'$. One concludes by \cite[Corollary 2.8]{Li}.
\end{proof}
In the sequel, we fix an affine embedding $X$ of a spherical homogeneous space $G/H$
given by a colored cone $(\Cc, \F_{Y})$. We denote by $\F_{0}$ the set of colors
of $X$ and by $\Gamma$ the cone generated by $\varrho(\F_{0})\cup \Cc$. We
recall that $\Gamma^{\vee}$ is the cone of $B$-weights of $\KK[X]$ (see Lemma \ref{lemma:affcone}).
\begin{rappel}
An embedding $Z$ of $G/H$ is called \emph{elementary}
if $Z$ has two orbits so that the complement of the open orbit 
is a divisor. In particular, every elementary embedding is smooth.
There is a natural bijection between non-trivial normalized 
discrete $G$-invariant valuations on $\KK(G/H)$ and elementary
embeddings of $G/H$ (see \cite[2.2]{BP}). 

Let $v\in\Gamma(1)$. We say that $v$ \emph{corresponds to a $G$-divisor} of $X$
if there exists a $G$-divisor $D\subset X$ such that $v = \varrho(D)$.
In the sequel, we make the following convention. If $v$ corresponds to a $G$-divisor of $X$, then $X_{v}$
 denotes the associated elementary embedding of $G/H$ given by the colored cone $(\QQ_{\geq 0}\cdot v, \emptyset)$.
Otherwise, $ v \in \QQ_{\ge0}\cdot\varrho(D_{0})$ for some $D_{0}\in\F_{0}$ (see \cite[Lemma 2.4]{Kn}) 
and we  let $X_{v} = G/H$.
\end{rappel}
Let us introduce the main definition of this subsection.
\begin{definition}
\label{rootsphere}
%Let $v\in \Gamma(1)$. Assume that
A character $\chi$ of $G$ is called a \emph{Demazure root} of $X$
if $\chi\in \Rt(\Gamma)$ and the following holds.
The embedding $X_v$, where $v = \rho_{\chi}$ is the distinguished ray of $\chi$ (see \ref{marker: Demazure}), admits  a normalized $\G_a$-action of degree $\chi$ and is homogeneous under the induced $\G_{a}\rtimes_{\chi}G$-action.
%The resulting structure of the $\G_{a}\rtimes_{\chi}G$-homogeneous
%space on $X_v$ is denoted by $X_v(\chi)$. Despite the $\G_a$-action of degree $\chi$ is defined up to the multiplication by a constant, 
% and there exists a 
%$\G_{a}\rtimes_{\chi}G$-homogeneous
%space $X_{v}(\chi)$ such that the variety $X_{v}(\chi)$ equipped with the
%$G$-action is the spherical variety $X_{v}$,
%where $v = \rho_{\chi}$ is the distinguished ray of $\chi$ (see \ref{marker: Demazure}). 

We denote by $\Rt(X)$
the set of Demazure roots of $X$. A root $\chi\in\Rt(X)$ is said to be 
\emph{exterior} if $\rho_\chi$ corresponds to a $G$-divisor. Otherwise, $\chi$
is called \emph{interior}. We denote by $\Rte(X)$ (resp. $\Rti(X)$)
the set of exterior (resp. interior) roots of $X$. Clearly, we have $$\Rte(X)\cap \Rti(X) = \emptyset\,\,\, 
{\rm and}\,\,\,\Rt(X) = \Rte(X)\cup \Rti(X).$$    
\end{definition}
\begin{rappel}
Two nonzero LNDs $\partial,\partial'$ on $\KK[X]$ are called equivalent if $\partial = \lambda\cdot\partial'$
for some $\lambda\in\KK^{*}$. We denote by $\LG(X)$ the set of equivalence
classes of nonzero LNDs on $\KK[X]$ corresponding to normalized $\G_{a}$-actions. 
Since $\deg \partial = \deg\partial'$ for equivalent $\partial$ and $\partial'$, the degree is well defined on $\LG(X)$. 
It also coincides with the degree of the homogeneous LND on $\KK[X]^{U}$ induced by any $\partial'\in [\partial]$.
\end{rappel}

The following theorem allows to reduce the classification of normalized $\G_{a}$-actions
on $X$ to the case when $X$ is
 either quasi-affine homogeneous or elementary, and the induced action of $\G_{a}\rtimes_{\chi}G$ on $X$ is transitive.
 %the normalized $\G_{a}$-action
%makes $X$ to a homogeneous space under a semidirect product $\G_{a}\rtimes_{\chi}G$. 
\begin{theorem}
\label{theoroot}
The map 
\begin{equation}\label{eq:iota}
\iota:  \LG(X)\rightarrow \Rt(X),\,\,\, [\partial]\mapsto {\rm deg}\,\partial 
\end{equation}

is well defined and bijective. Furthermore, let $\partial\in\LND_G(X)$ and $\chi=\deg \partial\in\Rt(X)$ have the distinguished ray $v$. 
Then the following hold. 
\begin{itemize} 
\item[(i)] 
%Consider an action $\G_a\rtimes_\chi G$ corresponding to an LND of degree $\chi$.
The open orbit of the induced $\G_a\rtimes_\chi G$-action on $X$ is  $X_v$.
%The $\G_{a}$-action corresponding to the root $\chi$
%is the unique extension of the $\G_{a}$-action on the subset $X_{v}(\chi)$ 
%to the whole variety $X$.
\item[(ii)] We have $-v\in\Vc$.
\item[(iii)] %If $[\partial]\in \LG(X)$, then 
The kernel ${\rm ker}\, \partial$ is a finitely generated
subalgebra. 
\end{itemize}
\end{theorem}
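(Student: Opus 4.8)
The plan is to establish the bijectivity of $\iota$ by combining the structure theory developed in the earlier sections with the uniqueness statement of Lemma~\ref{lemma: LNDBweight}. First I would check that $\iota$ is well defined: given $[\partial]\in\LG(X)$, the degree $\chi=\deg\partial$ does not depend on the representative, and I must show $\chi\in\Rt(X)$ in the sense of Definition~\ref{rootsphere}. The LND $\partial$ induces a homogeneous LND $\partial'$ on $\KK[X]^U=\KK[\Gamma^\vee\cap L]$; by the classical description of homogeneous LNDs on affine semigroup algebras (\cite[Corollary 2.8]{Li} or the Demazure-root description recalled in \ref{marker: Demazure}), $\chi\in\Rt(\Gamma)$ with a well-defined distinguished ray $v=\rho_\chi$. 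It then remains to verify that $X_v$ carries a normalized $\G_a$-action of degree $\chi$ which makes it homogeneous under $\G_a\rtimes_\chi G$. For this I would localize: restricting $\partial$ to the open subset where the $B$-eigenfunctions of weight in $\relint(\omega^\vee)$ are invertible (as in the proof of Lemma~\ref{lemma:B-orbit}), one sees that $\partial$ extends to $X_v$, and a dimension/orbit count shows the $\G_a\rtimes_\chi G$-action on $X_v$ is transitive. This simultaneously gives part (i).

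For surjectivity, given $\chi\in\Rt(X)$ with distinguished ray $v$, by definition there is a normalized $\G_a$-action of degree $\chi$ on $X_v$; the associated LND $\partial_v$ on $\KK[X_v]$ induces a homogeneous LND on $\KK[X_v]^U=\KK[\omega^\vee\cap L]$ of degree $\chi$, where $\omega$ is the cone generated by the rays of $\Gamma$ other than $v$. Because $\chi\in\Rt(\Gamma)$, the same formula $\partial'(f\chi^m)=\rho_\chi(m)\,f\chi^{m+\chi}$ defines a homogeneous LND on $\KK[X]^U=\KK[\Gamma^\vee\cap L]$ (the root condition $\rho(\chi)\ge0$ for $\rho\in\Gamma(1)\setminus\{v\}$ is exactly what guarantees this is an LND). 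By the uniqueness part of Lemma~\ref{lemma: LNDBweight}, this homogeneous LND on $\KK[X]^U$ lifts uniquely (up to scalar) to an LND $\partial$ on $\KK[X]$ normalized by $G$ of degree $\chi$, provided such a lift exists at all; existence follows because the $G$-module structure of $\KK[X]$ has no multiplicities, so $\partial$ is determined on each isotypic component by its restriction to the corresponding $B$-weight vector, and the compatibility is inherited from $X_v$. Thus $\iota([\partial])=\chi$, and injectivity is again immediate from the uniqueness in Lemma~\ref{lemma: LNDBweight}: two classes mapping to the same degree have representatives inducing the same homogeneous LND on $\KK[X]^U$, hence are proportional.

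For the remaining assertions: part (ii), $-v\in\Vc$, follows from the fact that a normalized $\G_a$-action on $X_v$ generating a transitive $\G_a\rtimes_\chi G$-action forces the $G$-invariant valuation attached to the boundary divisor of $X_v$ to be "anti-dominant" — concretely, $\chi$ is non-positive on $\Vc$ while $\rho_\chi(\chi)=-1$, and the coherence/root conditions (cf. \ref{marker: Demazure} and the domain cone of Definition~\ref{marker:defcohe}) translate into $v$ lying in $\Vc\cap(-\Vc)$; I would spell this out by noting that $X_v$ being homogeneous under $\G_a\rtimes_\chi G$ with $G$-spherical underlying structure means the boundary divisor is $G$-stable of codimension one, whose valuation $v_D$ satisfies $\varrho(v_D)=v$, and a $\G_a$-orbit crossing it transversally in both directions forces $-v\in\Vc$ as well. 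Part (iii), finite generation of $\ker\partial$, reduces to the toroidal/toric picture via $X_v$: the kernel of the induced homogeneous LND on $\KK[X]^U$ is $\KK[\omega^\vee\cap L]$, which is finitely generated, and $\ker\partial$ is generated over $\ker\partial\cap\KK[X]^U$ by finitely many $G$-submodules (the simple $G$-submodules of $\KK[X]$ lying in $\ker\partial$ correspond to $B$-weights $m$ with $\rho_\chi(m)=0$, i.e. $m\in\omega^\vee$); invoking \cite[Proposition 3.3]{AHS}-style localization or the explicit kernel formula \eqref{eq:kerpm} closes the argument.

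The main obstacle I expect is verifying cleanly that the homogeneous LND on $\KK[X]^U$ forced by $\chi\in\Rt(\Gamma)$ actually lifts to a $G$-normalized LND on all of $\KK[X]$ — i.e. that the combinatorial root condition is not only necessary but sufficient. The subtle point is that $X_v$ furnishes the lift only on a localization, and one must argue that this local LND is everywhere regular on $X$; this uses that $X$ is normal, that $X\setminus X_v$ has the right codimension behaviour, and that $\partial$ maps $B$-eigenvectors to $B$-eigenvectors (Lemma~\ref{lemma: LNDBweight}) so that its regularity can be checked weight-by-weight. Handling the interplay between the colors in $\F_0$ (which may or may not contain $v$'s ray) and the $G$-divisors is the delicate bookkeeping, but it is exactly the content of the colored-cone description of $X_v$ as the elementary embedding $(\QQ_{\ge0}v,\emptyset)$ versus $X_v=G/H$, which the definition has already isolated.
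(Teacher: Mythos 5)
Your overall skeleton (pass to $\KK[X]^U$ to get $\chi\in\Rt(\Gamma)$ and the distinguished ray, localize at a kernel $B$-eigenvector, use Lemma~\ref{lemma: LNDBweight} for injectivity, and prove (iii) via $(\ker\partial)^U$) is the paper's, but the two places where the paper does its real work are missing. First, in the proof of well-definedness and (i) you assert that ``a dimension/orbit count shows the $\G_a\rtimes_\chi G$-action on $X_v$ is transitive.'' No dimension count can do this: when $v=\rho_\chi$ corresponds to a $G$-divisor $D_v$, the whole issue is whether the $\G_a$-action moves $D_v$ or leaves it (and hence $G/H$) invariant, and the two alternatives are dimensionally indistinguishable. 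The paper excludes the second by a genuine argument: if $G/H$ were $\G_a$-stable, the induced homogeneous LND on $\KK[G/H]^U$ would make $\chi$ a root of the cone $\Gamma_0$ generated by the color images, and comparing kernels ($v^{\perp}\cap\Gamma^{\vee}\subset v'^{\perp}\cap\Gamma_0^{\vee}$) forces the distinguished ray with respect to $\Gamma_0$ to equal $v$, so $v$ would lie on a color ray, contradicting that it corresponds to a $G$-divisor. One also needs the codimension-two sweeping argument (the sets $Z$, $Z_0$, $Z_1$ in the paper) to see that $G/H$, respectively $X_v$, is $\G_a$-stable at all; your localization remark does not give this. Without these steps you have not shown that the open $\G_a\rtimes_\chi G$-orbit is $X_v$ rather than $G/H$, i.e. you cannot distinguish interior from exterior roots, and the defining condition of Definition~\ref{rootsphere} (homogeneity of $X_v$) remains unverified, so $\iota$ is not yet well defined.

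Second, your argument for (ii) does not work as stated. The appeal to coherent pairs and the domain cone of Definition~\ref{marker:defcohe} is out of place: that machinery belongs to the $\TT$-variety/AL-coloring description for $G_e$ in Sections~2--3, whereas Theorem~\ref{theoroot} concerns an arbitrary connected reductive $G$. The claim that a $\G_a$-orbit ``crossing the boundary divisor transversally in both directions forces $-v\in\Vc$'' is unsubstantiated, and in the interior case $X_v=G/H$ has no boundary divisor, so your argument says nothing there. The paper instead builds a $G$-invariant valuation directly from $\partial$, namely $w(f)=-\max\{i\mid\partial^{i}(f)\neq 0\}$ on $\KK[X_v]$, whose $G$-invariance comes from the normalization condition and whose value on a $B$-eigenvector is $\langle\chi_f,-v\rangle$, whence $-v=\varrho(w)\in\Vc$ in both cases. (Minor points of the same kind: in (iii) your citations of \eqref{eq:kerpm} and the AHS-type localization are again $G_e$-specific; the general argument only needs finite generation of $(\ker\partial)^U$ and $\ker\partial=G\cdot(\ker\partial)^U$, which is the idea you do have. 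Your surjectivity sketch also leaves the key extension step, that the derivation furnished by $X_v$ preserves $\KK[X]$, as an unproved ``compatibility''.)
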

\begin{proof}
Consider $[\partial]\in\LG(X)$. Let us show that $\chi = {\rm deg}\, \partial\in\Rt(X)$.
By Lemmata~\ref{lemma:affcone}, \ref{lemma: LNDBweight}, and \cite[Theorem 2.7]{Li}, 
we have $\chi\in\Rt(\Gamma)$.
Choose a $B$-eigenvector $f\in \KK[X]$ such that $\chi_{f}\in\relint
(\Gamma\cap\rho^{\perp}_{\chi})\cap L$, and let $B.x$ be the open $B$-orbit of $X$.
Since $f\in {\rm ker}\, \partial$, the subset $X_{f}=X\setminus \div(f)$ is stable under the $\G_{a}$-action
induced by $\partial$. 

Assume first that $v = \rho_{\chi}\in\Gamma(1)$ does not correspond to a $G$-divisor of $X$.
Then we may consider the subset of colors
$$\F' = \{D'\in \F_{0}\,\,|\,\, \varrho(D')\in\rho_{\chi}\}.$$
Using the relation
$${\rm div}\, f = \sum_{D\in\F_{0}\cup\mathcal{P}}\langle \chi_{f},\varrho(D)\rangle\cdot D,$$
where $\mathcal{P}$ is the set of $G$-divisors of $X$,
we obtain the equality
$$X_{f} =  X\setminus \bigcup_{\mathcal{P}\cup\F_0\setminus\F'}D.$$
The complement to $ (G/H)\cap X_f$ in $X_f$ is of codimension at least two,
hence the subset
$$Z = \G_{a}\star\left(X_f\setminus  (G/H)\right)$$
is  $\G_a$-stable and of codimension at least one. Thus, $G/H = G\cdot (X_{f}\setminus Z)$ is also $\G_{a}$-stable. Hence $\chi\in \Rti(X)$ and (i) is proven for this case.

Now assume that $v = \rho_{\chi}\in\Gamma(1)$ corresponds to a $G$-divisor $D_v$ of $X$. 
Then
$$X_{f} =  X\setminus \bigcup_{(\mathcal{P}\cup\F_0)\setminus\{D_v\}}D.$$
So, the open subset $G\cdot X_{f}$ is $\G_{a}$-stable and its unique $G$-divisor is $D_v$. 

Let us prove by contradiction that $\G_{a}\star D_v$ is dense in $X$.
If $\G_{a}\star D_v$ is not dense, then $G/H$ is $\G_{a}$-stable. Remark
that the dual cone $\Gamma_{0}$ of the $B$-weights of $\KK[G/H]$ is the subcone of $\Gamma$ generated by $\varrho(\F_{0})$. By considering the induced $\LND$ on $\KK[G/H]^{U}$ 
(see \ref{lemma: LNDBweight}),
which has same degree $\chi$, we have $\chi\in\Rt(\Gamma_{0})$. Let 
$v'$ be the distinguished ray of $\chi$ with respect to $\Gamma_{0}$.
The inclusion $$\KK[X]^{U}\cap \KK(X)^{\partial}\subset \KK[G/H]^{U}\cap \KK(X)^{\partial}$$
yields
the inclusion of facets 
$v^{\perp}\cap\Gamma^{\vee}\subset v'^{\perp}\cap \Gamma^{\vee}_{0}.$
Hence $v^{\perp} = v'^{\perp}$. Since $v,v'$ are primitive
vectors and $\Gamma$ is strongly convex, we have $v = v'\in \Gamma(1)\cap\Gamma_{0}(1)$.
Therefore $v$ does not correspond to a $G$-divisor of $X$, a contradiction.
Hence $\G_{a}\star D_v$ is dense in $X$. 

Let $G.y$ be the orbit which  closure 
in $G\cdot X_{f}$ is $D_v$. Such an orbit $G.y$ exists due to the fact that $X$ has a finite number of 
orbits. Then the subset 
$$Z_{0} = G\cdot X_{f}\setminus (G/H\cup G.y)$$
is closed of codimension  at least two.
So, $Z_1=\overline{\G_{a}\star Z_{0}}$ is a $\G_{a}\rtimes_{\chi}G$-stable subset of codimension at least
one. Hence $Z_1\cap G/H = \emptyset$.
If $Z_1\cap G.y\neq \emptyset$, then $Z_{1}$ contains $D_v$,
yielding a contradiction. Therefore, $Z_{1}=Z_0$
and $X_{v}$ is $\G_{a}$-stable.    
Since $D_v$ is not $\G_a$-stable, $\G_{a}\rtimes_{\chi}G$ acts homogeneously on $X_{v}$ and 
$\chi\in \Rte(X)$. This proves (i) and shows that $\iota$ in (\ref{eq:iota}) is well defined. Note that
the injectivity of $\iota$ follows from Lemma \ref{lemma: LNDBweight} and the surjectivity
is a consequence of (i).

%(i) %Let $\partial$ be the associated LND on $\KK[X_{v}]$ with degree $\chi\in\Rt(\Gamma)$.
%According to Lemma \ref{lemma:affcone}, the algebra $\KK[X]^{U}$ is $\partial$-stable. 
%Using that $\KK[X] = G\cdot\KK[X]^{U}$, 
%the subalgebra $\KK[X]$ is also $\partial$-stable, yielding a $\G_{a}$-action on $X$.

(ii) 
We define the discrete $G$-invariant valuation $w$ 
on $\KK(G/H)$ by letting
$$w(f) = - \max\{i\in\mathbb{N}\,|\, \partial^{i}(f)\neq0\,\}$$
for every nonzero $f\in \KK[X_{v}]$. Further, if $f$ is a $B$-eigenvector, then $w(f) = \langle \chi_{f}, -v\rangle$.
Hence $w = -v\in\linea\Vc$. 

(iii) This follows from the fact that the $({\rm ker}\,\partial)^{U}$ is finitely generated
(see \cite[Theorem 1.2]{Ku} and \cite[Corollary 2.11]{Li}) and that ${\rm ker}\,\partial = G\cdot ({\rm ker}\,\partial)^{U}$. 
\end{proof}
By the previous result we may introduce the following definition.
\begin{definition}
A non-trivial normalized $\G_{a}$-action of degree $\chi$ on $X$ is said to be 
\emph{exterior} (resp. \emph{interior}) if $\chi\in\Rte(X)$ (resp. $\chi\in \Rti(X)$) 
\end{definition}
As a direct consequence of Theorem \ref{theoroot}, we have the following results.
\begin{corollaire}
If $G$ is semisimple, then $\Rt(X) = \emptyset$.
If $\Vc$ is strongly convex, then $\Rte(X) = \emptyset$. 
\end{corollaire}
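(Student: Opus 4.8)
The plan is to deduce both assertions directly from Theorem~\ref{theoroot}, in particular from the necessary conditions (i) and (ii) that any Demazure root must satisfy. The key observation is that a root $\chi\in\Rt(X)$ comes with a distinguished ray $\rho_\chi = v\in\Gamma(1)$, that $X_v$ carries a normalized $\G_a$-action making $\G_a\rtimes_\chi G$ act transitively, and that $-v\in\Vc$ by part~(ii).

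First I would prove that $G$ semisimple forces $\Rt(X)=\emptyset$. Suppose $\chi\in\Rt(X)$ with distinguished ray $v=\rho_\chi$. By Theorem~\ref{theoroot}(i), the group $\G_a\rtimes_\chi G$ acts transitively on $X_v$; in particular the $\G_a$-action on $X_v$ is non-trivial, so the corresponding LND $\partial$ on $\KK[X_v]$ is non-zero and $\chi=\deg\partial$ is its $B$-weight. Now pick a $B$-eigenvector $f\in\KK[X_v]$ whose $B$-weight lies in the interior of the weight cone; since $G$ is semisimple, its character group $\XX(G)$ is finite (indeed trivial for the adjoint form, finite in general because $G=[G,G]$), so $\chi$ has finite order, say $\chi^N=1$. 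But the $B$-weight of $\partial^N(f)$ is $\chi_f+N\chi=\chi_f$, while $\partial^N(f)\neq0$ for suitable $f$ and suitable $N$ (taking $N$ large one still has $\chi^N$ in the finite group $\XX(G)$); more cleanly: the non-zero homogeneous elements $\partial^i(f)$, $i=0,1,2,\dots$, have pairwise distinct $B$-weights $\chi_f+i\chi$, because $\KK(X)^{(B)}\to L$ is injective and the $L$-components are distinct unless $\chi$ is torsion in $L$, which it cannot be as $\partial$ is locally nilpotent. Since $L\hookrightarrow\XX(B)=\XX(T)$ is a lattice and $\XX(G)\subset\XX(T)$ is detected via the decomposition $\XX(B)\cong\XX(G)\oplus(\text{root lattice part})$, a character of $G$ has infinite order in $L$ only if it is non-torsion in the free part, which is empty when $G$ is semisimple because then $\XX(G)$ is finite. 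Hence $\chi$ is torsion in $L$, contradicting the existence of infinitely many distinct weights $\chi_f+i\chi$. So $\Rt(X)=\emptyset$.

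Next I would handle the statement that $\Vc$ strongly convex implies $\Rte(X)=\emptyset$. Let $\chi\in\Rte(X)$, so by definition its distinguished ray $v=\rho_\chi\in\Gamma(1)$ corresponds to a $G$-divisor $D_v$ of $X$; in particular $v\in\Cc(1)\subset\Vc$ (a $G$-divisor of an affine embedding has image in the valuation cone, cf.\ \ref{marker:col}). On the other hand, Theorem~\ref{theoroot}(ii) gives $-v\in\Vc$, in fact the proof shows $w=-v\in\linea\Vc=\Vc\cap(-\Vc)$. Thus $v$ is a non-zero element of $\Vc\cap(-\Vc)$. But $\Vc$ strongly convex means precisely $\Vc\cap(-\Vc)=\{0\}$, a contradiction. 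Hence no exterior root exists.

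The main obstacle, and the only point needing care, is the first assertion: making precise why a character of a semisimple group, viewed as a $B$-weight in $L$, cannot be the degree of a non-zero LND. The cleanest route is the one indicated in the proof of Theorem~\ref{theoroot}(ii): for any $f\in\KK[X_v]^{(B)}\cap\ker$(nothing), one has $w(f)=\langle\chi_f,-v\rangle$ for the valuation $w(f)=-\max\{i:\partial^i f\neq0\}$, and choosing $\chi_f$ with $\langle\chi_f,v\rangle>0$ we get $w(f)<0$ arbitrarily negative as $\chi_f$ scales, which is impossible unless $v=0$; but $v=\rho_\chi$ is a ray generator, so non-zero — wait, this argument actually reproves (ii) rather than ruling out semisimplicity. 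The genuinely semisimple-specific input is that $\rho_\chi\in\Gamma$ and $\rho_\chi(\chi)=-1$ force $\chi$ to pair non-trivially with the cone, whereas for semisimple $G$ every character is trivial on the connected center and hence, after restricting to the lattice $L$ which only sees the ``toric directions'', $\chi$ would have to be a root-lattice element pairing $-1$ with a ray of $\Gamma$ while being non-negative on all others — and one shows via the structure of $\Gamma^\vee$ as the $B$-weight cone that this is incompatible with $\XX(G)$ being finite. I expect the referee-safe formulation is simply: by \cite[Theorem 2.7]{Li} applied to $\KK[X]^U$, a degree of a homogeneous LND is a Demazure root $\chi$ of $\Gamma$, so $\rho_\chi(\chi)=-1$; but $\chi\in\XX(G)$ and $\XX(G)$ is finite when $G$ is semisimple, so $\chi$ has finite order in the lattice $L$, contradicting $\rho_\chi(\chi)=-1\neq0$. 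This is the step to write out with the most care.
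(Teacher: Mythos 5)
Your argument is correct and is essentially the paper's own proof, which simply combines the inclusion $\Rt(X)\subset\Rt(\Gamma)$ (for a connected semisimple $G$ the character group is trivial, while a Demazure root of $\Gamma$ satisfies $\rho_{\chi}(\chi)=-1\neq 0$) with Theorem~\ref{theoroot}(ii) (an exterior root yields a nonzero primitive $v$ that lies in $\Vc$, being the image of a $G$-divisor, and satisfies $-v\in\Vc$, impossible when $\Vc$ is strongly convex). The clean formulation you settle on at the end is exactly this; the intermediate detours (the torsion discussion, and the aside $\Cc(1)\subset\Vc$, which is false in general but is not needed since the distinguished ray of an exterior root is the image of a $G$-divisor) are superfluous but do not affect correctness.
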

\begin{proof}
This follows from the fact that $\Rt(X)\subset \Rt(\Gamma)$ and from the part (ii) of Theorem \ref{theoroot}. 
\end{proof}
\begin{corollaire}
\label{fixedpointcor}
If $X$ has three orbits including a fixed point $y$, then  $\Rte(X) = \emptyset$.
\end{corollaire}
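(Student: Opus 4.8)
The plan is to use Theorem~\ref{theoroot} to translate the geometric hypothesis into a statement about the cone $\Gamma$ and its Demazure roots. Suppose $\chi\in\Rte(X)$; then by Theorem~\ref{theoroot}(i) there is a $G$-divisor $D_v$ of $X$ with $v=\rho_\chi=\varrho(D_v)$, and the elementary embedding $X_v$ carries a normalized $\G_a$-action of degree $\chi$ making $\G_a\rtimes_\chi G$ act transitively. Moreover $\chi\in\Rt(\Gamma)$, so $\rho_\chi$ is a ray of $\Gamma=\Cone(\varrho(\F_0)\cup\Cc)$ with $\langle\chi,\rho_\chi\rangle=-1$ and $\langle\chi,\rho\rangle\ge0$ for all other rays $\rho$ of $\Gamma$.

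Next I would exploit the hypothesis that $X$ has exactly three orbits, one of which is the fixed point $y$. Since $X$ is affine, $y$ is the unique closed orbit and it lies in the closure of every orbit; the colors $\F_Y$ containing $y$ form part of the associated colored cone $(\Cc,\F_Y)$. With three orbits total—the open orbit $G/H$, the fixed point $y$, and one intermediate orbit $\Oo$—the cone $\Cc$ has at most two rays (one for each $G$-divisor, counting the $G$-divisor that is $\overline{\Oo}$, if it is $G$-stable, together with images of colors in $\F_Y$). The key observation is to compare $\Gamma$ and $\Cc$: since $\F_0\setminus\F_Y$ are the colors \emph{not} containing $y$, and the affineness condition of Definition~\ref{definition:conecol} gives a lattice vector $m\in L$ that is zero on $\Cc$ and positive on every color outside $\F_Y$, the cone $\Gamma$ is obtained from $\Cc$ by adjoining the rays $\varrho(\F_0\setminus\F_Y)$, all lying in the open half-space $\{m>0\}$.

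Now I would argue that $v=\rho_\chi$ cannot be a ray of $\Gamma$ of the required root type. The fixed point $y$ corresponds, in the localization picture of Theorem~\ref{theoroot}, to the full cone $\Gamma$ being pointed with $\Gamma^\vee$ of full dimension; more precisely, $y$ being a fixed point means $0\in\Gamma$ is a vertex and the weight monoid $\Gamma^\vee\cap L$ has no invertible elements other than constants, so every ray of $\Gamma$ "sees" the origin. Since $D_v=\overline{\Oo}$ must be the unique intermediate $G$-orbit's closure and $X_v$ is homogeneous under $\G_a\rtimes_\chi G$, the $\G_a$-action moves points of $D_v$ out into the open orbit; but then by Theorem~\ref{theoroot}(ii) we have $-v\in\Vc$, i.e. $v\in-\Vc$. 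On the other hand $v\in\Cc(1)\subset\Gamma(1)$ and the affineness of $(\Cc,\F_Y)$ forces a functional $m$ non-positive on $\Vc$ and zero on $\Cc$; applying it to $v\in\Cc\cap(-\Vc)$ gives $\langle m,v\rangle=0$ and $\langle m,-v\rangle\le0$, consistent, so this alone is not a contradiction—the real obstruction comes from the three-orbit structure forcing the intermediate orbit $\Oo$ to be the \emph{only} non-closed, non-open orbit, so $D_v$ contains $y$ in its closure, which means $v$ lies in the relative interior of a two-dimensional face of $\Gamma$ pairing with a color in $\F_Y$; this prevents $\langle\chi,\rho\rangle\ge0$ on all other rays while $\langle\chi,v\rangle=-1$. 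I would make this precise by a dimension/face count: a root ray of $\Gamma$ must be an extremal ray on which $\chi$ is $-1$ and $\chi\ge0$ elsewhere, but the $G$-divisor $D_v$ whose closure contains $y$ forces $v$ together with some $\varrho(D)$, $D\in\F_Y$, to span a face, contradicting extremality or the sign condition.

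The main obstacle I anticipate is pinning down exactly how the three-orbit hypothesis constrains the combinatorics of $(\Cc,\F_Y)$ and $\Gamma$—in particular, ruling out the case where the single intermediate orbit gives a $G$-divisor $D_v$ whose ray is genuinely extremal in $\Gamma$ and on which some root $\chi$ takes value $-1$. The resolution should come from the interplay of two facts proved earlier: (a) $X_v$ is an \emph{elementary} embedding, hence smooth with exactly two orbits, so the $\G_a$-orbit through a point of $D_v$ is an affine line meeting the open orbit—this forces $D_v$ to be "movable"; and (b) the presence of the fixed point $y\in\overline{D_v}$ means $D_v$ itself, as a subvariety of $X$, retracts to $y$ under the torus part of $G$, which is incompatible with $D_v$ being swept out by $\G_a$-lines into a homogeneous $X_v$ unless $\Vc$ degenerates. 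I would therefore structure the proof so that the contradiction is extracted from Theorem~\ref{theoroot}(i)–(ii) applied to the specific colored cone $(\QQ_{\ge0}v,\emptyset)$ of $X_v$, comparing it with the three-orbit colored cone of $X$, and showing the required root $\chi$ would have to satisfy incompatible sign conditions on the rays $\varrho(\F_Y)$.
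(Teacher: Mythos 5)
Your write-up assembles the correct ingredients from Theorem~\ref{theoroot} (the ray $v=\rho_\chi$ corresponds to a $G$-divisor, $X_v$ is homogeneous under $\G_a\rtimes_\chi G$, and $-v\in\Vc$), but it never actually produces the contradiction: the claimed ``incompatible sign conditions'' are only announced, not derived. Concretely, the assertion that the fixed point forces $v$ to lie ``in the relative interior of a two-dimensional face of $\Gamma$ pairing with a color in $\F_Y$'' does not follow from anything stated (and is in tension with $v$ being a ray of $\Gamma$, which is exactly what $\chi\in\Rt(\Gamma)$ requires); likewise the claim that three orbits force $\Cc$ to have at most two rays is unjustified, since $\Cc$ is generated by the $G$-divisors together with $\varrho(\F_Y)$ and $\F_Y$ can be large. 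The fact that $y$ lies in the closure of every orbit is automatic for an affine spherical variety (unique closed orbit) and by itself imposes no extra sign constraint on $\chi$, so the proposed ``dimension/face count'' has no visible source of contradiction. Note also that whether $\Rt(X)$ is combinatorially determined is precisely the open Question~\ref{question} of the paper, so a purely convex-geometric derivation should be regarded with suspicion unless carried out in full.

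The paper's argument is instead geometric and short, and your parenthetical point (b) is a gesture in its direction without the key input. Since $X$ has exactly three orbits and $v$ corresponds to a $G$-divisor, the open set $X_v$ consists of the open orbit together with the codimension-one orbit, and it is homogeneous under $\G_a\rtimes_\chi G$; hence $X\setminus X_v=\{y\}$ is $\G_a$-stable, so $y$ is fixed, while normality of $\G_a$ in $\G_a\rtimes_\chi G$ and transitivity on $X_v$ show that no point of $X_v$ is $\G_a$-fixed. Thus the (nontrivial) $\G_a$-action on the affine variety $X$ would have fixed locus exactly the single point $y$, which is impossible by Bia{\l}ynicki-Birula's theorem on fixed points of additive group actions on affine varieties (the reference \cite{Bi} in the paper). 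Without this fixed-point theorem, or some equally substantive geometric input, your outline does not close.
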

\begin{proof}
Take $\chi\in\Rte(X)$. By Theorem \ref{theoroot}, $X$ admits a normalized $\G_{a}$-action on $X$ that
induces a transitive $\G_{a}\rtimes_{\chi}G$-action on $X_{v}$, where
$v\in\Cc(1)\cap\Vc$. 
Therefore, $X\setminus X_{v}$ is $\G_{a}$-stable and
$y$ is the unique fixed point for the $\G_{a}$-action. This yields a contradiction (see \cite{Bi}). 
\end{proof}
\begin{example}
Consider the spherical action of $G = {\rm GL}_{2}\times {\rm GL}_{2}$  on the variety $X = {\rm M}_{2\times2}$
of $2\times 2$ matrices  via the relation $(P,Q)\cdot M = PMQ^{-1}$
for all $(P,Q)\in G$ and $M\in X$. Then $X$ has three orbits:
$$G\cdot\begin{pmatrix} 0  & 0 \\ 0   &   0 \end{pmatrix},\,\,\,
G\cdot\begin{pmatrix} 1  & 0 \\ 0   &   0 \end{pmatrix},\,\,\,
G\cdot\begin{pmatrix} 1  & 0 \\ 0   &   1 \end{pmatrix},$$
where the first is the fixed point. Hence by Corollary \ref{fixedpointcor},
every exterior normalized $\G_{a}$-action on $X$ is trivial. 
\end{example}
The set $\Rt(X)$ of Demazure roots of an affine spherical variety $X$ is defined in geometric terms.
A natural question is to know whether $\Rt(X)$ is described in combinatorial terms. More precisely,
we propose the following problem.
\begin{question}
\label{question}
For any affine spherical variety $X$, is the set $\Rt(X)$ equal to the intersection of
a finite union of polyhedra in $L_{\QQ}$ with a sublattice of $L$? 
\end{question}

\subsection{Description for the reductive group $G_{e}$}
\

\label{subsec:4.2}
Let $X$ be an affine $G_{e}$-spherical variety with associated colored cone $(\Cc,\F)$.
We assume that $e\neq 0$.
The aim of this subsection is to provide an explicit description of the set $\Rt(X)$ of
the Demazure roots of $X$ in terms of the pair $(\Cc, \F)$, see Theorem \ref{th:final}. 
\begin{rappel}
\label{raplnd}
Let $\partial$ be the LND on $\KK[X]$ corresponding to a non-trivial normalized $\G_{a}$-action
of degree $\chi^{\ee}$, where $\ee\in M$ is nonzero. 
Equivalently, $\partial$ is nonzero, homogeneous of degree $\ee$ with respect to the $M$-grading
of $\KK[X]$, and satisfies
$[\partial,\partial_{-}] = [\partial,\partial_{+}] = 0,$
where $\partial_{\pm}$ is the LND corresponding to the $U_{\pm}$-action on $X$. 
\end{rappel} 
Applying Lemma \ref{lemma: LNDBweight} to the  Borel subgroups $B_{-}$
and $B_{+}$ of $G_{e}$, we immediately obtain the following result.
\begin{corollary}
\label{corroot}
Assume that $X$ is of type {\rm I}. Without loss of generality, we may suppose that $\KK[X]$ admits a presentation 
as in \ref{theorem:SL}. Then in the notation of Section \ref{sec:3} and  \ref{raplnd},
we have $\ee\in \Rt(\omega_{-})\cap \Rt(\omega_{+})$, and
there exists a facet $\varsigma_{\pm}\subset\omega_{\pm}^{\vee}$
such that 
$$ {\rm ker}\,\partial_{\pm}\cap {\rm ker }\,\partial = \bigoplus_{m\in\varsigma_{\pm}\cap L}\KK\phi_{m}^{\pm}\chi^{m}.$$  
\end{corollary}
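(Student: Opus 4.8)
The plan is to reduce the statement to the classical Demazure--Liendo description of homogeneous locally nilpotent derivations on an affine toric variety, by restricting $\partial$ to the subalgebras of $U_{\pm}$-invariants. I fix one of the two signs throughout. By \ref{raplnd} the derivation $\partial$ is homogeneous of degree $\ee$ for the $M$-grading of $\KK[X]$ and satisfies $[\partial,\partial_{\pm}]=0$; consequently $\partial$ maps $\ker\partial_{\pm}=\KK[X]^{U_{\pm}}$ into itself, and the induced $\KK$-derivation $\partial'_{\pm}$ of $\ker\partial_{\pm}$ is again locally nilpotent. I would first check that $\partial'_{\pm}\neq0$: applying Lemma~\ref{lemma: LNDBweight} to the Borel subgroup $B_{\pm}\subset G_{e}$, $\partial$ sends $B_{\pm}$-eigenvectors to $B_{\pm}$-eigenvectors, so if it annihilated all of them it would annihilate all their $G_{e}$-translates, which span the semisimple $G_{e}$-module $\KK[X]$, forcing $\partial=0$; hence some $B_{\pm}$-eigenvector $f$ has $\partial f\neq0$, and $f$, being $U_{\pm}$-invariant, lies in $\ker\partial_{\pm}$, so $\partial'_{\pm}(f)=\partial f\neq0$.

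Next I would identify $\ker\partial_{\pm}$ with a semigroup algebra. By the proof of Lemma~\ref{lemma: LNDBweight}, $\ker\partial_{\pm}=\KK[X]^{U_{\pm}}$ is a normal finitely generated semigroup algebra, and by its explicit form \eqref{eq:kerpm} the map $\phi^{\pm}_{m}\chi^{m}\mapsto\chi^{m}$ is an isomorphism of $M$-graded algebras $\ker\partial_{\pm}\cong\KK[\omega_{\pm}^{\vee}\cap L]$, the semigroup algebra of the torus with character lattice $L$. Since $\ker\partial_{\pm}$ has transcendence degree $\dim X-1=\dim\TT=\operatorname{rk}L$, this torus has a dense orbit in $\Spec\ker\partial_{\pm}$, so $\omega_{\pm}^{\vee}$ is full-dimensional in $L_{\QQ}$ and $\omega_{\pm}$ is strongly convex. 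Finally $\partial'_{\pm}$ is homogeneous for this $L$-grading, and (as $\partial'_{\pm}\neq0$) its degree is forced to be the $M$-degree $\ee$ of $\partial$; in particular $\ee\in L$.

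Now I would invoke the toric classification (see \ref{marker: Demazure} and \cite[Theorem 2.7]{Li}): a nonzero homogeneous locally nilpotent derivation of degree $\ee$ on $\KK[\omega_{\pm}^{\vee}\cap L]$ exists only if $\ee$ is a Demazure root of $\omega_{\pm}$, and then, up to a nonzero scalar, it is the root derivation with some distinguished ray $\rho^{\pm}_{\ee}\in\omega_{\pm}(1)$, acting by $\partial'_{\pm}(\phi^{\pm}_{m}\chi^{m})=\const\cdot\langle m,\rho^{\pm}_{\ee}\rangle\,\phi^{\pm}_{m+\ee}\chi^{m+\ee}$. Running this for both signs, with the same $\ee$, yields $\ee\in\Rt(\omega_{-})\cap\Rt(\omega_{+})$. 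Its kernel is $\ker\partial'_{\pm}=\bigoplus_{m}\KK\phi^{\pm}_{m}\chi^{m}$, the sum over $m\in\omega_{\pm}^{\vee}\cap(\rho^{\pm}_{\ee})^{\perp}\cap L$, and $\ker\partial_{\pm}\cap\ker\partial=\ker\partial'_{\pm}$ by construction; since $\omega_{\pm}^{\vee}$ is full-dimensional and $\rho^{\pm}_{\ee}$ is a ray of the strongly convex cone $\omega_{\pm}$, the set $\varsigma_{\pm}\coloneq\omega_{\pm}^{\vee}\cap(\rho^{\pm}_{\ee})^{\perp}$ is a facet of $\omega_{\pm}^{\vee}$, which is exactly the asserted identity.

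The step I expect to be the main obstacle is the passage from $\ker\partial_{\pm}$ to the semigroup algebra $\KK[\omega_{\pm}^{\vee}\cap L]$ \emph{compatibly with the grading}, so that the toric Demazure-root formula applies verbatim: this forces one to keep careful track of the finite-index sublattice $L\subset M$ and of the torus $\Spec\KK[L]$ in place of $\TT$. Once this identification and the resulting dimension/full-dimensionality statements are in hand, the rest is purely the toric case, which is why the corollary follows at once from Lemma~\ref{lemma: LNDBweight}.
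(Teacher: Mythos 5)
Your proof is correct, and its overall skeleton is the one the paper uses: restrict $\partial$ to $\ker\partial_{\pm}=\KK[X]^{U_{\pm}}$ (stable because $[\partial,\partial_{\pm}]=0$), observe via \eqref{eq:kerpm} that this kernel is a semigroup algebra isomorphic to $\KK[\omega_{\pm}^{\vee}\cap L]$, and then invoke Liendo's classification of nonzero homogeneous LNDs on such algebras to obtain $\ee\in\Rt(\omega_{-})\cap\Rt(\omega_{+})$ together with the facet description of $\ker\partial_{\pm}\cap\ker\partial$. The one step where you genuinely diverge is the nontriviality of the restricted derivation. The paper argues LND-theoretically: it first shows $\partial$ cannot be equivalent to $\partial_{\pm}$ (writing $\partial=\frac{f}{g}\partial_{-}$ with $f,g\in\ker\partial_{-}$ and expanding $[\partial_{+},\partial]=0$ would force $\delta$ to be equivalent to $\partial_{-}$), and then excludes $\ker\partial_{\pm}\subsetneq\ker\partial$ by Freudenburg's principle on kernels and transcendence degree. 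You instead use the semi-invariance identity $g\cdot\partial(f)=\chi(g)\cdot\partial(g\cdot f)$ of Lemma \ref{lemma: LNDBweight}, which makes $\ker\partial$ a $G_{e}$-stable subspace, together with the fact that $G_{e}$-translates of $B_{\pm}$-eigenvectors span the semisimple $G_{e}$-module $\KK[X]$; this is the same representation-theoretic mechanism that appears inside the paper's proof of Lemma \ref{lemma: LNDBweight}, transplanted here, and it is equally valid. Your route is somewhat more self-contained (no appeal to equivalence of LNDs or to Freudenburg), while the paper's works from the commutation relations alone; you also spell out two points the paper leaves implicit in citing Liendo, namely that $\phi_{m}^{\pm}\chi^{m}\mapsto\chi^{m}$ is multiplicative because $v_{0}^{\pm},v_{1}^{\pm}$ are integral on $L$, and that $\omega_{\pm}^{\vee}$ is full-dimensional so that $\omega_{\pm}^{\vee}\cap(\rho_{\ee}^{\pm})^{\perp}$ really is a facet (the latter also follows from Theorem \ref{theorem:hor}, which exhibits $\omega_{\pm}^{\vee}$ as a maximal cone of $\Lambda(\D|_{\AA^{1}})$).
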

\begin{proof}
Recall that two LNDs $\partial_1,\partial_2$ are called equivalent if their kernels coincide, and in such a case there holds $a_1\partial_1=a_2\partial_2$ for some $a_1,a_2\in\ker\partial_1$ and they commute. Let us prove that $\partial$ is not equivalent to neither of $\partial_+,\partial_-$. 
Indeed, assume that $\partial=\frac{f}{g}\partial_-$, $f,g\in\ker\partial_-$, without loss of generality.
Then
\[
0=[\partial_+,\partial] = \partial_+\left(\frac{f}{g}\partial_-\right) - \frac{f}{g}\partial_-\partial_+ = \partial_+\left(\frac{f}{g}\right)\partial_- + \frac{f}{g}\delta,
\]
so $\delta$ and $\partial_-$ are equivalent, a contradiction.
Since $\partial_\pm$ and $\partial$ commute, $\ker\partial_\pm$ is $\partial$-stable, the restriction of $\partial$ is a homogeneous LND. 
It is non-trivial, otherwise $\ker \partial_\pm\subsetneq\ker\partial$ and by \cite[Section~1.4, Principle~11(e)]{Fre},
\[1=\mathrm{tr.deg}_{\ker\partial_\pm} \ker\partial=\mathrm{tr.deg}_{\ker\partial_\pm} \K[X]=\mathrm{tr.deg}_{ \ker\partial}\K[X],\]  a contradiction.
Since $\ker\partial_\pm$ is a semigroup algebra, we conclude by \cite[Lemma 2.6]{Li}.
\end{proof}
The following theorem gives examples of affine spherical homogeneous spaces admitting a non-trivial
normalized $\G_{a}$-action.
\begin{theorem}
\label{theoexhom}
Let $X$ is be a $G_{e}$-spherical homogeneous space of type {\rm I}. %Without loss of generality,we may suppose that 
Consider the representation $\KK[X] = A[\AA^{1},\D]$  as in 
Remark \ref{marker:remarkG/H}, where $\D = \D_{G_{e}\,/H}$. Then $X$ admits a normalized $\G_{a}$-action if and only if 
$X$ is reflexive and $v_{0},v_{1}$ are linearly independent.
Under this condition, all normalized $\G_{a}$-actions are horizontal,
and we have 
$$\Rt(X) = \Rti(X) = \{\ee\in L\,|\, v_{0}(\ee) = v_{1}(\ee) = 1\,\,\,{\rm or}\,\,\, v_{0}(\ee) = v_{1}(\ee) = -1\}.$$ 
\end{theorem}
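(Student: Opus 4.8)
The plan is to analyze the normalized $\G_a$-actions on the homogeneous space $X = G_e/H$ directly, using the classification of spherical subgroups of $G_e$ from Theorem~\ref{th:sph-subgroups-I} and the combinatorial data recorded in Table~\ref{tab:hom}. First I would recall that by Remark~\ref{marker:remarkG/H} the type (reflexive, skew, or horospherical) of $X$ is determined entirely by $H$, and that for type~I we are in cases (ii) or (iii) of Lemma~\ref{lemma:subgroup}. Since $X$ is homogeneous, any normalized $\G_a$-action is necessarily of horizontal type: a vertical action would have a general $\G_a$-orbit contained in the closure of a $\TT$-orbit, but on a homogeneous space $\TT$-orbit closures are $\TT$-orbits, and the resulting $\G_a\rtimes\TT$-action would then have orbits of dimension $\le\dim\TT$, contradicting that the $\G_a$-action is non-trivial on a space where $\TT$ has complexity one (alternatively: a vertical homogeneous LND on $A[\AA^1,\D]$ has $\KK(C)$ in its kernel, so $\partial$ would descend to give a $\G_a$-action on a fibre, impossible since the fibres here are homogeneous under $\G_m$'s only). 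So only horizontal actions need be considered, and these are governed by Theorem~\ref{theorem:hor} together with Corollary~\ref{corroot}: a root $\ee$ of $X$ must lie in $\Rt(\omega_-)\cap\Rt(\omega_+)$ and its distinguished ray must (by part~(ii) of Theorem~\ref{theoroot}, since a homogeneous space has no $G_e$-divisors, all roots are interior) satisfy $\rho_\ee\in\Vc\cap(-\Vc)=\Vc$, which by Lemma~\ref{lemma:valhorvert} is the half-space $(\QQ_{\ge0}(\mp e))^\vee$.

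Next I would dispose of the skew case and the degenerate reflexive case $v_0\parallel v_1$. In the skew case, by Table~\ref{tab:hom} (or \ref{rap:SL2-omegas}) we have $\omega_-=\Cone(v_1)+\sigma$ with $\sigma=\Cone(\Cc(1)\cap e^\perp,\cvert_\pm,\dots)$; since $X$ is homogeneous, $\sigma$ reduces (using Proposition~\ref{propdivpolversuscol} with $\Cc$ the colored cone of the open orbit, generated only by $\varrho(\F_0)$) to a one-dimensional cone, and one checks that $\omega_-$ and $\omega_+$ are then half-planes (or two-dimensional cones) whose sets of Demazure roots are disjoint — a root of $\omega_-$ has its distinguished ray equal to $\rho_{v_1}$-ish while a root of $\omega_+$ has distinguished ray on the other side, and the coherence condition forces $v_0(\ee)=\pm\tfrac12\notin\ZZ$, contradicting $\ee\in M$. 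Concretely, in the skew case $2v_0(\ee)$ must be an odd integer for any candidate root coming from the $\partial_-$-side, which is incompatible with $\ee\in L\subset M$. Similarly if $v_0\parallel v_1$ in the reflexive case, then after the reduction of Remark~\ref{rem:T-reduction} we are in the surface case with $v_0,v_1$ proportional, and the cones $\omega_-=\Cone(-v_0,v_1)+\sigma$, $\omega_+=\Cone(v_0,-v_1)+\sigma$ degenerate so that $\Rt(\omega_-)\cap\Rt(\omega_+)=\emptyset$: a vector $\ee$ with $v_0(\ee)=-1$ forces $v_1(\ee)>0$ from one cone and $v_1(\ee)<0$ from the other. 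This shows $\Rt(X)=\emptyset$ unless $X$ is reflexive with $v_0,v_1$ independent.

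It then remains to show that when $X$ is reflexive with $v_0\nparallel v_1$ linearly independent, $\Rt(X) = \Rti(X)$ equals the stated set. Here $\sigma=\Cone(\varrho(\F_0))$ is strongly convex of dimension $\le 2$; using Table~\ref{tab:hom}, for $H=Q_1$ the colors are $-k\nu_\alpha-\nu_e,-\nu_e,\nu_e,k\nu_\alpha+\nu_e$, so $\sigma=\Cone(-\nu_e,\nu_e)=\QQ\cdot\nu_e=e^\perp$, while $v_0=k\nu_\alpha+\nu_e$, $v_1=-\nu_e$. Then by \ref{rap:SL2-omegas}, $\omega_-=\Cone(-v_0,v_1)+e^\perp$ and $\omega_+=\Cone(v_0,-v_1)+e^\perp$, which are the two closed half-planes bounded by the line $\QQ v_0=\QQ v_1\cdot\dots$ — more precisely, $\omega_\pm^\vee=\QQ_{\ge0}(\pm\ee_0)$ for a suitable $\ee_0$, and $\Rt(\omega_-)\cap\Rt(\omega_+)$ consists exactly of the integral vectors $\ee$ in this intersection with $v_0(\ee)=v_1(\ee)=1$ or $v_0(\ee)=v_1(\ee)=-1$, because the distinguished ray of a root of $\omega_-$ (a two-dimensional cone with $e^\perp$ as a face) must be the primitive generator of the extremal ray not in $e^\perp$, which is $\rho_{v_0}$ or $\rho_{v_1}$, and the root condition $\rho(\ee)=-1$ translates into $v_i(\ee)=-1$ (up to primitivity scaling, which is trivial here since $v_i\in N$) while $\rho(\ee)\ge0$ on the other ray; combining the constraints from both $\omega_-$ and $\omega_+$ yields the two alternatives. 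For $H=Q_2,N_1,N_2$ the analysis is identical with $v_0=\nu_e$ (reflexive case $Q_2$) — and one notes $N_1,N_2$ are skew so already excluded, leaving $Q_1$ and $Q_2$, both of which give the asserted formula after unwinding $L=\bangle{\alpha,e}$. The main obstacle I anticipate is the bookkeeping in identifying, for each subgroup $H$ in Theorem~\ref{th:sph-subgroups-I}, precisely which cones $\omega_\pm$ arise and verifying the coherence conditions of Definition~\ref{marker:defcohe} reduce exactly to $v_0(\ee)=v_1(\ee)=\pm1$; everything else is a routine consequence of Corollary~\ref{corroot}, Theorem~\ref{theoroot}(ii), and the (now purely toric, since the open orbit has $\F_0$ generating $\sigma$) description of Demazure roots of $\Rt(\Gamma)$ with $\Gamma=\Cone(\varrho(\F_0))$.
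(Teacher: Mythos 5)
There is a genuine gap at the heart of your argument: you treat the necessary combinatorial conditions as if they characterized $\Rt(X)$. Corollary~\ref{corroot} (together with Theorem~\ref{theoroot}) only yields the inclusion $\Rt(X)\subseteq\Rt(\omega_-)\cap\Rt(\omega_+)$, which, when computed correctly, does match the asserted set in the nondegenerate reflexive case. But the converse — that every $\ee\in L$ with $v_0(\ee)=v_1(\ee)=\pm1$ is actually realized by a normalized $\G_a$-action — is nowhere established in your proposal. The coherence conditions of Definition~\ref{marker:defcohe} only guarantee that a candidate datum defines a homogeneous LND on $A[\AA^1,\D]$, i.e.\ a $\G_a$-action normalized by $\TT$; normalization by all of $G_e$ is the additional requirement $[\partial,\partial_-]=[\partial,\partial_+]=0$, and verifying it is precisely the content of the paper's proof: the possible colorings for $\ker\partial$ are reduced to two (AL-colors $(0,0)$ or $(v_0,v_1)$), and an explicit bracket computation shows commutation holds exactly when $v_0(\ee)=v_1(\ee)=1$, respectively $v_0(\ee)=v_1(\ee)=-1$ — giving both inclusions at once. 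Declaring this step a "routine consequence" of Corollary~\ref{corroot} and the toric root description skips the essential part of the theorem.

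There are also concrete errors in the steps you do carry out. In the skew case, your claim that coherence forces $v_0(\ee)=\pm\tfrac12\notin\ZZ$ is unjustified (coherence imposes no such constraint on the degree); the paper's exclusion instead runs through the quasifan: $\Lambda(\D)$ has only the two maximal cones $\omega_\pm^\vee$, so $\ker\partial$ would have to coincide with $\ker\partial_-$ or $\ker\partial_+$, contradicting the non-equivalence statement in Corollary~\ref{corroot}. In the reflexive case you pool the $B_-$-colors and $B_+$-colors into a single cone and assert $\sigma=\Cone(-\nu_e,\nu_e)=e^{\perp}$; but the tail cone of $\D_{G_e/H}$ is $\{0\}$, colors must be taken with respect to one fixed Borel subgroup, and $\nu_e\notin e^{\perp}$ since $\nu_e(e)=1$. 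Consequently $\omega_\pm=\Cone(\mp v_0,\pm v_1)$ are two-dimensional strongly convex cones (when $v_0\nparallel v_1$), not half-planes, and $\omega_\pm^{\vee}$ are not rays. Finally, your first justification of horizontality (that $\TT$-orbit closures in a homogeneous space are orbits) is false — e.g.\ on the quadric $\{z^2-4xy=1\}$ the torus has non-closed orbits — although a correct argument is available (trivial tail cone, so no vertical LNDs exist by \ref{marker: Demazure}, or the citation of Liendo used in the paper). These slips are individually repairable, but together with the missing realization step the proposal does not amount to a proof.
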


\begin{proof}
Let $\partial$ be the LND corresponding to a non-trivial normalized $\G_{a}$-action on $X$. 
Denote by $\ee$ the degree of $\partial$, and by $\varpi$
the weight cone of $\ker\,\partial$. By \cite[Lemma 3.1]{Li}, $\partial$ is horizontal
with respect to the $M$-grading of $\KK[X]$.

Assume that $X$ is skew. Then the quasifan $\Lambda(\D)$ (see \ref{marker:defpol}) has exactly two 
maximal elements. By Theorem~\ref{theorem:hor}, they are exactly $\omega_{-}^{\vee}$ and $\omega_{+}^{\vee}$, hence $\varpi$
coincides with one of them. But this contradicts Corollary~\ref{corroot}.
% $\sigma^{\vee} = \omega_{-}^{\vee}\cup \omega_{+}^{\vee}$.
%{theorem:hor}
%By Corollary \ref{corroot}, the subcones $\varsigma_{-}$ and $\varsigma_{+}$ are of codimension one.
%Since $\varsigma_{-}$, $\varsigma_{+}$ cover $\varpi$, the cone $\varpi$ is not full dimensional. 
%This contradicts the fact that $\partial$ is horizontal.
Thus, we may suppose that $X$ is reflexive and $\Lambda(\D)$ has at least three maximal cones.
The latter condition is equivalent to $v_{0}, v_{1}$ being linearly independent.
In fact, in this case $\Lambda(\D)$ has four maximal cones,  and $\omega_-$, $\omega_+$, $\varpi$ are among them.

By Theorem \ref{theorem:hor}, the LND $\partial$ can be given by a triple $(\lambda, \dcol, \ee)$, where
we let $\lambda =1$ for simplicity and
where the colored polyhedral divisor is given by 
$$\dcol  = (\D, \{\mu_{z}\,|\, z\in\AA^{1}\}, z_{0}).$$

Since $\omega_-$ and $\omega_+$ correspond to diagrams
$$
\dots \lin \cou{v_0}{0} \lin \cou{0}{1} \lin \dots\qquad \qquad\dots \lin \cou{0}{0} \lin \cou{v_1}{1} \lin\ldots
$$
the two possible cases remaining for $\varpi$ are
$$
\dots \lin \cou{0}{0} \lin \cou{0}{1} \lin \dots\qquad \qquad\dots \lin \cou{v_0}{0} \lin \cou{v_1}{1} \lin\ldots
$$
 where the marked point $z_0$ is not yet known.
%The cone $\varpi$ is a maximal element of the quasifan $\Lambda(\D)$. 
%Since $\varpi$
%is different from $\omega_{-}^{\vee}$ and $\omega_{+}^{\vee}$, two cases are possible.

\emph{Case {\rm 1}.} 
The coloring $\dcol$ is represented by one of diagrams
$$
\dots \lin \cou{0}{z_0=0} \lin \cou{0}{1} \lin \dots
\dots \lin \cou{0}{0} \lin \cou{0}{z_0=1} \lin \dots
\dots \lin  \cou{0}{0} \lin \cou{0}{1} \lin \cou{0}{z_{0}} \lin \dots 
$$
The LND $\partial$ is given by the relation
$$
\partial(Q\chi^{m})= Q'\chi^{m+\theta} \mbox{ for all } Q\in\K(t), m\in M.
$$
Let us study the conditions $[\partial,\partial_{+}] = [\partial,\partial_{-}] = 0$.
Recall that
\begin{align*}
\partial_-(Q\chi^{m})=&(v_0(m)Q+tQ')\chi^{m-e},  \mbox{ for all } Q\in\K(t), m\in M,\\
\partial_+(Q\chi^{m})=&(v_1(m)Q+(t-1)Q')\chi^{m+e} \mbox{ for all } Q\in\K(t), m\in M.
\end{align*}
Then
\begin{align*}
\partial\circ \partial_{-} (Q\chi^{m})=&
(v_0(m)Q'+Q'+tQ'')\chi^{m+\theta-e},\\
\partial_{-}\circ \partial (Q\chi^{m})=&
(v_0(m+\theta)Q'+tQ'')\chi^{m+\theta-e}.\\
\partial\circ \partial_{+} (Q\chi^{m})=&
(v_1(m)Q'+Q'+(t-1)Q'')\chi^{m+\theta+e},\\
\partial_{+}\circ \partial (Q\chi^{m})=&
(v_1(m+\theta)Q'+(t-1)Q'')\chi^{m+\theta+e}.
\end{align*}
The conditions $[\partial,\partial_{-}]=0$ and $[\partial,\partial_{+}]=0$ are therefore equivalent
$v_0(\theta)=1$ and $v_1(\theta)=1$ respectively.

\emph{Case {\rm 2.}} The set of AL-colors $\{\mu_{z}\,|\, z\in\AA^{1}\}$ is $\{v_{0},v_{1}\}$.
Since $\varsigma_{\pm}=\varpi\cap\omega_{\pm}^{\vee}$ is a facet that separates full-dimensional cones $\varpi$ and $\omega_{\pm}^{\vee}$ in $\sigma^{\vee}$,
we have $\varsigma_{\pm}\cap\relint(\sigma^{\vee})\neq\emptyset.$ This implies
that 
$$\ker\,\partial = \bigoplus_{m\in\varpi\cap L}\KK t^{-v_{0}(m)}(t-1)^{-v_{1}(m)},$$
and therefore $z_{0}\in \{0,1\}$ by Remark \ref{rem:general}. 
By applying the formula from Remark~\ref{rem:general}, one can see that in both cases $z_0=0$ and $z_0=1$ the derivation $\partial$
is the same. %, see also Remark~\ref{rem:der-egal}. 
%We use the following representation of $\partial$: 
namely for all  $Q\in\K(t), m\in M$
$$
\partial(Q\chi^{m})= \left(\left(\frac{v_0(m)}{t}+\frac{v_1(m)}{t-1}\right)Q+Q'\right)\gamma\chi^{m+\theta} \mbox{ where } \gamma=t^{-v_0(\theta)}(t-1)^{-v_1(\theta)}.
$$
Let us study the conditions $[\partial,\partial_{-}] = [\partial,\partial_{+}] = 0$.
%Recall that
%\begin{align*}
%\partial_-(Q\chi^{m})=&(v_0(m)Q+tQ')\chi^{m-e},\\
%\partial_+(Q\chi^{m})=&(v_1(m)Q+(t-1)Q')\chi^{m+e}.
%\end{align*}
Using $\gamma'=-v_0(\theta)\frac{\gamma}{t}-v_1(\theta)\frac{\gamma}{t-1},$ we have
\begin{align*}
\partial\circ \partial_{-} (t)=&\partial(t\chi^{-e})=%& 
%\left(\left(-\frac{1}{t}+\frac{1}{t-1}\right)t+1\right)\gamma\chi^{\theta-e}=&
\frac{t}{t-1}\gamma\chi^{\theta-e},\\
\partial_{-}\circ \partial (t)=&\partial_-(\gamma\chi^{\theta})=%&
%\left(v_0(\theta)\gamma-v_0(\theta)\gamma-v_1(\theta)\frac{t}{t-1}\gamma \right)\chi^{\theta-e}=&
-v_1(\theta)\frac{t}{t-1}\gamma\chi^{\theta-e},\\
\partial\circ \partial_{+} (t)=&\partial((t-1)\chi^{e})= %&
%\left(\left(\frac{1}{t}-\frac{1}{t-1}\right)(t-1)+1\right)\gamma\chi^{\theta+e}=&
\frac{t-1}{t}\gamma\chi^{\theta+e},\\
\partial_{+}\circ \partial (t)=&\partial_+(\gamma\chi^{\theta})=%&
%\left(v_1(\theta)\gamma-v_0(\theta)\frac{t-1}{t}\gamma-v_1(\theta)\gamma\right)\chi^{\theta+e}=&
-v_0(\theta)\frac{t-1}{t}\gamma\chi^{\theta+e}.
\end{align*}
The conditions $\partial\circ\partial_{+}(t)=\partial_{+}\circ\partial(t)$ and $\partial\circ\partial_{-}(t)=\partial_{-}\circ\partial(t)$ are therefore equivalent
$v_0(\theta)=-1$ and $v_1(\theta)=-1$ respectively.
Thus, $\gamma=t(t-1)$ and
$$
\partial(Q\chi^{m})= ((v_0(m)(t-1)+v_1(m)t)Q+t(t-1)Q')\chi^{m+\theta}.
$$
It is easy to check that $\partial\circ\partial_{-}(\chi^{m})=\partial_{-}\circ\partial(\chi^{m})$ and $\partial\circ\partial_{+}(\chi^{m})=\partial_{+}\circ\partial(\chi^{m})$ for any $m\in M$. Therefore, $\partial$ commutes with $\partial_-$ and $\partial_+$.

Finally, from cases 1 and 2 we obtain the condition $v_{0}(\ee) = v_{1}(\ee) = \pm 1$,
which implies that $\ee$ is the Demazure root of the %\in\Rt(\tilde{\omega}_{\bullet})$, where $\tilde{\omega}_{\bullet}$
 domain of $\D_{\bullet}$. Now the proof of the theorem is completed.
\end{proof}

%\begin{remarque}\label{rem:der-egal}
%Let $\ee\in M$ be a vector satisfying $v_{0}(\ee) = v_{1}(\ee) = -1$.
%In the proof of Theorem \ref{theoexhom}, we consider the derivations $\partial_{1}$ and $\partial_{2}$
%defined on the field of fractions of $A[\AA^{1},\D]$ and given by the relations
%\begin{align*}
%\partial_{1}(t^{r}\xi_{m}^{+}\chi^{m}) = (v_{0}(m) + r)t^{r}\xi_{m+\ee}^{+}\chi^{m +\ee},\,\,\,
%{\rm for\,\,\,all}\,(m,r)\in M\oplus \mathbb{Z},
% \\
%\partial_{2}((t-1)^{r}\zeta_{m}\chi^{m}) = (v_{1}(m) + r)(t-1)^{r}\zeta_{m + \ee}\chi^{m+\ee},
%\,\,\,\,{\rm for\,\,all}\,\,\,(m,r)\in M\oplus\mathbb{Z}. 
%\end{align*}
%By a direct computation one can see that $\partial_{1}$
%coincides with $\partial_{2}$.
%\end{remarque}
Let us give an example of a non-trivial interior normalized $\G_{a}$-action.
\begin{example}
\label{exinterior}
Assume that $M = N = \mathbb{Z}^{2}$ and  $M_{\mathbb{Q}} = N_{\mathbb{Q}} = \mathbb{Q}^{2}$.
The pairing between $M$ and $N$ is given by the usual scalar product. Consider the polyhedral
divisor 
$$\D = {\rm Conv}(0,v_{0})\cdot \{0\} + {\rm Conv}(0,v_{1})\cdot \{1\}$$
over the affine line $\AA^{1}$, where $v_{0} = (-1,0)$ and $v_{1} = (0,-1)$.
An easy computation shows that 
$A = A[\AA^{1},\D] = \KK[u_{1},u_{2},u_{3},u_{4}]$, where $u_{1} = \chi^{(-1,0)}$, $u_{2} = \chi^{(0,-1)}$,
$u_{3} = (t-1)\chi^{(0,1)}$, and $u_{4} = t\chi^{(1,0)}$ satisfy the irreducible relation $u_{1}u_{4}-u_{2}u_{3}-1$.
Hence $X = {\rm Spec}\, A$ is identified with the hypersurface $\VV(x_{1}x_{4} - x_{2}x_{3} - 1)\subset \AA^{4}$.  
The action of the torus $\TT = (\KK^{\star})^{2}$ on the coordinates is given by the formula
$$(t_{1},t_{2})\cdot (x_{1},x_{2},x_{3},x_{4}) = (t_{1}\cdot x_{1}, t_{2}\cdot x_{2}, 
t_{2}^{-1}\cdot x_{3}, t_{1}^{-1}\cdot x_{4}),$$
where $(t_{1},t_{2})\in \TT$. Consider the vector $e = (-1,1)$. Then we have
$\langle v_{0}, e\rangle  = 1$ and $\langle v_{1}, e \rangle  = -1$. The corresponding LNDs are
given by 
$$\partial_{-}  = u_{2}\frac{\partial}{\partial u_{1}} + u_{4}\frac{\partial}{\partial u_{3}}\,\,\,{\rm and}
\,\,\,\partial_{+}  = u_{1}\frac{\partial}{\partial u_{2}} + u_{3}\frac{\partial}{\partial u_{4}}.$$ 
Thus, the action of the algebraic group $G_{e} = \SL_{2}\rtimes_{e}\TT$ is defined by the 
formula
$$g\cdot (x_{1},x_2,x_3, x_{4})
 = (dt_{1}x_{1} + ct_{2}x_{2}, bt_{1}x_{1} + at_{2}x_{2}, dt_{2}^{-1}x_{3} + ct_{1}^{-1}x_{4}, bt^{-1}_{2}x_{3} 
+ at_{1}^{-1}x_{4}),$$
where $g= \left(\left(\begin{smallmatrix}
              a   &   b \\
              c   &   d\end{smallmatrix}\right),(t_1,t_2)\right)\in G_{e}$. In particular,
we have 
$$g\cdot (1,0,0,1) = (dt_{1}, bt_{1}, ct_{1}^{-1}, at_{1}^{-1}).$$
The isotropy subgroup of $(1,0,0,1)$ is equal to 
$$H  = \left\{\left.\left(\begin{pmatrix}
              t_{1}   &   0 \\
              0   &   t_{1}^{-1}\end{pmatrix}, (t_{1},t_{2})\right)\,\,\right|\,\, t_{1},t_{2}\in\KK^{\star}\right\},$$
and we have a $G_{e}$-isomorphism $X\simeq G_{e}\,/H$.
Let us describe the non-trivial normalized $\G_{a}$-actions on $X$. By Theorem \ref{theoexhom},  we observe that 
$\Rti(X) = \{(1,1), (-1,-1)\}$.
The associated LNDs to the vectors $(-1,-1)$, $(1,1)$ are given respectively by
$$ u_{3}\frac{\partial}{\partial u_{1}} + u_{4}\frac{\partial}{\partial u_{2}}\,\,\,{\rm and}\,\,\,
u_{1}\frac{\partial}{\partial u_{3}} + u_{2}\frac{\partial}{\partial u_{4}},$$  
and the associated $\G_{a}$-actions by
$$\lambda\star (x_{1},x_{2},x_{3},x_{4}) = (x_{1} + \lambda x_{3}, x_{2} + \lambda x_{4}, x_{3}, x_{4})$$
and  $$\lambda\star (x_{1},x_{2},x_{3},x_{4}) = (x_{1}, x_{2}, x_{3} + \lambda x_{1}, x_{4} + \lambda x_{2})$$ 
for any $\lambda\in\G_{a}$. 

Let us describe the $\G_{a}\rtimes_{\ee}G_{e}$-homogeneous space
$X = X_{\rho}$ associated with the root $\ee\in\Rti(X)$ with the distinguished ray $\rho$. If $\ee = (-1,-1)$, then we have 
$$(\lambda, g)\cdot (1,0,0,1) = \lambda\star(g\cdot (1,0,0,1)) = 
(dt_{1}+\lambda c t_{1}^{-1}, bt_{1} + \lambda a t_{1}^{-1}, ct_{1}^{-1}, a t_{1}^{-1}).$$
The isotropy subgroup of $(1,0,0,1)$ for the $\G_{a}\rtimes_{\ee}G_e$-action is
$$H_{1} = \left\{\left.\left(-bt_{1}, \begin{pmatrix}
              t_{1}   &   b \\
              0   &   t_{1}^{-1}\end{pmatrix}, (t_{1}, t_{2})\right)\,\,\right|\,\, b\in\KK,\,(t_{1},t_{2})\in
(\KK^{\star})^{2}\right\}.$$
We have a $\G_{a}\rtimes_{\ee} G_{e}$-isomorphism $X\simeq (\G_{a}\rtimes_{\ee}G_{e})/H_{1}$.
Similarly, for $\ee = (1,1)$ the isotropy subgroup of $(1,0,0,1)$ is
$$H_{2} = \left\{\left.\left(-ct_{1}^{-1}, \begin{pmatrix}
              t_{1}   &   0 \\
              c   &   t_{1}^{-1}\end{pmatrix}, (t_{1}, t_{2})\right)\,\,\right|\,\, c\in\KK,\,(t_{1},t_{2})\in
(\KK^{\star})^{2}\right\},$$
and $X\simeq (\G_{a}\rtimes_{\ee}G_{e})/H_{2}$.
\end{example}
The next two lemmas will be used in the proof of Proposition \ref{prop:final}. 
\begin{lemme}
\label{lemmaexterior}
Let $X$ be an affine $G_{e}$-spherical variety of type $\rm I$. Assume that the torus $\TT$ acts faithfully on $X$.
Then every exterior normalized $\G_{a}$-action on $X$ is vertical for the induced $\TT$-action. 
\end{lemme}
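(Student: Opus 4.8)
The plan is to argue by contradiction. Let $\partial$ be the LND on $\KK[X]$ attached to an exterior normalized $\G_a$-action, of degree $\ee=\deg\partial\in\Rte(X)$ and with distinguished ray $\rho_\ee$, and suppose that $\partial$ is horizontal for the $\TT$-action. The idea is to first pin down $\rho_\ee$, then restrict everything to the elementary embedding $X_v$ (where $v=\rho_\ee$), and finally exhibit on $X_v$ a proper closed $\G_a$-stable subset containing the $G_e$-divisor $D_v$, which contradicts a density statement already contained in the proof of Theorem~\ref{theoroot}.

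For the first step: since $\ee$ is exterior, $\rho_\ee=\varrho(D_v)$ for some $G_e$-divisor $D_v\subset X$, hence $\rho_\ee\in\Vc$; by Theorem~\ref{theoroot}(ii) also $-\rho_\ee\in\Vc$, so $\rho_\ee\in\linea\Vc$, which by Lemma~\ref{lemma:valhorvert} is $e^\perp$. Thus $\rho_\ee\in e^\perp$. Looking at the list of $G_e$-divisor images in Proposition~\ref{proposition:col.G-div}, the vertical $\TT$-divisors $D_{(\infty,v)}$ have image proportional to $v_0+v$ (or $v_0+v_1+v$), which is not in $e^\perp$ because $\Delta_\infty(0)\subset e^\perp$ while $v_0(e)\ne0$; hence $D_v$ is one of the horizontal $\TT$-divisors $D_\rho$, with $\rho=\rho_\ee\in\sistar\subset\sigma(1)$. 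Then $X_v$ is the embedding of $G_e/H$ with colored cone $(\QQ_{\ge0}\rho_\ee,\emptyset)$; as this cone lies in $e^\perp$ and $G_e/H$ is of type~I, $X_v$ is the affine type~I variety $\Spec A[\AA^1,\D_{\rho_\ee}]$ built in the proof of Lemma~\ref{lemma:valhorvert}, with $\sigma_{X_v}=\QQ_{\ge0}\rho_\ee$ and $D_v=D_{\rho_\ee}$ its unique horizontal $\TT$-divisor (Theorem~\ref{theorem:first}(i), Proposition~\ref{propdivpolversuscol}). By Theorem~\ref{theoroot}(i), $X_v$ is the open orbit of the $\G_a\rtimes_\ee G_e$-action on $X$, so $\KK[X_v]$ is $\partial$-stable and $\partial$ restricts to a nonzero homogeneous LND $\partial'$ of degree $\ee$, which is again horizontal for $\TT$ since $\partial'(t)=\partial(t)\ne0$.

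For the last step I would apply Theorem~\ref{theorem:hor} to $X_v$: $\ker\partial'=\bigoplus_{m\in\omega^\vee\cap L}\KK\,\phi_m\chi^m$, and the cone $\omega^\vee$ is full-dimensional because $\operatorname{tr.deg}_{\KK}\ker\partial'=\dim X_v-1=\dim M$. For each $m\in\omega^\vee\cap L$ the element $\phi_m\chi^m$ lies in $\KK[X_v]$, so by the divisor formula of~\ref{rappel:div-hor-vert} its order along $D_v=D_{\rho_\ee}$, namely $\langle m,\rho_\ee\rangle$, is nonnegative; full-dimensionality of $\omega^\vee$ then forces $\langle m,\rho_\ee\rangle>0$ for $m$ in the relative interior. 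Fixing such an $m$, the set $\supp\div(\phi_m\chi^m)$ is a proper closed subset of $X_v$ containing $D_v$, and it is $\G_a\rtimes_\ee\TT$-stable because $\phi_m\chi^m$ is a $\TT$-eigenvector lying in $\ker\partial'$. This contradicts the fact, established in the proof of Theorem~\ref{theoroot}(i), that $\G_a\star D_v$ is dense in $X_v$. Hence $\partial$ must be vertical for the $\TT$-action.

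The step I expect to be the real obstacle is the first bookkeeping step: showing that $\rho_\ee$ is orthogonal to $e$ and is the image of a \emph{horizontal} $\TT$-divisor. This is precisely what makes the reduction to $X_v$ work, with its one-dimensional cone $\sigma_{X_v}$; once on $X_v$ the contradiction is essentially formal, using only that $\ker\partial'$ is a semigroup-type subalgebra of $\KK[X_v]$ together with the density of $\G_a\star D_v$.
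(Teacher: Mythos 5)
Your proposal is correct, and its first half is exactly the paper's reduction: from Theorem~\ref{theoroot}(ii) together with $\rho_\ee=\varrho(D_v)\in\Vc$ and Lemma~\ref{lemma:valhorvert} you get $\rho_\ee\in\linea(\Vc)=e^{\perp}$, hence $X_v$ is affine, of type I over $\AA^{1}$, given by a polyhedral divisor with tail cone $\QQ_{\geq0}\rho_\ee$, and the $\G_a$-action lives on it. Where you diverge is the concluding step on $X_v$: the paper finishes in one line by citing Liendo's result (\cite[Lemma 3.1]{Li}) that the degree of a \emph{horizontal} homogeneous LND lies in $\linea(\sigma^{\vee})$, which is incompatible with $\rho_\ee(\ee)=-1$; you instead re-derive this special case by hand, combining the kernel description of horizontal LNDs from Theorem~\ref{theorem:hor} (full-dimensional weight cone $\omega^{\vee}_{\bullet}$, one-dimensional graded pieces), the divisor formula of~\ref{rappel:div-hor-vert} identifying $D_v$ with the horizontal divisor $D_{\rho_\ee}$, and the transitivity of $\G_a\rtimes_{\ee}G_e$ on $X_v$ established in the proof of Theorem~\ref{theoroot}. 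Both arguments are valid; the paper's is shorter, while yours is self-contained within the paper's own toolkit and makes explicit two points the paper leaves implicit (that the $G_e$-divisor $D_v$ must be a horizontal $\TT$-divisor, and why $X_v$ is affine over $\AA^{1}$). Two cosmetic remarks: $\partial$ \emph{extends} (rather than restricts) to the larger algebra $\KK[X_v]\supset\KK[X]$; and the density proved in Theorem~\ref{theoroot} is that of $\G_a\star D_v$ in $X$, but since $X_v=\G_a\star(G_e\cdot y)$ by homogeneity, your proper closed $\G_a$-stable set containing $G_e\cdot y$ gives the contradiction just as well.
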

\begin{proof}
According to Theorem \ref{theoroot}, it is enough to consider the $\G_{a}\rtimes_{{\ee}}G_{e}$-homogeneous 
variety $X_{\rho}$, where  $\rho$ is the distinguished
ray of $\ee\in\Rte(X)$. Note that a priori $X_\rho$ is not necessarily affine. By Theorem~\ref{theoroot}(ii) and Lemma~\ref{lemma:valhorvert}, we have $\rho\in \linea(\Vc) = e^{\perp}$. 
%Hence the $G_{e}$-divisor of $X_{\rho}$ is horizontal under the $\TT$-action. 
This implies that
$X_{\rho}$ is affine and described by a $\sigma$-polyhedral divisor over $\AA^{1}$,
where $\sigma  = \QQ_{\geq 0}\cdot\rho$. By \cite[Lemma 3.1]{Li}, the equality $\rho(\ee) = -1$ implies that 
the $\G_{a}$-action on $X_{\rho}$ is vertical. 
\end{proof}

\begin{lemme}
\label{lemmabracket}
Let $\phi\in\KK(t)^{\star}$, $s\in\mathbb{Z}$, $d\in\mathbb{Z}\setminus\{0\}$, $v\in N_{\QQ}$,
$\rho\in N_{\QQ}\setminus\{0\}$, and $e,\ee\in M$. 
Consider the derivations $\partial, \partial_{\bullet}$ on
$\KK(t)[M]$ defined by the relations 
\begin{align*}
\partial_{\bullet}(t^{r}\chi^{m}) = d\cdot (v(m)+r)\chi^{m+e}t^{r+s},
 \\
\partial(t^{r}\chi^{m}) = \rho(m)\phi \,t^{r}\chi^{m+\ee}\,\,\,{\rm for}\,\,\,{\rm all}\,(m,r)\in M\oplus\mathbb{Z}.
\end{align*}
%If $\phi$ and $t\frac{d}{dt}(\phi)$ are 
%linearly independent over $\KK$,
%then $[\partial,\partial_{\bullet}]\neq 0$. Consequently,
Then $[\partial,\partial_{\bullet}] = 0$
if and only if $\rho\in e^{\perp}$, $v(\ee)\in\mathbb{Z}$, 
and $\phi = \lambda t^{-v(\ee)}$ for some $\lambda\in \KK^{\star}$. 
\end{lemme}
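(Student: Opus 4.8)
The plan is to compute the commutator $[\partial,\partial_\bullet]$ explicitly on monomials $t^r\chi^m$ and extract the vanishing conditions coefficient-by-coefficient. First I would apply both derivations in each order. Writing $\partial_\bullet\partial(t^r\chi^m)=\partial_\bullet\big(\rho(m)\phi\,t^r\chi^{m+\ee}\big)$, I expand $\phi$ as a rational function of $t$, noting $\partial_\bullet$ is a derivation of $\KK(t)[M]$, so it acts on the $t$-part by $\partial_\bullet(t)=d\,t^{1+s}\chi^e$ (taking $m=0,r=1$ in the defining formula) and hence $\partial_\bullet(\phi)=d\,t^{1+s}\phi'\chi^e$ where $\phi'=\tfrac{d\phi}{dt}$; combined with its action on $t^r\chi^{m+\ee}$ this gives
\[
\partial_\bullet\partial(t^r\chi^m)=\rho(m)\,d\,\big(t\phi'+(v(m+\ee)+r)\phi\big)t^{r+s}\chi^{m+\ee+e}.
\]
Symmetrically, $\partial\partial_\bullet(t^r\chi^m)=\partial\big(d(v(m)+r)\chi^{m+e}t^{r+s}\big)=d(v(m)+r)\rho(m+e)\phi\,t^{r+s}\chi^{m+e+\ee}$. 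Subtracting, the condition $[\partial,\partial_\bullet]=0$ becomes, for all $(m,r)$,
\[
\rho(m)\big(t\phi'+(v(m+\ee)+r)\phi\big)=(v(m)+r)\rho(m+e)\phi.
\]

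The second step is to separate variables in this identity. Viewing it as an identity in $t\in\KK(t)$ for fixed $m,r$: the term $t\phi'/\phi$ must be a constant (a rational function equal to a constant is constant), so $\phi=\lambda t^c$ for some $c\in\QQ$ and $\lambda\in\KK^\star$; since $\phi\in\KK(t)^\star$ forces $c\in\ZZ$. Substituting $\phi=\lambda t^c$ and $t\phi'=c\phi$, the identity reduces to the purely linear relation
\[
\rho(m)\big(c+v(m+\ee)+r\big)=(v(m)+r)\rho(m+e)\qquad\text{for all }m\in M,\ r\in\ZZ.
\]
Now I would compare coefficients of $r$: the $r$-linear part gives $\rho(m)=\rho(m+e)$ for all $m$, i.e. $\rho(e)=0$, so $\rho\in e^\perp$. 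With $\rho(m+e)=\rho(m)$ the remaining constant part becomes $\rho(m)\big(c+v(m+\ee)\big)=\rho(m)v(m)$, i.e. $\rho(m)\big(c+v(\ee)\big)=0$ for all $m$; since $\rho\neq 0$ there is $m$ with $\rho(m)\neq0$, whence $c=-v(\ee)$. In particular $v(\ee)=-c\in\ZZ$ and $\phi=\lambda t^{-v(\ee)}$, which is exactly the asserted form.

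For the converse, I would simply substitute $\rho\in e^\perp$, $v(\ee)\in\ZZ$, $\phi=\lambda t^{-v(\ee)}$ back into the displayed identity and verify it holds identically, which is immediate from the computation above run in reverse. I do not anticipate a genuine obstacle here; the only point requiring a little care is the bookkeeping of the $t$-exponents (the shift $t^{r+s}$ coming from $\partial_\bullet$ versus the absence of a shift in $\partial$) and making sure the separation-of-variables argument is applied for a value of $m$ with $\rho(m)\neq0$, which exists precisely because $\rho\neq0$. The computation is routine once the two composite expressions are written out correctly.
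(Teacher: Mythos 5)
Your proof is correct and follows essentially the same route as the paper's: both compute $\partial_{\bullet}\circ\partial$ and $\partial\circ\partial_{\bullet}$ on monomials $t^{r}\chi^{m}$, reduce $[\partial,\partial_{\bullet}]=0$ to the identity $\rho(m)\bigl(t\phi'+(v(m+\ee)+r)\phi\bigr)=(v(m)+r)\rho(m+e)\phi$, and then extract $\rho(e)=0$, $v(\ee)\in\ZZ$, and $\phi=\lambda t^{-v(\ee)}$. The only (immaterial) difference is the order of extraction — you first solve $t\phi'/\phi=\mathrm{const}$ for $\phi$ and then compare coefficients of $r$, while the paper first uses $r$-independence to get $\rho(e)=0$ and then solves the resulting equation for $\phi$ — and you correctly note the need to pick $m$ with $\rho(m)\neq0$.
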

\begin{proof}
Let us compute the Lie bracket $[\partial_{\bullet},\partial](t^{r}\chi^{m})$.
For any $(m,r)\in M\oplus\mathbb{Z}$, we have
\begin{align*}
\partial_{\bullet}\circ \partial(t^{r}\chi^{m}) =& 
d\cdot\rho(m)\left[\frac{d\phi}{dt}t+(v(m+\ee)+r)\phi\right]t^{r+s}\chi^{m+e+\ee},\,\,\,{\rm and}
\\
\partial\circ\partial_{\bullet}(t^{r}\chi^{m}) =& d\cdot\rho(m+e)(v(m)+r)\phi t^{r+s}\chi^{m+e+\ee}.
\end{align*}

Thus, the condition $[\partial,\partial_{\bullet}](t^{r}\chi^{m}) = 0$ is equivalent to
$$
\rho(m)\left(v(\ee)\phi+t\frac{d\phi}{dt}\right)=\rho(e)(v(m)+r)\phi
$$
for all $r\in\ZZ$, $m\in M$. The left-hand side does not depend on $r$, hence $[\partial,\partial_{\bullet}] = 0$
implies $\rho(e)r\phi=0$ for any $r\in\ZZ$ and is equivalent to
%it is equivalent to
$$\begin{cases} \rho(e)=0,\\ 
v(\ee)\phi+t\frac{d\phi}{dt}=0.\end{cases}$$
The latter equation is equivalent to $\phi= \lambda t^{-v(\ee)}$ for some $\lambda\in \KK^{\star}$. 
%
%This gives
%$$[\partial,\partial_{\bullet}](t^{r}\chi^{m}) = d\cdot\left[\rho(m)\frac{d\phi}{dt}t+\alpha\phi\right]t^{r+s}\chi^{m+e+\ee},$$
%where $\alpha = \rho(m)v(\ee) - \rho(e)(r+v(m))$.
%Assume that $\phi$ and $t\frac{d\phi}{dt}$ are linearly independent over $\KK$. 
%Choosing $m\in M$ such that $\rho(m)\neq 0$, we have $\rho(m)\frac{d\phi}{dt}t + \alpha\phi\neq 0$,
%and so $[\partial,\partial_{\bullet}](t^{r}\chi^{m})\neq 0$.
%This yields the first claim.
%
%Assume that $[\partial,\partial_{\bullet}] = 0$. By the preceding step,
%there exists $\beta\in\KK$ such that $\frac{d\phi}{dt}t = \beta\phi$.
%Let $m\in M$ satisfying $\rho(m)\neq 0$ and $v(m)\in\mathbb{Z}$. Then we obtain
%$$0 = [\partial,\partial_{\bullet}](t^{-v(m)}\chi^{m}) 
%= d\cdot\rho(m)(\beta + v(\ee))\phi t^{-v(m)+s}\chi^{m+e+\ee}.$$
%Hence $\beta = -v(\ee)$. In particular, we have $v(\ee)\in\mathbb{Z}$ and $\phi = \lambda t^{-v(\ee)}$ for some 
%$\lambda\in \KK^{\star}$. Thus for any $(m,r)\in M\oplus\mathbb{Z}$, 
%$$0 = [\partial,\partial_{\bullet}](t^{r}\chi^{m}) = d\cdot\rho(e)(r+v(m))\phi \,t^{r+s}\chi^{m+e+\ee}.$$
%This implies that $\rho\in e^{\perp}$.
%The rest of the proof is straightforward and left to the reader. 
\end{proof}
\begin{proposition}
\label{prop:final}%\sa{Check references}
Let $X$ be an affine $G_{e}$-spherical variety. Assume that $e\neq 0$ and that the $\TT$-action on $X$ is faithful of complexity one. Let $(\Cc, \F)$ be the associated colored cone of $X$ with respect to the 
$B_{\pm}$-action. Let $\Gamma$ be the polyhedral cone generated by the subset $\Cc$ and by the images of all colors of $X$.
Then the following  hold.
\begin{itemize}
 \item [(i)] If $X$ is of type {\rm I}, then  $X$ is isomorphic to an embedding of $G_{e}\,/H$,
where $G_{e}\,/H$ is an in \ref{marker:remarkG/H}. In the reflexive case,
$$\Rte(X) = \{\ee\in\Rt(\Gamma)\cap L\,|\, \rho_{\ee}\in\linea(\Vc),\,\, v_{0}(\ee)=v_{1}(\ee) = 0\},$$
 and  in the skew case
$$\Rte(X) =\{\ee\in\Rt(\Gamma)\cap L\,|\, \rho_{\ee}\in\linea(\Vc),\,\, %v_{0}(\ee)\in\mathbb{Z}_{\leq 0}, 
v_{1}(\ee)= 0\}.$$
 Furthermore, if $X$ is skew or $v_{0},v_{1}$ are linearly dependent, then 
$\Rti(X) = \emptyset$; otherwise, we have 
$$\Rti(X) = \{\ee\in \Rt(\Gamma)\cap L\,|\,v_{0}(\ee) = v_{1}(\ee) = -1\,\,\, {\rm or}\,\,\, v_{0}(\ee) = v_{1}(\ee) = 1\}.$$
\item [(ii)] 
If $X$ is not of type {\rm I}, then $X$ is a $\tilde G_{\tilde e}$-spherical variety of type \emph{II (}see Lemma~\ref{lemmenotoftypeI}{\rm)}  isomorphic to an embedding of  $X_{\Sigma}$ as in Lemma~\ref{lemmatoricdiv}.
We have $\Rti(X) = \emptyset$ and, in
the notation of \ref{proptoric},
%If $X$ is not of type {\rm I}, then by Lemma \ref{lemmenotoftypeI} we may suppose that $X$ is of type {\rm II} and $X$
%is an embedding of $X_{\Sigma}$, where $X_{\Sigma}$ is as in \ref{lemmatoricdiv}. We have $\Rti(X) = \emptyset$ and {\rm (}see 
%the notation of \ref{proptoric}{\rm )}
$$\Rte(X) =\{\ee\in\Rt(\Gamma)\cap L_{\pm}\,|\,\ee\in\rho_{-}^{\perp}\cap \rho_{+}^{\perp}\,\,\,{\rm and}\,\,\, \rho_{\ee}\in e^{\perp} \}.$$
\end{itemize} 
\end{proposition}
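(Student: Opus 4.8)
The plan is to reduce everything, via Theorem~\ref{theoroot}, to a study of normalized $\G_a$-actions on the model embeddings $X_v$: for an interior root one has $X_v=G_e/H$, for an exterior root $X_v$ is the elementary embedding attached to a $G_e$-divisor, and in both cases $\G_a\rtimes_\chi G_e$ must act transitively on $X_v$. In either case a normalized $\G_a$-action of degree $\chi^\ee$ amounts to a homogeneous (for the $M$-grading) $\LND$ $\partial$ of degree $\ee$ with $[\partial,\partial_-]=[\partial,\partial_+]=0$; I would write $\partial$ in the normal form available for the type at hand — a vertical homogeneous $\LND$ in complexity one (see \ref{marker: Demazure}) for type I, a classical Demazure-root $\LND$ on a semigroup algebra (see \ref{marker: semisimpleroot}) for type II — and then impose the two bracket conditions using the explicit formulae of \ref{rap:remdpm}, \ref{rap:der-Q}, \ref{marker: semisimpleroot} together with Lemma~\ref{lemmabracket}. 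For the interior roots of~(i), Theorem~\ref{theoroot}(i) forces $X_{\rho_\ee}=G_e/H$ to carry a normalized $\G_a$-action of degree $\chi^\ee$; by Theorem~\ref{theoexhom} this happens only when $X$ is reflexive with $v_0,v_1$ linearly independent, and then the admissible degrees are exactly the $\ee\in L$ with $v_0(\ee)=v_1(\ee)=\pm1$ — which settles the empty cases and the shape of $\Rti(X)$. For the reverse inclusion one must check that, for such an $\ee\in\Rt(\Gamma)$, its distinguished ray $\rho_\ee$ (which satisfies $\rho_\ee(\ee)=-1$) is a color direction rather than a $G_e$-divisor; this I would extract from Theorem~\ref{theoexhom} (giving $\ee\in\Rt(\Gamma_0)$ for the subcone $\Gamma_0\subset\Gamma$ generated by the colors) together with the bookkeeping of $\varrho$ on colors and $G_e$-divisors in Proposition~\ref{proposition:col.G-div}, arguing as in the proof of Theorem~\ref{theoroot} that $\rho_\ee\in\Gamma_0(1)$.

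For the exterior roots of~(i): if $\ee\in\Rte(X)$ then $\rho_\ee$ is the image of a $G_e$-divisor, so $\rho_\ee\in\Vc$, and Theorem~\ref{theoroot}(ii) gives $-\rho_\ee\in\Vc$, whence $\rho_\ee\in\linea(\Vc)=e^\perp$ by Lemma~\ref{lemma:valhorvert}. Then Lemma~\ref{lemmaexterior} makes the $\G_a$-action on $X_{\rho_\ee}$ vertical for $\TT$, and $X_{\rho_\ee}=\Spec A[\AA^1,\D_{\rho_\ee}]$ with $\D_{\rho_\ee}$ the polyhedral divisor of Lemma~\ref{lemma:valhorvert}, so $\partial(Q\chi^m)=\rho_\ee(m)\phi Q\chi^{m+\ee}$ for some $\phi\in\KK(t)^{\ast}$. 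Imposing $[\partial,\partial_-]=0$ via Lemma~\ref{lemmabracket} and $[\partial,\partial_+]=0$ by a direct computation with the formula of \ref{rap:der-Q}, I get: in the reflexive case the first bracket forces $\phi=\lambda t^{-v_0(\ee)}$ and the second $\phi=\mu(t-1)^{-v_1(\ee)}$, hence $v_0(\ee)=v_1(\ee)=0$ and $\phi$ constant; in the skew case the conditions become $\phi=\lambda t^{-v_0(\ee)}$ and $v_1(\ee)=0$. Conversely, when $\ee\in\Rt(\Gamma)\cap L$, $\rho_\ee\in e^\perp$ corresponds to a $G_e$-divisor, and the relevant vanishing conditions hold, the $\LND$ above with $\phi$ the forced monomial (a nonzero section of $\Oc(\lfloor\D_{\rho_\ee}(\ee)\rfloor)$) commutes with $\partial_\pm$, hence with $\delta$ by the Jacobi identity, hence defines a $\G_a$-action normalized by $G_e$ of degree $\chi^\ee$; and since $\rho_\ee$ is primitive there is $m$ with $\rho_\ee(m)=1$, so $\partial$ does not preserve the vanishing ideal of $D_{\rho_\ee}$, whence $\G_a\rtimes_\ee G_e$ is transitive on $X_{\rho_\ee}$ and $\ee\in\Rte(X)$. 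This yields both formulae of~(i).

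For~(ii): by \cite[Proposition 3.8]{AL} the variety $X$ is horospherical, and by Lemma~\ref{lemmenotoftypeI} it is, after enlarging $\TT$ to $\tilde\TT$, an affine embedding of $X_\Sigma$ of type II under $\tilde{G}_{\tilde{e}}$, say $X=\Spec\KK[\tilde\sigma^\vee\cap M]$; moreover any $\G_a$-action normalized by $G_e$ is automatically normalized by $\tilde{G}_{\tilde{e}}$ (its $\LND$ commutes with $\partial_\pm$, hence is homogeneous for the $\SL_2$-isotypic grading), so $\Rt(X)$ can be computed in the type II setting. A homogeneous $\LND$ of degree $\ee$ on $\KK[\tilde\sigma^\vee\cap M]$ is $\partial(\chi^m)=\langle m,\rho_\ee\rangle\chi^{m+\ee}$ with $\rho_\ee$ its distinguished ray; expanding $[\partial,\partial_-]=[\partial,\partial_+]=0$ with $\partial_\pm(\chi^m)=\langle m,\rho_\pm\rangle\chi^{m\pm\tilde e}$ (see \ref{marker: semisimpleroot}) yields the identities $\langle\tilde e,\rho_\ee\rangle\rho_-=-\langle\ee,\rho_-\rangle\rho_\ee$ and $\langle\tilde e,\rho_\ee\rangle\rho_+=\langle\ee,\rho_+\rangle\rho_\ee$. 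As $\rho_\pm(\tilde e)=\mp1$ and $\rho_-,\rho_+$ are linearly independent, these are incompatible once $\rho_\ee\in\{\rho_-,\rho_+\}$ — so $\rho_\ee$ always corresponds to a $G_e$-divisor and $\Rti(X)=\emptyset$ — and when $\rho_\ee$ is independent of both $\rho_-$ and $\rho_+$ they force $\rho_\ee\in\tilde e^\perp$ and $\ee\in\rho_-^\perp\cap\rho_+^\perp$. The converse runs as in~(i): for $\ee\in\Rt(\Gamma)\cap L_\pm$ satisfying these conditions the Demazure-root $\LND$ commutes with $\partial_\pm$ and moves $D_{\rho_\ee}$, so $\ee\in\Rte(X)$, giving~(ii).

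I expect the main obstacle to be the bracket computations of the exterior case~(i): $[\partial,\partial_+]$ in the skew case is not of the shape handled by Lemma~\ref{lemmabracket} because of the extra $(1-\tfrac1t)$-term in $\partial_+$ (see \ref{rap:der-Q}), so it has to be carried out by hand, and it is precisely this asymmetry between $\partial_-$ and $\partial_+$ that produces the different outcomes — $v_0(\ee)=v_1(\ee)=0$ in the reflexive case against only $v_1(\ee)=0$ in the skew case. A secondary nuisance, in the reverse inclusions, is the precise identification of $\rho_\ee$ with a color direction; everything else is conceptually already contained in Theorems~\ref{theoroot} and~\ref{theoexhom} and in Lemmas~\ref{lemmaexterior}, \ref{lemma:valhorvert}, \ref{lemmabracket}.
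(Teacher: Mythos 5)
Your proposal is correct and follows essentially the same route as the paper: interior roots via Theorem~\ref{theoroot} and Theorem~\ref{theoexhom}, exterior roots via Lemmas~\ref{lemmaexterior}, \ref{lemma:valhorvert}, \ref{lemmabracket} plus the hand computation of $[\partial,\partial_+]$ with the formulae of \ref{rap:der-Q} (including the reflexive/skew asymmetry you single out), and the semigroup-algebra bracket computation of \ref{marker: semisimpleroot} for type II. The extra details you supply on the converse inclusions and on reducing to $\tilde G_{\tilde e}$ in case (ii) are consistent with, and slightly more explicit than, the paper's argument.
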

\begin{proof}
(i) Note that the description of $\Rti(X)$ is a direct consequence of Theorem~\ref{theoexhom}. 
Let us describe the exterior Demazure roots of $X$.
Let $\ee\in \Rte(X)$ with the distinguished root $\rho=\rho_\ee$. Then by Lemma \ref{lemmaexterior}, the corresponding $\G_{a}$-action on $X_{\rho}$
is obtained from a vertical LND $\partial$ on $\KK[X_{\rho}]$ of degree $\ee$ such that the algebra $\KK[X]$ is $\partial$-stable, see \ref{marker: Demazure}.
By Lemma \ref{lemmabracket}, the condition $[\partial, \partial_{-}] = 0$ implies that $v_{0}(\ee)\in\mathbb{Z}$
and  $\partial$ is given by the formula
$$\partial(Q\chi^{m}) = \rho(m)t^{-v_{0}(\ee)}Q\chi^{m+\ee},$$ 
where $Q\in\KK(t)$ and $m\in M$. The other implication $\rho  \in e^{\perp}$
 is equivalent to $\rho\in\linea(\Vc)$.

\emph{Reflexive case.} 
Using formula for $\partial_+$ in \ref{rap:der-Q} and the equality $\rho(e)=0$, we have
\begin{align*}
%\partial\circ\partial_+(Q\chi^{m})=\partial((v_1(m)Q+(t-1)Q')\chi^{m+e})=& \rho(m)t^{-v_0(\ee)}(v_1(m)Q+(t-1)Q')\chi^{m+e+\ee},\\
%\partial_+\circ\partial(Q\chi^{m})=\partial_+(\rho(m)t^{-v_0(\ee)}Q\chi^{m+\ee})=\\
%\left(\right)\rho(m) \rho(m)t^{-v_0(\ee)}(v_1(m)Q+(t-1)Q')\chi^{m+e+\ee},
[\partial_+,\partial](Q\chi^{m})=\rho(m)\left(v_1(\ee)-v_0(\ee)\frac{t-1}{t}\right)t^{-v_0(\ee)}Q\chi^{m+e+\ee}\mbox{ for all } Q\in\K(t), m\in M.
\end{align*}
Therefore, $\partial$ and $\partial_+$ commute if and only if $v_0(\ee)=v_1(\ee)=0$.
We conclude by remarking
that $\partial(\KK[X])\subset \KK[X]$ gives the condition $\ee\in \Rt(\Gamma)$.

%The derivation $\partial_{+}$ satisfies
%$$\partial_{+}(t^{r}\xi_{m}^{+}\chi^{m}) = rt^{r-1}\xi_{m+e}^{+}\chi^{m+e}\,\,\,\,{\rm for\,\,all}\,\,\,
%(m,r)\in M\oplus\mathbb{Z}.$$ 
%Since $\rho(e) = 0$, we observe that $[\partial, \partial_{+}](t) = 0$. In addition,
%we have $\partial\circ\partial_{+}(\xi_{m}^{+}\chi^{m}) = 0$ for any $m\in M$, and 
%$$\partial_{+}\circ\partial( \xi_{m}^{+}\chi^{m}) = -v_{0}(\ee)\rho(m)t^{-v_{0}(\ee)-1}\xi_{m+e}^{+}\chi^{m+e+\ee}.$$
%Hence $v_{0}(\ee) = 0$ if and only if $[\partial,\partial_{+}] = 0$.

\emph{Skew case.} % The derivation $\partial_{+}$ satisfies
% $$\partial_{+}(t^{r}\xi_{m}^{+}\chi^{m}) =2(v_{0}(m)+ r)t^{r-1}\xi_{m+e}^{+}\chi^{m+e}\,\,\,\,{\rm for\,\,all}\,\,\,
%(m,r)\in M\oplus\mathbb{Z}.$$   
%By a direct computation, we have $[\partial,\partial_{+}] = 0$. The condition
%$t^{-v_{0}(\ee)}\in\Phi_{\ee}^{\star}$ yields $v_{0}(\ee)\leq 0$ and $v_{1}(\ee)\geq 0$. Using the same argument as before
%we conclude.
Similarly,
\begin{align*}
[\partial_+,\partial](Q\chi^{m})=2\rho(m)v_1(\ee)t^{-v_0(\ee)}Q\chi^{m+e+\ee}\mbox{ for all } Q\in\K(t), m\in M.
\end{align*}
Thus, $\partial$ and $\partial_+$ commute if and only if $v_1(\ee)=0$.
Moreover, let $\D'$ be the polyhedral divisor over $\AA^{1}$ associated with $X_{\rho}$;
here we identify $\KK[X_{\rho}]$ with a subalgebra of $\KK[G_{e}\,/H]$. 
Note that the condition on the vertical derivation $\partial=\partial_{\ee,t^{-v_0(\ee)}}$ imposed by \ref{marker: Demazure}, namely
$$t^{-v_{0}(\ee)}\in
\Phi_{\ee}^{\star} = H^{0}(\AA^{1},\mathcal{O}_{\AA^{1}}(\lfloor\D'(\ee)\rfloor))\setminus\{0\},$$ 
is always fulfilled. %So, $v_0(\ee)$ is an arbitrary integer.
Again, we have the condition $\ee\in\Rt(\Gamma)$.

(ii) $\KK[X]$ is the semigroup algebra $\KK[\sigma^{\vee}\cap M]$, where $\sigma^{\vee}\subset M_{\QQ}$
is the weight cone for the $\TT$-action. Let $\ee\in\Rt(X)$ with the distinguished root $\rho=\rho_\ee$. Then $\ee\in\Rt(\sigma)$ and it corresponds to
the homogeneous LND $\partial$ given by
 $$\partial(\chi^{m}) = \rho(m)\chi^{m+\ee}.$$
The conditions $[\partial,\partial_{-}] = [\partial,\partial_{+}] = 0$ are equivalent to
(see the notation of \ref{marker: semisimpleroot})
$$
\begin{cases}
\rho_{-}(\ee)\rho(m) = - \rho(e)\rho_{-}(m)\mbox{ for any }m\in\sigma^{\vee}\cap M, \\
\rho_{+}(\ee)\rho(m) = \rho(e)\rho_{+}(m)\mbox{ for any }m\in\sigma^{\vee}\cap M.
\end{cases}$$
In other words, $\rho_{-}(\ee)\rho = - \rho(e)\rho_{-}$ and $\rho_{+}(\ee)\rho = \rho(e)\rho_{+}$. Moreover, these two vectors are collinear. Since $\rho_-$ and $\rho_+$ are not, $\rho(e)=0$.
Moreover,  $\rho\neq0$ implies $\rho_{-}(\ee)=\rho_{+}(\ee)=0$.
%. Assume that $\rho_{-}(\ee)\neq 0$. Then 
%$\rho_{\ee}(e)\neq 0$. By choosing $m\in\rho_{-}^{\perp}\cap \sigma^{\vee}\cap M$
%in the preceding conditions, we obtain $\rho_{-}^{\perp}\cap\sigma^{\vee} = \rho_{\ee}^{\perp}\cap\sigma^{\vee}$,
%and therefore $\rho_{-} = \rho_{e}$. 
%This yields a contradiction. 
%Hence $\rho_{-}(\ee) = \rho_{\ee}(e) = 0$. This also implies $\rho_{+}(\ee) = 0$. 
But $\rho(\ee)=-1$ by definition, hence
$\rho\notin\{\rho_-,\rho_+\}$. So, Lemma~\ref{lemmatoricdiv}(iii) implies that $\rho$ is exterior and $\Rti(X) = \emptyset$. 
%The rest of the proof is straightforward.
Finally, under the conditions $\rho(e)=\rho_{-}(\ee)=\rho_{+}(\ee)=0$ the induced $\G_a$-action is indeed normalized by $G_e$.
\end{proof}
The following result gives a description of Demazure roots 
of affine $G_{e}$-spherical varieties of types I and II. % when the algebraic group acting is .

\begin{theorem}\label{th:final}
Let $X$ be an affine embedding of a $G_e$-homogeneous spherical space $G_e/H$
corresponding to the colored cone $(\Cc,\F)$. Assume that the $\TT$-action on $X$ is faithful of complexity one. Denote by $\F_0$ the subset of colors of $G_e/H$ and $\Gamma=\Cone(\Cc\cup\varrho(\F_0))$.
Then the set of exterior Demazure roots is 
\[
\Rte(X) = \{\ee\in\Rt(\Gamma)\cap L\,|\, \rho_{\ee}\in\linea(\Vc),\,\,\ee\in\varrho(\F_0)^{\perp}\},
\]
whereas the set of interior Demazure roots is non-empty if and only if $H$ is a torus that does not contain a maximal torus $T$ of $\SL_2$. In such a case $\F_0$ consists of two different colors, say $D_1,D_2$ and
\[
\Rti(X) = \{\ee\in\Rt(\Gamma)\cap L\,|\, \rho_{\ee}\in\linea(\Vc),\,\,\{\varrho(D_1)(\ee),\varrho(D_2)(\ee)\}=\{-1,1\}\}.
\]
\end{theorem}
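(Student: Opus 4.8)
The theorem is a reformulation of Proposition~\ref{prop:final} in purely Luna--Vust terms, so the plan is to deduce it from that proposition together with the classification of $G_e$-spherical homogeneous spaces in Section~\ref{sec:hom}. First I would split into the two cases of Proposition~\ref{prop:final}. In the type~II case, the colored cone $(\Cc,\F)$ has $\F_0=\{D_{\mp}\}$ a single color whose image is $\rho_{\pm}$ (by Lemma~\ref{lemmatoricdiv} and Proposition~\ref{proptoric}); since $\rho_{-}^{\perp}\cap\rho_{+}^{\perp}=\rho_{\pm}^{\perp}$ inside $L_{\pm}$ (because $\ee\in L_\pm$ already lies in the span of the facet $\tau_\pm$), the condition $\ee\in\rho_{-}^{\perp}\cap\rho_{+}^{\perp}$ is exactly $\ee\in\varrho(\F_0)^{\perp}$. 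Moreover $\linea(\Vc)=e^{\perp}$ by Lemma~\ref{lemma:valhorvert}, so $\rho_{\ee}\in e^{\perp}$ becomes $\rho_\ee\in\linea(\Vc)$; and the horospherical homogeneous space has no interior roots, matching $\Rti(X)=\emptyset$. This reduces the type~II case to bookkeeping.

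In the type~I case, I would again use $\linea(\Vc)=e^\perp$ from Lemma~\ref{lemma:valhorvert}. For the exterior roots, Proposition~\ref{prop:final}(i) gives $\Rte(X)=\{\ee\in\Rt(\Gamma)\cap L\mid\rho_\ee\in\linea(\Vc),\ v_0(\ee)=v_1(\ee)=0\}$ in the reflexive case and with only $v_1(\ee)=0$ in the skew case. Here I would invoke Table~\ref{tab:hom} of Theorem~\ref{th:sph-subgroups-I}: the images $\varrho(D)$ of the colors in $\F_0$ are (up to sign) the vectors $\nu_e$, $k\nu_\alpha+\nu_e$, etc., and one checks directly that the linear span of $\varrho(\F_0)$ equals the span of $\{v_0,v_1\}$ in the reflexive case and the span of $\{v_1\}$ in the skew case. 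Thus $\ee\in\varrho(\F_0)^\perp$ is equivalent to $v_0(\ee)=v_1(\ee)=0$ (resp.\ $v_1(\ee)=0$), giving the uniform formula for $\Rte(X)$. The skew case needs a small argument that $v_0(\ee)$ is then automatically unconstrained, which follows from the structure of $\F_0$ for $N_1,N_2$ (only one color, proportional to $\nu_e$).

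For the interior roots, Proposition~\ref{prop:final}(i) says $\Rti(X)\neq\emptyset$ iff $X$ is reflexive with $v_0,v_1$ linearly independent, and then $\Rti(X)=\{\ee\in\Rt(\Gamma)\cap L\mid v_0(\ee)=v_1(\ee)=\pm1\}$. By the classification, reflexive homogeneous spaces are exactly $G_e/Q_1$ and $G_e/Q_2$ (subgroups of type (ii) or (iii) that are tori, i.e.\ $T\not\subset H$), and $v_0,v_1$ are independent precisely in the $Q_1$ case where $\lambda_1\nparallel\lambda_2$ — equivalently $H$ is a torus not containing a maximal torus of $\SL_2$ (the $Q_2$ case has $v_0\parallel v_1$). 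In that situation Table~\ref{tab:hom} shows $\F_0=\{D_1,D_2\}$ with $\varrho(D_1),\varrho(D_2)$ spanning the same plane as $v_0,v_1$; a direct linear-algebra computation (using $v_0=k\nu_\alpha+\nu_e$, $v_1=-\nu_e$ and the $B_\pm$-colors being $\pm\nu_e$ and $\pm(k\nu_\alpha+\nu_e)$) shows that $v_0(\ee)=v_1(\ee)=1$ translates into $\{\varrho(D_1)(\ee),\varrho(D_2)(\ee)\}=\{1,-1\}$ and the case $v_0(\ee)=v_1(\ee)=-1$ into the same set, so the two descriptions coincide. I also need the condition $\rho_\ee\in\linea(\Vc)$ to be redundant for interior roots, or rather automatically satisfied: from $v_0(\ee)=v_1(\ee)=\pm1$ and the fact that $\linea(\Vc)=e^\perp$ with $e\in\lins$, one checks the distinguished ray lies in $e^\perp$. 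The main obstacle will be this last step — pinning down the relation between the abstractly defined distinguished ray $\rho_\ee$ of a root of $\Gamma$ and the valuation-cone condition $e^\perp$ — together with carefully matching the sign conventions between Table~\ref{tab:hom}, the definition of $\varrho$, and the pairing $\varrho(D)(\ee)$; everything else is a translation exercise.
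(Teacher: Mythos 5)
Your overall route is exactly the paper's: the paper proves Theorem~\ref{th:final} in one line, as an immediate consequence of Proposition~\ref{prop:final} and Theorem~\ref{th:sph-subgroups-I}, and your translations — $\ee\in\varrho(\F_0)^{\perp}$ into $v_0(\ee)=v_1(\ee)=0$ (reflexive) resp.\ $v_1(\ee)=0$ (skew), the interior condition into $v_0(\ee)=v_1(\ee)=\pm1$, and ``reflexive with $v_0,v_1$ independent'' into ``$H$ a torus not containing a maximal torus of $\SL_2$'' — via Proposition~\ref{proposition:col.G-div} and Table~\ref{tab:hom} are the right bookkeeping.

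However, the step you yourself single out as the main obstacle is a genuine gap, and the verification you propose there is false. For an interior root the distinguished ray $\rho_\ee$ does not correspond to a $G_e$-divisor, hence lies on a color ray (see the convention before Definition~\ref{rootsphere}); in the reflexive case the color images are $v_0$ and $-v_1$ for $B_+$ (resp.\ $-v_0$, $v_1$ for $B_-$), and $v_0(e)=1$, $v_1(e)=-1$ by Theorem~\ref{theorem:SL}, so $e$ does not vanish on any color ray and $\rho_\ee\notin e^{\perp}=\linea(\Vc)$; Theorem~\ref{theoroot}(ii) only yields $-\rho_\ee\in\Vc$. This is visible in the paper's own Example~\ref{exinterior}: the interior roots $(1,1)$ and $(-1,-1)$ have distinguished rays along $-v_1$ and $v_0$ (with respect to $B_+$), neither of which lies in $e^{\perp}$. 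So ``one checks the distinguished ray lies in $e^{\perp}$'' cannot be carried out, and your argument does not reconcile the condition $\rho_\ee\in\linea(\Vc)$ in the displayed formula for $\Rti(X)$ with Proposition~\ref{prop:final}(i); taken literally that condition would even force $\Rti(X)=\emptyset$, against Example~\ref{exinterior}, so this point must be addressed explicitly (for interior roots the only valuation-theoretic constraint available is $-\rho_\ee\in\Vc$) rather than asserted. A smaller inaccuracy of the same kind occurs in your type~II reduction: you invoke Lemma~\ref{lemma:valhorvert} to write $\linea(\Vc)=e^{\perp}$, but that lemma concerns the non-horospherical (type~I) homogeneous spaces; for horospherical $G_e/H$ one has $\Vc=V_{\QQ}$, so the condition $\rho_\ee\in\linea(\Vc)$ is vacuous, while the condition $\rho_\ee\in e^{\perp}$ of Proposition~\ref{prop:final}(ii) is stated in the toric picture for the enlarged group of Lemma~\ref{lemmenotoftypeI}; matching these two therefore also requires an argument, not mere relabelling.
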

\begin{proof}
This is the immediate consequence of Proposition~\ref{prop:final} and Theorem~\ref{th:sph-subgroups-I}.
\end{proof}
\begin{remark}\label{rem:red}
Let us consider an arbitrary $G_e$-spherical variety $X$.
 The $\TT$-action on $X$ is of complexity at most two, since $\codim_B\TT=2.$
If the $\TT$-action on $X$ is of complexity $2$, then we take $\TT'=\TT\times T$, i.e. we add a copy of the torus $T\subset\SL_2$ to $\TT$ preserving its action on $X$.
Then $e'=(e,-1)$, $G_{e'}=\SL_2\rtimes \TT'$, and $X$ is the $G_{e'}$-spherical variety. Since $\codim_B T\times\TT=1$  in $G_e$, the $\TT'$-action on $X$ is of complexity one. There is a bijection between Demazure roots of $X$ corresponding to  $G_{e}$ and $G_{e'}$, sending a root $\chi$ to $(\chi,0)$. Indeed, since  normalized $\G_a$-actions commute with the $\SL_2$-action, they commute with $T$ as well.

Assume now that the $\TT$-action on $X$ is not faithful. Denote $G_e^\prime=\SL_2\times\TT'$, where $\TT'$ is the quotient of $\TT$ by the kernel of non-effectivity. Then $X$ is a $G_e^\prime$-spherical variety endowed with the faithful $\TT'$-action and Demazure roots with respect to $G_e$ and $G_e^\prime$ coincide. The complexity of $\TT$-action is not changed under this reduction.
\end{remark}
\begin{corollaire}\label{cor:final}
Let $e\in M$ be a nonzero element. Then
the answer to Question \ref{question} is affirmative for the reductive group $G_{e}$.  
\end{corollaire}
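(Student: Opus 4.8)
The plan is to read the statement off the explicit formulas of Theorem~\ref{th:final}, after first arranging that its hypotheses hold. First I would invoke Remark~\ref{rem:red}: given an arbitrary affine $G_e$-spherical variety $X$, one reduces to the case where the $\TT$-action on $X$ is faithful of complexity one, by enlarging $\TT$ with a copy of $T\subset\SL_2$ when the action has complexity two, and by passing to the effective quotient of $\TT$ when the action is not faithful. In each case $G_e$ is replaced by a group of the form $G_{e'}=\SL_2\rtimes_{e'}\TT'$, and Remark~\ref{rem:red} identifies $\Rt(X)$ for $G_e$ with the preimage, respectively the image, of $\Rt(X)$ for $G_{e'}$ under the injective affine-linear map of character lattices induced by the torus morphism. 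Since preimages and images of finite unions of polyhedra under such maps are again finite unions of polyhedra, and preimages and images of sublattices are sublattices, it suffices to settle the reduced case.

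With $X$ now as in Theorem~\ref{th:final}, I would unwind the ingredients in its formulas. Let $\Gamma(1)=\{\rho_1,\dots,\rho_k\}$ be the primitive ray generators of the strongly convex cone $\Gamma$. By \ref{marker: Demazure} the set of classical Demazure roots is the disjoint union $\Rt(\Gamma)=\bigcup_{i=1}^k(\Pi_i\cap M)$, where $\Pi_i=\{m\in M_\QQ\mid\rho_i(m)=-1,\ \rho_j(m)\ge0\ (j\ne i)\}$ is a polyhedron, and every $\ee\in\Pi_i$ has $\rho_\ee=\rho_i$. Hence the constraint $\rho_\ee\in\linea(\Vc)$ merely deletes from this union the indices $i$ with $\rho_i\notin\linea(\Vc)$, leaving a finite union of lattice points of polyhedra. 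The remaining constraints are affine-linear: $\ee\in\varrho(\F_0)^\perp$ is membership in a fixed linear subspace $W$ (the exterior case), and $\{\varrho(D_1)(\ee),\varrho(D_2)(\ee)\}=\{-1,1\}$ is membership in one of two affine slices (the interior case, the empty case being trivial). Intersecting each surviving $\Pi_i$ with $W$, respectively with those slices, and taking the finite union, I obtain $\Rte(X)=P_{\mathrm{ext}}\cap L$ and $\Rti(X)=P_{\mathrm{int}}\cap L$ with $P_{\mathrm{ext}},P_{\mathrm{int}}$ finite unions of polyhedra in $L_\QQ$. Since $(A\cap L)\cup(B\cap L)=(A\cup B)\cap L$, this gives $\Rt(X)=(P_{\mathrm{ext}}\cup P_{\mathrm{int}})\cap L$, and $L$ is a sublattice of $\XX(G_e)$ with $L_\QQ\subseteq\QQ\otimes_{\ZZ}\XX(G_e)$, which is exactly the shape Question~\ref{question} asks for.

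I do not expect a genuine analytic obstacle, the substance of the computation having already been carried out in Theorem~\ref{th:final}; the only points needing care are bookkeeping ones. One is to check that the reduction of Remark~\ref{rem:red} really transports a ``finite union of polyhedra intersected with a sublattice'' description along the comparison of character lattices. The other is to notice that in Theorem~\ref{th:final} the condition $\rho_\ee\in\linea(\Vc)$ selects which chambers $\Pi_i$ contribute, rather than imposing a further condition on $\ee$ inside a chamber, so that it is compatible with the finite-union structure instead of destroying it.
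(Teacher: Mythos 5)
Your proposal is correct and follows the same route as the paper, whose proof of this corollary is simply the citation of Theorem~\ref{th:final} together with the reduction of Remark~\ref{rem:red}. You merely make explicit the bookkeeping the paper leaves implicit: that the chamber decomposition $\Rt(\Gamma)=\bigcup_i(\Pi_i\cap M)$, the ray-selection by $\rho_\ee\in\linea(\Vc)$, and the affine-linear color conditions produce a finite union of polyhedra intersected with the sublattice $L$, and that this shape is preserved under the lattice comparisons in Remark~\ref{rem:red}.
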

\begin{proof}
This follows from Theorem~\ref{th:final} and Remark~\ref{rem:red}.
\end{proof}

\end{document}